\documentclass[final,onefignum,onetabnum]{siamonline190516}

\synctex=1

\usepackage{amsopn}
\usepackage{amsfonts}
\usepackage{amsmath,amsbsy,amsgen,amscd,amssymb,bm}
\usepackage{mathtools}
\usepackage{dsfont}
\usepackage{stmaryrd}
\usepackage{pifont}

\usepackage[scaled=1.2]{urwchancal}

\usepackage{graphicx}
\graphicspath{{figures/}}

\usepackage{epstopdf}

\ifpdf
  \DeclareGraphicsExtensions{.eps,.pdf,.png,.jpg}
\else
  \DeclareGraphicsExtensions{.eps}
\fi

\definecolor{vdkgray}{gray}{0.3}
\definecolor{dkgray}{rgb}{.4,.4,.4}
\definecolor{dkblue}{rgb}{0,0,.5}
\definecolor{ltblue}{rgb}{0.9,0.9,1}
\definecolor{medblue}{rgb}{0,0,.75}
\definecolor{rust}{rgb}{0.5,0.1,0.1}
\definecolor{paleyellow}{rgb}{1,1,0.9}

\usepackage[caption=false]{subfig}

\usepackage{enumitem}
\setlist[enumerate]{leftmargin=.5in}
\setlist[itemize]{leftmargin=.5in}

\usepackage[noend]{myalgpseudocode}
\usepackage{algorithmicx}

\newsiamthm{infthm}{Informal Theorem}
\crefname{infthm}{Informal Theorem}{Informal Theorems}

\numberwithin{equation}{section}
\numberwithin{theorem}{section}
\numberwithin{figure}{section}

\newsiamthm{claim}{Claim}
\crefname{claim}{Claim}{Claims}

\newsiamthm{fact}{Fact}
\crefname{fact}{Fact}{Facts}

\newsiamremark{remark}{Remark}
\crefname{remark}{Remark}{Remarks}

\newsiamremark{warning}{Warning}
\crefname{warning}{Warning}{Warnings}

\newsiamremark{example}{Example}
\crefname{example}{Example}{Examples}

\hyphenation{Sketchy-CGAL}

\usepackage{csvsimple}
\usepackage{booktabs, makecell, longtable}
\usepackage{numprint}
\usepackage{siunitx}
\sisetup{output-exponent-marker = e,round-mode = figures, round-precision = 3,
  scientific-notation = true}

\usepackage[normalem]{ulem}

\newcommand{\R}{\mathbb{R}}
\newcommand{\C}{\mathbb{C}}
\newcommand{\F}{\mathbb{F}}

\newcommand{\Sym}{\mathbb{S}}
\newcommand{\real}{\operatorname{Re}}

\newcommand{\eps}{\varepsilon}
\newcommand{\econst}{\mathrm{e}}
\newcommand{\vct}[1]{\bm{#1}}
\newcommand{\mtx}[1]{\bm{#1}}

\newcommand{\Id}{\mathbf{I}}
\newcommand{\trace}{\operatorname{tr}}
\newcommand{\rank}{\operatorname{rank}}
\newcommand{\range}{\operatorname{range}}
\newcommand{\diag}{\operatorname{diag}}
\newcommand{\dist}{\operatorname{dist}}
\newcommand{\abs}[1]{\vert #1 \vert}
\newcommand{\norm}[1]{\Vert #1 \Vert}
\newcommand{\normsq}[1]{\norm{#1}^2}
\newcommand{\fnorm}[1]{\norm{#1}_{\mathrm{F}}}
\newcommand{\fnormsq}[1]{\fnorm{#1}^2}
\newcommand{\triplenorm}[1]{\vert\!\vert\!\vert #1 \vert\!\vert\!\vert}
\newcommand{\ip}[2]{\langle #1, \, #2 \rangle}
\newcommand{\lowrank}[2]{\llbracket {#1} \rrbracket_{#2}}
\newcommand{\psdle}{\preccurlyeq}
\newcommand{\psdge}{\succcurlyeq}
\newcommand{\Expect}{\operatorname{\mathbb{E}}}

\newcommand{\minimize}{\text{minimize}}
\newcommand{\maximize}{\text{maximize}}
\newcommand{\subjto}{\text{subject to}}
\newcommand{\argmin}{\operatorname{arg\,min}}

\newcommand{\grad}{\nabla}
\newcommand{\sgn}{\operatorname{sgn}}

\usepackage{xspace}
\newcommand{\CGAL}{\textsf{CGAL}\xspace}
\newcommand{\sCGAL}{\textsf{SketchyCGAL}\xspace}
\newcommand{\primone}{\text{\textcolor{dkgray}{\ding{202}}}}
\newcommand{\primtwo}{\text{\textcolor{dkgray}{\ding{203}}}}
\newcommand{\primthree}{\text{\textcolor{dkgray}{\ding{204}}}}

\headers{Scalable Semidefinite Programming}{Yurtsever, Tropp, Fercoq, Udell, and Cevher}

\title{Scalable Semidefinite Programming\thanks{Submitted to the editors 6 December 2019. Revised on 18 January 2020, 24 July 2020 and 12 November 2020. 
\funding{VC and AY have received funding from the European Research Council (ERC) under the European Union’s Horizon 2020 research and innovation program under the grant agreement number 725594 (time-data) and the Swiss National Science Foundation (SNSF) under the grant number 200021\_178865/1. 
JAT gratefully acknowledges ONR Awards N00014-11-1-0025, N00014-17-1-2146, and N00014-18-1-2363.
MU gratefully acknowledges DARPA Award FA8750-17-2-0101. 
{Part of this research is conducted while AY is at Massachusetts Institute of Technology, Cambridge, MA, USA. 
AY acknowledges the Early Postdoc.Mobility Fellowship P2ELP2\_187955 from the Swiss National Science Foundation and partial postdoctoral support from the NSF-CAREER grant IIS-1846088.}}}}

\author{
Alp Yurtsever\thanks{Laboratory for Information and Inference Systems, Department of Electrical Engineering, {\'E}cole Polytechnique F{\'e}d{\'e}rale de Lausanne, Lausanne, Switzerland
  (\email{alp.yurtsever@epfl.ch}, \email{volkan.cevher@epfl.ch}, \url{https://lions.epfl.ch/}).}
\and Joel A.~Tropp\thanks{Department of Computing and Mathematical Sciences, California Institute of Technology, Pasadena, CA, USA
  (\email{jtropp@cms.caltech.edu}, \url{http://users.cms.caltech.edu/\string~jtropp}).}
\and Olivier Fercoq\thanks{Laboratoire Traitement et Communication d'Information, T{\'e}l{\'e}com Paris, Institut Polytechnique de Paris, Palaiseau, France 
  (\email{olivier.fercoq@telecom-paris.fr}, \url{https://perso.telecom-paristech.fr/ofercoq/}.)}
\and Madeleine Udell\thanks{Department of Operations Research and Information Engineering, Cornell University, Ithaca, NY, USA
  (\email{udell@cornell.edu}, \url{https://people.orie.cornell.edu/mru8/}).}
\and Volkan Cevher\footnotemark[2]}

\ifpdf
\hypersetup{
  pdftitle={Scalable semidefinite programming},
  pdfauthor={A.~Yurtsever, J.~A.~Tropp, O.~Fercoq, M.~Udell, and V.~Cevher}
}
\fi

\begin{document}

\maketitle

\begin{abstract}
Semidefinite programming (SDP) is a powerful framework from convex optimization
that has striking potential for data science applications.
This paper develops a provably correct randomized algorithm
for solving large, weakly constrained SDP problems by economizing on the storage
and arithmetic costs.  Numerical evidence shows that the method
is effective for a range of applications,
including relaxations of \textsf{MaxCut}, abstract phase retrieval,
and quadratic assignment.
Running on a laptop equivalent, the algorithm
can handle SDP instances where the
matrix variable has over $10^{14}$ entries.
\end{abstract}

\begin{keywords}
  Augmented Lagrangian, conditional gradient method, convex optimization,
  dimension reduction, first-order method, 
  randomized linear algebra, semidefinite programming, sketching.
\end{keywords}

\begin{AMS}
Primary:
	90C22, 
	65K05. 
Secondary:
	65F99. 
\end{AMS}

\section{Motivation}

For a spectrum of challenges in data science, methodologies
based on semidefinite programming offer remarkable
performance both in theory and for small problem instances.
Even so, practitioners often critique this approach
by asserting that it is impossible to solve
semidefinite programs (SDPs) at the scale
demanded by real-world applications.
We would like to argue against this article of conventional wisdom.

This paper proposes a new algorithm, called \sCGAL,
that can solve very large SDPs to moderate accuracy.
The algorithm marries a primal--dual
optimization technique~\cite{YFC19:Conditional-Gradient-Based} to a randomized
sketch 
for low-rank matrix approximation~\cite{TYUC17:Fixed-Rank-Approximation}.
In each iteration, the primary expense is one low-precision
randomized eigenvector calculation~\cite{KW92:Estimating-Largest}.

For every standard-form SDP that satisfies strong duality,
\sCGAL\ provably converges to a near-optimal low-rank approximation
of a solution.  The algorithm uses limited arithmetic and minimal storage.
It is most effective for weakly constrained problems
whose solutions are nearly low-rank.
In contrast, given the same computational resources, 
other methods for this class of problems 
may fail. 
In particular, \sCGAL\ needs far less storage than the
Burer--Monteiro factorization heuristic~\cite{Burer2003,JMLR:v15:boumal14a}
for 
certain problem instances~\cite{WW18:RankOptimality}.

In addition to the theoretical guarantees,
we offer evidence
that \sCGAL\ is a practical optimization algorithm.
For example, on a laptop equivalent, we can 
solve the \textsf{MaxCut} SDP for
a sparse graph with over 20 million vertices,
where the matrix variable %
has over $10^{14}$ entries.
We also tackle large phase retrieval
problems arising from Fourier ptychography~\cite{Horstmeyer_2015},
as well as relaxations~\cite{Zhao1998,BravoFerreira2018} of the quadratic
assignment problem.

\subsection{Example: The maximum cut in a graph}

To begin, we derive a fundamental
SDP~\cite{DP93:Laplacian-Eigenvalues,DP93:Performance-Eigenvalue,GW95:Improved-Approximation}
that arises in combinatorial optimization.
This example highlights why large SDPs are hard to solve,
and it 
illustrates the potential of our approach.

\subsubsection{MaxCut}

Consider an undirected graph $\mathsf{G} = (\mathsf{V}, \mathsf{E})$
comprising a vertex set $\mathsf{V} = \{1, \dots, n\}$
and a set $\mathsf{E}$ of $m$ edges.
The combinatorial Laplacian of the graph is the real positive-semidefinite (psd) matrix
\begin{equation} \label{eqn:comb-laplacian}
\mtx{L} := \sum\nolimits_{\{i, j\} \in \mathsf{E}} (\mathbf{e}_i - \mathbf{e}_j)(\mathbf{e}_i - \mathbf{e}_j)^*
	\in \R^{n \times n},
\end{equation}
where $\mathbf{e}_i \in \R^n$ denotes the $i$th standard basis vector and ${}^*$
refers to the (conjugate) transpose of a matrix or vector.
We can search for a maximum-weight cut in the graph by solving 
\begin{equation} \label{eqn:maxcut}
\maximize\quad \vct{\chi}^* \mtx{L} \vct{\chi}
\quad\subjto\quad	\vct{\chi} \in \{ \pm 1 \}^n.
\end{equation}
Unfortunately, the formulation~\cref{eqn:maxcut} is \textsf{NP}-hard~\cite{Kar72:Reducibility-Combinatorial}.
One remedy is to relax it to an SDP.

Consider the matrix $\mtx{X} = \vct{\chi\chi}^*$ where $\vct{\chi} \in \{ \pm 1 \}^n$.
The matrix $\mtx{X}$ is psd; its diagonal entries equal one; and it has rank one.
We can express the \textsf{MaxCut} problem~\cref{eqn:maxcut}
in terms of the matrix $\mtx{X}$
by rewriting the objective as a trace.
Bringing forward the implicit constraints on $\mtx{X}$
and dropping the rank constraint, we arrive at the \textsf{MaxCut} SDP:
\begin{equation} \label{eqn:maxcut-sdp}
\maximize\quad \trace(\mtx{LX})
\quad\subjto\quad \diag(\mtx{X}) = \vct{1},
\quad \text{$\mtx{X}$ is psd.}
\end{equation}
As usual, $\diag$ 
extracts the diagonal of a matrix as a vector,
and $\vct{1} \in \R^n$ is the vector of ones.

The matrix solution $\mtx{X}_{\star}$ 
of~\cref{eqn:maxcut-sdp} does not
immediately yield a cut.  Let $\vct{x}_{\star}\vct{x}_{\star}^*$ be a best
rank-one approximation of $\mtx{X}_{\star}$ with respect to the Frobenius norm.
Then the vector $\vct{\chi}_{\star} = \sgn(\vct{x}_{\star})$ is a valid cut. 
In many cases, the cut $\vct{\chi}_{\star}$ %
yields an excellent solution to the discrete \textsf{MaxCut}
problem~\cref{eqn:maxcut}.  We can also use $\mtx{X}_{\star}$ to compute a cut that
is provably near-optimal via a more involved randomized rounding
procedure~\cite{GW95:Improved-Approximation}.

\subsubsection{What's the issue?\nopunct}
\label{sec:maxcut-issue}

We specify an instance of the \textsf{MaxCut} SDP~\cref{eqn:maxcut-sdp}
by means of the Laplacian $\mtx{L}$ of the graph, which has $\mathcal{O}(m+n)$ nonzero entries.
Our goal is to compute a best rank-one approximation of a solution to the SDP, which has
$\mathcal{O}(n)$ degrees of freedom.  In other words, the total cost of representing
the input and output of the problem is $\mathcal{O}(m+n)$.  Sadly, the
matrix variable in~\cref{eqn:maxcut-sdp} seems to require storage $\mathcal{O}(n^2)$.
For example, a graph $\mathsf{G}$ with one million vertices leads to an SDP~\cref{eqn:maxcut-sdp}
with a trillion real variables. 

Storage is one of the main reasons that it has been challenging
to solve large instances of the \textsf{MaxCut} SDP reliably.
Undeterred, we raise a question:

\vspace{0.5pc}

\begin{quotation}
\noindent \emph{
Can we provably find a best rank-one approximation
of a solution to the \textsf{MaxCut} SDP~\cref{eqn:maxcut-sdp}
with storage $\mathcal{O}(m+n)$?
Can we achieve working storage $\mathcal{O}(n)$?}
\end{quotation}

\vspace{0.5pc}

\noindent
We are not aware of any correct algorithm that can solve
an arbitrary instance of \cref{eqn:maxcut-sdp}
with a working storage guarantee better than $\Theta(\min\{m,n^{3/2}\})$;
see \cref{sec:related-work}.

In addition to the limit on storage,
a good algorithm should interact with
the Laplacian $\mtx{L}$ only through noninvasive, low-cost operations,
such as matrix--vector multiplication.

\subsubsection{A storage-optimal algorithm for the \textsf{MaxCut} SDP}

Surprisingly, it is possible to achieve all the goals announced
in the last subsection. 

\begin{theorem}[\textsf{MaxCut} via \sCGAL] \label{thm:scgal-maxcut}
For any $\eps, \zeta > 0$ and any Laplacian $\mtx{L} \in \R^{n \times n}$,
the \sCGAL\ algorithm computes a $(1+\zeta)$-optimal 
rank-one approximation of an $\eps$-optimal point of~\cref{eqn:maxcut-sdp};
see~\cref{sec:approx-soln}.
The working storage is $\mathcal{O}(n/\zeta)$.
The algorithm performs at most $\tilde{\mathcal{O}}(\eps^{-2.5})$
matrix--vector multiplies with the Laplacian $\mtx{L}$, plus
lower-order arithmetic.  The algorithm is randomized;
it succeeds with high probability over its random choices. 
\end{theorem}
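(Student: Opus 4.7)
The plan is to specialize the general convergence and storage guarantees for \sCGAL\ (which constitute the technical heart of the paper and must be established separately) to the particular structure of the \textsf{MaxCut} SDP~\cref{eqn:maxcut-sdp}. The argument breaks into four pieces.

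First, I would recast~\cref{eqn:maxcut-sdp} in the standard trace-constrained form required by \sCGAL. The equality constraint $\diag(\mtx{X}) = \vct{1}$ forces $\trace(\mtx{X}) = n$, so the feasible set is contained in $\{\mtx{X} \psdge 0 : \trace(\mtx{X}) = n\}$. After a cosmetic rescaling $\mtx{X} \mapsto \mtx{X}/n$ and negation of the objective, the problem reads: minimize $-\trace(\mtx{L}\mtx{X})$ subject to $\diag(\mtx{X}) = \vct{1}/n$ and $\mtx{X}$ in the unit-trace spectrahedron. The affine constraint operator and its adjoint cost $\mathcal{O}(n)$, and the relevant gradient is $-\mtx{L} + \diag(\vct{y})$ for a dual vector $\vct{y} \in \R^n$, so the only expensive primitive in each \sCGAL\ iteration is a top-eigenvector computation on a matrix available through mat-vecs with $\mtx{L}$.

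Second, I would invoke the \sCGAL\ convergence theorem for the primal iterate, which (to be proved elsewhere) guarantees that after $T = \tilde{\mathcal{O}}(\eps^{-2})$ iterations, and with high probability over the randomness in the Lanczos subroutine, the current iterate is simultaneously $\eps$-optimal and $\eps$-feasible in the sense of~\cref{sec:approx-soln}. Third, I would bound the per-iteration work: for \textsf{MaxCut}, the only nontrivial subroutine is an approximate largest-eigenvector computation on a shifted Laplacian, for which the Kuczy{\'n}ski--Wo{\'z}niakowski analysis~\cite{KW92:Estimating-Largest} delivers the required spectral-gap accuracy in $\tilde{\mathcal{O}}(\eps^{-1/2})$ mat-vecs with $\mtx{L}$. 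Multiplying yields the claimed $\tilde{\mathcal{O}}(\eps^{-2.5})$ total mat-vec count.

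Fourth, I would invoke the fixed-rank sketching guarantee of~\cite{TYUC17:Fixed-Rank-Approximation} with sketch size $r = \mathcal{O}(1/\zeta)$, which is carried alongside \sCGAL\ in place of the full matrix iterate. This produces a rank-one approximation whose Frobenius error is within a factor $(1+\zeta)$ of the best rank-one approximation of the true primal iterate, and hence of any $\eps$-optimal solution. Since the dual variable is $\mathcal{O}(n)$, the sketch occupies $\mathcal{O}(n/\zeta)$ words, and no dense $n \times n$ object is ever materialized, the working storage comes out to $\mathcal{O}(n/\zeta)$. The main obstacle, as I see it, lies precisely at the interface of steps two and four: one must ensure that the sketching error and the CGAL suboptimality compose cleanly, so that the multiplicative $(1+\zeta)$ and the additive $\eps$ remain decoupled throughout a run in which the sketch is being updated against an iterate that is itself only approximately feasible. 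Getting the per-iteration accuracies and tail-probability failures to telescope into the stated guarantee is where the bulk of the work lies.
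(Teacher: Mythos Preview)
Your proposal is correct and matches the paper's approach: the paper simply remarks that \cref{thm:scgal-maxcut} follows from the general \cref{thm:scgal} by specializing to the \textsf{MaxCut} data ($d=n$, rank $r=1$, sketch size $R=\mathcal{O}(1/\zeta)$), exactly as you outline. Two minor corrections: the sketching guarantee~\cref{eqn:goal} is in the nuclear norm, not Frobenius; and the obstacle you flag at the end turns out to be a non-issue, because \sCGAL\ maintains loop invariants (\cref{sec:loop-invariants}) guaranteeing that its state variables exactly track an \emph{implicit} sequence of genuine \CGAL\ iterates $\mtx{X}_t$, so the $\eps$-optimality of $\mtx{X}_t$ and the $(1+\zeta)$-quality of the Nystr\"om approximation $\widehat{\mtx{X}}_t$ are decoupled by construction rather than composed.
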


\Cref{thm:scgal-maxcut} follows from \cref{thm:scgal}.
As usual, $\tilde{\mathcal{O}}$ suppresses constants and logarithmic factors.
In contrast to Burer--Monteiro methods~\cite{WW18:RankOptimality}
and to the approximate complementarity paradigm~\cite{DYC+19:ApproximateComplementarity},
\sCGAL\ provably succeeds for every instance of \textsf{MaxCut}.

In our experience, \sCGAL works \emph{better} than the theorem says. 
\Cref{fig:MaxCut-1} compares its scalability with four standard SDP solvers
on a laptop equivalent (details in \cref{sec:maxcut-numerics}).

\begin{figure}[t!]
    \centering
    \includegraphics[height=4.25cm]{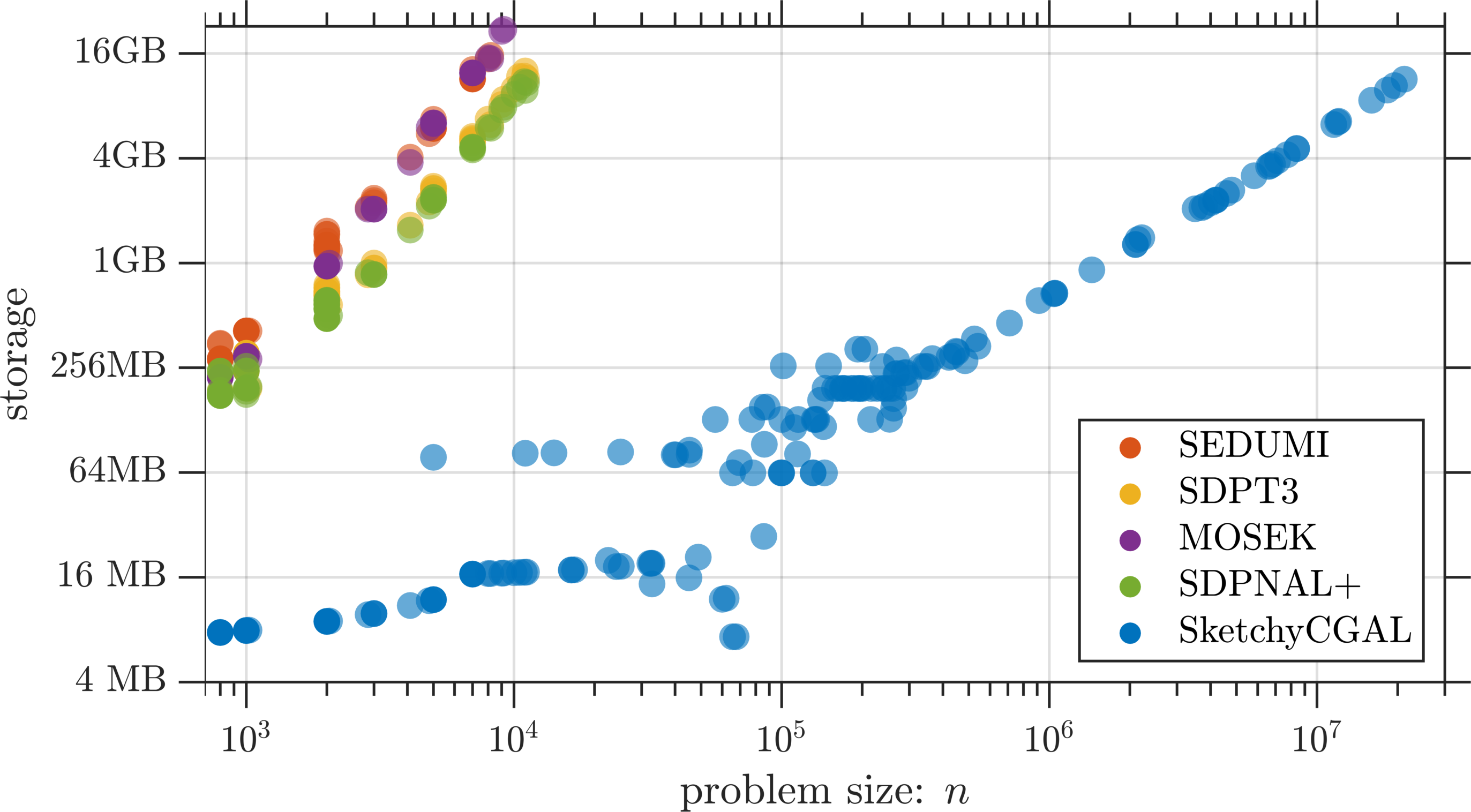}
    \hfill
    \includegraphics[height=4.25cm]{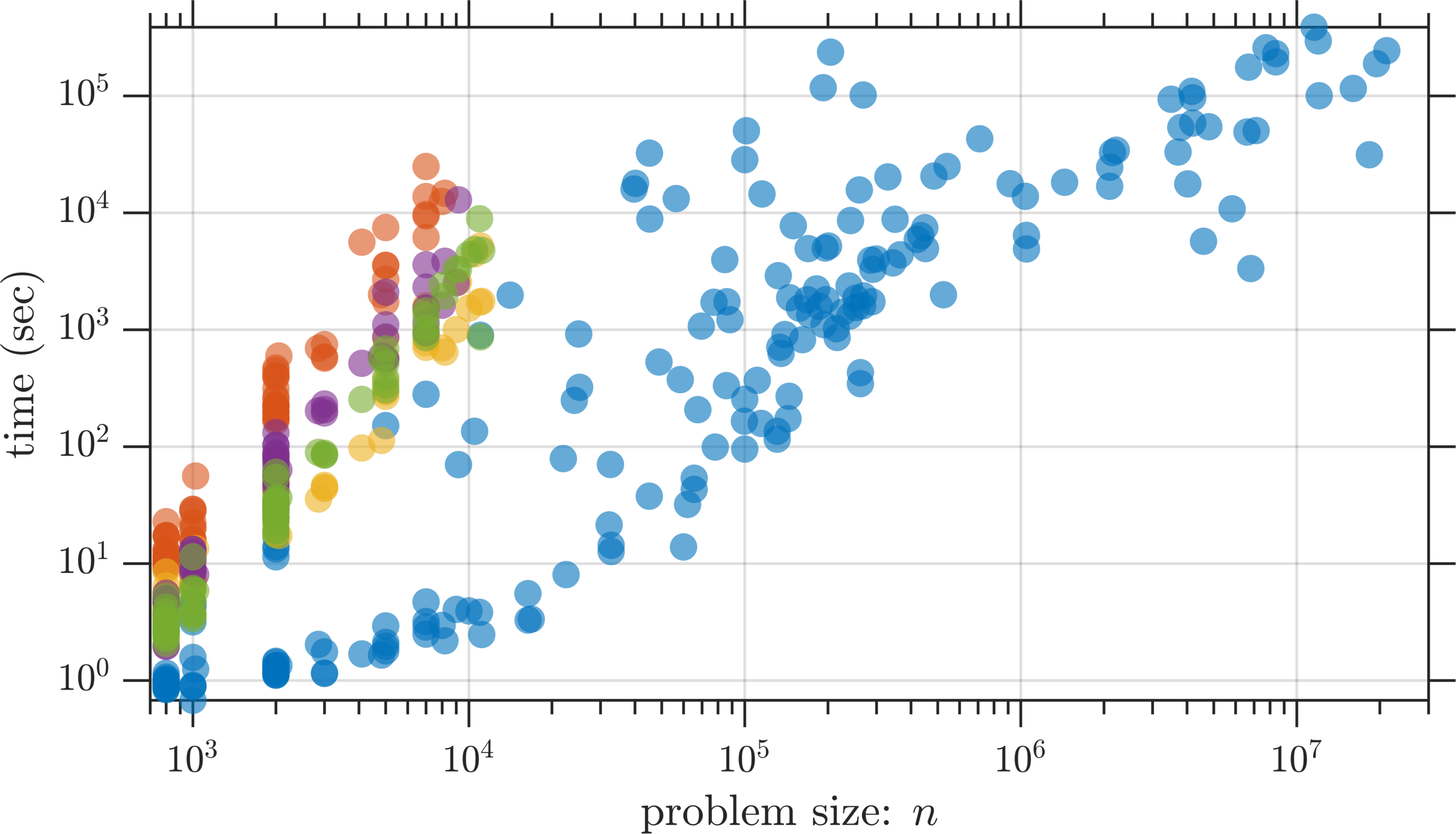}
\caption{\textsf{\textbf{\textsf{MaxCut} SDP: Scalability.}}
Storage cost \textrm{[left]} and runtime \textrm{[right]} of \sCGAL\ with sketch size $R=10$ as compared with four standard SDP solvers.  The relative error tolerance for each solver is $10^{-1}$. 
Each marker represents one dataset.  See \cref{sec:maxcut-fig1} for details.}
\label{fig:MaxCut-1}
\end{figure}

\subsection{A model problem}

\sCGAL\ can solve all standard-form SDPs that satisfy strong duality. 
To simplify parts of the presentation,
we focus on a model problem that includes
an extra trace constraint.
\Cref{sec:extensions} 
extends \sCGAL\ to 
a more expressive problem template
that includes standard-form SDPs
with additional (conic) inequality constraints.

\subsubsection{The trace-constrained SDP}

We work over the field $\F = \R$ or $\F = \C$.
For each $n \in \mathbb{N}$, define the set $\Sym_n := \Sym_n(\F)$
of (conjugate) symmetric $n \times n$ matrices with
entries in $\F$.

Introduce the set of $n \times n$ psd matrices with trace one:
\begin{equation} \label{eqn:Delta}
\mtx{\Delta}_n :=
	\{ \mtx{X} \in \Sym_n : \text{$\trace \mtx{X} = 1$ and $\mtx{X}$ is psd} \}.
\end{equation}
Our model problem is the following trace-constrained SDP:
\begin{equation} \label{eqn:sdp-explicit}
\begin{aligned}
\minimize & \quad \trace(\mtx{CX}) \\
\subjto & \quad \trace(\mtx{A}_i \mtx{X}) = b_i \quad\text{for $i = 1, \dots, d$} 
\quad\text{and}\quad \mtx{X} \in \alpha \mtx{\Delta}_n.
\end{aligned}
\end{equation}
The trace parameter $\alpha > 0$,
each matrix $\mtx{C}, \mtx{A}_1, \dots, \mtx{A}_d \in \Sym_n$,
and $b_1, \dots, b_d \in \R$.
We always assume that~\cref{eqn:sdp-explicit} satisfies strong duality
with its standard-form dual problem.

To solve a general standard-form SDP, we replace the inclusion $\mtx{X} \in \alpha \mtx{\Delta}_n$
with the constraints that $\mtx{X}$ is psd and $\trace( \mtx{X} ) \leq \alpha$
for a large enough parameter $\alpha$. %

\subsubsection{Applications}
The model problem~\cref{eqn:sdp-explicit} has diverse applications
in statistics, signal processing, quantum information theory, combinatorics,
and beyond. 
Evidently, the \textsf{MaxCut} SDP~\cref{eqn:maxcut-sdp} is a special case. 
The template~\cref{eqn:sdp-explicit} includes problems
in computer vision~\cite{HCG14:Scalable-Semidefinite},
in microscopy~\cite{Horstmeyer_2015}, and
in robotics~\cite{RCBL19:SE-Sync-Certifiably}.
It also supports contemporary machine learning tasks,
such as certifying robustness of neural networks~\cite{RSL18:Semidefinite-Relaxations}.
There is a galaxy of other examples.

\subsection{Complexity of SDP formulations and solutions}

This section describes some special features
that commonly appear in large, real-world SDPs.
Our algorithm will take advantage of these features,
even as it provides
guarantees for every instance of~\cref{eqn:sdp-explicit}.

\subsubsection{Structure of the problem data}

The matrices $\mtx{C}$ and $\mtx{A}_i$ that appear
in~\cref{eqn:sdp-explicit}
are often highly structured, or sparse, or have low-rank.
As such, we can specify the SDP using a small amount
of information.
In our work, we exploit this property by treating
the problem data for the SDP~\cref{eqn:sdp-explicit}
as a collection of black boxes that support specific
linear algebraic operations.  The algorithm for solving
the SDP only needs to access the data via these black
boxes, and we can insist that these subroutines
are implemented efficiently.

\subsubsection{Low-rank solutions of SDPs}
\label{sec:low-rank-solutions}

We will also capitalize on the fact that SDPs frequently
have low-rank solutions, or the solutions are approximated
well by low-rank matrices.  There are several reasons why
we can make this surmise.

\paragraph{Weakly-constrained SDPs}

First, many SDPs have low-rank solutions just because they are weakly constrained.
That is, the number $d$ of linear equalities 
in~\cref{eqn:sdp-explicit}
is much smaller than the 
number $n^2$ of components in the matrix variable.
This situation often occurs in signal processing and statistics problems, where $d$
reflects the amount of measured data. 
\sCGAL\ is designed for weakly constrained SDPs,
but it does not require this property.

A weakly constrained SDP has at least one low-rank solution
because of the geometry of the set of
psd matrices; see~\cite[Prop.~II(13.4) and Prob. II.14.5]{Bar02:Course-Convexity}
and~\cite{Pat98:Rank-Extreme}.

\begin{fact}[Barvinok--Pataki]
\label{fact:barvinok-pataki}
When $\mathbb{F} = \R$,
the SDP~\cref{eqn:sdp-explicit} has a solution with
rank $r \leq \sqrt{2(d+1)}$. 
When $\mathbb{F} = \C$,
there is a solution with rank $r \leq \sqrt{d+1}$.
\end{fact}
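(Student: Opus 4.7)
The plan is to apply the classical rank-bounding argument of Barvinok and Pataki to the trace-constrained template, noting that the explicit constraint $\trace(\mtx{X}) = \alpha$ packaged inside $\mtx{X} \in \alpha\mtx{\Delta}_n$ adds one equation to the $d$ linear constraints already listed in~\cref{eqn:sdp-explicit}. First I would invoke Bauer's minimum principle: the feasible set lies inside the compact convex set $\alpha\mtx{\Delta}_n$, and the objective $\mtx{X} \mapsto \trace(\mtx{CX})$ is linear, so the minimum is attained at some extreme point $\mtx{X}_\star$ of the feasible set. The whole fact then reduces to a rank bound for extreme points.

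To prove that bound, fix any extreme point $\mtx{X}$ of rank $r$ and factor $\mtx{X} = \mtx{U}\mtx{U}^*$ with $\mtx{U} \in \F^{n \times r}$ of full column rank. Any self-adjoint perturbation $\mtx{\Delta}$ for which both $\mtx{X} + t\mtx{\Delta}$ and $\mtx{X} - t\mtx{\Delta}$ remain psd for all sufficiently small $t > 0$ must satisfy $\range(\mtx{\Delta}) \subseteq \range(\mtx{X})$, hence takes the form $\mtx{\Delta} = \mtx{U}\mtx{M}\mtx{U}^*$ for some $\mtx{M} \in \Sym_r(\F)$. Preserving the $d+1$ equality constraints imposes $d+1$ real-linear equations on $\mtx{M}$, namely $\trace(\mtx{U}^*\mtx{A}_i\mtx{U}\,\mtx{M}) = 0$ for $i=1,\dots,d$ together with $\trace(\mtx{U}^*\mtx{U}\,\mtx{M}) = 0$. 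Since $\Sym_r(\R)$ has real dimension $r(r+1)/2$ and $\Sym_r(\C)$ has real dimension $r^2$, whenever $r(r+1)/2 > d+1$ (real case) or $r^2 > d+1$ (complex case) one can pick a nonzero $\mtx{M}$ annihilating all these functionals, whereupon $\mtx{X} = \tfrac{1}{2}(\mtx{X}+t\mtx{\Delta}) + \tfrac{1}{2}(\mtx{X}-t\mtx{\Delta})$ realises $\mtx{X}$ as a nontrivial midpoint of two distinct feasible points, contradicting extremality.

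Rearranging the contrapositive inequalities gives $r \leq \sqrt{2(d+1)}$ over $\R$ and $r \leq \sqrt{d+1}$ over $\C$, which is exactly the claim. I do not foresee any serious obstacle; the two points that need a bit of care are (i) remembering to count the trace equation among the constraints on $\mtx{M}$, so that the right-hand side is $d+1$ rather than $d$, and (ii) verifying that $\mtx{X} \pm t\mtx{\Delta}$ really is psd for small $t$, which holds because $\mtx{U}^*\mtx{U}$ is positive definite on $\F^r$ so the $r$ nonzero eigenvalues of $\mtx{X}$ along $\range(\mtx{U})$ are bounded away from zero and the perturbation lives entirely inside that subspace.
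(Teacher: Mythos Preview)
The paper does not prove this fact; it simply cites Barvinok's textbook (Prop.~II(13.4) and Prob.~II.14.5) and Pataki's paper as references. Your argument is exactly the classical Barvinok--Pataki proof those references contain: pass to an extreme point of the feasible set via Bauer's principle, write any psd-preserving two-sided perturbation as $\mtx{U}\mtx{M}\mtx{U}^*$, and count real dimensions in $\Sym_r(\F)$ against the $d+1$ linear equations (correctly including the trace constraint hidden in $\alpha\mtx{\Delta}_n$). The argument is correct, and your two flagged points of care---counting the trace equation and verifying that $\mtx{U}(\Id_r \pm t\mtx{M})\mtx{U}^*$ is psd for small $t$---are handled properly.
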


\noindent
For example, the \textsf{MaxCut} SDP~\cref{eqn:maxcut-sdp}
admits a solution with rank $\sqrt{2(n+1)}$.

Although a weakly-constrained SDP can have solutions with high rank,
a \emph{generic} weakly-constrained SDP has a unique solution, which
must be low-rank~\cite{AHO97:Complementarity-Nondegeneracy}.

\begin{fact}[Alizadeh et al.]
\label{fact:alizadeh}
Let $\mathbb{F} = \R$.
Except for a set of matrices $\{ \mtx{C}, \mtx{A}_1, \dots, \mtx{A}_d \}$
with measure zero, the solution set of the SDP~\cref{eqn:sdp-explicit}
is a unique matrix with rank $r \leq \sqrt{2(d+1)}$.
\end{fact}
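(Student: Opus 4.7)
The plan is to combine the Barvinok--Pataki existence statement (\cref{fact:barvinok-pataki}) with a genericity argument for uniqueness, following the program of Alizadeh--Haeberly--Overton. Existence of a solution with $\rank \le \sqrt{2(d+1)}$ is already supplied by \cref{fact:barvinok-pataki}, so the real content is showing uniqueness generically in the data $(\mtx{C}, \mtx{A}_1, \dots, \mtx{A}_d)$.

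First I would set up the KKT system for~\cref{eqn:sdp-explicit} at an optimal pair $(\mtx{X}_\star, \vct{y}_\star, \mtx{S}_\star)$ where $\mtx{S}_\star = \mtx{C} - \sum_i y_{\star,i} \mtx{A}_i$ is the dual slack and $\trace(\mtx{S}_\star \mtx{X}_\star) = 0$, and recall the standard notions of primal nondegeneracy, dual nondegeneracy, and strict complementarity phrased as rank and span conditions at the active face of the psd cone. Under primal nondegeneracy, $\mtx{X}_\star$ is the unique primal optimizer; under strict complementarity, $\rank(\mtx{X}_\star) + \rank(\mtx{S}_\star) = n$. These are linear-algebraic consequences of examining the tangent spaces to the rank-$r$ strata of the psd cone at $\mtx{X}_\star$ and $\mtx{S}_\star$.

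The main obstacle is showing that these three nondegeneracy conditions hold off a Lebesgue-null subset of data. The strategy is parameterized transversality. For each triple $(r, s, \mathcal{S})$ with $r + s < n$ or $r + s = n$ but with primal/dual nondegeneracy failing, I would form the set
\begin{equation*}
\mathcal{M}_{r,s} := \{ (\mtx{C}, \mtx{A}_1, \dots, \mtx{A}_d, \mtx{X}, \vct{y}, \mtx{S}) : \text{KKT holds},\ \rank(\mtx{X}) = r,\ \rank(\mtx{S}) = s \}
\end{equation*}
inside the product of data space $\Sym_n \times \Sym_n^d$ with the primal--dual variables. Using the smooth manifold structure of the fixed-rank strata of $\Sym_n$ and writing the KKT equalities as a system of equations in these coordinates, a dimension count shows $\mathcal{M}_{r,s}$ is a smooth manifold whose projection to the data space has positive codimension when $r + s \neq n$ or when the nondegeneracy condition fails. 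Sard's theorem (or directly the fact that smooth images of lower-dimensional manifolds have Lebesgue measure zero) then finishes the argument. Summing over the finitely many admissible rank patterns $(r,s)$ preserves the measure-zero conclusion. The delicate point is verifying that the equations defining $\mathcal{M}_{r,s}$ are indeed transverse, i.e., that the derivative of the KKT map with respect to $(\mtx{C}, \mtx{A}_1, \dots, \mtx{A}_d)$ is surjective onto the appropriate tangent space; this reduces to a concrete linear algebra check using the freedom in perturbing $\mtx{C}$.

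On the resulting full-measure set, primal nondegeneracy gives uniqueness, and then \cref{fact:barvinok-pataki} forces the unique $\mtx{X}_\star$ to have rank at most $\sqrt{2(d+1)}$, completing the proof. I expect the technical heart to be the transversality verification; everything upstream and downstream is essentially bookkeeping around the stratification of the psd cone.
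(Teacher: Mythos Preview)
The paper does not prove this statement; it is stated as a \textbf{Fact} with a citation to Alizadeh--Haeberly--Overton~\cite{AHO97:Complementarity-Nondegeneracy} and no argument is given in the text or the appendix. So there is no ``paper's own proof'' to compare against.

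That said, your outline is broadly the program of the cited reference: set up the KKT system, stratify by the ranks of the primal and dual slack, and use a parametric transversality / Sard argument to show that the degenerate strata project to a null set of problem data. One correction worth flagging: in the standard AHO terminology you have the implication reversed. \emph{Dual} nondegeneracy (a full-rank condition on the images of the $\mtx{A}_i$ restricted to the range of $\mtx{X}_\star$) is what forces the primal optimizer to be unique; primal nondegeneracy yields uniqueness of the dual solution. Your concluding sentence should therefore invoke dual nondegeneracy, not primal. Also note that the model problem~\cref{eqn:sdp-explicit} carries the extra trace constraint $\mtx{X} \in \alpha \mtx{\Delta}_n$, which contributes one more equality to the KKT system and is why the bound reads $\sqrt{2(d+1)}$ rather than $\sqrt{2d}$; make sure your dimension count incorporates it.
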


\paragraph{Matrix rank minimization}

Second, some SDPs are \emph{designed}
to produce a low-rank matrix that satisfies a system of linear matrix equations.
This idea can be traced to the control theory
literature~\cite{MP97:Rank-Minimization,Par00:Structured-Semidefinite},
and it was explored thoroughly in Fazel's thesis~\cite{Faz02:Matrix-Rank}.
Early applications include Euclidean distance
matrix completion~\cite{AKW99:Solving-Euclidean}
and collaborative filtering~\cite{Sre04:Learning-Matrix}.
Extensive empirical work indicates
that these SDPs often produce low-rank solutions.

\paragraph{Structural properties}

There are other reasons that an SDP must have a low-rank solution.
For instance, consider the optimal power flow SDPs developed
by Lavaei and Low~\cite{LL12:Zero-Duality},
where the rank of the solution
is controlled by the geometry of the power grid.

\subsubsection{Algorithms?\nopunct}

To summarize, many realistic SDPs have structured data,
and they admit solutions that are (close to) low rank.
Are there algorithms that can exploit these features?
Although there are a number of methods that attempt to do so,
none can provably solve every SDP with limited arithmetic
and minimal storage. 
See \cref{sec:related-work} for related work.

Why has it been so difficult to develop provably
correct algorithms for finding low-rank solutions
to structured SDPs?  Most approaches
that try to control the rank run
headlong into a computational complexity barrier:
For any fixed rank parameter $r$,
it is \textsf{NP}-hard to solve %
the model problem~\cref{eqn:sdp-explicit}
if the variable $\mtx{X}$ is also constrained to be a rank-$r$ matrix~\cite[p.~7]{Faz02:Matrix-Rank}.

To escape this sticky fact, we revise
the computational goal, following~\cite{YUTC17:Sketchy-Decisions}.
The key insight is to seek a rank-$r$ matrix
that \emph{approximates a solution}
to~\cref{eqn:sdp-explicit}.
See~\cref{sec:approx-soln} for a detailed explanation.
This shift in perspective opens up new algorithmic prospects.

\subsection{Contributions}

Inspired by~\cite{YUTC17:Sketchy-Decisions},
we derive an algorithm that harnesses
the favorable properties %
common in large SDPs.
The \sCGAL\ algorithm solves
the SDP~\cref{eqn:sdp-explicit} using a primal--dual optimization
method~\cite{YFC19:Conditional-Gradient-Based} developed by a subset of the authors.
Each iteration requires one coarse eigenvector computation~\cite{KW92:Estimating-Largest}
and leads to a rank-one update of the psd matrix variable.
Instead of storing this matrix, we maintain a compressed representation
by means of a matrix sketching technique~\cite{TYUC17:Fixed-Rank-Approximation}.
After the optimization algorithm terminates, we extract from the sketch
a low-rank approximation of a solution of the SDP.
This idea leads to a practical, provably correct
SDP solver that economizes on storage and arithmetic.

\begin{theorem}[The model problem via \sCGAL] \label{thm:scgal-intro}
Assume that the model problem~\cref{eqn:sdp-explicit} satisfies strong duality.
For any $\eps,\zeta > 0$ and any rank parameter $r$,
the \sCGAL\ algorithm computes a
$(1+\zeta)$-optimal rank-$r$ approximation of an
$\eps$-optimal
point of~\cref{eqn:sdp-explicit}; 
see~\cref{sec:approx-soln}.
The storage cost is $\mathcal{O}(d + rn/\zeta)$.
Most of the arithmetic consists
in $\tilde{\mathcal{O}}(\eps^{-2.5})$
matrix--vector products with each matrix
$\mtx{C}, \mtx{A}_1, \dots, \mtx{A}_d$
from the problem data.  The algorithm succeeds
with high probability. 
\end{theorem}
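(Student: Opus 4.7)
The plan is to assemble the theorem from three streams that are largely already available in the cited works, and then to check that the errors from the three streams compose compatibly. Concretely, I would (a) run the conditional-gradient-based augmented Lagrangian method (\CGAL) of~\cite{YFC19:Conditional-Gradient-Based} on the constrained template~\cref{eqn:sdp-explicit}, (b) exploit the fact that every \CGAL\ update to the primal variable is a rank-one matrix (obtained from a single approximate leading eigenvector), so that the primal iterate can be handled implicitly as a convex combination of rank-ones, and (c) maintain a fixed-rank Nystr\"om-style sketch in the sense of~\cite{TYUC17:Fixed-Rank-Approximation} that is updated with each rank-one contribution and, at termination, yields a rank-$r$ reconstruction of the hidden iterate.

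First, I would invoke the convergence analysis of \CGAL\ for~\cref{eqn:sdp-explicit} to obtain, after $T = \tilde{\mathcal{O}}(\eps^{-2.5})$ iterations, a primal iterate $\mtx{X}_T \in \alpha\mtx{\Delta}_n$ whose objective suboptimality and constraint infeasibility are both at most $\eps$, so that $\mtx{X}_T$ is an $\eps$-optimal point of~\cref{eqn:sdp-explicit} in the sense of \cref{sec:approx-soln}. Here a key technical point is that the eigenvector computation~\cite{KW92:Estimating-Largest} is approximate and randomized; I would have to select its accuracy and confidence schedule (logarithmically in $T$ and the failure probability) so that the inexactness is absorbed into the $\tilde{\mathcal{O}}$ and so that all iterations succeed jointly with high probability by a union bound. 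Second, since each step adds a rank-one piece $\vct{v}_t \vct{v}_t^*$ to $\mtx{X}_t$, I would never instantiate $\mtx{X}_t$, and instead apply the fixed-rank sketching update of~\cite{TYUC17:Fixed-Rank-Approximation} with sketch size parameter proportional to $r/\zeta$, using just $\mathcal{O}(rn/\zeta)$ storage beyond the $\mathcal{O}(d)$ dual variables.

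Third, I would use the reconstruction guarantee from~\cite{TYUC17:Fixed-Rank-Approximation}: with sketch size tuned to $\zeta$, the sketch produces a rank-$r$ psd matrix $\widehat{\mtx{X}}_T$ satisfying $\fnorm{\mtx{X}_T - \widehat{\mtx{X}}_T} \le (1+\zeta)\,\fnorm{\mtx{X}_T - \lowrank{\mtx{X}_T}{r}}$, where $\lowrank{\cdot}{r}$ is the best rank-$r$ approximation. Combining this with the fact that $\mtx{X}_T$ is $\eps$-optimal, and using the notion of rank-$r$ approximation of an approximate solution defined in \cref{sec:approx-soln}, this gives a $(1+\zeta)$-optimal rank-$r$ approximation of an $\eps$-optimal point. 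Finally, I would count costs: the dominant arithmetic per iteration is a matrix--vector product with $\mtx{C} + \sum_i y_i \mtx{A}_i + \text{augmented-Lagrangian terms}$ inside the randomized eigensolver; totals come out to $\tilde{\mathcal{O}}(\eps^{-2.5})$ matvecs with each of $\mtx{C}, \mtx{A}_1, \dots, \mtx{A}_d$, and the storage is dominated by the $\mathcal{O}(d)$ dual variables and the $\mathcal{O}(rn/\zeta)$ sketch.

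The main obstacle I anticipate is not any single one of these ingredients, each of which lives in a cited paper, but rather verifying that the composition is clean: namely, that the randomized inexact eigenvector oracle can be plugged into \CGAL\ without degrading the $\tilde{\mathcal{O}}(\eps^{-2.5})$ rate (which requires a careful choice of accuracy schedule and a high-probability union bound across iterations), and that the sketch-reconstruction error is measured in a norm compatible with the notion of ``$(1+\zeta)$-optimal rank-$r$ approximation'' defined in \cref{sec:approx-soln}, so that the sketching error does not inflate the $\eps$-term. Most of the rest is bookkeeping, and I would defer the detailed parameter tuning to the formal \cref{thm:scgal} referenced in the body of the paper.
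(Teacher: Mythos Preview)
Your three-stream decomposition (\CGAL\ convergence, rank-one primal updates, Nystr\"om sketch) is exactly the paper's approach, but your accounting for the $\tilde{\mathcal{O}}(\eps^{-2.5})$ matvec budget is off in a way that matters. You write that \CGAL\ needs $T = \tilde{\mathcal{O}}(\eps^{-2.5})$ \emph{iterations} to reach an $\eps$-optimal point, and that the dominant cost per iteration is ``a matrix--vector product.'' In fact, the \CGAL\ guarantee (\cref{fact:cgal-converge}) gives suboptimality and infeasibility of order $t^{-1/2}$, so only $T = \mathcal{O}(\eps^{-2})$ outer iterations are needed. The extra half-power of $\eps^{-1}$ does not come from \CGAL; it comes from the inner eigenvector solve. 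At outer step $t$, the tolerance~\cref{eqn:cgal-lmo-approx} demands relative accuracy $(t+1)^{-1/2}$, and the randomized Lanczos bound (\cref{fact:rand-lanczos}) converts that into $q_t = \mathcal{O}(t^{1/4}\log(tn))$ matvecs with $\mtx{D}_t$. Summing $\sum_{t\le T} q_t \sim T^{5/4}$ and substituting $T = \mathcal{O}(\eps^{-2})$ yields the claimed $\tilde{\mathcal{O}}(\eps^{-2.5})$ total. If instead you took $T = \tilde{\mathcal{O}}(\eps^{-2.5})$ outer iterations and paid even a constant number of Lanczos steps each, you would already match the matvec budget with nothing left for the growing accuracy schedule; with the correct $q_t \sim t^{1/4}$ schedule, your count would blow up to roughly $\eps^{-25/8}$.

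A smaller point: the sketch guarantee you quote is in Frobenius norm, but the goal~\cref{eqn:goal} and the Nystr\"om bound (\cref{fact:nystrom}) are stated in the nuclear norm $\norm{\cdot}_*$. You correctly flagged this as a compatibility issue; the paper resolves it by working in $\norm{\cdot}_*$ throughout, and the sketch size $R \ge \zeta^{-1}r + 1$ delivers the factor $(1+\zeta)$ directly in that norm, so no conversion is needed.
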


\Cref{thm:scgal} contains full theoretical details.
Note that the storage $\Theta(d + rn)$ is the minimum possible
for any primal--dual algorithm that returns a rank-$r$ solution
to~\cref{eqn:sdp-explicit}.
\Cref{sec:numerics} contains numerical evidence
that the algorithm is effective for a range of
examples.

\subsection{Roadmap}

\Cref{sec:scalable} presents an abstract framework for
studying SDPs that exposes the challenges associated
with large problems.  \Cref{sec:cgal} outlines a primal--dual
algorithm, called \CGAL, for the model problem~\cref{eqn:sdp-explicit}.
\Cref{sec:all-eigs,sec:nystrom} introduce methods
from randomized linear algebra that we use to control
storage and arithmetic costs.
\Cref{sec:sketchy-cgal} develops the \sCGAL\
algorithm, its convergence theory, and its resource
usage guarantees.
\Cref{sec:numerics} contains a numerical study of \sCGAL, and
\cref{sec:related-work} covers related work.

\subsection{Notation}

The symbol $\fnorm{\cdot}$ denotes the Frobenius norm,
while $\norm{\cdot}_*$ is the nuclear norm (i.e., Schatten-1).
The unadorned norm $\norm{\cdot}$
refers to the $\ell_2$ norm of a vector,
the spectral norm of a matrix,
or the operator norm of a linear map from $(\Sym_n, \fnorm{\cdot})$ to $(\R^d, \norm{\cdot})$.
We write $\ip{\cdot}{\cdot}$ for both the
$\ell_2$ inner product on vectors
and the trace inner product on matrices.

The map $\lowrank{\mtx{M}}{r}$ returns an $r$-truncated singular-value decomposition
of the matrix $\mtx{M}$, which is a best rank-$r$ approximation with respect to
every unitarily invariant norm~\cite{Mir60:Symmetric-Gauge}.

We use the standard computer science interpretation of the asymptotic notation
$\mathcal{O}, \tilde{\mathcal{O}}, \Theta$.

\section{Scalable semidefinite programming}
\label{sec:scalable}

To solve the model problem~\cref{eqn:sdp-explicit} efficiently,
we need to exploit structure inherent in the problem data.
This section outlines an abstract approach
that directs our attention to the core
computational difficulties.

\subsection{Abstract form of the model problem}

Let us instate compact notation for the
linear constraints in the model problem~\cref{eqn:sdp-explicit}.
Define a linear map $\mathcal{A}$ and its adjoint $\mathcal{A}^*$
via
\begin{equation} \label{eqn:linear-maps}
\begin{aligned}
\mathcal{A} &: \Sym_n \to \R^d
\quad\text{where}
&&\mathcal{A}\mtx{X} = \begin{bmatrix} \ip{\mtx{A}_1}{\mtx{X}} & \dots & \ip{\mtx{A}_d}{ \mtx{X} } \end{bmatrix}; \\
\mathcal{A}^* &: \R^d \to \Sym_n
\quad\text{where}
&&\mathcal{A}^* \vct{z} = \sum\nolimits_{i=1}^d z_i \mtx{A}_i.
\end{aligned}
\end{equation}
We bound the linear map with the operator norm $\norm{\mathcal{A}} := \norm{\mathcal{A}}_{\mathrm{F} \to \ell_2}$.
Form the vector $\vct{b} := (b_1, \dots, b_d) \in \R^d$ of constraint values.
In this notation, \cref{eqn:sdp-explicit} becomes %
\begin{equation} \label{eqn:model-problem}
\minimize\quad \ip{\mtx{C}}{\mtx{X}}
\quad\subjto\quad \mathcal{A}\mtx{X} = \vct{b},
\quad \mtx{X} \in \alpha \mtx{\Delta}_n.
\end{equation}
Problem instances are parameterized by the tuple $(\mtx{C}, \mathcal{A}, \vct{b}, \alpha)$.

\subsection{Approximate solutions}
\label{sec:approx-soln}

Let $\mtx{X}_{\star}$ be a solution to the model problem~\cref{eqn:model-problem}.
For $\eps \geq 0$,
we say that a matrix $\mtx{X}_{\eps}$ is \emph{$\eps$-optimal} for~\cref{eqn:model-problem} when
\begin{equation*} \label{eqn:eps-optimal}
\norm{ \mathcal{A}\mtx{X}_{\eps} - \vct{b} } \leq \eps
\quad\text{and}\quad
\ip{ \mtx{C} }{ \mtx{X}_{\eps} } - \ip{ \mtx{C} }{ \mtx{X}_{\star} } \leq \eps.
\end{equation*}
Many optimization algorithms aim to produce such $\eps$-optimal points;
cf.~\cref{sec:related-work}.

As we saw in \cref{sec:low-rank-solutions}, there are many
situations where the solutions to~\cref{eqn:model-problem}
have low rank or they admit accurate low-rank approximations.
The $\eps$-optimal points inherit these 
properties for sufficiently small $\eps$. 
This insight suggests a new computational goal.

For a rank parameter $r$, we will seek a rank-$r$ matrix $\widehat{\mtx{X}}$ that
approximates an $\eps$-optimal point $\mtx{X}_{\eps}$.  More precisely, for a fixed suboptimality parameter $\zeta > 0$,
we want
\begin{equation} \label{eqn:goal}
\norm{ \mtx{X}_{\eps} - \widehat{\mtx{X}} }_*
	\leq (1+\zeta) \cdot \norm{ \mtx{X}_{\eps} - \lowrank{\mtx{X}_{\eps}}{r} }_*
	\quad\text{where $\rank \widehat{\mtx{X}} \leq r$ and $\mtx{X}_\eps$ is $\eps$-optimal.}
\end{equation}
Given~\cref{eqn:goal},
if the $\eps$-optimal point $\mtx{X}_{\eps}$ is close to \emph{any} rank-$r$ matrix, then
the rank-$r$ approximate solution $\widehat{\mtx{X}}$ is also close to the $\eps$-optimal point $\mtx{X}_{\eps}$.
This formulation is advantageous because it is easier to 
compute and to store the low-rank matrix $\widehat{\mtx{X}}$.

\subsection{Black-box presentation of problem data}

To develop scalable algorithms for~\cref{eqn:model-problem},
it is productive to hide the internal complexity of the problem instance
from the algorithm~\cite{YUTC17:Sketchy-Decisions}.
To do so, we treat $\mtx{C}$ and $\mathcal{A}$ as black
boxes that support three primitive computations:
\begin{equation} \label{eqn:primitives}
\begin{aligned}
\fcolorbox{black}{ltblue}{$
\begin{aligned}
\primone &&
\vct{u} &\mapsto \mtx{C} \vct{u} \\
&& \R^n &\to \R^n
\end{aligned}$}
\qquad\quad %
\fcolorbox{black}{ltblue}{$
\begin{aligned}
\primtwo &&
(\vct{u}, \vct{z}) &\mapsto (\mathcal{A}^*\vct{z}) \vct{u} \\
&& \R^n \times \R^d &\to \R^n
\end{aligned}$}
\qquad\quad
\fcolorbox{black}{ltblue}{$
\begin{aligned}
\primthree &&
\vct{u} &\mapsto \mathcal{A}(\vct{uu}^*) \\
&& \R^n &\to \R^d
\end{aligned}$}
\end{aligned}
\end{equation}
The vectors $\vct{u} \in \R^n$ and $\vct{z} \in \R^d$ are arbitrary.
Although these functions may seem abstract, they are often quite
natural and easy to implement well.  For example, see~\cref{sec:maxcut-storage-opt}.

We will formulate algorithms for~\cref{eqn:model-problem} that
interact with the problem data only through the operations~\cref{eqn:primitives}.
We tacitly assume that the primitives require minimal storage and arithmetic;
otherwise, it may be impossible to develop a truly efficient algorithm.

\subsection{Example: The \textsf{MaxCut} SDP}
\label{sec:maxcut-storage-opt}

To provide a concrete example, let us summarize the meaning of the primitives
and the desired storage costs for solving the \textsf{MaxCut} SDP~\cref{eqn:maxcut-sdp}.

\begin{itemize}
\item	The primitive $\primone : \vct{u} \mapsto - \mtx{L} \vct{u}$.
In the typical case that the Laplacian $\mtx{L}$ is sparse,
this amounts to a sparse matrix--vector multiply.

\item	The primitive $\primtwo : (\vct{u}, \vct{z}) \mapsto (\diag^* \vct{z})\vct{u}
= (z_1 u_1, \dots, z_n u_n)$.

\item	The primitive $\primthree : \vct{u} \mapsto \diag( \vct{uu}^* )
	=(\abs{u_1}^2, \dots, \abs{u_n}^2)$.

\item	The \textsf{MaxCut} SDP has $n$ linear constraints, and we seek a rank-one
approximation of the solution.  Thus, we desire an algorithm that operates with $\Theta(n)$
working storage.
\end{itemize}

\noindent
Note that we still need $\Theta(m)$ numbers to store the Laplacian $\mtx{L}$
of a generic graph with $m$ edges.  But we do not charge the optimization algorithm
for this storage 
because the algorithm only interacts with $\mtx{L}$ through the primitives~\cref{eqn:primitives}.

\section{An algorithm for the model problem}
\label{sec:cgal}

We will develop a scalable method for the model problem~\cref{eqn:model-problem}
by enhancing an existing algorithm, called \CGAL~\cite{YFC19:Conditional-Gradient-Based},
developed by a subset of the authors.
This method works well, 
but it is not as scalable as we would
like because it lacks storage and
arithmetic guarantees.
This section summarizes the \CGAL\ method and its convergence properties.
Subsequent sections introduce additional ideas that we need to control
resource usage, culminating with the \sCGAL\ algorithm in \cref{sec:sketchy-cgal}.

\subsection{The augmented problem}

For a parameter $\beta > 0$, we revise the problem~\cref{eqn:model-problem}
by introducing an extra term in the objective:
\begin{equation} \label{eqn:augmented-problem}
\minimize\quad \ip{\mtx{C}}{\mtx{X}} + \frac{\beta}{2} \normsq{ \mathcal{A}\mtx{X} - \vct{b} }
\quad\subjto\quad \mathcal{A}\mtx{X} = \vct{b}, \quad
\mtx{X} \in \alpha \mtx{\Delta}_n.
\end{equation}
The original problem~\cref{eqn:model-problem} and the augmented problem~\cref{eqn:augmented-problem}
share the same optimal value and optimal set.  But the new formulation has several benefits:
the augmented objective cooperates with the affine constraint to penalize infeasible points,
and the dual of the augmented problem is smooth, whereas the dual of the original problem is not.
See~\cite[Ch.~2]{Ber82:Constrained-Optimization} for background.

\subsection{The augmented Lagrangian}

We construct the Lagrangian $L_{\beta}$ of the augmented problem~\cref{eqn:augmented-problem}
by introducing a dual variable $\vct{y} \in \R^d$
and promoting the affine constraint:
\begin{equation} \label{eqn:augmented-lagrangian}
L_{\beta}(\mtx{X}; \vct{y}) := \ip{ \mtx{C} }{\mtx{X} }
	+ \ip{ \vct{y} }{ \mathcal{A}\mtx{X} - \vct{b} }
	+ \frac{\beta}{2} \normsq{\mathcal{A}\mtx{X} - \vct{b}}
	\quad\text{for $\mtx{X} \in \alpha \mtx{\Delta}_n$ and $\vct{y} \in \R^d$.}
\end{equation}
For reference, note that the partial derivatives of the augmented Lagrangian $L_{\beta}$ satisfy
\begin{equation} \label{eqn:lagrangian-partials}
\begin{aligned}
\partial_{\mtx{X}} L_{\beta}(\mtx{X}; \vct{y})
	= \mtx{C} + \mathcal{A}^* \vct{y} + \beta \mathcal{A}^* (\mathcal{A}\mtx{X} - \vct{b}) \quad\text{and}\quad
\partial_{\vct{y}} L_{\beta}(\mtx{X}; \vct{y})
	= \mathcal{A}\mtx{X} - \vct{b}.
\end{aligned}
\end{equation}
We attempt to minimize the augmented Lagrangian $L_{\beta}$ with respect to the primal variable $\mtx{X}$,
while we attempt to maximize with respect to the dual variable $\vct{y}$.

\subsection{The augmented Lagrangian strategy}

The form of the augmented Lagrangian suggests an algorithm.
We generate a sequence $\{ (\mtx{X}_t; \vct{y}_t) \}$ of primal--dual
pairs by alternately minimizing over the primal variable $\mtx{X}$
and taking a gradient step in the dual variable $\vct{y}$.

\begin{enumerate} 
\item	\textbf{Initialize:}  Choose $\mtx{X}_1 \in \alpha\mtx{\Delta}_n$ and $\vct{y}_1 \in \R^d$.

\item	\textbf{Primal step:}  $\mtx{X}_{t+1} \in \argmin\{ L_{\beta}(\mtx{X}; \vct{y}_t) : \mtx{X} \in \alpha \mtx{\Delta}_n \}$.

\item	\textbf{Dual step:} $\vct{y}_{t+1} = \vct{y}_t + \beta ( \mathcal{A} \mtx{X} - \vct{b} )$.

\end{enumerate}

\noindent
As we proceed, we can also increase the smoothing parameter $\beta$
to make violations of the affine constraint in~\cref{eqn:augmented-problem}
more and more intolerable.

The augmented Lagrangian strategy is powerful, but it is hard to apply
in this setting because of the cost of implementing the primal step,
even approximately.

\subsection{The CGAL iteration}
\label{sec:cgal-strategy}

The \CGAL\ iteration~\cite{YFC19:Conditional-Gradient-Based}
is related to the augmented Lagrangian paradigm,
but the primal steps are inspired by the conditional gradient method~\cite{FW56:FrankWolfe}.
\CGAL\ identifies an update direction for the primal variable by minimizing
a linear proxy for the augmented Lagrangian.  We take a small primal
step in the update direction, and we improve the dual variable by taking
a small gradient step.  At each iteration, the smoothing
parameter increases according to a fixed schedule.

This subsection outlines the steps in the \CGAL\ iteration.
Afterward, we give 
\emph{a priori} guarantees on the convergence rate.
Pseudocode for \CGAL\ appears as~\cref{alg:cgal}.

\begin{algorithm}[t]
	\footnotesize
  \caption{\CGAL\ for the model problem~\cref{eqn:model-problem} 
  \label{alg:cgal}}
  \begin{algorithmic}[1]
  \vspace{0.5pc}

	\Require{Problem data for~\cref{eqn:model-problem} implemented via the primitives~\cref{eqn:primitives};
	number $T$ of iterations}
    \Ensure{Approximate solution $\mtx{X}_T$ to~\cref{eqn:model-problem}}

\vspace{0.5pc}

	\Function{\CGAL}{$T$}

	\State	Scale problem data (\cref{sec:numerics-scaling})
		\Comment	\textcolor{dkblue}{\textbf{[opt]}} Recommended!
	\State	$\beta_0 \gets 1$ and $K \gets + \infty$
		\Comment	Default parameters

	\State	$\mtx{X} \gets \mtx{0}_{n \times n}$ and $\vct{y} \gets \vct{0}_d$
	\For{$t \gets 1, 2, 3, \dots, T$}
  		\State	$\beta \gets \beta_0 \sqrt{t+1}$ and $\eta \gets 2 /(t+1)$
		\State	$(\xi, \vct{v}) \gets \textsf{ApproxMinEvec}( \mtx{C} + \mathcal{A}^* (\vct{y} + \beta(\mathcal{A} \mtx{X} - \vct{b})); q_t )$
		\Comment \cref{alg:rand-lanczos} with $q_t = t^{1/4} \log n$
		\Statex	\Comment Implement with primitives~\cref{eqn:primitives}\primone\primtwo!
		\State	$\vct{X} \gets (1 - \eta) \, \vct{X} + \eta \, (\alpha \, \vct{vv}^*)$
		\State	$\vct{y} \gets \vct{y} + \gamma (\mathcal{A} \mtx{X} - \vct{b})$
			\Comment	Step size $\gamma$ satisfies~\cref{eqn:cgal-dual-step}
	\EndFor
	\EndFunction

  \vspace{0.25pc}

\end{algorithmic}
\end{algorithm}

\subsubsection{Initialization}

Let $\beta_0 > 0$ be an initial smoothing parameter,
and fix a (large) bound $K > 0$ on the maximum allowable size of the dual variable.
We also assume that we have access to the norm $\norm{\mathcal{A}}$
of the constraint matrix, or---failing that---a \emph{lower} bound. 
Begin with 
an arbitrary choice $\mtx{X}_1 \in \Sym_n$ for the primal variable;
set the initial dual variable $\vct{y}_1 = \vct{0}$.

\subsubsection{Primal updates via linear minimization}

At 
iteration $t = 1, 2, 3, \dots$, we increase the smoothing parameter to
$\beta_t := \beta_0 \sqrt{t+1}$, and we form the partial derivative of the augmented Lagrangian 
with respect to the primal variable at the current pair of iterates:
\begin{equation} \label{eqn:cgal-grad}
\mtx{D}_t := \partial_{\mtx{X}} L_{\beta_t}(\mtx{X}_t; \vct{y}_t)
	= \mtx{C} + \mathcal{A}^* (\vct{y}_t + \beta_t (\mathcal{A}\mtx{X}_t - \vct{b}) ).
\end{equation}
Then we compute an update $\mtx{H}_t \in \alpha \mtx{\Delta}_t$ by
finding the feasible point that is most correlated with the negative gradient $- \mtx{D}_t$.
This amounts to a linear minimization problem:
\begin{equation} \label{eqn:cgal-lmo}
\begin{aligned}
\mtx{H}_t &\in \argmin \{ \ip{ \mtx{D}_t }{ \mtx{H} } : \mtx{H} \in \alpha \mtx{\Delta}_n \} \\
	&= \mathrm{convex\,hull}\{ \alpha \vct{vv}^* : \text{$\vct{v}$ is a unit-norm minimum eigenvector of $\mtx{D}_t$} \}.
\end{aligned}
\end{equation}
In particular, we may take $\mtx{H}_t = \alpha \vct{v}_t \vct{v}_t^*$ for a minimum eigenvector $\vct{v}_t$
of the gradient $\mtx{D}_t$.
Next, update the matrix primal variable by taking a small step in the direction $\mtx{H}_t$:
\begin{equation} \label{eqn:cgal-primal-update}
\mtx{X}_{t+1} := (1 - \eta_t) \mtx{X}_t + \eta_t \mtx{H}_t \in \alpha \mtx{\Delta}_n
\quad\text{where}\quad
\eta_t := \frac{2}{t+1}.
\end{equation}
The appeal of this approach is that it only requires
a single eigenvector computation~\cref{eqn:cgal-lmo}. 
Moreover, we need not compute the eigenvector accurately;
see~\cref{sec:approx-evec} for details.

\subsubsection{Dual updates via gradient ascent}

Next, we update the dual variable by taking a small gradient
step on the dual variable in the augmented Lagrangian: 
\begin{equation} \label{eqn:cgal-dual-update}
\begin{aligned}
\vct{y}_{t+1} = \vct{y}_t + \gamma_t \, \partial_{\vct{y}} L_{\beta_t}(\mtx{X}_{t+1}; \vct{y}_t) 
	= \vct{y}_t + \gamma_t (\mathcal{A}\mtx{X}_{t+1} - \vct{b}).
\end{aligned}
\end{equation}
The dual step size $\gamma_t$ is the largest number 
that satisfies the conditions 
\begin{equation} \label{eqn:cgal-dual-step}
\begin{aligned}
\gamma_t \normsq{\mathcal{A}\mtx{X}_{t+1} - \vct{b}}
	\leq \frac{4 \alpha^2 \beta_0}{(t+1)^{3/2}} \normsq{\mathcal{A}}
	\quad\text{and}\quad
	0 \leq \gamma_t \leq \beta_0.
\end{aligned}
\end{equation}
We omit the dual step if it makes the dual variable too large.
More precisely, if~\cref{eqn:cgal-dual-update,eqn:cgal-dual-step}
result in $\norm{ \vct{y}_{t+1} } > K$,
then we set $\vct{y}_{t+1} = \vct{y}_t$ instead.
This is the \CGAL\ iteration.

\subsubsection{Assessing solution quality}
\label{sec:solution-quality}

Given a primal--dual pair $(\mtx{X}_t; \vct{y}_t)$, we can assess
the quality of the primal solution to the model problem~\cref{eqn:model-problem}. 
First, note that we can simply compute the infeasibility: $\mathcal{A} \mtx{X}_t - \vct{b}$.
Second, we can bound the suboptimality of the primal objective value.
To do so, define the \emph{surrogate duality gap}
\begin{equation} \label{eqn:cgal-gap}
g_t(\mtx{X}) := \max_{\mtx{H} \in \alpha \mtx{\Delta}_n} \ip{\mtx{D}_t}{\mtx{X} - \mtx{H}}
	= \ip{ \mtx{C} }{ \mtx{X}_t } + \ip{ \vct{y}_t + \beta_t (\mathcal{A}\mtx{X}_t -\vct{b}) }{ \mathcal{A}\mtx{X}_t }
	- \lambda_{\min}(\mtx{D}_t).
\end{equation}
The latter expression follows from~\cref{eqn:cgal-grad,eqn:cgal-lmo}.
As usual, $\lambda_{\min}$ is the minimum eigenvalue.
The suboptimality of $\mtx{X}_t$ is bounded as
\begin{equation} \label{eqn:cgal-posterior-error-bd}
\ip{ \mtx{C} }{ \mtx{X}_t } - \ip{ \mtx{C} }{ \mtx{X}_\star }
	\leq g_t(\mtx{X}_t) - \ip{ \vct{y}_t }{ \mathcal{A} \mtx{X}_t - \vct{b} } - \frac{1}{2} \beta_t \normsq{\mathcal{A}\mtx{X}_t - \vct{b} },
\end{equation}
where $\mtx{X}_{\star}$ is a primal optimal point.
See \cref{sec:bound-subopt-supp} for the derivation of~\cref{eqn:cgal-posterior-error-bd}.

For \CGAL, the surrogate duality gap~\cref{eqn:cgal-gap} is analogous to %
the Frank--Wolfe duality gap~\cite{pmlr-v28-jaggi13}.
It offers a practical tool for detecting early convergence.
Unfortunately, we have not been able to prove that
the \CGAL\ error bound~\cref{eqn:cgal-posterior-error-bd}
converges to zero as the algorithm converges.

\subsubsection{Approximate eigenvector computations}
\label{sec:approx-evec}

A crucial fact is %
that the \CGAL\ strategy provably works if we replace~\cref{eqn:cgal-lmo}
with an approximate minimum eigenvector computation.
At step $t$, suppose that we find an update direction
\begin{equation} \label{eqn:cgal-lmo-approx}
\mtx{H}_t := \alpha \vct{v}_t \vct{v}_t^*
\quad\text{where}\quad
\vct{v}_t^* \mtx{D}_t \vct{v}_t
	\leq \lambda_{\min}(\mtx{D}_t) + \frac{1}{\sqrt{t+1}} \norm{\mtx{D}_t}
\end{equation}
for a unit vector $\vct{v}_t \in \F^n$.  %
The matrix $\mtx{H}_t$ defined in~\cref{eqn:cgal-lmo-approx}
serves in place of a solution to~\cref{eqn:cgal-lmo}
throughout the algorithm. %
We discuss solvers for the subproblem~\cref{eqn:cgal-lmo-approx}
in \cref{sec:all-eigs}.

\subsection{Convergence guarantees for CGAL}

The following result describes the convergence of the \CGAL\ iteration.
The analysis is adapted from~\cite[Thm.~3.1]{YFC19:Conditional-Gradient-Based};
see~\cref{sec:cgal-theory-supp} for details and a recapitulation of the proof.

\begin{fact}[\CGAL: Convergence] \label{fact:cgal-converge}
Assume problem~\cref{eqn:model-problem} satisfies strong duality.
The \CGAL\ iteration (\cref{sec:cgal-strategy})
with approximate eigenvector computations~\cref{eqn:cgal-lmo-approx}
yields a sequence $\{\mtx{X}_t : t =1,2,3,\dots\} \subset \alpha \mtx{\Delta}_n$
that satisfies
\begin{align}
\norm{ \mathcal{A}\mtx{X}_t - \vct{b} }
	\leq \frac{\mathrm{Const}}{\sqrt{t}} %
	\quad\text{and}\quad
	\abs{\ip{\mtx{C}}{\mtx{X}_t} - \ip{\mtx{C}}{\mtx{X}_{\star}}}
	\leq \frac{\mathrm{Const}}{\sqrt{t}}.
	\label{eqn:cgal-bounds}
\end{align}
The matrix $\mtx{X}_{\star}$ solves~\cref{eqn:model-problem}.
The constant depends on the problem data $(\mtx{C}, \mathcal{A}, \vct{b}, \alpha)$,
the minimum Euclidean norm of a dual solution, %
and the algorithm parameters $\beta_0$ and $K$.
\end{fact}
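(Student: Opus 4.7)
The plan is to carry out a Lyapunov-style analysis on the augmented Lagrangian $L_{\beta_t}(\mtx{X}_t;\vct{y}_t)$ along the trajectory, treating the primal step as an inexact Frank--Wolfe iteration on the smooth-in-$\mtx{X}$ function $\mtx{X} \mapsto L_{\beta_t}(\mtx{X};\vct{y}_t)$ and the dual step as a controlled gradient ascent. Set $M_t := L_{\beta_t}(\mtx{X}_t;\vct{y}_t) - \min_{\mtx{X}\in\alpha\mtx{\Delta}_n} L_{\beta_t}(\mtx{X};\vct{y}_t)$, or the closely related quantity $\ip{\mtx{C}+\mathcal{A}^*\vct{y}_\star}{\mtx{X}_t - \mtx{X}_\star} + \frac{\beta_t}{2}\|\mathcal{A}\mtx{X}_t-\vct{b}\|^2$ that measures primal progress against a fixed dual optimum $\vct{y}_\star$ guaranteed by strong duality. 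The goal is to show this potential decreases along a schedule compatible with $\beta_t = \beta_0\sqrt{t+1}$, $\eta_t = 2/(t+1)$, and the dual step rule~\cref{eqn:cgal-dual-step}.

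The key per-iteration estimate is the usual Frank--Wolfe descent lemma applied to $L_{\beta_t}(\cdot;\vct{y}_t)$, whose gradient is Lipschitz on $\alpha\mtx{\Delta}_n$ with constant $\beta_t\|\mathcal{A}\|^2$. Combined with the inexact linear-minimization criterion~\cref{eqn:cgal-lmo-approx}, this gives
\begin{equation*}
L_{\beta_t}(\mtx{X}_{t+1};\vct{y}_t) \leq L_{\beta_t}(\mtx{X}_t;\vct{y}_t) - \eta_t\, g_t(\mtx{X}_t) + 2\eta_t^2 \alpha^2 \beta_t \|\mathcal{A}\|^2 + \eta_t \alpha\, \tfrac{1}{\sqrt{t+1}}\|\mtx{D}_t\|,
\end{equation*}
where the third term is the standard curvature contribution and the last term absorbs the eigenvector inaccuracy. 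One then invokes the definition of $g_t$ at the optimal point $\mtx{X}_\star$ (which is feasible in $\alpha\mtx{\Delta}_n$) to lower bound $g_t(\mtx{X}_t) \geq \ip{\mtx{D}_t}{\mtx{X}_t - \mtx{X}_\star}$, converting primal progress into progress on the Lagrangian gap. The dual step is handled by a direct identity: the condition~\cref{eqn:cgal-dual-step} ensures that the increment $\gamma_t\|\mathcal{A}\mtx{X}_{t+1}-\vct{b}\|^2$ contributed to the Lagrangian by $\vct{y}$ is at most $O(\alpha^2\beta_0 \|\mathcal{A}\|^2/(t+1)^{3/2})$, which is summable after multiplication by the appropriate weight. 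The clipping rule $\|\vct{y}_{t+1}\|\leq K$ keeps the dual variable bounded, uniformly controlling $\|\vct{y}_t\|$ and hence $\|\mtx{D}_t\|$ in terms of the problem data and $K$.

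With these pieces, I would form the rescaled potential $(t+1)M_t$ (or $t(t+1)M_t$, matching $\eta_t = 2/(t+1)$) and show it grows by $O(\sqrt{t})$ per step, which upon dividing by the weight gives $M_t = O(1/\sqrt{t})$. Converting the Lagrangian bound into the two assertions of~\cref{eqn:cgal-bounds} is then standard: the augmentation term forces $\|\mathcal{A}\mtx{X}_t-\vct{b}\|^2 \leq (2/\beta_t) M_t = O(1/t)$, yielding the feasibility bound $\|\mathcal{A}\mtx{X}_t-\vct{b}\| = O(1/\sqrt{t})$; for the objective, the upper bound follows directly from $M_t$, while the lower bound uses strong duality, $\ip{\mtx{C}}{\mtx{X}_t - \mtx{X}_\star} \geq -\ip{\vct{y}_\star}{\mathcal{A}\mtx{X}_t - \vct{b}} \geq -\|\vct{y}_\star\|\cdot\|\mathcal{A}\mtx{X}_t-\vct{b}\|$, which is why $\|\vct{y}_\star\|$ appears in the constant.

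The main obstacle is the simultaneous management of the increasing smoothing parameter $\beta_t$, the shrinking step size $\eta_t$, and the accumulation of inexact-eigenvector errors. Specifically, the curvature term $\eta_t^2 \beta_t \|\mathcal{A}\|^2 \asymp t^{-3/2}$ must be summable, while the dual-update contribution of size $\gamma_t\|\mathcal{A}\mtx{X}_{t+1}-\vct{b}\|^2$ and the approximation residual of size $\eta_t/\sqrt{t+1}$ must each be summable against the potential weights. The choices $\beta_t = \beta_0\sqrt{t+1}$, $\eta_t = 2/(t+1)$, the dual cap~\cref{eqn:cgal-dual-step}, and the eigensolver tolerance $\tfrac{1}{\sqrt{t+1}}\|\mtx{D}_t\|$ are precisely calibrated so every such term contributes $O(1/\sqrt{t})$ at the end; verifying this balance, and tracking the resulting constants through the clipping rule on $\vct{y}$, is the most delicate part of the argument and is where the dependence on $\|\vct{y}_\star\|$, $\beta_0$, and $K$ emerges.
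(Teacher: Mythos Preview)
Your proposal is correct and follows essentially the same route as the paper: a Frank--Wolfe descent lemma on $L_{\beta_t}(\cdot;\vct{y}_t)$, comparison against $\mtx{X}_\star$ via convexity, absorption of the dual step using the cap~\cref{eqn:cgal-dual-step}, an inductive/telescoping bound yielding $L_{\beta_t}(\mtx{X}_t;\vct{y}_t)-p_\star = O(1/\sqrt{t})$, and finally the superoptimality bound from the saddle-point inequality at $\vct{y}_\star$.

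One small point of divergence worth noting: your feasibility extraction ``$\|\mathcal{A}\mtx{X}_t-\vct{b}\|^2 \le (2/\beta_t)M_t$'' only works directly if $M_t$ is taken to be your second (nonnegative) potential against $\vct{y}_\star$; the paper instead tracks $L_t(\mtx{X}_t;\vct{y}_t)-p_\star$, which can be negative, and recovers infeasibility by combining the recursion bound with the saddle-point lower bound to obtain a quadratic inequality in $\|\mathcal{A}\mtx{X}_t-\vct{b}\|$. Both variants are viable; the paper's choice avoids the extra $\vct{y}_t-\vct{y}_\star$ cross terms that appear if you run the FW recursion on the $\vct{y}_\star$-anchored potential, at the cost of one more algebraic step at the end. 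The handling of the $\beta_{t-1}\to\beta_t$ increase (which you correctly flag as delicate) is done in the paper by verifying $(1-\eta_t)(\beta_t-\beta_{t-1})-\beta_t\eta_t < -\beta_0^2/\beta_t$, which lets the resulting concave quadratic in $\|\mathcal{A}\mtx{X}_t-\vct{b}\|$ be maximized out and folded into the curvature term.
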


In view of~\cref{eqn:cgal-bounds}, \CGAL\ produces
a primal iterate $\mtx{X}_T$ that is $\eps$-optimal after $T = \mathcal{O}( \eps^{-2} )$ iterations.
The numerical work in~\cite[Sec.~5]{YFC19:Conditional-Gradient-Based}
shows that \CGAL\ finds an $\eps$-optimal point
after $T = \mathcal{O}(\eps^{-1})$ iterations for realistic problem instances.
See also~\cref{sec:primal-dual-conv-supp}.

The analysis behind \cref{fact:cgal-converge}
indicates that \CGAL\ converges more quickly
if we pre-condition the problem data by rescaling;
see \cref{sec:numerics-scaling}. %
This step is critical in practice. %

\section{Approximate eigenvectors}
\label{sec:all-eigs}

Most of the computation in \CGAL\ occurs in
the approximate eigenvector step~\cref{eqn:cgal-lmo-approx}.
This section describes our approach to this subproblem.

\subsection{Krylov methods}

The approximate minimum eigenvector computation~\cref{eqn:cgal-lmo-approx}
involves a matrix of the form $\mtx{D}_t = \mtx{C} + \mathcal{A}^* \vct{w}_t$.
We can multiply the matrix $\mtx{D}_t$ by a vector using the %
primitives~\cref{eqn:primitives}\primone\primtwo.  Krylov methods compute
eigenvectors of $\mtx{D}_t$ by repeated matrix--vector multiplication with $\mtx{D}_t$,
so they are obvious tools for the subproblem~\cref{eqn:cgal-lmo-approx}.

Unfortunately, \CGAL\ tends to generate matrices $\mtx{D}_t$ that have clustered
eigenvalues.  These matrices challenge standard eigenvector software,
such as \textsf{ARPACK} \cite{LSY98:ARPACK},
the engine behind the \textsc{Matlab} command \texttt{eigs}.
Instead, we retreat to a more classical technique. %

\subsection{A storage-optimal randomized Lanczos method}
\label{sec:rand-lanczos}

We use a nonstandard implementation of the randomized Lanczos method~\cite{KW92:Estimating-Largest}
to find an approximate eigenvector with minimal storage.
This hinges on two ideas.
First, we run the Lanczos iteration with a random
initial vector, which supports convergence guarantees.
Second, we do not store the Lanczos vectors,
but rather regenerate them to construct the approximate eigenvector.
See~\cref{alg:rand-lanczos} for pseudocode.
Kuczy{\'n}ski \& Wo{\'z}niakowski~\cite[Thm.~4.2(a)]{KW92:Estimating-Largest}
have obtained error bounds. %

\begin{fact}[Randomized Lanczos method] \label{fact:rand-lanczos}
Let $\mtx{M} \in \Sym_n$.  For $\eps \in (0, 1]$ and $\delta \in (0, 0.5]$,
the randomized Lanczos method, \cref{alg:rand-lanczos},
computes a unit vector $\vct{u} \in \F^n$ that satisfies
$$
\vct{u}^* \mtx{M} \vct{u} \leq \lambda_{\min}(\mtx{M}) + \frac{\eps}{8} \norm{\mtx{M}}
\quad\text{with probability at least $1 - 2\delta$}
$$
after $q \geq \tfrac{1}{2} + \eps^{-1/2} \log(n/\delta^2)$ iterations.
The arithmetic cost
is at most $q$ matrix--vector multiplies with $\mtx{M}$ and
$\mathcal{O}(q n)$ extra operations.
The working storage is $\mathcal{O}(n + q)$. %
\end{fact}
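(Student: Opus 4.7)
The plan is to reduce the claimed bound to the classical analysis of Kuczy{\'n}ski--Wo{\'z}niakowski~\cite{KW92:Estimating-Largest} and then separately verify that the storage-optimal two-pass implementation of \cref{alg:rand-lanczos} produces the same mathematical output as the textbook Lanczos method, so the analysis transfers without loss. Concretely, I would first apply the KW result to the shifted matrix $\norm{\mtx{M}} \Id - \mtx{M}$, which is psd and whose largest eigenvalue corresponds to $\lambda_{\min}(\mtx{M})$; KW's Theorem~4.2(a) says that after $q$ Lanczos iterations from a standard Gaussian start vector, the Ritz value approximating the top eigenvalue of a psd matrix $\mtx{N}$ has relative error at most $\eps/8$ with probability $\geq 1-2\delta$ provided $q$ satisfies the stated bound. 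Translating back yields the Rayleigh-quotient guarantee for $\mtx{M}$.

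Second, I would recall the two ingredients behind KW's bound, since they explain the scaling. (i) By the variational principle, the Lanczos method produces the vector in the Krylov subspace $\mathcal{K}_q(\mtx{M}, \vct{g})$ that minimizes the Rayleigh quotient, so the error reduces to a polynomial approximation problem: find $p$ of degree $< q$ that is large at $\lambda_{\min}(\mtx{M})$ and small on the rest of the spectrum. (ii) A scaled Chebyshev polynomial on $[\lambda_{\min} + \sqrt{\eps}\norm{\mtx{M}},\,\lambda_{\max}]$ gives decay like $\exp(-c\sqrt{\eps}\, q)$, which is where the $\eps^{-1/2}\log(\cdot)$ count comes from. (iii) A Gaussian start vector has a non-trivial component in the target eigenspace with probability at least $1-2\delta$ once $q$ absorbs the $\log(n/\delta^2)$ factor, because a standard chi-squared/Gaussian ratio tail estimate controls the initial projection.

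Third, I would verify the resource bookkeeping for the storage-optimal implementation. In the first pass, run the three-term recurrence maintaining only the current and previous Lanczos vectors plus the tridiagonal entries $\alpha_1,\dots,\alpha_q$ and $\beta_1,\dots,\beta_{q-1}$; this costs $q$ applications of $\mtx{M}$, $\mathcal{O}(qn)$ vector arithmetic, and $\mathcal{O}(n+q)$ storage. Then diagonalize the $q\times q$ tridiagonal matrix (e.g.\ via QR) at cost $\mathcal{O}(q^2)$, a lower-order term, and extract the eigenvector $\vct{s}\in\R^q$ associated with its minimum eigenvalue. In the second pass, rerun the recurrence from the \emph{same} seed and accumulate $\vct{u}\gets\vct{u}+s_i\vct{q}_i$ on the fly, doubling the matvec count but preserving $\mathcal{O}(n+q)$ storage. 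In exact arithmetic, $\vct{u}$ equals the standard Lanczos Ritz vector, so the KW guarantee applies verbatim.

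The main obstacle is not the convergence bound itself, which is essentially a black-box invocation, but rather the justification that the two-pass construction returns the intended mathematical object. The key observation is that the Lanczos recurrence is deterministic once the seed is fixed, so replaying it regenerates each $\vct{q}_i$ identically in exact arithmetic; one need only note that the coefficients $\alpha_i,\beta_i$ computed in the first pass match those that would arise in the second pass, so the Ritz vector assembled incrementally coincides with the standard definition. (In floating-point one would additionally reseed the pseudorandom generator identically and accept mild reorthogonalization losses, but this is outside the scope of \cref{fact:rand-lanczos}.)
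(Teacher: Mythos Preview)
Your proposal is correct and aligns with the paper's treatment: the paper presents this result as a \emph{Fact} attributed directly to Kuczy{\'n}ski--Wo{\'z}niakowski~\cite[Thm.~4.2(a)]{KW92:Estimating-Largest} without supplying its own proof, so your plan of invoking KW as a black box is exactly what the authors do. Your additional exposition---the shift to a psd matrix so that KW's top-eigenvalue bound applies, the Chebyshev/Krylov heuristic behind the $\eps^{-1/2}\log(n/\delta^2)$ scaling, and especially the verification that the two-pass regeneration scheme reproduces the standard Ritz vector in exact arithmetic---goes beyond what the paper spells out and is a welcome clarification of why \cref{alg:rand-lanczos} inherits the KW guarantee despite its nonstandard storage pattern. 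One minor bookkeeping note: your observation that the second pass doubles the matvec count is consistent with the paper's own Remark following the Fact (retaining the Lanczos vectors ``reduce[s] the flop count by approximately a factor two''), so the ``at most $q$'' phrasing in the Fact should be read loosely or with $q$ absorbing the constant.
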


With constant probability, we solve the eigenvector problem~\cref{eqn:cgal-lmo-approx}
successfully in every iteration $t$ of \CGAL\ if we use $q_t = \mathcal{O}(t^{1/4} \log(tn))$
iterations of \cref{alg:rand-lanczos}.  In practice, we implement
\CGAL\ with $q_t = t^{1/4} \log n$.
Although the Lanczos method has complicated numerical behavior
in finite-precision arithmetic, it serves well as a subroutine within \CGAL.

\begin{remark}[Time--storage tradeoff]
We can retain the vectors $\vct{v}_i$ in \cref{alg:rand-lanczos}
to reduce the flop count by approximately a factor two,
but the storage increases to $\mathcal{O}(qn)$.
\end{remark}

\begin{algorithm}[t]%
	\footnotesize
  \caption{\textsf{ApproxMinEvec} via storage-optimal randomized Lanczos %
  (\cref{sec:rand-lanczos}).}  %
  \label{alg:rand-lanczos}
  \begin{algorithmic}[1]
  \vspace{0.5pc}

	\Require{Input matrix $\mtx{M} \in \Sym_n$ and maximum number $q$ of iterations}
    \Ensure{Approximate minimum eigenpair $(\xi, \vct{v}) \in \R \times \F^n$ of the matrix $\mtx{M}$}

\vspace{0.5pc}

	\Function{ApproxMinEvec}{$\mtx{M}$; $q$}
		\State	$\vct{v}_1 \gets \texttt{randn}(n, 1)$
		\State	$\vct{v}_1 \gets \vct{v}_1 / \norm{\vct{v}_1}$
			\Comment	Store $\vct{v}_1$ for reuse
		\For{$i \gets 1, 2, 3, \dots, \min\{q, n - 1\}$}
			\Comment	During loop, store only $\vct{v}_{i}$ and  $\vct{v}_{i+1}$
		\State	${\omega_i} \gets \real( \vct{v}_i^* (\mtx{M} \vct{v}_{i}) )$
		\State	$\vct{v}_{i+1} \gets \mtx{M} \vct{v}_{i} - {\omega_i} \vct{v}_i - {\rho_{i-1}} \vct{v}_{i-1}$
			\Comment	Three-term Lanczos recurrence; ${\rho_0} \vct{v}_0 = \vct{0}$
		\State	${\rho_i \gets \norm{\vct{v}_i}}$
		\If{$\smash{{\rho_i}} = 0$} \textbf{break}
			\Comment	Found an invariant subspace! \EndIf{}
		\State	$\vct{v}_{i+1} \gets \vct{v}_{i+1} / {\rho_i}$
		\EndFor
		\State	$\mtx{T} \gets \texttt{tridiag}( \vct{\rho}_{1:(i-1)}, \vct{{\omega}}_{1:i}, \vct{{\rho}}_{1:(i-1)} )$
			\Comment	Form tridiagonal matrix
		\State	$[ \xi, \vct{u} ] \gets \texttt{MinEvec}(\mtx{T})$
			\Comment	Exploit tridiagonal form~\cite[Ch.~7]{Par98:Symmetric-Eigenvalue}, or just use \texttt{eig}
		\State	$\vct{v} \gets \sum_{j=1}^i u_j \vct{v}_j$
			\Comment	Modify lines 4--9 to regenerate $\vct{v}_2,\dots,\vct{v}_i$ and form sum
	\EndFunction

  \vspace{0.25pc}

\end{algorithmic}
\end{algorithm}

\section{Sketching and reconstruction of a psd matrix}
\label{sec:nystrom}

The \CGAL\ iteration generates a psd matrix that solves the model problem~\cref{eqn:model-problem}
via a sequence~\cref{eqn:cgal-primal-update} of rank-one linear updates.
To control storage costs, \sCGAL\ retains only a compressed version
of the psd matrix variable $\mtx{X}_t$.  This section outlines the \emph{Nystr{\"o}m sketch},
an established method~\cite{HMT11:Finding-Structure,Git13:Topics-Randomized,LLS+17:Algorithm-971,TYUC17:Fixed-Rank-Approximation}
that can track the evolving psd matrix and then report a provably accurate low-rank approximation.

For more information about the implementation and behavior
of low-rank matrix approximation from streaming data,
see \cref{sec:nystrom-supp} and our papers~\cite{TYUC17:Fixed-Rank-Approximation,TYUC17:Practical-Sketching,
TYUC19:Streaming-Low-Rank}. %

\subsection{Sketching and updates}

Consider a psd input matrix $\mtx{X} \in \Sym_n$.
Let $R$ be a parameter that
modulates the storage cost of the sketch and the quality of the matrix approximation.

To construct the Nystr{\"o}m sketch, we draw and fix a standard normal%
\footnote{Each entry of the matrix is an independent Gaussian random variable
with mean zero and variance one.  In the complex setting, the real and imaginary
parts of each entry are independent standard normal variables.}
test matrix $\mtx{\Omega} \in \F^{n \times R}$.
Our summary, or \emph{sketch}, of the matrix $\mtx{X}$ takes the form
\begin{equation} \label{eqn:sketch}
\mtx{S} = \mtx{X\Omega} \in \F^{n \times R}.
\end{equation}
The sketch $\mtx{S}$ supports linear rank-one updates to $\mtx{X}$.
Indeed, we can track the evolution
\begin{equation} \label{eqn:sketch-update}
\begin{aligned}
\mtx{X} &\gets (1 - \eta)\, \mtx{X} + \eta \,\vct{vv}^*
\qquad\qquad\text{for $\eta \in[0,1]$ and $\vct{v}\in\F^n$} \\
\quad\text{via}\quad
\mtx{S} &\gets (1 - \eta)\,\mtx{S} + \eta \, \vct{v} (\vct{v}^* \mtx{\Omega}). %
\end{aligned}
\end{equation}
The test matrix $\mtx{\Omega}$ and the sketch $\mtx{S}$ require
storage of $2Rn$ numbers in $\F$.  The arithmetic cost of the linear
update~\cref{eqn:sketch-update} to the sketch is $\Theta(Rn)$ numerical operations.

\begin{remark}[Structured random matrices]
We can reduce storage costs by a factor of two by using a structured
random matrix in place of $\mtx{\Omega}$.
For example, see~\cite[Sec.~3]{TYUC19:Streaming-Low-Rank}
or~\cite{SGTU18:Tensor-Random}.
\end{remark}

\subsection{The reconstruction process}

Given the test matrix $\mtx{\Omega}$
and the sketch $\mtx{S} = \mtx{X\Omega}$,
we form a rank-$R$ approximation $\widehat{\mtx{X}}$
of the sketched matrix $\mtx{X}$. %
The approximation is defined by %
\begin{equation} \label{eqn:nystrom}
\widehat{\mtx{X}} := \mtx{S} (\mtx{\Omega}^* \mtx{S})^\dagger \mtx{S}^*
	= (\mtx{X\Omega}) (\mtx{\Omega}^* \mtx{X \Omega})^{\dagger} (\mtx{X \Omega})^*,
\end{equation}
where ${}^\dagger$ is the pseudoinverse.
This reconstruction is called a \emph{Nystr{\"o}m approximation}.
We often truncate $\widehat{\mtx{X}}$ %
by replacing it with
its best rank-$r$ approximation $\lowrank{\widehat{\mtx{X}}}{r}$ for some $r \leq R$.

See~\cref{alg:nystrom-sketch} for a numerically stable
implementation of the Nystr{\"o}m approximation~\cref{eqn:nystrom}. %
The algorithm takes $\Theta(R^2 n)$ numerical operations
and $\Theta(Rn)$ storage.

\begin{algorithm}[t]%
  \caption{\textsf{NystromSketch} implementation (see \cref{sec:nystrom})
  \label{alg:nystrom-sketch}}
  \begin{algorithmic}[1]
  \vspace{0.5pc}
	\footnotesize

	\Require{Dimension $n$ of input matrix, size $R$ of sketch } %
    \Ensure{Rank-$R$ approximation $\widehat{\mtx{X}}$ of sketched matrix in factored form
    $\widehat{\mtx{X}} = \mtx{U\Lambda U}^*$,
    where $\mtx{U} \in \F^{n \times R}$ has orthonormal columns and $\mtx{\Lambda} \in \R^{R \times R}$
    is nonnegative diagonal}

	\vspace{0.5pc}

	\Function{\textsf{NystromSketch.Init}}{$n$, $R$}
	\State	$\mtx{\Omega} \gets \texttt{randn}(n, R)$
		\Comment	Draw and fix random test matrix
	\State	$\mtx{S} \gets \texttt{zeros}(n, R)$ %
		\Comment	Form sketch of zero matrix
  	\EndFunction

	\vspace{0.5pc}

	\Function{\textsf{NystromSketch.RankOneUpdate}}{$\vct{v}$, $\eta$}
		\Comment	Implements~\cref{eqn:sketch-update}
	\State	$\mtx{S} \gets	(1-\eta)\, \mtx{S} + \eta\, \vct{v} (\vct{v}^* \mtx{\Omega})$
		\Comment	Update sketch of matrix
	\EndFunction

	\vspace{0.5pc}

	\Function{\textsf{NystromSketch.Reconstruct}}{{}} %
	\State	$\sigma \gets \sqrt{n} \, \texttt{eps}(\texttt{norm}(\mtx{S}))$ %
		\Comment	Compute a shift parameter
	\State	$\mtx{S}_{\sigma} \gets \mtx{S} + \sigma \, \mtx{\Omega}$
		\Comment	Implicitly form sketch of $\mtx{X} + \sigma \, \Id$
	\State	$\mtx{L} \gets \texttt{chol}(\mtx{\Omega}^* \mtx{S}_{\sigma})$
	\State	$[\mtx{U}, \mtx{\Sigma}, \sim] \gets \texttt{svd}(\mtx{S}_{\sigma}/\mtx{L})$
		\Comment	Dense SVD
	\State	$\mtx{\Lambda} \gets \max\{0, \mtx{\Sigma}^2 - \sigma \, \Id \}$
		\Comment	Remove shift

	\EndFunction

  \vspace{0.25pc}

\end{algorithmic}
\end{algorithm}

\subsection{\textit{A priori} error bounds}
\label{sec:nystrom-apriori}

The Nystr{\"o}m approximation $\widehat{\mtx{X}}$ yields a provably good estimate
for the matrix $\mtx{X}$ contained in the sketch~\cite[Thm.~4.1]{TYUC17:Fixed-Rank-Approximation}.

\begin{fact}[Nystr{\"o}m sketch: Error bound] \label{fact:nystrom}
Fix a psd matrix $\mtx{X} \in \Sym_n$.  Let $\mtx{S} = \mtx{X\Omega}$
where $\mtx{\Omega} \in \F^{n \times R}$ is standard normal.
For each {$r < R-1$}, the Nystr{\"o}m approximation~\cref{eqn:nystrom} satisfies
\begin{equation} \label{eqn:nystrom-error}
\Expect_{\mtx{\Omega}} \norm{ \mtx{X} - \widehat{\mtx{X}} }_* %
	\leq \left(1 + \frac{r}{R-r-1}\right) \cdot \norm{ \mtx{X} - \lowrank{\mtx{X}}{r} }_*,
\end{equation}
where $\Expect_{\mtx{\Omega}}$ is the expectation with respect to $\mtx{\Omega}$.
If we replace $\widehat{\mtx{X}}$ with its rank-$r$ truncation $\lowrank{ \widehat{\mtx{X}} }{r}$,
the error bound~\cref{eqn:nystrom-error} remains valid.
Similar results hold with high probability.
\end{fact}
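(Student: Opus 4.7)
The plan is to follow the standard Nyström analysis of Halko--Martinsson--Tropp and Gittens, as refined in~\cite{TYUC17:Fixed-Rank-Approximation}. First I exploit positive semidefiniteness to reduce the nuclear norm to a trace. Let $\mtx{B} := \mtx{X}^{1/2}$ and observe that~\cref{eqn:nystrom} simplifies to $\widehat{\mtx{X}} = \mtx{B}\,\mtx{P}\,\mtx{B}$, where $\mtx{P}$ is the orthogonal projector onto $\range(\mtx{B}\mtx{\Omega})$. Consequently $\mtx{X} - \widehat{\mtx{X}} = \mtx{B}(\Id - \mtx{P})\mtx{B}$ is psd, so $\norm{\mtx{X} - \widehat{\mtx{X}}}_* = \trace((\Id - \mtx{P})\mtx{X})$.

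Second, I diagonalize. Write $\mtx{X} = \mtx{U}\mtx{\Lambda}\mtx{U}^*$ with $\mtx{\Lambda} = \diag(\mtx{\Lambda}_1, \mtx{\Lambda}_2)$ partitioned into the top $r$ and trailing $n-r$ eigenvalues, and set $\mtx{W} := \mtx{U}^*\mtx{\Omega}$, a standard normal matrix with independent blocks $\mtx{W}_1 \in \F^{r \times R}$ and $\mtx{W}_2 \in \F^{(n-r) \times R}$. Since $\mtx{W}_1$ almost surely has full row rank, $\range(\mtx{\Lambda}^{1/2}\mtx{W})$ contains the range of the test matrix $\mtx{M} := \mtx{\Lambda}^{1/2}\mtx{W}\,\mtx{W}_1^\dagger\,\mtx{\Lambda}_1^{-1/2}$, whose top block equals $\Id_r$ and whose bottom block equals $\mtx{T} := \mtx{\Lambda}_2^{1/2}\mtx{W}_2\mtx{W}_1^\dagger\mtx{\Lambda}_1^{-1/2}$. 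Hence $\Id - \mtx{P} \psdle \Id - \mtx{P}_{\mtx{M}}$ in the rotated frame, and a block computation with the Woodbury identity gives
\[
\trace((\Id - \mtx{P}_{\mtx{M}})\mtx{\Lambda}) = \trace\bigl((\Id + \mtx{T}^*\mtx{T})^{-1}\mtx{T}^*\mtx{T}\,\mtx{\Lambda}_1\bigr) + \trace\bigl((\Id + \mtx{T}\mtx{T}^*)^{-1}\mtx{\Lambda}_2\bigr).
\]
Bounding $(\Id+\mtx{T}^*\mtx{T})^{-1}\mtx{T}^*\mtx{T} \psdle \mtx{T}^*\mtx{T}$ in the first term and $(\Id+\mtx{T}\mtx{T}^*)^{-1} \psdle \Id$ in the second, I arrive at
\[
\norm{\mtx{X} - \widehat{\mtx{X}}}_* \leq \trace(\mtx{\Lambda}_2) + \trace\bigl(\mtx{\Lambda}_2\,\mtx{W}_2\,(\mtx{W}_1\mtx{W}_1^*)^{-1}\mtx{W}_2^*\bigr).
\]

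Third, I take expectations using independence of $\mtx{W}_1$ and $\mtx{W}_2$. An entrywise computation gives $\Expect_{\mtx{W}_2}[\mtx{W}_2^*\mtx{\Lambda}_2\mtx{W}_2] = \trace(\mtx{\Lambda}_2)\,\Id_R$, and the standard inverse-Wishart identity $\Expect_{\mtx{W}_1}[(\mtx{W}_1\mtx{W}_1^*)^{-1}] = (R - r - 1)^{-1}\,\Id_r$ (valid for $R > r + 1$) yields a contribution of $(r/(R-r-1))\,\trace(\mtx{\Lambda}_2)$. Since $\trace(\mtx{\Lambda}_2) = \norm{\mtx{X} - \lowrank{\mtx{X}}{r}}_*$, this establishes~\cref{eqn:nystrom-error}. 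For the rank-$r$ truncation, I use the variational inequality $\sum_{i=1}^r \lambda_i(\widehat{\mtx{X}}) \geq \trace(\mtx{U}_1^*\widehat{\mtx{X}}\mtx{U}_1)$, where $\mtx{U}_1$ holds the top-$r$ eigenvectors of $\mtx{X}$; rearranging yields $\norm{\mtx{X} - \lowrank{\widehat{\mtx{X}}}{r}}_* \leq \trace(\mtx{\Lambda}_2) + \trace(\mtx{U}_1\mtx{U}_1^*(\mtx{X} - \widehat{\mtx{X}}))$, and plugging in the same structural bound recovers the identical constant. High-probability statements follow from concentration bounds on $\norm{(\mtx{W}_1\mtx{W}_1^*)^{-1}}$ and $\norm{\mtx{W}_2}$. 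The main obstacle is preserving the sharp constant $1 + r/(R-r-1)$ in the truncated bound---the naive triangle inequality would give only $2 + r/(R-r-1)$---so the variational identity involving $\mtx{U}_1$ is essential; handling the degenerate case $\rank(\mtx{X}) < r$ requires interpreting $\mtx{\Lambda}_1^{-1/2}$ as a pseudo-inverse on $\range(\mtx{\Lambda}_1)$, which is routine.
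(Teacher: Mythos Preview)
The paper does not give its own proof of this fact; it is stated as a quoted result from~\cite[Thm.~4.1]{TYUC17:Fixed-Rank-Approximation}, and your proposal essentially reconstructs that proof (which in turn rests on the structural bound of~\cite[Thm.~9.1]{HMT11:Finding-Structure}). The overall strategy---psd reduction to a trace, the projector representation $\widehat{\mtx{X}}=\mtx{X}^{1/2}\mtx{P}\mtx{X}^{1/2}$, the block-Woodbury identity for $\Id-\mtx{P}_{\mtx{M}}$, and the inverse-Wishart expectation---is exactly the standard route, and it is correct. Your handling of the truncation via the Ky~Fan inequality $\sum_{i\le r}\lambda_i(\widehat{\mtx{X}})\ge\trace(\mtx{U}_1^*\widehat{\mtx{X}}\mtx{U}_1)$ is also a clean way to preserve the sharp constant, matching the argument in~\cite{TYUC17:Fixed-Rank-Approximation}; the paper's supplement (\cref{sec:nystrom-rep-supp}) records the companion representation $\lowrank{\widehat{\mtx{X}}}{r}=\mtx{X}^{1/2}\mtx{P}_r\mtx{X}^{1/2}$ that underlies it.

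One small slip to fix: the displayed deterministic bound $\trace\bigl(\mtx{\Lambda}_2\,\mtx{W}_2\,(\mtx{W}_1\mtx{W}_1^*)^{-1}\mtx{W}_2^*\bigr)$ has inconsistent dimensions, since $\mtx{W}_2\in\F^{(n-r)\times R}$ while $(\mtx{W}_1\mtx{W}_1^*)^{-1}\in\F^{r\times r}$. What actually emerges from $\trace(\mtx{T}^*\mtx{T}\mtx{\Lambda}_1)$ is $\trace\bigl(\mtx{\Lambda}_2\,\mtx{W}_2\,\mtx{W}_1^\dagger(\mtx{W}_1^\dagger)^*\,\mtx{W}_2^*\bigr)$. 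This does not affect the final expectation, because after averaging over $\mtx{W}_2$ you are left with $\trace(\mtx{\Lambda}_2)\cdot\trace(\mtx{W}_1^\dagger(\mtx{W}_1^\dagger)^*)=\trace(\mtx{\Lambda}_2)\cdot\trace((\mtx{W}_1\mtx{W}_1^*)^{-1})$, but the intermediate line as written is ill-formed.
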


\section{Scalable semidefinite programming via \sCGAL}
\label{sec:sketchy-cgal}

We may now present an extension of \CGAL\
that solves the model problem~\cref{eqn:model-problem}
with controlled storage and computation.
Our new algorithm, \sCGAL, enhances %
the \CGAL\ iteration from~\cref{sec:cgal-strategy}. %
Instead of storing
the matrix $\mtx{X}_t$ in
the \CGAL\ iteration, %

\begin{enumerate}
\item	We drive the iteration with the $d$-dimensional
primal state variable $\vct{z}_t := \mathcal{A} \mtx{X}_t$. %

\item	We maintain a Nystr{\"o}m sketch of the primal iterate $\mtx{X}_t$
using storage $\Theta(Rn)$.

\item	When the iteration is halted, say at time $T$,
we use the sketch to construct a rank-$R$ approximation
$\widehat{\mtx{X}}_T$ of the implicitly computed solution $\mtx{X}_T$ of the model problem.
\end{enumerate}

\noindent
As we will see, the resulting method exhibits almost the same
convergence behavior as the \CGAL\ algorithm,
but it also enjoys a strong storage guarantee
(when $d \ll n^2$ and $R \ll n$).

\subsection{The \sCGAL\ iteration}
\label{sec:sketchy-cgal-iteration}

To develop the \sCGAL\ iteration,
we start with the \CGAL\ iteration.
Then we make the substitutions $\vct{z}_t = \mathcal{A}\mtx{X}_t$
and $\vct{h}_t = \mathcal{A}\mtx{H}_t$ to eliminate the matrix variables.
Let us summarize what happens; see~\cref{sec:loop-invariants} for additional explanation.
\Cref{alg:sketchy-cgal} contains pseudocode with implementation recommendations.

\subsubsection{Initialization}

First, we choose the size $R$ of the Nystr{\"o}m sketch.
Then draw and fix the random test matrix $\mtx{\Omega} \in \F^{n \times R}$.
Select an initial smoothing parameter $\beta_0$ and a (finite) bound $K$ on the dual variable.
Define the smoothing parameters $\beta_t := \beta_0 \sqrt{t+1}$
and the step size parameters $\eta_t := 2 / (t+1)$.
Initialize the primal state variable $\vct{z}_1 := \vct{0} \in \R^d$,
the sketch $\mtx{S}_1 := \mtx{0} \in \F^{n\times R}$,
and the dual variable $\vct{y}_1 := \vct{0} \in \R^d$.
These choices correspond to the simplest initial iterate $\mtx{X}_1 := \mtx{0}$.

\subsubsection{Primal updates}

At iteration $t = 1, 2, 3, \dots$, we compute a unit-norm vector $\vct{v}_t$
that is an approximate minimum eigenvector of the gradient $\mtx{D}_t$
of the smoothed objective:
\begin{equation} \label{eqn:sketchy-cgal-lmo-approx}
\xi_t :=
\vct{v}_t^* \mtx{D}_t \vct{v}_t \leq \lambda_{\min}(\mtx{D}_t) + \frac{\norm{\mtx{D}_t}}{\sqrt{t+1}}
\quad\text{where}\quad
\mtx{D}_t := \mtx{C} + \mathcal{A}^* (\vct{y}_t + \beta_t (\vct{z}_t - \vct{b})).
\end{equation}
This calculation corresponds with~\cref{eqn:cgal-lmo-approx}.
Form the primal update direction $\vct{h}_t = \mathcal{A}(\alpha \vct{v}_t \vct{v}_t^*)$, %
and then update the primal state variable $\vct{z}_t$ and the sketch $\mtx{S}_t$:
\begin{align} \label{eqn:sketchy-cgal-update}
\vct{z}_{t+1} := (1 - \eta_t) \vct{z}_t + \eta_t \vct{h}_t \quad\text{and}\quad %
\mtx{S}_{t+1} := (1 - \eta_t) \mtx{S}_t + \eta_t \alpha \vct{v}_t (\vct{v}_t^* \mtx{\Omega}).
\end{align}
We obtain the update rule for the primal state variable $\vct{z}_t$
by applying the linear map $\mathcal{A}$ to the primal update rule~\cref{eqn:cgal-primal-update}.
The update rule for the sketch $\mtx{S}_t$ follows from~\cref{eqn:sketch-update}.

\subsubsection{Dual updates}

The update to the dual variable takes the form
\begin{equation} \label{eqn:sketchy-cgal-dual-update}
\vct{y}_{t+1} = \vct{y}_t + \gamma_t (\vct{z}_{t+1} - \vct{b})
\end{equation}
where we choose the largest $\gamma_t$ %
that satisfies the conditions
\begin{equation} \label{eqn:sketchy-cgal-dual-step}
\gamma_t \normsq{ \vct{z}_{t+1} - \vct{b} }
	\leq \frac{4 \alpha^2 \beta_0}{(t+1)^{3/2}} \normsq{ \mathcal{A} }
	\quad\text{and}\quad
	0 \leq \gamma_t \leq \beta_0.
\end{equation}
If needed, we set $\gamma_t = 0$ to prevent $\norm{\vct{y}_{t+1}} > K$.
This is the \sCGAL\ iteration.

\begin{algorithm}[t]%
  \caption{\sCGAL\ for the model problem~\cref{eqn:model-problem}
  \label{alg:sketchy-cgal}}
  \begin{algorithmic}[1]
  \vspace{0.5pc}

	\footnotesize
	\Require{Problem data for~\cref{eqn:model-problem} implemented via the primitives~\cref{eqn:primitives},
	sketch size $R$, number $T$ of iterations}
    \Ensure{Rank-$R$ approximate solution to~\cref{eqn:model-problem}
    in factored form $\widehat{\mtx{X}}_T = \mtx{U\Lambda U}^*$,
    where $\mtx{U} \in \F^{n \times R}$ has orthonormal columns and $\mtx{\Lambda} \in \R^{R\times R}$
    is nonnegative diagonal}
	\vspace{0.5pc}

	\Function{\sCGAL}{$R$; $T$}

	\State	Scale problem data (\cref{sec:numerics-scaling})
		\Comment{\textcolor{dkblue}{\textbf{[opt]}} Recommended!}
	\State	$\beta_0 \gets 1$ and $K \gets +\infty$
		\Comment{Default parameters}
	\State	\textsf{NystromSketch.Init}($n$, $R$)
	\State	$\vct{z} \gets \vct{0}_{d}$ and $\vct{y} \gets \vct{0}_d$
	\For{$t \gets 1, 2, 3, \dots, T$}
  		\State	$\beta \gets \beta_0 \sqrt{t+1}$ and $\eta \gets 2 /(t+1)$
		\State	$[\xi, \vct{v}] \gets \textsf{ApproxMinEvec}( \mtx{C} + \mathcal{A}^* (\vct{y} + \beta(\vct{z} - \vct{b})); q_t )$
			\Comment \cref{alg:rand-lanczos} with $q_t = t^{1/4} \log n$
			\Statex	\Comment Implement with primitives~\cref{eqn:primitives}\primone\primtwo!

		\State	$\vct{z} \gets (1 - \eta) \, \vct{z} + \eta \, \mathcal{A}( \alpha \vct{vv}^* )$
			\Comment	Use primitive~\cref{eqn:primitives}\primthree
		\State	$\vct{y} \gets \vct{y} + \gamma (\vct{z} - \vct{b})$
			\Comment	$\gamma$ is the largest solution to~\cref{eqn:sketchy-cgal-dual-step}
		\State	\textsf{NystromSketch.RankOneUpdate}($\sqrt{\alpha} \vct{v}$, $\eta$)
	\EndFor
	\State	$[\mtx{U}, \mtx{\Lambda}] \gets \textsf{NystromSketch.Reconstruct}()$

	\State	$\mtx{\Lambda} \gets \mtx{\Lambda} + (\alpha - \trace(\mtx{\Lambda})) \Id_R / R$
		\Comment	\textcolor{dkblue}{\textbf{[opt]}} Enforce trace constraint in \cref{eqn:model-problem}
	\EndFunction

  \vspace{0.25pc}

\end{algorithmic}
\end{algorithm}

\subsection{Connection with \CGAL}
\label{sec:loop-invariants}

There is a tight connection between the iterates of
\sCGAL\ and \CGAL.
Let $\mtx{X}_1 := \mtx{0}$.  Using the vectors $\vct{v}_t$ computed
in~\cref{eqn:sketchy-cgal-lmo-approx}, define matrices
\begin{equation} \label{eqn:sketchy-cgal-correspondence}
\mtx{H}_t := \alpha \vct{v}_t \vct{v}_t^*
\quad\text{and}\quad
\mtx{X}_{t+1} := (1 - \eta_t) \mtx{X}_t + \eta_t \mtx{H}_t.
\end{equation}
With these definitions, the following loop invariants are in force:
\begin{equation} \label{eqn:loop-invariant}
\vct{h}_t = \mathcal{A} \mtx{H}_t
\quad\text{and}\quad
\vct{z}_t = \mathcal{A} \mtx{X}_t
\quad\text{and}\quad
\mtx{S}_t = \mtx{X}_t \mtx{\Omega}.
\end{equation}
By comparing \cref{sec:sketchy-cgal-iteration,sec:cgal-strategy},
we see that the trajectory $\{ (\mtx{X}_t, \mtx{H}_t, \vct{y}_t) : t = 1, 2, 3, \dots \}$
could also have been generated by running the \CGAL\ iteration.
In other words, the variables in \sCGAL\
track the variables of some invocation of \CGAL\
and inherit their behavior.  We refer to the matrices
$\mtx{X}_t$ as the \emph{implicit} \CGAL\ iterates.

\subsubsection{Approximating the \CGAL\ iterates}

We do not have access to the implicit \CGAL\ iterates %
described above.  Nevertheless, the sketch permits us
to approximate them! %
After iteration $t$ of \sCGAL, we can form a rank-$R$
approximation $\widehat{\mtx{X}}_t$
of the implicit iterate $\mtx{X}_t$ by invoking the
formula~\cref{eqn:nystrom} with $\mtx{S} = \mtx{S}_t$.
According to~\cref{fact:nystrom}, for each $r < R-1$,
\begin{equation} \label{eqn:Xt-hat-err}
\Expect_{\mtx{\Omega}} \norm{ \mtx{X}_t - \widehat{\mtx{X}}_t }_*
	\leq \left( 1 + \frac{r}{R-r-1} \right) \cdot \norm{\mtx{X}_t - \lowrank{\mtx{X}_t}{r} }_*.
\end{equation}
In other words, the computed approximation $\widehat{\mtx{X}}_t$ is a good proxy
for the implicit iterate $\mtx{X}_t$ whenever the latter
matrix is well-approximated by a low-rank matrix.
The same bound~\cref{eqn:Xt-hat-err} holds
if we replace $\widehat{\mtx{X}}_t$ by the truncated
rank-$r$ matrix $\lowrank{\widehat{\mtx{X}}_t}{r}$.

\begin{remark}[Trace correction]
The model problem~\cref{eqn:model-problem}
requires the matrix variable to have trace $\alpha$,
but the computed solution $\widehat{\mtx{X}}_t$ %
rarely satisfies this constraint. 
\Cref{alg:sketchy-cgal} includes an optional projection step (line 13)
that corrects the trace.  This step never increases the error in the Nystr{\"o}m
approximation by more than a factor of two (\cref{sec:trace-correct-supp}). %
Our analysis of \sCGAL\ \emph{does not} include the projection,
but it is valuable in practice.
\end{remark}

\subsubsection{Assessing solution quality}

Given quantities computed by \sCGAL,
we can assess how well the implicit iterate
$\mtx{X}_t$
solves the model problem~\cref{eqn:model-problem}.
The infeasibility is just $\vct{z}_t - \vct{b}$.
To bound suboptimality, we track the objective $p_t := \ip{ \mtx{C} }{ \mtx{X}_t }$
by setting $p_1 = 0$ and using the update
$p_{t+1} = (1-\eta_t) p_t + \eta_t \alpha \vct{v}_t^* (\mtx{C} \vct{v}_t)$.
The surrogate duality gap~\cref{eqn:cgal-gap} takes the form %
$$
g_t(\mtx{X}_t) = p_t + \ip{ \vct{y}_t + \beta_t (\vct{z}_t - \vct{b}) }{ \vct{z}_t } - \lambda_{\min}(\mtx{D}_t).
$$
In practice, we use~\cref{eqn:sketchy-cgal-lmo-approx} to approximate
the minimum eigenvalue: $\lambda_{\min}(\mtx{D}_t) \approx \xi_t$.
In light of \cref{eqn:cgal-posterior-error-bd},
we can bound the suboptimality of $\mtx{X}_t$ 
via the expression
\begin{equation} \label{eqn:scgal-posterior-error-bd}
\ip{ \mtx{C} }{ \mtx{X}_t } - \ip{\mtx{C}}{ \mtx{X}_{\star}}
	\leq g_t(\mtx{X}_t) - \ip{ \vct{y}_t }{ \vct{z}_t - \vct{b} } - \frac{1}{2} \beta_t \norm{ \vct{z}_t - \vct{b} }^2,
\end{equation}
where $\mtx{X}_{\star}$ is a primal optimal point.
See \cref{sec:scgal-soln-quality-supp} for more details.

\subsection{Convergence of \sCGAL}

The implicit iterates $\mtx{X}_t$ converge to a solution
of the model problem~\cref{eqn:model-problem} at the same rate
as the iterates of \CGAL\ would.  On average, the rank-$R$
iterates $\widehat{\mtx{X}}_t$ track the implicit iterates.
The discrepancy between them depends on how well the implicit iterates
are approximated by low-rank matrices.
Here is a simple convergence result
that reflects this intuition;
see~\cref{sec:scgal-converge-supp}
for the easy proof and further results.

\begin{theorem}[\sCGAL: Convergence]
Assume problem~\cref{eqn:model-problem} satisfies strong duality,
and let $\mathsf{\Psi}_{\star}$ be its solution set.
For each {$r < R-1$}, the iterates $\widehat{\mtx{X}}_t$ computed by \sCGAL\ (\cref{sec:sketchy-cgal-iteration,sec:loop-invariants})
satisfy
$$
\limsup_{t\to\infty} \Expect_{\mtx{\Omega}} \dist_*(\widehat{\mtx{X}}_t, \mathsf{\Psi}_{\star})
	\leq \left(1+ \frac{r}{R-r-1}\right) \cdot
	\max_{\mtx{X} \in \mathsf{\Psi}_{\star}} \norm{\mtx{X} - \lowrank{\mtx{X}}{r} }_*.
$$
The same bound holds if we replace $\widehat{\mtx{X}}_t$ with the truncated approximation $\lowrank{\widehat{\mtx{X}}_t}{r}$.
Here, $\dist_*$ is the nuclear-norm distance between a matrix and a set of matrices.
\end{theorem}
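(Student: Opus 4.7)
The plan is to decompose the quantity of interest via the triangle inequality using the implicit \CGAL\ iterates $\mtx{X}_t$ defined in \cref{eqn:sketchy-cgal-correspondence}, then bound each piece using the two results already in hand: \cref{fact:cgal-converge} for the iterates themselves and \cref{fact:nystrom} for the sketch's reconstruction error. Concretely, for any $\mtx{X}^\star_t \in \mathsf{\Psi}_\star$ one has
\[
\dist_*(\widehat{\mtx{X}}_t, \mathsf{\Psi}_\star) \;\leq\; \norm{\widehat{\mtx{X}}_t - \mtx{X}_t}_* + \norm{\mtx{X}_t - \mtx{X}^\star_t}_*,
\]
and minimizing over $\mtx{X}^\star_t$ on the right gives $\norm{\widehat{\mtx{X}}_t - \mtx{X}_t}_* + \dist_*(\mtx{X}_t, \mathsf{\Psi}_\star)$.

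Next, I would take $\Expect_{\mtx{\Omega}}$ on both sides. Since the implicit \CGAL\ iterate $\mtx{X}_t$ does not depend on $\mtx{\Omega}$ (the sketch is passive by the invariants~\cref{eqn:loop-invariant}), the distance term passes through the expectation unchanged, while \cref{fact:nystrom} applied pointwise in $\mtx{X}_t$ gives
\[
\Expect_{\mtx{\Omega}} \norm{\widehat{\mtx{X}}_t - \mtx{X}_t}_* \;\leq\; \Bigl(1 + \tfrac{r}{R-r-1}\Bigr) \norm{\mtx{X}_t - \lowrank{\mtx{X}_t}{r}}_*.
\]
For the distance term, \cref{fact:cgal-converge} shows $\norm{\mathcal{A}\mtx{X}_t - \vct{b}} \to 0$ and $\ip{\mtx{C}}{\mtx{X}_t} \to \ip{\mtx{C}}{\mtx{X}_\star}$; since each $\mtx{X}_t$ lives in the compact set $\alpha\mtx{\Delta}_n$ and $\mathcal{A}$ and $\ip{\mtx{C}}{\cdot}$ are continuous, every cluster point of $\{\mtx{X}_t\}$ must be primal feasible with the optimal value, hence an element of $\mathsf{\Psi}_\star$. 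Combined with closedness of $\mathsf{\Psi}_\star$, this yields $\dist_*(\mtx{X}_t, \mathsf{\Psi}_\star) \to 0$.

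The final ingredient is to transfer the truncation residual $\norm{\mtx{X}_t - \lowrank{\mtx{X}_t}{r}}_*$ onto the solution set. Using that $\lowrank{\cdot}{r}$ furnishes the best rank-$r$ approximation in nuclear norm,
\[
\norm{\mtx{X}_t - \lowrank{\mtx{X}_t}{r}}_* \;\leq\; \norm{\mtx{X}_t - \lowrank{\mtx{X}^\star_t}{r}}_* \;\leq\; \norm{\mtx{X}_t - \mtx{X}^\star_t}_* + \norm{\mtx{X}^\star_t - \lowrank{\mtx{X}^\star_t}{r}}_*
\]
for any $\mtx{X}^\star_t \in \mathsf{\Psi}_\star$. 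Choosing $\mtx{X}^\star_t$ to be a nearest solution (in nuclear norm) drives the first term to zero, while the second is uniformly controlled by $\max_{\mtx{X} \in \mathsf{\Psi}_\star} \norm{\mtx{X} - \lowrank{\mtx{X}}{r}}_*$. Passing to $\limsup$ and combining with the Nystr\"om bound yields the advertised inequality; the truncated variant $\lowrank{\widehat{\mtx{X}}_t}{r}$ is handled identically because \cref{fact:nystrom} guarantees the same error bound after truncation.

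The only non-routine step is the claim $\dist_*(\mtx{X}_t, \mathsf{\Psi}_\star) \to 0$: \cref{fact:cgal-converge} does not state it, only giving feasibility and objective-value convergence. The compactness-plus-continuity argument sketched above is standard but worth being careful about, since the result is stated for the whole sequence (not just along subsequences) and the solution set $\mathsf{\Psi}_\star$ need not be a singleton. If $\mathsf{\Psi}_\star$ has more than one element, the sequence may not converge, but every cluster point lies in $\mathsf{\Psi}_\star$, which is precisely what is needed for $\dist_*(\mtx{X}_t, \mathsf{\Psi}_\star) \to 0$ on the compact set $\alpha\mtx{\Delta}_n$.
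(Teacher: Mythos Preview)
Your proposal is correct and follows essentially the same approach as the paper: triangle inequality through the implicit iterate $\mtx{X}_t$, Nystr{\"o}m bound (\cref{fact:nystrom}) on the sketch error, compactness of $\alpha\mtx{\Delta}_n$ to get $\dist_*(\mtx{X}_t,\mathsf{\Psi}_\star)\to 0$ from \cref{fact:cgal-converge}, and then transferring the rank-$r$ residual onto the solution set. The only cosmetic difference is that the paper invokes the 1-Lipschitz property of $\mtx{X}\mapsto\norm{\mtx{X}-\lowrank{\mtx{X}}{r}}_*$ by citation, whereas you prove exactly this inequality inline via the best-approximation property plus the triangle inequality; the two arguments are equivalent.
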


\subsection{Resource usage}

The main working variables in the \sCGAL\ iteration
are the primal state $\vct{z}_t \in \R^d$, the computed eigenvector
$\vct{v}_t \in \R^n$, the update direction $\vct{h}_t \in \R^d$,
the test matrix $\mtx{\Omega} \in \R^{n \times R}$, the
sketch $\mtx{S}_t \in \R^{n \times R}$, and the dual variable $\vct{y}_t \in \R^d$.
The total cost for storing these variables is $\Theta(d + Rn)$.

The arithmetic bottleneck in \sCGAL\ comes from the approximate eigenvector
computation~\cref{eqn:sketchy-cgal-lmo-approx};
see \cref{sec:rand-lanczos}
for resource requirements.
The remaining computation takes place in the variable updates.
To form the primal update direction $\vct{h}_t$, %
we invoke the primitive~\cref{eqn:primitives}\primthree.
To update the primal state variable $\vct{z}_t$ and
the sketch $\mtx{S}_t$ in~\cref{eqn:sketchy-cgal-update},
we need $\mathcal{O}(d + Rn)$ arithmetic operations.
No further storage is required at this stage.

Table~\ref{tab:scgal-cost} documents the cost of performing $T$ iterations
of the \sCGAL\ method.
The first column %
summarizes the resources consumed in the outer iteration. %
The second column tabulates the total resources spent to solve
the eigenvalue problem~\cref{eqn:sketchy-cgal-lmo-approx}
via the randomized Lanczos method (\cref{alg:rand-lanczos}).

In light of \cref{fact:cgal-converge,sec:loop-invariants},
we can be confident that the implicit iterate $\mtx{X}_T$ is $\eps$-optimal
within $T = \mathcal{O}(\eps^{-2})$ iterations.
The formula~\cref{eqn:Xt-hat-err} gives \emph{a priori}
guarantees on the quality of the approximation $\widehat{\mtx{X}}_T$.

\begin{center}
\begin{table}
\caption{Resource usage for $T$ iterations of \sCGAL\
with sketch size $R$. %
The algorithm returns a rank-$R$ approximation to
an $\eps$-optimal solution (\cref{sec:approx-soln}) %
to the model problem~\cref{eqn:model-problem}.
\textbf{In theory, $T \sim \eps^{-2}$.}
\textbf{In practice, $T \sim \eps^{-1}$.}
Constants and logarithms are omitted.  See~\cref{sec:scgal-cost}.} %
\label{tab:scgal-cost}
\begin{center}
\begin{tabular}{l|r||r}
& \sCGAL\			& with Lanczos \\ %
& \cref{alg:sketchy-cgal}	& \cref{alg:rand-lanczos} \\
\hline
\textbf{Storage} (floats)		& $d + Rn$ & $n$ \\ %
\hline
\textbf{Arithmetic} (flops)		& $T (d + Rn)$ & $T^{5/4} n$ \\ %
\textbf{Primitives} (calls) && \\
\quad\cref{eqn:primitives}\primone\primtwo & --- & $T^{5/4}$ \\ %
\quad \cref{eqn:primitives}\primthree	& $T$ & ---  \\ %
\hline
\end{tabular}
\end{center}
\end{table}
\end{center}

\subsection{Theoretical performance of \sCGAL}
\label{sec:scgal-cost}

We can package up this discussion in a theorem that describes the theoretical performance
of the \sCGAL\ method. %

\begin{theorem}[\sCGAL] \label{thm:scgal} %
Assume the model problem~\cref{eqn:model-problem} satisfies strong duality.
Fix a rank $r$, and set the sketch size $R \geq \zeta^{-1} r + 1$.
After $T = \mathcal{O}(\eps^{-2})$ iterations,
\sCGAL\ (\cref{sec:sketchy-cgal-iteration,sec:loop-invariants})
returns a rank-$r$ approximation $\widehat{\mtx{X}} = \lowrank{\widehat{\mtx{X}}_T}{r}$
to an $\eps$-optimal point of~\cref{eqn:model-problem}
that satisfies~\cref{eqn:goal} with constant probability.

Suppose we use the storage-optimal Lanczos method (\cref{alg:rand-lanczos})
to solve~\cref{eqn:sketchy-cgal-lmo-approx}.  Then the total storage cost for \sCGAL\ is $\mathcal{O}( d + Rn )$ floats.
The arithmetic requirements are $\mathcal{O}( \eps^{-2}(d + Rn) + \eps^{-5/2} n \log(n/\eps) )$ flops,
$\mathcal{O}(\eps^{-5/2} \log(n/\eps))$ invocations of the primitives \cref{eqn:primitives}\primone\primtwo,
and $\mathcal{O}(\eps^{-2})$ applications of the primitive \cref{eqn:primitives}\primthree.
The constants depend on the problem data $(\mtx{C}, \mathcal{A}, \vct{b}, \alpha)$
and algorithm parameters $(\beta_0, K)$.
\end{theorem}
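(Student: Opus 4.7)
The plan is to assemble the theorem by stitching together three results that are already in hand: the deterministic convergence analysis of \CGAL\ (\cref{fact:cgal-converge}), the \emph{a priori} error bound for the Nystr\"om sketch (\cref{fact:nystrom}), and the approximation guarantee for the storage-optimal randomized Lanczos method (\cref{fact:rand-lanczos}). The loop invariants~\cref{eqn:loop-invariant} from \cref{sec:loop-invariants} are what let us reduce the behavior of \sCGAL\ back to that of an invocation of \CGAL\ on its implicit iterates $\mtx{X}_t$, which is the conceptual bridge between the sketched algorithm and the existing theory.

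First I would fix a rank $r$ and set $R \geq \zeta^{-1}r + 1$ so that the contraction factor in~\cref{eqn:nystrom-error} is at most $1+\zeta$ (up to an unimportant constant). I would then argue that the implicit iterates $\mtx{X}_t$ defined in~\cref{eqn:sketchy-cgal-correspondence} are exactly the trajectory of \CGAL\ with the approximate eigenvector oracle~\cref{eqn:cgal-lmo-approx}, via the invariants $\vct{z}_t = \mathcal{A}\mtx{X}_t$ and $\mtx{S}_t = \mtx{X}_t\mtx{\Omega}$. \Cref{fact:cgal-converge} then gives $\|\mathcal{A}\mtx{X}_T - \vct{b}\| \lesssim 1/\sqrt{T}$ and the analogous objective bound, so $T = \mathcal{O}(\eps^{-2})$ iterations suffice to make $\mtx{X}_T$ an $\eps$-optimal point. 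With the Nystr\"om bound~\cref{eqn:Xt-hat-err} applied to $\mtx{X}_T$ and then the triangle inequality, the returned $\widehat{\mtx{X}} = \lowrank{\widehat{\mtx{X}}_T}{r}$ satisfies the approximation goal~\cref{eqn:goal} in expectation over $\mtx{\Omega}$, which yields the stated probability guarantee by Markov.

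The main obstacle is that \cref{fact:cgal-converge} presumes the eigenvector oracle~\cref{eqn:cgal-lmo-approx} succeeds at \emph{every} iteration, whereas \sCGAL\ only has \cref{fact:rand-lanczos}, a per-call high-probability guarantee. I would handle this by choosing the Lanczos budget at iteration $t$ to be $q_t = \Theta(t^{1/4}\log(n/\delta_t))$ with $\delta_t = \delta / (t(t+1))$, so that the Kuczy\'nski--Wo\'zniakowski bound gives eigenvector accuracy $\|\mtx{D}_t\|/\sqrt{t+1}$ with failure probability $\delta_t$. A union bound over all $T$ iterations keeps the total failure probability at most $\delta$, giving an event of constant probability on which both the \CGAL\ convergence analysis and the sketch bound apply; combining with Markov on the Nystr\"om error then yields the constant-probability version of~\cref{eqn:goal}.

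Finally I would read off the resource bounds from Table~\ref{tab:scgal-cost}. Storage is dominated by the sketch $\mtx{S}_t$, the test matrix $\mtx{\Omega}$, and the dual and state vectors, for a total of $\mathcal{O}(d+Rn)$; the Lanczos call adds only $\mathcal{O}(n + q_t)$ scratch which is absorbed. For arithmetic, the outer loop costs $\mathcal{O}(d+Rn)$ per iteration (dominated by the sketch update and the primitive \cref{eqn:primitives}\primthree\ evaluation), totalling $\mathcal{O}(\eps^{-2}(d+Rn))$. The Lanczos calls contribute $\sum_{t=1}^{T} q_t = \mathcal{O}(T^{5/4}\log(n/\eps))$ matrix--vector products with $\mtx{D}_t$, each implemented through \cref{eqn:primitives}\primone\primtwo, plus $\mathcal{O}(q_t n)$ extra operations; with $T = \mathcal{O}(\eps^{-2})$ this gives $\mathcal{O}(\eps^{-5/2}\log(n/\eps))$ primitive calls and $\mathcal{O}(\eps^{-5/2} n\log(n/\eps))$ additional flops, matching the claimed bounds.
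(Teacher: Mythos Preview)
Your assembly is exactly how the paper packages this theorem: the loop invariants of \cref{sec:loop-invariants} identify the implicit iterates with a \CGAL\ trajectory, \cref{fact:cgal-converge} then gives $\eps$-optimality of $\mtx{X}_T$ after $T=\mathcal{O}(\eps^{-2})$ steps, \cref{fact:rand-lanczos} with a union bound over $q_t=\Theta(t^{1/4}\log(tn))$ handles the eigenvector subproblems, \cref{fact:nystrom} controls the sketch error, and the resource counts come straight from Table~\ref{tab:scgal-cost}.

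The one place that needs more care is your passage from the expectation bound~\cref{eqn:Xt-hat-err} to~\cref{eqn:goal} ``with constant probability'' via Markov: with $R=\zeta^{-1}r+1$ the Nystr\"om factor is $1+r/(R-r-1)=1/(1-\zeta)>1+\zeta$, so Markov applied to $\Expect_{\mtx{\Omega}}\norm{\mtx{X}_T-\widehat{\mtx{X}}}_*$ against the threshold $(1+\zeta)\,\norm{\mtx{X}_T-\lowrank{\mtx{X}_T}{r}}_*$ yields a ratio at least $1$ and hence no usable probability bound. You should instead invoke the high-probability form of the Nystr\"om estimate mentioned at the end of \cref{fact:nystrom}, or equivalently absorb a fixed constant into $R$ (which your parenthetical ``up to an unimportant constant'' already anticipates); that, rather than Markov on the expectation, is what the paper is tacitly using.
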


In practice, \sCGAL\ performs better than \cref{thm:scgal}
suggests: empirically, we find that $T = \mathcal{O}(\eps^{-1})$
for real-world problem instances.

\section{Numerical examples}
\label{sec:numerics}

This section showcases computational experiments
that establish that \sCGAL\ is a practical method
for solving large SDPs.
We show that the algorithm is {flexible} by applying it to several classes of SDPs,
and we show it is {reliable} by solving a large number of instances of each type.
We give empirical evidence that the (implicit) iterates
{converge} to optimality much faster than \cref{thm:scgal} suggests.
Comparisons with other general-purpose SDP solvers demonstrate
that \sCGAL\ is {competitive} for small SDPs,
while it {scales} to problems that standard
methods cannot handle.

\subsection{Setup}

All experiments are performed in \textsc{Matlab\_R2018}a
with double-precision arithmetic. %
Source code is included with the supplementary material.
To simulate the processing power of a personal laptop computer,
we use a single Intel Xeon CPU E5-2630 v3, clocked at 2.40 GHz,
with RAM usage capped at 16 GB.

Arithmetic costs are measured in terms of actual run time. %
\textsc{Matlab} does not currently offer a %
memory profiler, so we externally monitor the total memory allocated. %
We approximate the storage cost by reporting the peak value minus the storage at \textsc{Matlab}'s idle state.

\subsubsection{Problem scaling}
\label{sec:numerics-scaling}

The bounds in the convergence theorem \cref{fact:cgal-converge}
for the implicit iterates of \sCGAL\ depend on problem scaling.  Our analysis motivates us to set
\begin{equation}
\label{eqn:problem-scaling}
\fnorm{\mtx{C}} = \norm{\mathcal{A}} = \alpha = 1
\quad\text{and}\quad
\fnorm{\mtx{A}_1} = \fnorm{\mtx{A}_2} = \dots = \fnorm{\mtx{A}_d}. %
\end{equation}
In our experiments, we enforce the scalings~\cref{eqn:problem-scaling},
except where noted.

\subsubsection{Implementation}

Our experiments require a variant of \sCGAL\ that can handle
inequality constraints; see~\cref{sec:extensions}.
We implement the algorithm with the default parameters ($\beta_0 = 1$ and $K = +\infty$).
The sketch uses a Gaussian test matrix.
The eigenvalue subproblem is solved via randomized Lanczos (\cref{alg:rand-lanczos}).
We include the optional trace normalization (line 13 in \cref{alg:sketchy-cgal})
whenever appropriate.  \textbf{No other tuning is done.}

\subsection{The MaxCut SDP}
\label{sec:maxcut-numerics}

We begin with \textsf{MaxCut} SDPs~\cref{eqn:maxcut-sdp}.
Our goal is to assess the storage and arithmetic costs of
\sCGAL\ for a standard testbed.
We compare with provable solvers for general SDPs:
\textsf{SEDUMI} \cite{S98guide}, \textsf{SDPT3} \cite{TTT99:SDPT3}, \textsf{MOSEK} \cite{mosek}, and \textsf{SDPNAL+} \cite{YST15:SDPNAL}.

\subsubsection{Rounding}
\label{sec:maxcut-rounding}

To extract a cut from an approximate solution to~\cref{eqn:maxcut-sdp},
we apply a simple rounding procedure.
\sCGAL\ returns a matrix $\widehat{\mtx{X}} = \mtx{U\Lambda U}^*$,
where $\mtx{U} \in \R^{n \times R}$ has orthonormal columns.
The columns of the entrywise signum, $\sgn(\mtx{U})$, are the signed indicators of $R$ cuts.
We compute the weights of all $R$ cuts and select the largest.
The other solvers return a full-dimensional solution $\widehat{\mtx{X}}$;
we compute the top $R$ eigenvectors of $\widehat{\mtx{X}}$ using the \textsc{Matlab} command \texttt{eigs}
and invoke the same rounding procedure.

\subsubsection{Datasets}

We consider datasets from two different benchmark groups:
\begin{enumerate} %
\item \textsc{Gset}: 67 binary-valued matrices generated by an autonomous random graph generator and published online \cite{Gset}. The dimension $n$ varies from $800$ to $10\,000$.
\item \textsc{Dimacs10}: This benchmark consists of 150 symmetric matrices (with $n$ varying from $39$ to $50\,912\,018$) chosen for the $10$th~\textsc{Dimacs} Implementation Challenge \cite{DIMACS10}. We consider 148 datasets with dimension $n \leq 24\,000\,000$.
Two problems (\texttt{rgg\_n\_2\_24\_s0} and \texttt{Europe\_osm}) exceeded the 16 GB storage limit.
\end{enumerate}

\subsubsection{Storage and arithmetic comparisons}
\label{sec:maxcut-fig1}

We run each solver for each dataset and measure the storage cost and the runtime. We invoke \sCGAL\ with rank parameter $R = 10$,
and we stop the algorithm when the error bound~\cref{eqn:scgal-posterior-error-bd}
guarantees that the implicit iterates have both {relative} suboptimality and infeasibility below {$10^{-1}$}.
{For other solvers, we set the relative error tolerance to $10^{-1}$.}
See \cref{sec:maxcut-details-supp} for implementation details. 

\Cref{fig:MaxCut-1} displays the results of this experiment. %
The conventional convex solvers do not scale beyond $n = 10^4$ due to their high storage costs.
In contrast, \sCGAL\ handles problems with $n \approx 2.2 \cdot 10^7$.
For 9 datasets, \sCGAL\ did not reach the error tolerance within 5 days.
Further details and results appear in~\cref{sec:maxcut-supp}. %

\begin{figure}[t!]
    \includegraphics[width=0.325\textwidth]{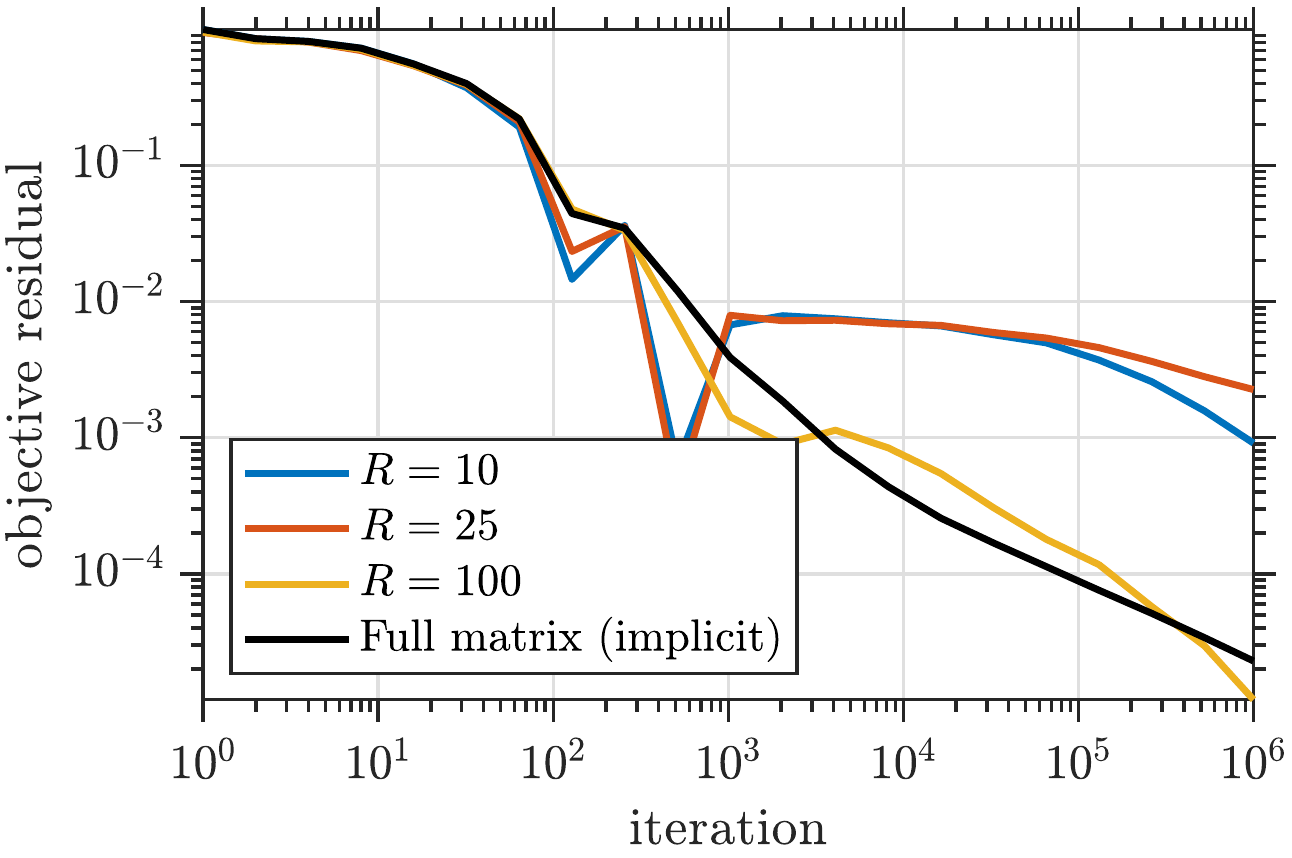}
    \includegraphics[width=0.325\textwidth]{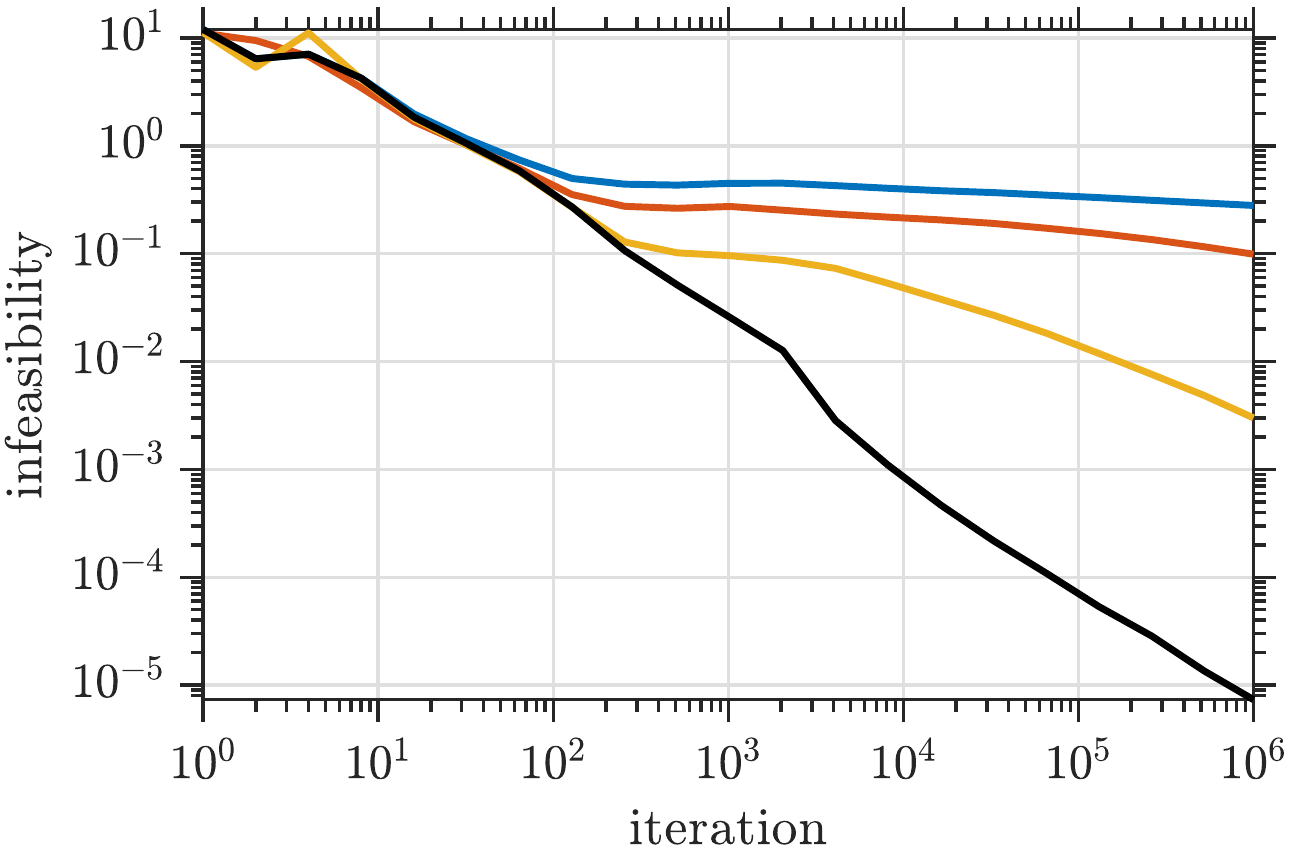}
    \includegraphics[width=0.325\textwidth]{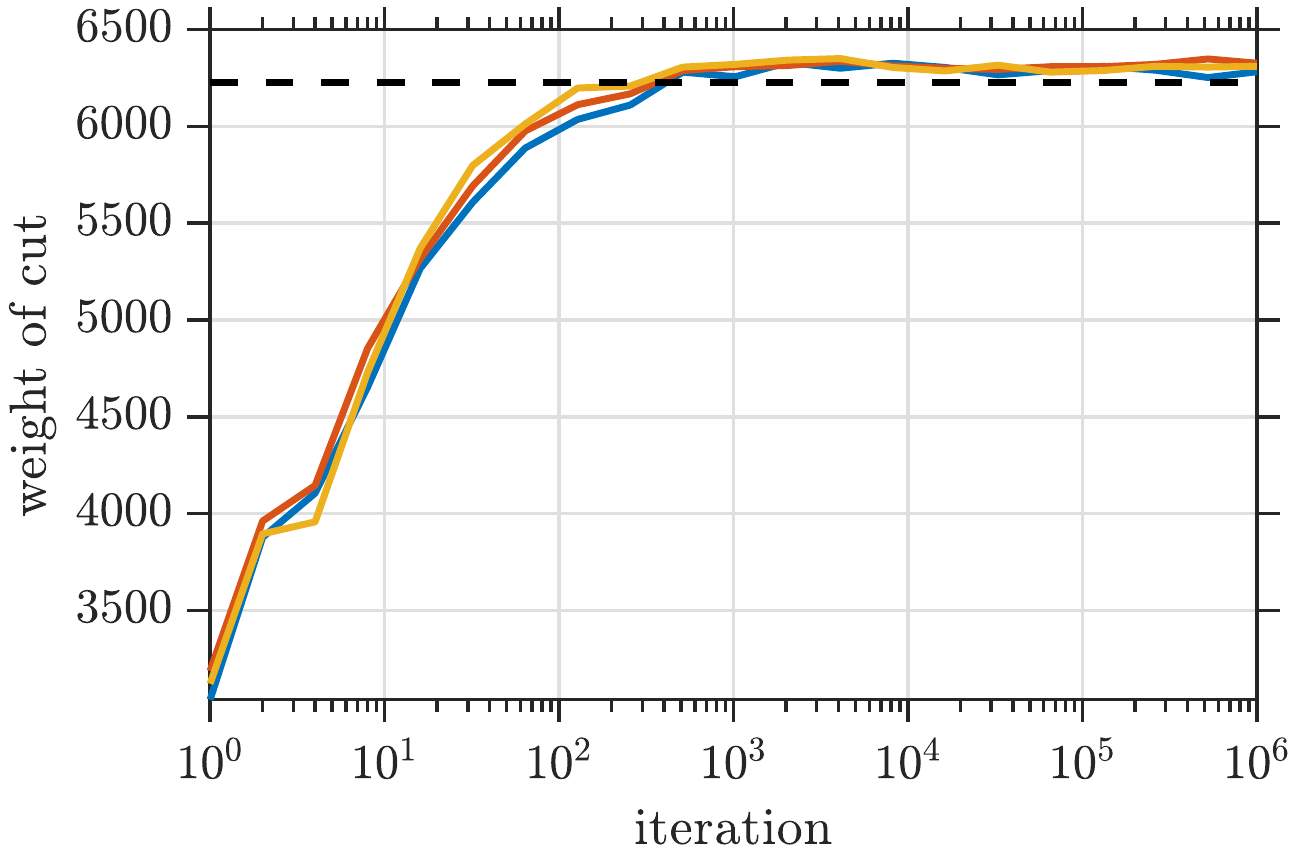} \\[0.5em]
    \includegraphics[width=0.325\textwidth]{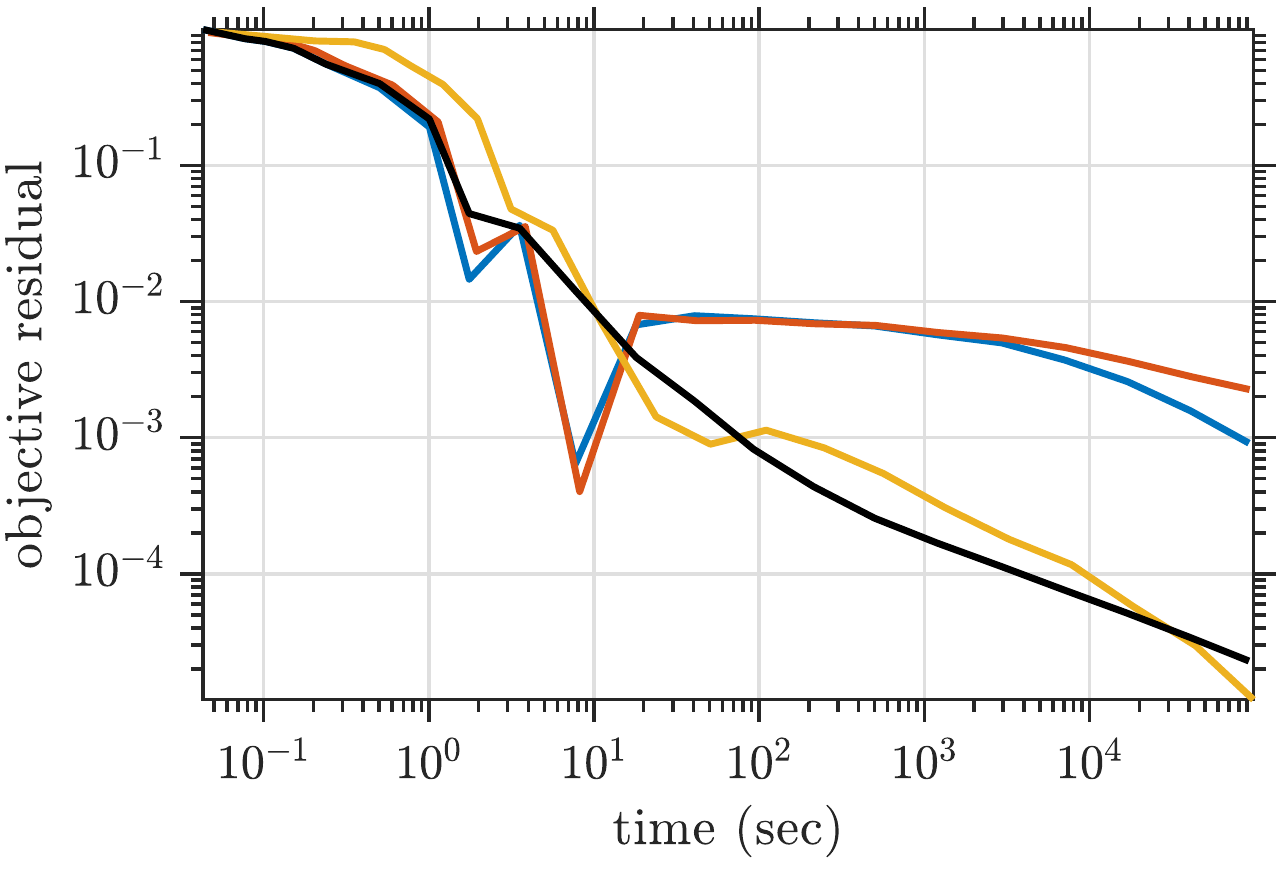}
    \includegraphics[width=0.325\textwidth]{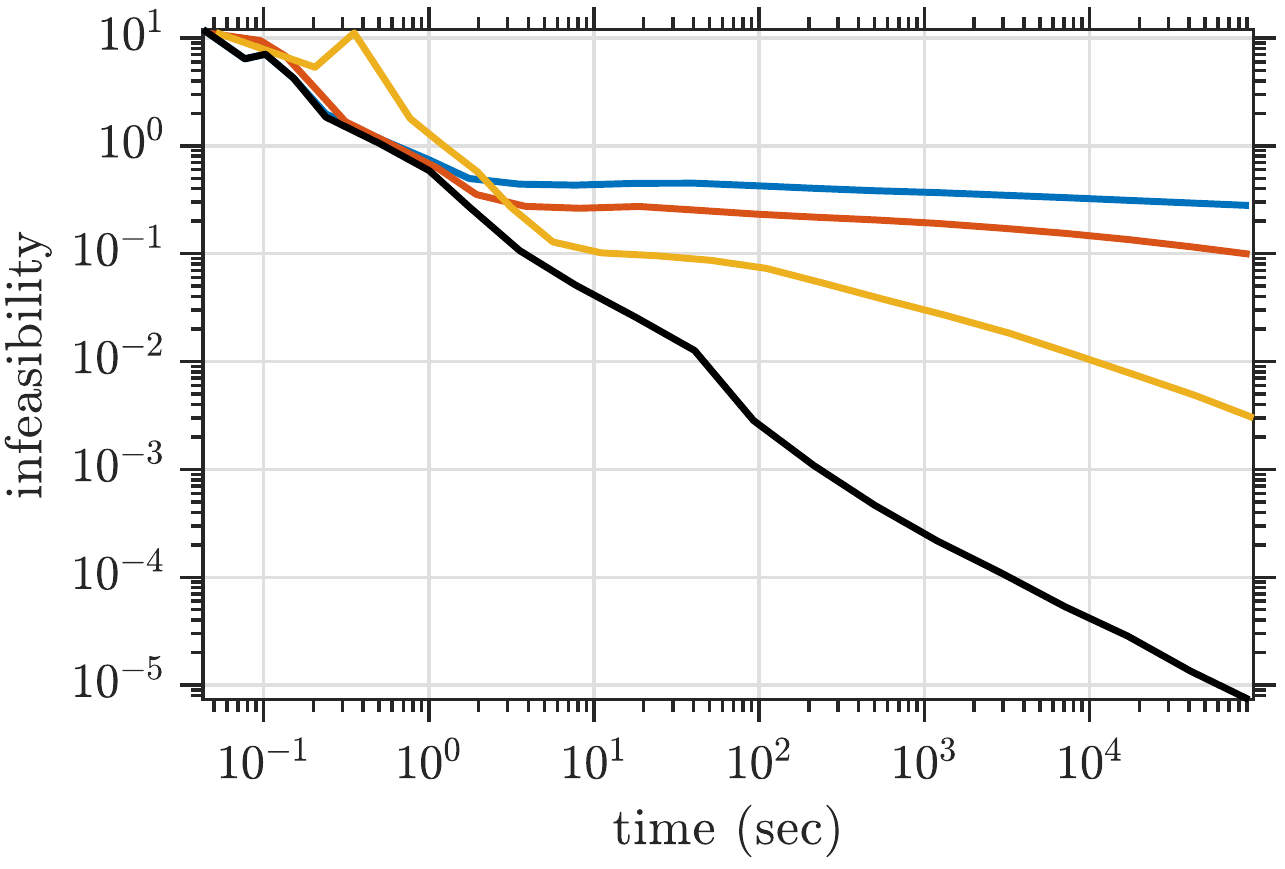}
    \includegraphics[width=0.325\textwidth]{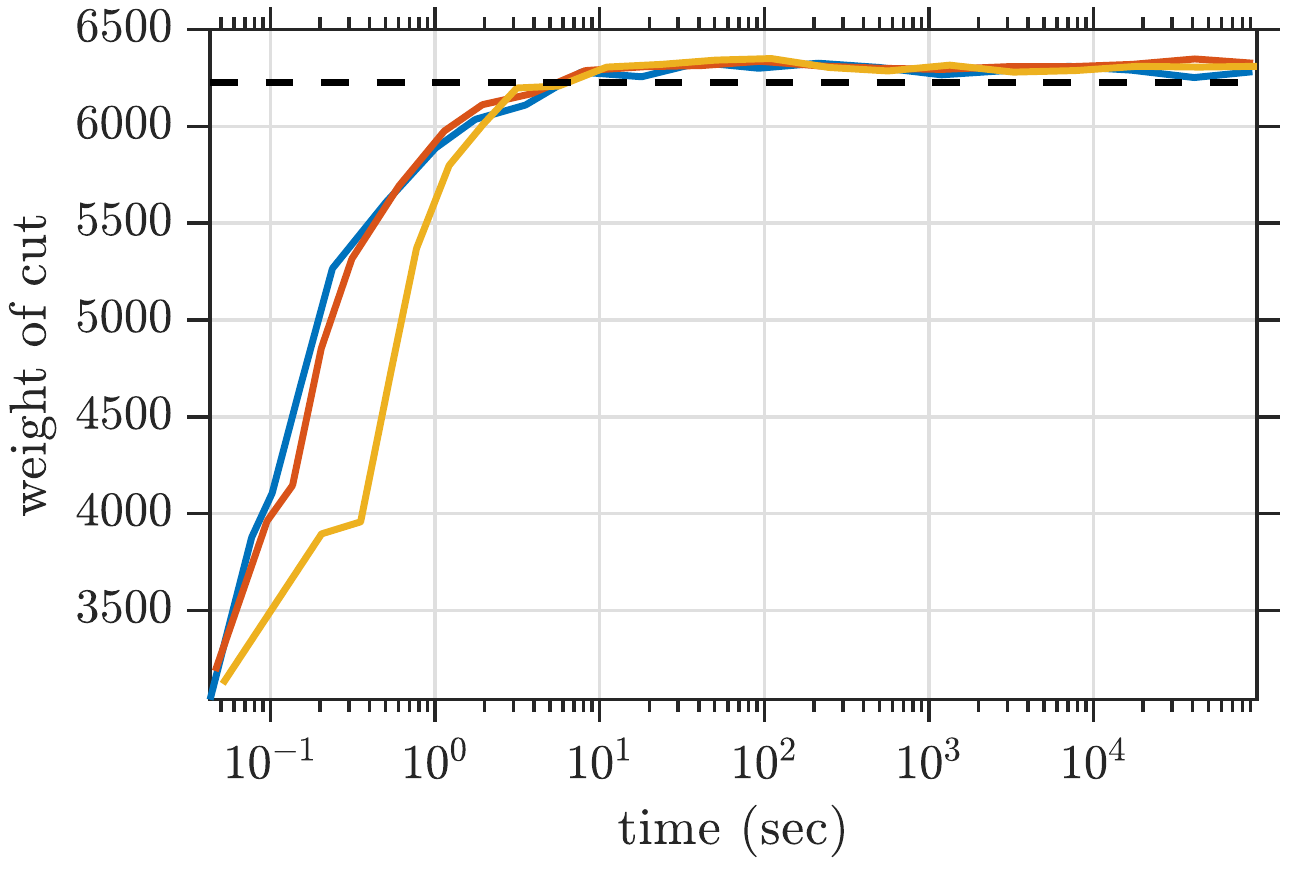}
\caption{\textsf{\textbf{\textsf{MaxCut} SDP: Convergence.}}
We solve the \textsf{MaxCut} SDP for the \textsf{G67} dataset ($n = 10\,000$) with \sCGAL.  The subplots show the suboptimality [left], infeasibility [center], and cut value [right] of the implicit iterates and low-rank approximations for sketch size $R \in \{10,25,100\}$ as a function of iteration [top] and run time [bottom].  The dashed line is the cut value of a high-accuracy \textsf{SDPT3} solution.  See~\cref{sec:maxcut-fig2}.}
\label{fig:MaxCut-2}
\end{figure}

\subsubsection{Empirical convergence rates}
\label{sec:maxcut-fig2}

Next, we investigate the empirical convergence of \sCGAL\ and the effect of the sketch size parameter $R$.  We consider the \textsf{MaxCut} SDP~\cref{eqn:maxcut-sdp} for the \textsf{G67} dataset ($n = 10\,000$), the largest instance in \textsc{Gset}.
We run $10^6$ iterations of \sCGAL\ for each $R \in \{10, 25, 100\}$.

We use a high-accuracy solution from \textsf{SDPT3} to approximate an optimal point $\mtx{X}_{\star}$
of~\cref{eqn:model-problem}.  Given a prospective solution $\mtx{X}$,
we compute its relative suboptimality and feasibility as
$$
\texttt{objective residual} = \frac{\abs{\ip{\mtx{C}}{\mtx{X}} - \ip{\mtx{C}}{\mtx{X}_{\star}}}}{1 + \abs{\ip{\mtx{C}}{\mtx{X}_{\star}}}}
\quad\text{and}\quad
\texttt{infeasibility} = \frac{\norm{\mathcal{A}\mtx{X} - \vct{b}}}{1 + \norm{\vct{b}}}.
$$
It is standard to increment the denominator by one to handle small values gracefully.
These quantities are evaluated with respect to the original (not rescaled) problem data.

\Cref{fig:MaxCut-2} illustrates the convergence of \sCGAL\ for this problem.
After $t$ iterations, the residual and infeasibility decay like $\mathcal{O}(t^{-1})$,
which is far better than the $\mathcal{O}(t^{-1/2})$ bound in \cref{thm:scgal}.
Similar behavior is manifest for other datasets and other problems.

\cref{fig:MaxCut-2} also displays the weight of the cut obtained after rounding, compared with the weight of the cut obtained from the \textsf{SDPT3} solution.
Observe that sketch size $R = 10$ is sufficient, and the \sCGAL\ solutions yield excellent cuts after a few hundred iterations.

\subsubsection{Primal--dual convergence}

We have observed empirically that convergence occurs for the implicit primal
sequence $(\mtx{X}_t)$, the dual sequence $(\vct{y}_t)$,
and the posterior error bound~\cref{eqn:scgal-posterior-error-bd} generated by \sCGAL.
See \cref{sec:primal-dual-conv-supp} for numerical evidence.

\subsubsection{Hard \textsf{MaxCut} instances}

Waldspurger \& Waters~\cite{WW18:RankOptimality} construct instances of
the \textsf{MaxCut} SDP~\cref{eqn:maxcut-sdp} that are challenging for
algorithms based on the Burer--Monteiro~\cite{Burer2003} factorization heuristic
(\cref{sec:bm}).
Each instance has a unique solution and the solution has rank $1$, but Burer--Monteiro
methods require factorization rank $R = \Theta(\sqrt{n})$, resulting
in storage cost $\Theta(n^{3/2})$.
In~\cref{sec:bm-hard}, we give numerical evidence that \sCGAL\
can solve these instances with sketch size $R = 2$, achieving
the optimal storage $\Theta(n)$.  We confirm that Burer--Monteiro
usually fails in the optimal-storage regime.

\subsection{Abstract phase retrieval}
\label{sec:numerics-phase-retrieval}

Phase retrieval is the problem of reconstructing a complex-valued signal from intensity-only measurements.
It arises in interferometry~\cite{Fie82:Phase-Retrieval}, speech processing~\cite{BCE06:Signal-Reconstruction},
array imaging~\cite{CMP10:Array-Imaging}, microscopy~\cite{Horstmeyer_2015}, and many other applications.
We will outline a standard method~\cite{CMP10:Array-Imaging,Horstmeyer_2015} for performing phase retrieval
by means of an SDP. %

This section uses synthetic instances of a phase retrieval SDP to compare the scaling behavior
of \sCGAL\ and \CGAL.  We also consider a third algorithm \textsf{ThinCGAL},
inspired by~\cite{YHC15:ScalablePhaseRetrieval},
that maintains a thin SVD of the matrix variable via rank-one updates~\cite{Bra06:Fast-Low-Rank}.

\subsubsection{Phase retrieval SDPs}

Let $\vct{\chi}_\natural \in \C^n$ be an unknown (discrete) signal. %
For known vectors $\vct{a}_i \in \C^n$, we acquire measurements of the form
\begin{equation}
\label{eqn:phase-retrieval-measurements}
b_i = \abs{\ip{\vct{a}_i}{\vct{\chi}_\natural}}^2
\quad \text{for} \quad i=1,2,3,\dots,d.
\end{equation}
Abstract phase retrieval is the challenging problem of recovering $\vct{\chi}_{\natural}$
from $\vct{b}$.

Let us summarize a lifting approach introduced by Balan et al.~\cite{BBCE09:Painless-Reconstruction}. %
The key idea is to replace the signal vector $\vct{x}_{\natural}$
by the matrix $\mtx{X}_{\natural} = \vct{\chi}_{\natural} \vct{\chi}_{\natural}^*$.
Then rewrite~\cref{eqn:phase-retrieval-measurements} as
\begin{equation} \label{eqn:phase-retrieval-meas-linear}
b_i = \vct{a}_i^*{\mtx{X}_\natural}\vct{a}_i =  \ip{\mtx{A}_i }{\mtx{X}_\natural}
\quad \text{where} \quad \mtx{A}_i = \vct{a}_i \vct{a}_i^*
\quad\text{for $i = 1, 2, 3, \dots, d$.}
\end{equation}
Promoting the implicit constraints on $\mtx{X}_{\natural}$ and forming the $\mtx{A}_i$
into a linear map $\mathcal{A}$, we can express the problem of finding $\mtx{X}_{\natural}$
as a %
feasibility problem with a matrix variable:
$$
\mathrm{find} \quad \mtx{X} \in \Sym_n \quad
\subjto \quad \mathcal{A}\mtx{X} = \vct{b},\quad \mtx{X}\text{~is~psd},\quad \rank(\mtx{X}) = 1.
$$
To reach a tractable convex formulation, we pass to a trace minimization SDP~\cite{Faz02:Matrix-Rank}:
\begin{equation}
\label{eqn:numerics-phase-retrieval-sdp}
\minimize \quad \trace{\mtx{X}} \quad
\subjto \quad \mathcal{A}\mtx{X} = \vct{b}, \quad \mtx{X}\text{~is~psd}, \quad \trace{\mtx{X}} \leq \alpha.
\end{equation}
The parameter $\alpha$ is an upper bound on the signal energy $\norm{\vct{\chi}_\natural}^2$,
which can be estimated from the observed data $\vct{b}$; see \cite{YHC15:ScalablePhaseRetrieval} for the details.
We can solve the SDP~\cref{eqn:numerics-phase-retrieval-sdp} via \sCGAL.

\subsubsection{Rounding}
\label{sec:abstract-phase-retrieval-rounding}

Suppose that we have obtained an approximate solution $\mtx{X}$ to~\cref{eqn:numerics-phase-retrieval-sdp}.
To estimate the signal $\vct{\chi}_{\natural}$,
we form the vector $\vct{\chi} = \sqrt{\lambda} \vct{u}$ where $(\lambda, \vct{u})$ is a maximum
eigenpair of $\mtx{X}$.
Both \sCGAL\ and \textsf{ThinCGAL} return eigenvalue decompositions,
so this step is trivial.  For \CGAL, we use the \textsc{Matlab} function \texttt{eigs} to perform this computation.

\subsubsection{Dataset and evaluation}
\label{sec:apr-dataset}

We consider synthetic phase retrieval instances.
For each $n \in \{ 10^2, 10^3, \dots, 10^6 \}$, we generate $20$ independent datasets as follows. %
First, draw $\vct{\chi}_\natural \in \C^n$ from the complex standard normal distribution.
Then acquire $d = 12n$ phaseless measurements \cref{eqn:phase-retrieval-measurements}
using the coded diffraction pattern model \cite{CLS15:CodedDiffractionPhaseLift};
see \cref{sec:synthetic-phase-supp}.  The induced linear maps $\mathcal{A}$ and $\mathcal{A}^*$
can be applied via the fast Fourier transform (FFT).

The relative error in a signal reconstruction $\vct{\chi}$ is given by
$\min_{\phi\in\R} \norm{\econst^{\mathrm{i}\phi}\vct{\chi} - \vct{\chi}_\natural}/\norm{\vct{\chi}_\natural}$.
In the SDP~\cref{eqn:numerics-phase-retrieval-sdp}, we set $\alpha = 3n$
to demonstrate insensitivity of the algorithm to the choice of $\alpha$. %

\begin{figure} %
    \centering
        \includegraphics[height=4.8cm]{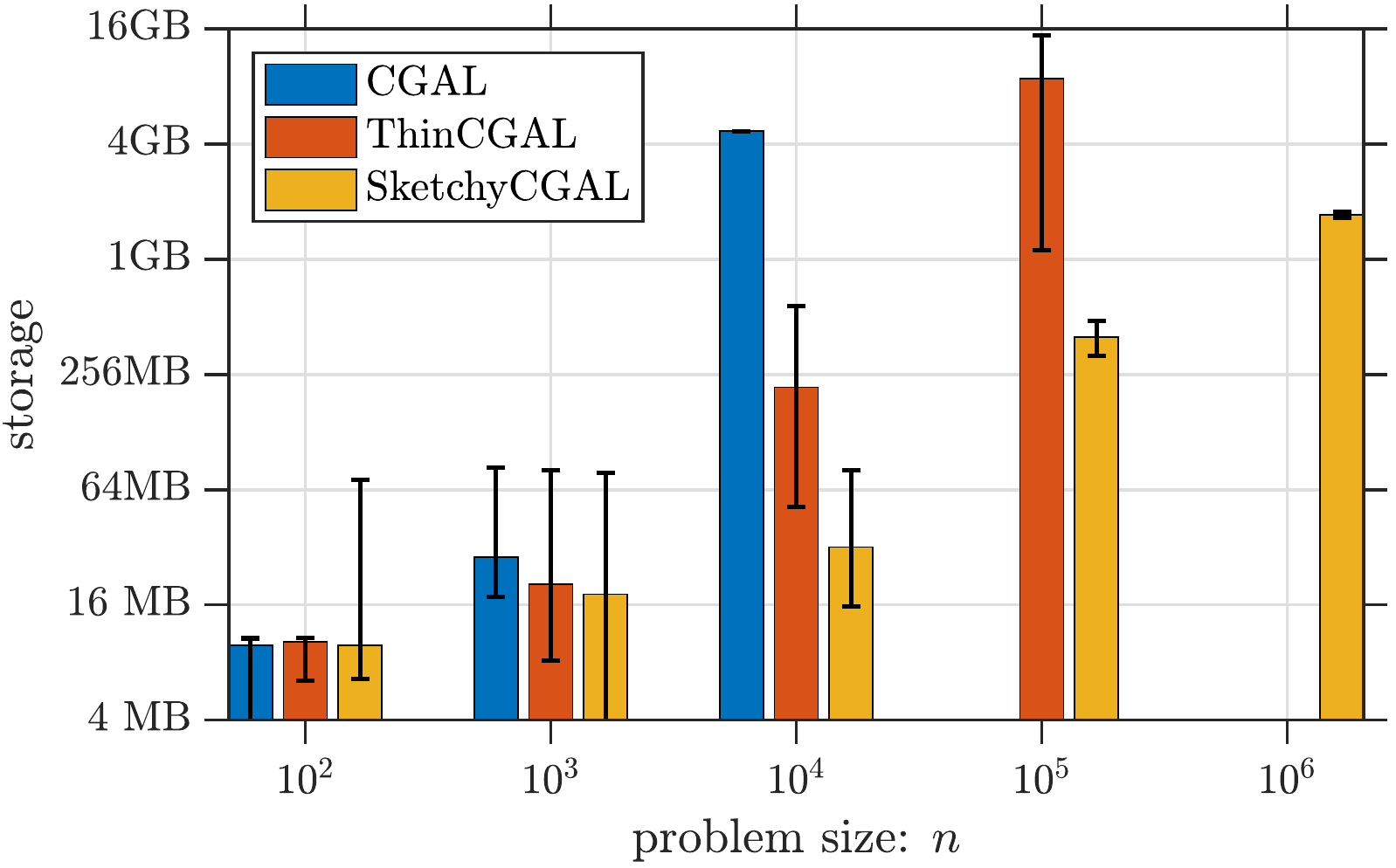}
        \hfill
        \includegraphics[height=4.8cm]{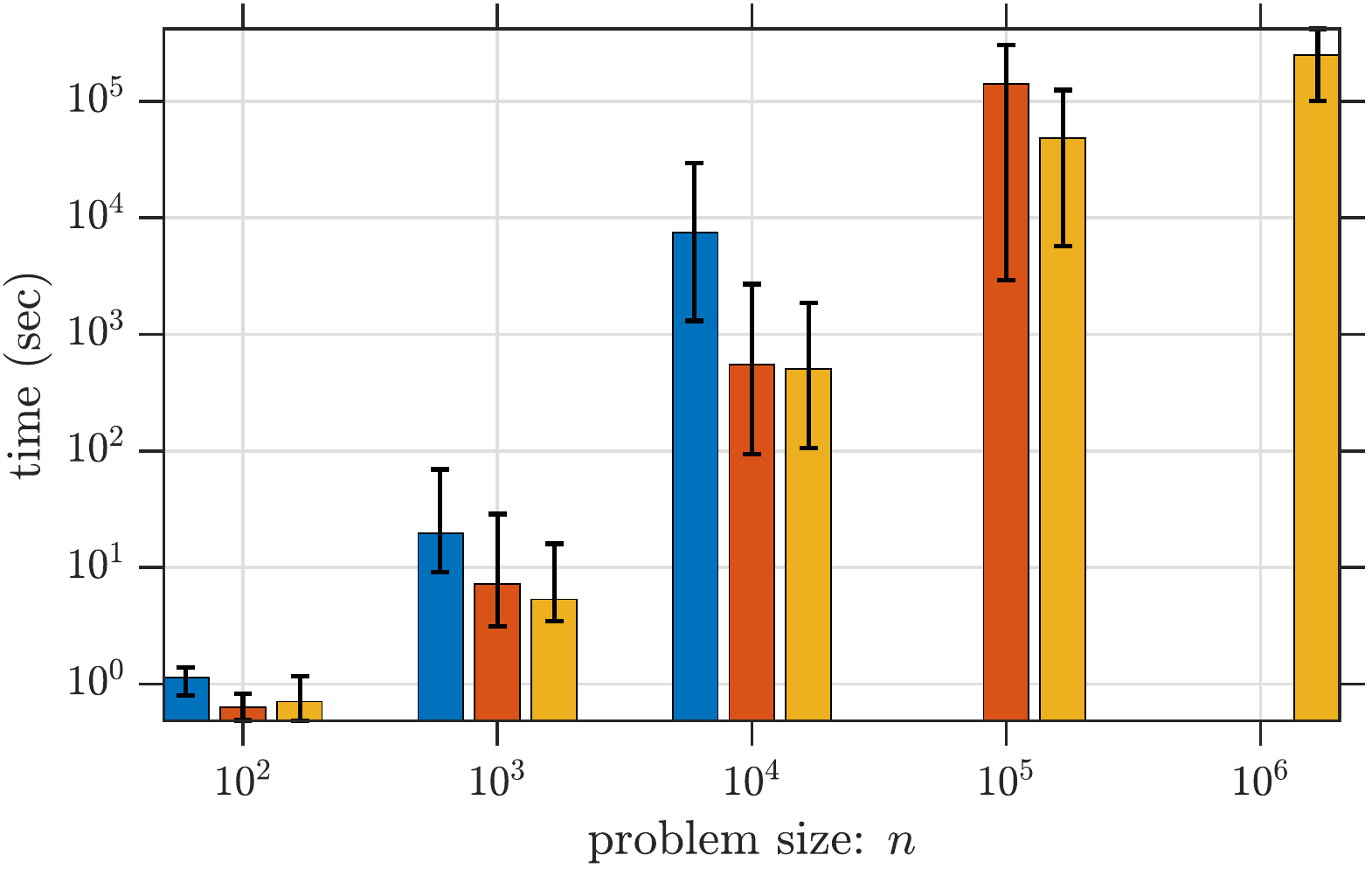}
\caption{\textsf{\textbf{Phase retrieval SDP: Scalability.}}
Storage cost {[left]} and runtime {[right]} to solve random instances
with algorithms \CGAL, \textsf{ThinCGAL}, and \sCGAL.  The height of each bar is the mean; the interval marks the minimum and maximum over 20 trials. Missing bars indicate total failure. See~\cref{sec:apr}.}
\label{fig:abstract-phase-retrieval}
\end{figure}

\subsubsection{Storage and arithmetic comparisons}
\label{sec:apr}

For each algorithm, we report the storage cost and runtime required to produce
a signal estimate $\vct{\chi}$ with (exact) relative error below $10^{-2}$.
We invoke \sCGAL\ with sketch size parameter $R = 5$.

\Cref{fig:abstract-phase-retrieval} displays the outcome.
We witness the benefit of sketching for both storage and arithmetic.
\CGAL~fails for all large-scale instances $(n = 10^5$ and $10^6)$ due to storage allocation.
For the same reason, \textsf{ThinCGAL} fails for $n = 10^6$.
\sCGAL\ successfully solves all problem instances to the target accuracy.

\subsection{Phase retrieval in microscopy}

Next, we study a more realistic phase retrieval problem that arises
from a type of microscopy system~\cite{ZHY13:Wide-Field-High-Resolution}
called Fourier ptychography (FP).  Phase retrieval SDPs
offer a potential approach to FP imaging~\cite{Horstmeyer_2015}.
This section shows that \sCGAL\ can successfully
solve the difficult phase retrieval SDPs that arise from FP.

\subsubsection{Fourier ptychography}

FP microscopes circumvent the physical limits
of a simple lens to achieve high-resolution and wide field-of-view
simultaneously~\cite{ZHY13:Wide-Field-High-Resolution}.
To do so, an FP microscope illuminates a sample
from many angles and uses a simple lens to collect
low-resolution intensity-only images.
The measurements are low-pass filters, whose transfer functions %
depend on the lens and the angle of illumination~\cite{Horstmeyer_2015}.
From the data, we form a high-resolution
image by solving %
a phase retrieval problem;
e.g., via the SDP~\cref{eqn:numerics-phase-retrieval-sdp}.

The high-resolution image of the sample
is represented by a Fourier-domain vector
$\vct{\chi}_{\natural} \in \C^n$.
We acquire $d$ intensity-only measurements of
the form~\cref{eqn:phase-retrieval-measurements},
where $d$ is the total number of pixels in the
low-resolution illuminations.
The low-pass measurements are encoded in vectors $\vct{a}_i$. %
The operators $\mathcal{A}$ and $\mathcal{A}^*$,
built from the matrices $\mtx{A}_i = \vct{a}_i\vct{a}_i^*$, %
can be applied via the FFT.

\subsubsection{Dataset and evaluation}

The authors of \cite{Horstmeyer_2015} provided transmission matrices $\mtx{A}_i$ of a working FP system.
We simulate this system in the computer environment to acquire noiseless intensity-only measurements
of a high-resolution target image~\cite{Con19:Marburg-Virus}. %
In this setup, $\mtx{\chi}_\natural \in \C^n$ corresponds to the Fourier transform of
an $n = 320^2 = 102\,400$ pixel grayscale image.
We normalize $\mtx{\chi}_\natural$ so that $\norm{\mtx{\chi}_\natural} = 1$.
We acquire $225$ low-resolution illuminations of the original image, each with $64^2 = 4\,096$ pixels.
The total number of measurements is $d = 921\,600$.  %

We evaluate the error of an estimate $\vct{\chi}$ as in \cref{sec:apr-dataset}.
Although we know that the signal energy equals $1$, it is more realistic
to approximate the signal energy by setting $\alpha = 1.5$
in the SDP~\cref{eqn:numerics-phase-retrieval-sdp}.

\begin{figure} %
    \centering
    \subfloat[$t = 10\quad$ (69 sec)]{\label{fig:Ptychography-10}\includegraphics[scale=0.44]{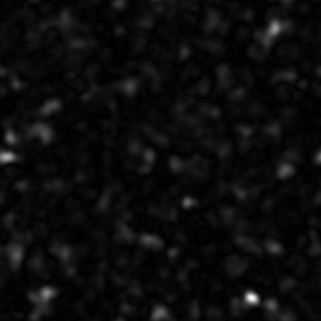} } ~
    \subfloat[$t = 100\quad$ (1\,063 sec)]{\label{fig:Ptychography-100}\includegraphics[scale=0.44]{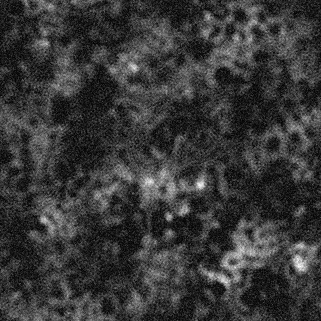} } \\
    \subfloat[$t = 1\,000\quad$ (18\,398 sec)]{\label{fig:Ptychography-1000}\includegraphics[scale=0.44]{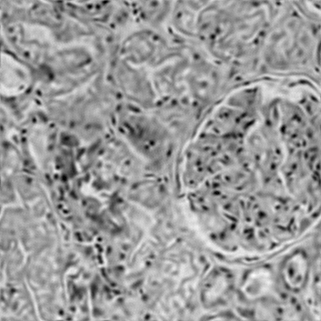} } ~
    \subfloat[$t = 10\,000\quad$ (209\,879 sec)]{\label{fig:Ptychography-10000}\includegraphics[scale=0.44]{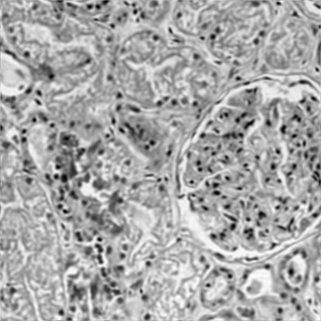} } ~
    \subfloat[original]{\fboxsep=0mm\fboxrule=1pt\label{fig:Ptychography-original}\fcolorbox{red}{white}{\includegraphics[scale=0.44]{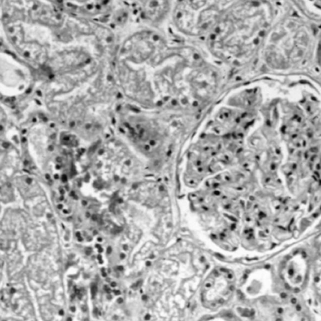}}} \\
    \caption{\textbf{\textsf{Phase retrieval SDP: Imaging.}}
    Reconstruction of an $n = 320^2$ pixel image from Fourier ptychography data.  We solve an $n \times n$ phase retrieval SDP via \sCGAL\ with rank parameter $R = 5$
    and show the images obtained at iterations $t = 10^3, 10^4$. %
    The last subfigure is the original.  See~\cref{sec:FP}.}
\label{fig:Ptychography}
\end{figure}

\subsubsection{FP imaging} %
\label{sec:FP}

We solve the phase retrieval SDP~\cref{eqn:numerics-phase-retrieval-sdp}
by performing $10\,000$ iterations of \sCGAL\ with rank parameter $R = 5$.
The top eigenvector of the output gives an approximation
$\vct{\chi} \in \C^n$ of the signal. %
The inverse Fourier transform of $\vct{\chi}$ is the desired image.

\Cref{fig:Ptychography} displays the image reconstruction after $t \in \{10, 10^2, 10^3, 10^4\}$ iterations.
We obtain a good-quality result in $5$ hours after $1\,000$ iterations;
a sharper image emerges in $59$ hours after $10\,000$ iterations are complete.
We believe the computational time can be reduced substantially with a parallel or GPU implementation.
Nevertheless, it is gratifying that we have solved a difficult SDP whose matrix variable
has over $10^{10}$ entries.  See~\cref{sec:FP-supp} for a larger FP instance with $n = 640^2$ pixels.

\subsection{The quadratic assignment problem}
\label{sec:numerics-qap}

The quadratic assignment problem (QAP) is a very difficult combinatorial
optimization problem that includes the traveling salesman, max-clique, bandwidth problems,
and many others as special cases~\cite{LMO+07:Survey-Quadratic}. %
SDP relaxations offer a powerful approach for obtaining good solutions
to large QAP problems~\cite{Zhao1998}.  In this section, we demonstrate that
\sCGAL\ can solve these challenging SDPs.

\subsubsection{QAP}

We begin with the simplest form of the QAP.
Fix symmetric $\mathtt{n} \times \mathtt{n}$ matrices $\mtx{A}, \mtx{B} \in \mathbb{S}_{\mathtt{n}}$
where $\mathtt{n}$ is a natural number.  We wish to ``align'' the matrices by solving
\begin{equation} \label{eqn:qap}
\minimize \quad \trace( \mtx{A\Pi B\Pi}^* ) \quad
\subjto \quad \text{$\mtx{\Pi}$ is an $\mathtt{n} \times \mathtt{n}$ permutation matrix.}
\end{equation}
Recall that a permutation matrix $\mtx{\Pi}$ has precisely one nonzero entry in each row and
column, and that nonzero entry equals one.  A brute force search over the $\mathtt{n}!$
permutation matrices of size $\mathtt{n}$ quickly becomes intractable as $\mathtt{n}$ grows.
Unsurprisingly, the QAP problem~\cref{eqn:qap}
is~\textsf{NP}-hard~\cite{SG76:P-Complete-Approximation}.
Instances with $\mathtt{n} > 30$ usually cannot
be solved in reasonable time.

\subsubsection{Relaxations}

There is an extensive literature on SDP relaxations for QAP,
beginning with the work~\cite{Zhao1998} of Zhao et al.
We consider a weaker relaxation inspired
by~\cite{HCG14:Scalable-Semidefinite,BravoFerreira2018}:
\begin{equation}
\label{eqn:qap-sdp}
\begin{aligned}
\minimize \quad &\trace[ (\mtx{B} \otimes \mtx{A}) \mathsf{Y} ] \\
\subjto \quad &\trace_1(\mathsf{Y}) = \Id, \quad \trace_2(\mathsf{Y}) = \Id,
	\quad \mathcal{G}(\mathsf{Y}) \geq \mathsf{0}, \\
	&\mathrm{vec}(\mtx{P}) = \diag(\mathsf{Y}), \quad
	\mtx{P}\vct{1} = \vct{1}, \quad \vct{1}^* \mtx{P} = \vct{1}^*,
	\quad \mtx{P} \geq \vct{0}, \\
	&\begin{bmatrix} 1 & \mathrm{vec}(\mtx{P})^* \\
	\mathrm{vec}(\mtx{P}) & \mathsf{Y} \end{bmatrix} \psdge \mtx{0},
	\quad \trace \mathsf{Y} = n. %
\end{aligned}
\end{equation}
We have written $\otimes$ for the Kronecker product.

The constraint $\mathcal{G}(\mathsf{Y}) \geq \mathsf{0}$ enforces
nonnegativity of a subset of the entries in $\mathsf{Y}$.
In the Zhao et al.~relaxation, $\mathcal{G}$ is the identity map,
so it yields $\mathcal{O}(\mathtt{n}^4)$ constraints.
We reduce the complexity %
by choosing $\mathcal{G}$ more carefully.
In our formulation, $\mathcal{G}$ extracts precisely the
nonzero entries of the matrix $\mtx{B} \otimes \vct{11}^*$.
This is beneficial because $\mtx{B}$ is sparse in many applications.
For example, in traveling salesman and bandwidth problems,
$\mtx{B}$ has $\mathcal{O}(\mathtt{n})$ nonzero entries,
so the map $\mathcal{G}$ produces only $\mathcal{O}(\mathtt{n}^3)$
constraints.

The main variable $\mathsf{Y}$ in~\cref{eqn:qap-sdp} has dimension $\mathtt{n}^2 \times \mathtt{n}^2$.
As a consequence, the problem has $\mathcal{O}(\mathtt{n}^4)$ degrees of freedom,
together with $\mathcal{O}(\mathtt{n}^3)$ to $\mathcal{O}(\mathtt{n}^4)$ constraints (depending on $\mathcal{G}$).
The explosive growth of this relaxation scuttles most algorithms
by the time $\mathtt{n} > 50$.  To solve larger instances,
many researchers resort to even weaker relaxations.

In contrast, we can solve the relaxation~\cref{eqn:qap-sdp} directly using \sCGAL,
up to $\mathtt{n} = 150$.
By limiting the number of inequality constraints, via the operator $\mathcal{G}$,
we achieve substantial reductions in resource usage.  We validate our algorithm
on QAPs where the exact solution is known, and we compare the performance with
algorithms \cite{ZBV09:PathFollowing,BravoFerreira2018} for other relaxations.

\subsubsection{Rounding}
\label{sec:qap-round}

Given an approximate solution to~\cref{eqn:qap-sdp}, we use a rounding method to construct a permutation. %
\sCGAL\ returns a matrix $\widehat{\mtx{X}} = \mtx{U\Lambda U}^*$ where $\mtx{U}$ has dimension $(\mathtt{n}^2 + 1) \times R$.
We extract the first column of $\mtx{U}$, discard its first entry
and reshape the remaining part into an $\mathtt{n} \times \mathtt{n}$ matrix.
Then we project this matrix onto the set of permutation matrices via the Hungarian method~\cite{Kuh55:Hungarian-Method,Mun57:Algorithms-Assignment,JK87:Shortest-Augmenting}.
This yields a feasible point $\mtx{\Pi}$ for the problem \cref{eqn:qap}.
We repeat this procedure for all $R$ columns of $\mtx{U}$,
and we pick the one that minimizes $\trace(\mtx{A\Pi B\Pi}^*)$.
This permutation gives an upper bound on the optimal value of \cref{eqn:qap}.

\subsubsection{Datasets and evaluation}
We consider instances from \textsf{QAPLIB} \cite{QAPLIB} and \textsf{TSPLIB} \cite{TSPLIB} that are used in \cite{BravoFerreira2018}.
The optimal values are known, and the permutation size $\mathtt{n}$ varies between $12$ and $150$. %
(Recall that the SDP matrix dimension $n = \mathtt{n}^2 + 1$.)  We report
\begin{equation} \label{eqn:qap-gap}
\texttt{relative gap }\% = \frac{\texttt{upper bound obtained - optimum}}{\texttt{optimum}} \times 100
\end{equation}

\subsubsection{Solving QAPs}
\label{sec:qap-solns}

To solve~\cref{eqn:qap-sdp}, we cannot use the scaling~\cref{eqn:problem-scaling}
because $\norm{\mathcal{A}}$ is not available;
see the source code for our approach.
We apply \sCGAL\ with sketch size $R = \mathtt{n}$,
so the sketch uses storage $\Theta(\mathtt{n}^3)$.
After rounding, a low- or medium-accuracy solution of \cref{eqn:qap-sdp}
often provides a better permutation than a high-accuracy solution.
Therefore, we applied the rounding step at iterations $2,4,8,16,\ldots$\
and tracked the quality of the best permutation attained on the solution path.
We stopped after (the first of) $10^6$ iterations or $72$ hours. %

The results of this experiment appear in \Cref{fig:qap}. %
We compare against the best value reported by Bravo Ferreira et al.\ in \cite[Tables~4 and~6]{BravoFerreira2018}
for their \textsf{CSDP} method with clique size $k \in \{2,3,4\}$.
We also include the results that \cite{BravoFerreira2018} lists for the \textsf{PATH} method~\cite{ZBV09:PathFollowing}.

\sCGAL\ allows us to solve a tighter SDP relaxation of QAP than the other methods (\textsf{CSDP}, \textsf{PATH}).
As a consequence, we obtain significantly smaller gaps for most instances.

\begin{figure}[t!]
    \centering
    \includegraphics[width=\textwidth]{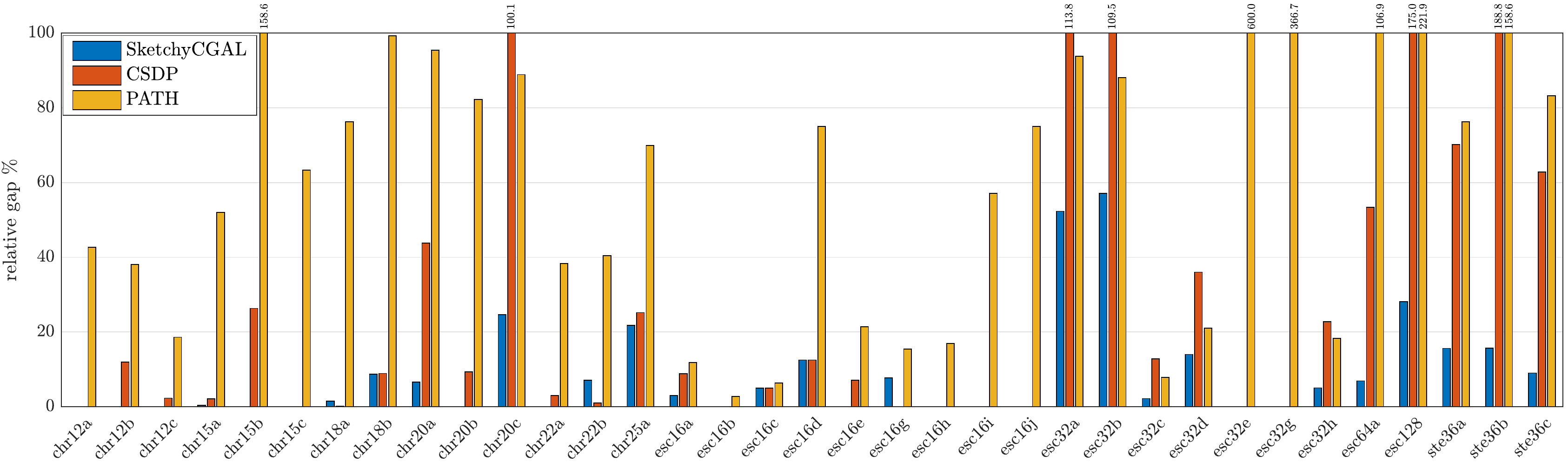}
    \\[0.5em]
    \includegraphics[width=\textwidth]{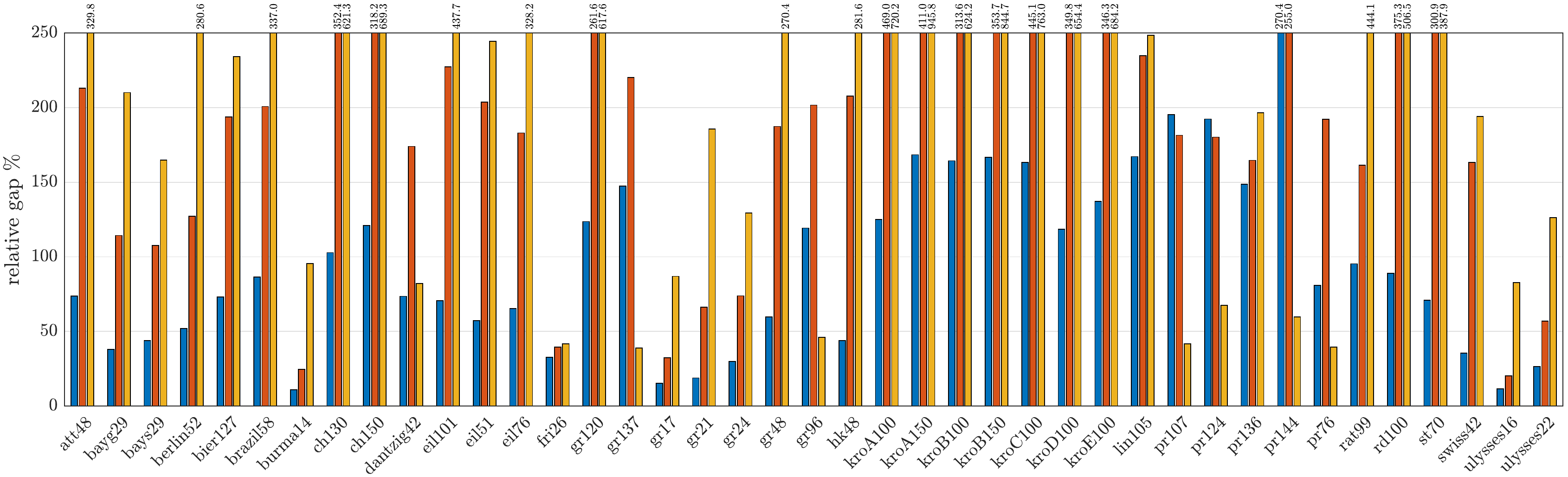}

    \caption{\textsf{\textbf{QAP relaxation: Solution quality.}}
    Using \sCGAL, the \textsf{CSDP} method~\cite{BravoFerreira2018}, and the \textsf{PATH} method~\cite{ZBV09:PathFollowing},
    we solve SDP relaxations of QAP instances from QAPLIB {[top]} and TSPLIB {[bottom]}.  The bars compare the cost of the computed solution,
    against the optimal value; shorter is better.
    See~\cref{sec:qap-solns}.}
\label{fig:qap}
\end{figure}

\section{Related work}
\label{sec:related-work}

There is a large body of literature on numerical methods for solving SDPs;
see~\cite{MHA19:Survey-Recent} for a recent survey.
Here we describe the SDP literature just enough to contextualize our contributions.

\subsection{Standard methodologies}

First, we outline the approaches that drive most of the reliable,
general-purpose SDP software packages that are currently available.

Interior-point methods (IPMs) %
\cite{NN90:Self-concordant,NN94:IP-polynomial,Ali91:IPM-SDP,Ali95:IPM-SDP-combinatorial,KK92:Upper-bounds,KK93:OnL-algorithm}
reformulate the SDP as an unconstrained problem and take an (approximate) Newton step at each iteration.
In exchange, they deliver quadratic convergence. Hence IPMs are widely used to solve SDPs to high precision
\cite{KT05:IPM-SDP-survey,AL12:Handbook-SDP,Nes04:IntroductoryLectures,BN01:ModernConvexOptimization}.
Software packages include \textsf{SeDuMi} \cite{S98guide}, \textsf{MoSeK} \cite{mosek}, and \textsf{SDPT3} \cite{TTT99:SDPT3}.
Alas, IPMs do not scale to large problems:
to solve the Newton system
we must store and factor large, dense matrices. %
A typical IPM for the SDP~\cref{eqn:model-problem}
requires $\Theta(n^3 + d^2n^2 + d^3)$ arithmetic operations per iteration
and $\Theta(n^2 + dn + d^2)$ memory \cite{AHO98:primal-dual-IPM-SDP}.

Several effective SDP solvers are based on the augmented Lagrangian (AL) paradigm
\cite{BV06:Lift-project-relaxations,Wen09:Row-by-row-method,ZSD10:SDPNAL,WGY10:Alternating-direction-SDP}.
In particular, Zhao et al.~\cite{ZSD10:SDPNAL} employ a semi-smooth Newton method and the conjugate
gradient method to solve the subproblems.
Their method is enhanced and implemented in the software package \textsf{SDPNAL+}~\cite{YST15:SDPNAL}.
AL methods for SDPs typically require storage $\Omega(n^2)$.

\subsection{First-order methods}
First-order methods use only gradient information to solve the SDP
to reduce runtime and storage requirements.
We focus on \emph{projection-free} algorithms, suitable large SDPs,
that do not require full SVD computations.
Major first-order methods for SDP include approaches based on
the conditional gradient method (\textsf{CGM}) \cite{FW56:FrankWolfe,LP66:Minimization-Methods,Jon92:Simple-Lemma}
and extensions that handle more complex constraints \cite{Clarkson:2010:CSG:1824777.1824783,Haz08:Sparse-Approximate,pmlr-v28-jaggi13},
primal--dual subgradient algorithms~\cite{Nesterov2009,YTC15:UniversalPD},
and
the matrix multiplicative weight (MMW) method~\cite{Tsuda05matrixexponentiated,AHK05:Fast-Algorithms},
or equivalently, the mirror-prox algorithm with the quantum entropy mirror map~\cite{Nem04:Prox-Method-Rate}.

The standard \textsf{CGM} algorithm does not apply to the model problem \cref{eqn:model-problem} because of the affine constraint $\mathcal{A}\mtx{X} = \vct{b}$.
Several variants~\cite{GPL18:FrankWolfeSplitting,LLM19:NonErgodicAL,silvetifalls2019generalized,YFL+18:HomotopyCGM}
of \textsf{CGM} can handle affine constraints.
In particular, \CGAL~\cite{YFC19:Conditional-Gradient-Based}
does so by applying \textsf{CGM} to an AL formulation.
We have chosen to extend \CGAL\
because of its strong empirical performance
and its robustness to inexact eigenvector computations; see~\cite[Sec.~4-5]{YFC19:Conditional-Gradient-Based}.

Primal--dual subgradient methods perform subgradient ascent on the dual problem;
the cost of each iteration is dominated by an eigenvector computation.
Nesterov~\cite{Nesterov2009} constructs primal iterates by proximal mapping.
The algorithm in Yurtsever et al.~\cite{YTC15:UniversalPD}
constructs primal iterates by an averaging technique, similar to the updates in \textsf{CGM};
this method involves a line search, so it requires very accurate eigenvector calculations. %

The MMW method is derived by
reducing the SDP to a sequence of feasibility problems.
These are reformulated as a primal--dual game
whose dual is an eigenvalue optimization problem.
The resulting MMW algorithm can be interpreted as
performing gradient descent in a dual space and
using the matrix exponential map to transfer information back to the primal space.
To scale this approach to larger problems,
researchers have proposed linearization, random projection, sparsification techniques,
and stochastic Lanczos quadrature to approximate the matrix exponential
\cite{Arora:2016:CPA:2906142.2837020,allenzhu2015using,Peng:2012:FSW:2312005.2312026,GH11:SDP-sublinear-time,GH16:Sublinear-Time,allen-zhu2017follow,baes2013randomized,LP19:Algorithm-Semidefinite,pmlr-v99-carmon19a}.
Even so, the reduction to a sequence of feasibility problems makes this technique impractical for general SDPs.
We are aware of only one computational evaluation of the MMW idea \cite{baes2013randomized}.

\subsection{Storage considerations}

Almost all provably correct %
SDP algorithms store and operate on a full-dimensional matrix variable,
so they are not suitable for very large SDPs.

Some primal--dual subgradient methods and \textsf{CGM} variants build
an approximate solution as a convex combination of rank-one updates,
so the rank of the solution does not exceed the number of iterations.
This fact has led researchers to call these methods ``storage-efficient,''
but this claim is misleading because the algorithms require many iterations to converge.

In the conference paper~\cite{YUTC17:Sketchy-Decisions}, written by a subset of the authors,
we observed that certain types of optimization algorithms can be combined with sketching
to control storage costs.  As a first example, we augmented \textsf{CGM} with sketching to obtain a new algorithm
called \textsf{SketchyCGM}.  This method solves a special class of low-rank matrix
optimization problems that arise in statistics and machine learning.
We believe that \textsf{SketchyCGM} is the first algorithm
for this class of problems that provably succeeds with optimal storage.

To develop \sCGAL, we changed the base optimization algorithm (to \CGAL)
so that we can solve standard-form SDPs.  We switched to a
simpler sketching technique (the Nystr{\"o}m sketch) that has better empirical performance.
We also analyzed how accurately to solve the eigenvalue
problems to ensure that \sCGAL succeeds (\cref{sec:cgal-theory-supp}),
and we deployed an approximate eigenvalue computation method (randomized Lanczos)
that meets our needs.  Altogether, this effort leads to a storage-optimal algorithm
that works for all standard-form SDPs with a minimum of tuning.
Reducing the storage has the ancillary benefit of reducing arithmetic
and communication costs, which also improves scalability.

In concurrent work with Ding \cite{DYC+19:ApproximateComplementarity},
a subset of the authors developed a new \emph{approximate complementarity principle}
that also yields a storage-optimal algorithm for standard-form SDPs.
This approach uses a suboptimal dual point to approximate the range
of the primal solution to the SDP.
By compressing the primal problem to this subspace, we can solve the primal SDP with limited storage.
This method, however, has more limited guarantees than \sCGAL.
A numerical evaluation is in progress.

\subsection{Nonconvex Burer--Monteiro methods} \label{sec:bm}

The most famous approach to low-storage semidefinite programming is the factorization heuristic proposed by Homer and Peinado~\cite{HP97:Design-Performance}
and refined by Burer and Monteiro (BM)~\cite{Burer2003}.
The main idea is to reformulate the model problem~\cref{eqn:model-problem} by
expressing the psd matrix variable $\mtx{X} = \mtx{FF}^*$ in terms of a factor $\mtx{F} \in \R^{n \times R}$,
where the rank parameter $R \ll n$.  That is,
\begin{equation} \label{eqn:factorized-problem}
\minimize\quad \ip{\mtx{C}}{\mtx{FF}^*}
\quad\subjto\quad \mathcal{A}(\mtx{FF}^*) = \vct{b},
\quad \mtx{FF}^* \in \alpha \mtx{\Delta}_n.
\end{equation}
This approach controls storage by sacrificing convexity and the associated guarantees.

Many nonlinear programming methods have been applied to optimize \cref{eqn:factorized-problem}.
AL methods are commonly used~\cite{Burer2003,BM05:LocalMinimaConvergence,pmlr-v2-kulis07a,sahin2019inexact,SGV18:SDPApproxOperatorSplitting}.
The most popular research software based on BM factorization is \textsf{Manopt} \cite{JMLR:v15:boumal14a},
which implements manifold optimization algorithms
including Riemannian gradient and Riemannian trust region methods~\cite{Absil2007}.
Consequently, \textsf{Manopt} is limited to problems where the factorized formulation \cref{eqn:factorized-problem}
defines a smooth manifold.

There has been an intense effort to establish theoretical results for the BM factorization approach.
It is clear that every solution to the SDP \cref{eqn:model-problem} of rank $R$ or less
is also a solution to the factorized problem~\cref{eqn:factorized-problem}.
On the other hand,~\cref{eqn:factorized-problem} may admit local minima
that are not global minima of~\cref{eqn:model-problem}.
Some guarantees are available.  For example,
if $\mtx{C}$ is generic and the constraint set of~\cref{eqn:factorized-problem} is a smooth manifold and
$R \geq \sqrt{2(d+1)}$,
then each second-order critical point of \cref{eqn:factorized-problem} is a global optimum \cite{BVB16:BMSmoothSDP}.
A second-order critical point can be located using a Riemannian trust region method.
See~\cite{BBJ+18:SmootheAnalysisSDP,PJB18:SmoothedAnalysisSDP,Cif19:BurerMonteiroGuarantees} for additional theoretical analysis.

The storage and arithmetic costs of solving~\cref{eqn:factorized-problem} depend on the factorization rank $R$.
Unfortunately, the BM method may fail when $R = o(\sqrt{d})$.
Below this threshold, the BM formulation~\cref{eqn:factorized-problem} can have spurious solutions
(second-order critical points that are not globally optimal),
and the bad problem instances can form a set of positive measure \cite{WW18:RankOptimality}.
Hence the Burer--Monteiro approach cannot support provably correct algorithms
with storage costs better than $\Omega(n\sqrt{d})$.
See~\cref{sec:bm-hard} for numerical evidence.

\section{Conclusion}

We have presented a practical, new approach for solving SDPs at scale.
Our algorithm, \sCGAL, combines a primal--dual optimization method with randomized linear algebra
techniques to achieve unprecedented guarantees when the problem is weakly constrained
and the solution is approximately low rank.
We hope that our ideas lead to further algorithmic advances
and support new applications of semidefinite programming.

\sCGAL\ is currently limited by the arithmetic
cost of solving large eigenvalue problems to increasing accuracy.
It also falters for SDPs with a large number of constraints
because it depends on a primal--dual approach.
Moreover, our analysis does not fully explain
the observed behavior of the algorithm,
including the rate of convergence of the primal variable
or the convergence of the dual variable and the surrogate duality gap.
These topics merit further research.

\appendix

\section{Analysis of the \CGAL\ Algorithm}
\label{sec:cgal-theory-supp}

This section contains a complete analysis of the convergence of the
\CGAL\ algorithm and its arithmetic costs.  For simplicity,
we have specialized this presentation to the model problem that
is the focus of this paper; many extensions are possible.
The convergence results here are adapted from the initial paper~\cite{YFC19:Conditional-Gradient-Based}
on the \CGAL\ algorithm.  The analysis of the approximate eigenvector computation
and the detailed results for the model problem are new.
Empirical work suggests that the analysis is still qualitatively suboptimal,
which is a direction for future research.

\subsection{The model problem}

We focus on solving the optimization template
\begin{equation} \label{eqn:model-problem-supp}
\minimize \quad \ip{\mtx{C}}{\mtx{X}}
\quad\subjto\quad \mathcal{A}\mtx{X} = \vct{b},
\quad \mtx{X} \in \alpha \mtx{\Delta}_n.
\end{equation}
The constraint set $\alpha \mtx{\Delta}_n$ consists
of $n \times n$ psd matrices with trace $\alpha$.  The objective function
depends on a matrix $\mtx{C} \in \Sym_n$.  The linear
constraints are determined by the linear map $\mathcal{A} : \Sym_n \to \R^d$
and the right-hand side vector $\vct{b} \in \R^d$.

\subsection{Elements of Lagrangian duality}

Introduce the Lagrangian function
\begin{equation} \label{eqn:lagrangian-supp}
L(\mtx{X}; \vct{y}) := \ip{\mtx{C}}{\mtx{X}} + \ip{\vct{y}}{\mathcal{A}\mtx{X} - \vct{b}}
\quad\text{for $\mtx{X} \in \alpha \mtx{\Delta}_n$ and $\vct{y} \in \R^d$.}
\end{equation}
We assume that the Lagrangian admits at least one saddle point
$(\mtx{X}_{\star}, \vct{y}_{\star}) \in \alpha \mtx{\Delta}_n \times \R^d$:
\begin{equation} \label{eqn:saddle-point-supp}
L(\mtx{X}_{\star}, \vct{y}) \leq L(\mtx{X}_{\star}, \vct{y}_{\star})
	\leq L(\mtx{X}; \vct{y}_{\star})
	\quad\text{for all $\mtx{X} \in \alpha \mtx{\Delta}_n$ and $\vct{y} \in \R^d$.}
\end{equation}
This hypothesis is guaranteed under Slater's condition.
Write $p_{\star}$ for the shared extremal value of the dual and primal problems:
\begin{equation} \label{eqn:optimal-value-supp}
\max_{\vct{y} \in \R^d} \min_{\mtx{X} \in \alpha \mtx{\Delta}_n} L(\mtx{X}, \vct{y})
	= p_{\star}
	= \min_{\mtx{X} \in \alpha \mtx{\Delta}_n} \sup_{\vct{y} \in \R^d} L(\mtx{X}, \vct{y}).
\end{equation}
In particular, note that $p_{\star} = \ip{\mtx{C}}{\mtx{X}_{\star}}$.

\subsection{The \CGAL\ iteration}\label{sec:cgal-iteration-supp}

The \CGAL iteration solves the model problem~\cref{eqn:model-problem-supp}
using an augmented Lagrangian method where the primal step is inspired
by the conditional gradient method.
See~\cref{alg:cgal-supp} for pseudocode.

Let $\beta_0 > 0$ be an initial smoothing parameter.
Fix a schedule for the step size parameter $\eta_t$ and the smoothing parameter $\beta_t$:
\begin{equation} \label{eqn:cgal-step-supp}
\eta_t := \frac{2}{t+1}
\quad\text{and}\quad
\beta_t := \beta_0\sqrt{t+1}
\quad\text{for $t = 1,2,3, \dots$.}
\end{equation}
Define the augmented Lagrangian $L_t$ with smoothing parameter $\beta_t$
\begin{equation} \label{eqn:al-supp}
L_{t}(\mtx{X}; \vct{y}) :=
	\ip{\mtx{C}}{\mtx{X}} + \ip{\vct{y}}{\mathcal{A}\mtx{X} - \vct{b}}
	+ \frac{1}{2} \beta_t\normsq{\mathcal{A}\mtx{X} - \vct{b}}.
\end{equation}
The \CGAL\ algorithm solves the model problem~\cref{eqn:model-problem-supp}
by alternating between primal and dual update steps on~\cref{eqn:al-supp},
while increasing the smoothing parameter.

Fix the initial iterates
\begin{equation} \label{eqn:cgal-init-supp}
\mtx{X}_1 = \mtx{0} \in \Sym_n
\quad\text{and}\quad
\vct{y}_1 = \vct{0} \in \R^d.
\end{equation}
At each iteration $t = 1, 2, 3, \dots$, we implicitly compute the partial derivative
\begin{equation} \label{eqn:cgal-grad-supp}
\mtx{D}_t := \partial_{\mtx{X}} L_t(\mtx{X}_t; \vct{y}_t)
	= \mtx{C} + \mathcal{A}^*\big( \vct{y}_t + \beta_t (\mathcal{A}\mtx{X}_t - \vct{b}) \big).
\end{equation}
Then we identify a primal update direction $\mtx{H}_t \in \alpha \mtx{\Delta}_n$ that satisfies
\begin{equation} \label{eqn:cgal-direction-supp}
\ip{ \mtx{D}_t }{ \mtx{H}_t }
	\leq \min_{\mtx{H} \in \alpha \mtx{\Delta}_n}
	\ip{ \mtx{D}_t }{ \mtx{H} }
	+ \frac{\alpha\beta_0}{\beta_t} \norm{\mtx{D}_t}.
\end{equation}
In other words, the smoothing parameter $\beta_t$ also controls
the amount of inexactness we are willing
to tolerate in the linear minimization at iteration $t$.
We construct the next primal iterate via the rule
\begin{equation} \label{eqn:cgal-primal-update-supp}
\mtx{X}_{t+1} = \mtx{X}_t + \eta_t (\mtx{H}_t - \mtx{X}_t) \in \alpha \mtx{\Delta}_n.
\end{equation}
The dual update takes the form
\begin{equation} \label{eqn:cgal-dual-update-supp}
\vct{y}_{t+1} = \vct{y}_t + \gamma_t (\mathcal{A}\mtx{X}_{t+1} - \vct{b}).
\end{equation}
We select the largest dual step size parameter $0 \leq \gamma_t \leq \beta_0$
that satisfies the condition %
\begin{equation} \label{eqn:cgal-dual-step-supp}
\gamma_t \normsq{\mathcal{A}\mtx{X}_{t+1} - \vct{b}}
	\leq \beta_t \eta_t^2 \alpha^2 \normsq{\mathcal{A}}.
\end{equation}
The latter inequality always holds when $\gamma_t = 0$.
We will also choose the dual step size to maintain a bounded travel condition:
\begin{equation} \label{eqn:cgal-bdd-travel-supp}
\norm{\vct{y}_t} \leq K.
\end{equation}
If the bounded travel condition holds at iteration $t - 1$,
then the choice $\gamma_t = 0$ ensures that it holds at iteration $t$.
In practice, it is not necessary to enforce~\cref{eqn:cgal-bdd-travel-supp}.
The iteration continues until it reaches a maximum iteration count $T_{\max}$.

\begin{algorithm}[t]%
  \caption{\CGAL\ for the model problem~\cref{eqn:model-problem-supp} %
  \label{alg:cgal-supp}}
  \begin{algorithmic}[1]
  \vspace{0.5pc}

	\Require{Problem data for~\cref{eqn:model-problem-supp} implemented via the primitives~\cref{eqn:primitives};
	number $T$ of iterations}
    \Ensure{Approximate solution $\mtx{X}_T$ to~\cref{eqn:model-problem}}
	\Recommend{Set $T \approx \eps^{-1}$ to achieve $\eps$-optimal solution~\cref{eqn:eps-optimal-supp}}

	\Statex

	\Function{\CGAL}{$T$}

	\State	Scale problem data (\cref{sec:numerics-scaling})
		\Comment	\textcolor{dkblue}{\textbf{[opt]}} Recommended!
	\State	$\beta_0 \gets 1$ and $K \gets + \infty$
		\Comment	Default parameters

	\State	$\mtx{X} \gets \mtx{0}_{n \times n}$ and $\vct{y} \gets \vct{0}_d$
	\For{$t \gets 1, 2, 3, \dots, T$}
  		\State	$\beta \gets \beta_0 \sqrt{t+1}$ and $\eta \gets 2 /(t+1)$
		\State	$(\xi, \vct{v}) \gets \textsf{ApproxMinEvec}( \mtx{C} + \mathcal{A}^* (\vct{y} + \beta(\mathcal{A} \mtx{X} - \vct{b})); q_t )$
		\Statex	\Comment \cref{alg:rand-lanczos} with $q_t = t^{1/4} \log n$
		\Statex \Comment Implement with primitives~\cref{eqn:primitives}\primone\primtwo!
		\State	$\vct{X} \gets (1 - \eta) \, \vct{X} + \eta \, (\alpha \, \vct{vv}^*)$
		\State	$\vct{y} \gets \vct{y} + \gamma (\mathcal{A} \mtx{X} - \vct{b})$
			\Comment	Step size $\gamma$ satisfies~\cref{eqn:cgal-dual-step-supp,eqn:cgal-bdd-travel-supp}
	\EndFor
	\EndFunction

  \vspace{0.25pc}

\end{algorithmic}
\end{algorithm}

\subsection{Distributed computation}

Since \CGAL\ builds on the augmented Lagrangian framework,
we can apply it even when the problem is too large to solve on one computational node.
In particular, when $d$ is large, it may be advantageous to partition
the constraint matrices $\mtx{A}_i$ and the associated dual variables $y_i$
among several workers.
Distributed \CGAL has a structure similar to the alternating directions method of multipliers (\textsf{ADMM}) \cite{BPC+11:Distributed-Optimization}.

\subsection{Theoretical analysis of \CGAL}

We develop two results on the behavior of \CGAL.
The first concerns its convergence to optimality,
and the second concerns the computational resource usage.

\subsubsection{Convergence}

The first result demonstrates that the \CGAL\ algorithm always converges
to a primal optimal solution of the model problem~\cref{eqn:model-problem-supp}.
This result is adapted from~\cite{YFC19:Conditional-Gradient-Based};
a complete proof appears below in~\cref{sec:cgal-pf-supp}.

\begin{theorem}[\CGAL: Convergence] \label{thm:cgal-supp}
Define
$$
E := 6 \alpha^2 \normsq{\mathcal{A}} + \alpha (\norm{\mtx{C}} + K \norm{\mathcal{A}}).
$$
The primal iterates $\{ \mtx{X}_t : t = 2, 3, 4, \dots \}$ generated by the
\CGAL iteration satisfy the {a priori} bounds
\begin{align}
&\ip{\mtx{C}}{\mtx{X}_t} - p_{\star}
	\leq \frac{2\beta_0E}{\sqrt{t}}
	+ K \cdot \norm{ \mathcal{A}\mtx{X}_{t} - \vct{b}}; \label{eqn:cgal-subopt-supp} \\
- \norm{\vct{y}_{\star}}\cdot\norm{ \mathcal{A}\mtx{X}_{t} - \vct{b} }
	\leq\ &\ip{\mtx{C}}{\mtx{X}_{t}} - p_{\star}; \label{eqn:cgal-superopt-supp} \\
\norm{ \mathcal{A}\mtx{X}_{t} - \vct{b} }
	\leq\ &\frac{2\beta_0^{-1} (K + \norm{\vct{y}_{\star}}) + 2 \sqrt{E}}{\sqrt{t}}.
	\label{eqn:cgal-infeas-supp}
\end{align}
The \emph{a priori} bounds %
ensure that
\begin{equation} \label{eqn:eps-optimal-supp}
\norm{ \mathcal{A} \mtx{X}_t - \vct{b} } \leq \eps %
\quad\text{and}\quad
\abs{ \ip{ \mtx{C} }{ \mtx{X}_t } - p_{\star} } \leq \eps %
\end{equation}
within $\mathcal{O}(\eps^{-2})$ iterations.
The big-O suppresses constants that depend on the problem data $(\alpha, \norm{\mathcal{A}}, \norm{\mtx{C}})$
and the algorithm parameters $\beta_0$ and $K$.
\end{theorem}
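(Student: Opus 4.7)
The superoptimality bound~\cref{eqn:cgal-superopt-supp} is the easy half: apply the saddle-point inequality~\cref{eqn:saddle-point-supp} at $(\mtx{X}_t, \vct{y}_{\star})$, which is legal since $\mtx{X}_t \in \alpha\mtx{\Delta}_n$, to obtain $p_{\star} = L(\mtx{X}_{\star}, \vct{y}_{\star}) \leq L(\mtx{X}_t, \vct{y}_{\star}) = \ip{\mtx{C}}{\mtx{X}_t} + \ip{\vct{y}_{\star}}{\mathcal{A}\mtx{X}_t - \vct{b}}$; Cauchy--Schwarz then yields~\cref{eqn:cgal-superopt-supp}. For the remaining two bounds, the centerpiece is to show that the augmented-Lagrangian gap $\Phi_t := L_t(\mtx{X}_t; \vct{y}_t) - p_{\star}$ decays like $\mathcal{O}(t^{-1/2})$; everything else follows by algebra.

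The core technical step is a one-step descent lemma. The function $L_t(\cdot;\vct{y}_t)$ has Lipschitz gradient with constant $\beta_t \norm{\mathcal{A}}^2$, and the feasible set $\alpha \mtx{\Delta}_n$ has Frobenius diameter at most $2\alpha$, so the standard Frank--Wolfe smoothness inequality gives
$$
L_t(\mtx{X}_{t+1}; \vct{y}_t) \leq L_t(\mtx{X}_t; \vct{y}_t) + \eta_t \ip{\mtx{D}_t}{\mtx{H}_t - \mtx{X}_t} + 2 \eta_t^2 \beta_t \alpha^2 \normsq{\mathcal{A}}.
$$
Invoking the inexact-LMO guarantee~\cref{eqn:cgal-direction-supp} and the convexity of $L_t(\cdot;\vct{y}_t)$, together with the identity $L_t(\mtx{X}_{\star};\vct{y}_t) = p_{\star}$ (which holds because $\mtx{X}_{\star}$ satisfies the affine constraint), this becomes
$$
L_t(\mtx{X}_{t+1}; \vct{y}_t) - p_{\star} \leq (1-\eta_t)\Phi_t + \eta_t \tfrac{\alpha \beta_0}{\beta_t} \norm{\mtx{D}_t} + 2\eta_t^2 \beta_t \alpha^2 \normsq{\mathcal{A}}.
$$
A direct algebraic computation then accounts for the transition to $(\vct{y}_{t+1}, \beta_{t+1})$: the extra term $\gamma_t \normsq{\mathcal{A}\mtx{X}_{t+1}-\vct{b}} + \tfrac{1}{2}(\beta_{t+1}-\beta_t)\normsq{\mathcal{A}\mtx{X}_{t+1}-\vct{b}}$ is controlled by the dual-step rule~\cref{eqn:cgal-dual-step-supp} (whose right-hand side was chosen for precisely this purpose) and by the $\beta_t = \beta_0 \sqrt{t+1}$ schedule, which makes the $\beta$-increment term summable. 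Telescoping the resulting recursion with $\eta_t = 2/(t+1)$ yields $\Phi_t \leq 2\beta_0 E/\sqrt{t}$ by a standard weighted-induction argument.

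Once this $\mathcal{O}(t^{-1/2})$ bound on $\Phi_t$ is in hand, the three stated bounds fall out. For~\cref{eqn:cgal-subopt-supp}, discard the nonnegative penalty term in $\Phi_t$ and bound the dual cross-term via $\norm{\vct{y}_t} \leq K$ from the bounded-travel condition~\cref{eqn:cgal-bdd-travel-supp}. For~\cref{eqn:cgal-infeas-supp}, combine $\tfrac{\beta_t}{2}\normsq{\mathcal{A}\mtx{X}_t-\vct{b}} \leq \Phi_t - (\ip{\mtx{C}}{\mtx{X}_t} - p_{\star}) + \norm{\vct{y}_t}\norm{\mathcal{A}\mtx{X}_t-\vct{b}}$ with the superoptimality bound~\cref{eqn:cgal-superopt-supp} to obtain a quadratic inequality of the form $\tfrac{\beta_t}{2} r^2 \leq \tfrac{2\beta_0 E}{\sqrt{t}} + (K+\norm{\vct{y}_{\star}}) r$ in $r := \norm{\mathcal{A}\mtx{X}_t-\vct{b}}$, and the quadratic formula with $\beta_t = \beta_0 \sqrt{t+1}$ delivers the claimed $\mathcal{O}(t^{-1/2})$ rate with the stated constants. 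The $\eps$-optimality conclusion~\cref{eqn:eps-optimal-supp} is then immediate by taking $t = \Theta(\eps^{-2})$.

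The main obstacle will be controlling $\norm{\mtx{D}_t}$ inside the per-step recursion, since $\mtx{D}_t = \mtx{C} + \mathcal{A}^*(\vct{y}_t + \beta_t(\mathcal{A}\mtx{X}_t-\vct{b}))$ contains the growing factor $\beta_t$ multiplying the very infeasibility that we are trying to bound. I would address this by a bootstrap: use the crude \emph{a priori} bound $\norm{\mathcal{A}\mtx{X}_t - \vct{b}} \leq \alpha\norm{\mathcal{A}} + \norm{\vct{b}}$ available from $\mtx{X}_t \in \alpha\mtx{\Delta}_n$, together with $\norm{\vct{y}_t} \leq K$, to control $\norm{\mtx{D}_t}$ loosely; the dual-step condition~\cref{eqn:cgal-dual-step-supp} then ensures that $\eta_t \alpha \beta_0 \norm{\mtx{D}_t}/\beta_t$ contributes only $\mathcal{O}(\eta_t/\sqrt{t}) = \mathcal{O}(t^{-3/2})$ to the telescope, which is summable. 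Verifying that the accumulated constants collapse cleanly to $E = 6\alpha^2 \normsq{\mathcal{A}} + \alpha(\norm{\mtx{C}} + K\norm{\mathcal{A}})$ is a bookkeeping exercise that I would defer to the detailed proof.
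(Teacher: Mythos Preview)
Your outline for the superoptimality bound~\cref{eqn:cgal-superopt-supp} and for extracting~\cref{eqn:cgal-subopt-supp} and~\cref{eqn:cgal-infeas-supp} from a bound $\Phi_t \leq 2\beta_0 E/\sqrt{t}$ is correct and matches the paper. The gap is in how you obtain that bound on $\Phi_t$.

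Your one-step recursion uses only convexity of $L_t(\cdot;\vct{y}_t)$ to bound $\ip{\mtx{D}_t}{\mtx{X}_\star - \mtx{X}_t} \leq p_\star - L_t(\mtx{X}_t;\vct{y}_t)$, and then proposes to control $\norm{\mtx{D}_t}$ via the crude estimate $\norm{\mathcal{A}\mtx{X}_t - \vct{b}} \leq \alpha\norm{\mathcal{A}} + \norm{\vct{b}}$. This does not give the claimed rate. Since $\norm{\mtx{D}_t} \leq \norm{\mtx{C}} + K\norm{\mathcal{A}} + \beta_t\norm{\mathcal{A}}\norm{\mathcal{A}\mtx{X}_t-\vct{b}}$ and $\beta_t = \beta_0\sqrt{t+1}$, the crude bound yields $\norm{\mtx{D}_t} = \mathcal{O}(\sqrt{t})$, so the inexactness term $\eta_t \alpha\beta_0 \norm{\mtx{D}_t}/\beta_t$ is $\mathcal{O}(1/t)$, not $\mathcal{O}(t^{-3/2})$ as you assert. (The dual-step condition~\cref{eqn:cgal-dual-step-supp} constrains $\gamma_t\normsq{\mathcal{A}\mtx{X}_{t+1}-\vct{b}}$ and says nothing about $\norm{\mtx{D}_t}$.) Likewise, the $\beta$-increment term $\tfrac{1}{2}(\beta_{t+1}-\beta_t)\normsq{\mathcal{A}\mtx{X}_{t+1}-\vct{b}}$ is $\mathcal{O}(t^{-1/2})$ under your crude bound. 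Telescoping the recursion $\Phi_{t+1} \leq (1-\eta_t)\Phi_t + e_t$ with $\eta_t = 2/(t+1)$ gives $\Phi_{T+1} = \tfrac{1}{T(T+1)}\sum_{t} t(t+1) e_t$; an error $e_t = \mathcal{O}(1/t)$ already yields only $\Phi_T = \mathcal{O}(1)$, and $e_t = \mathcal{O}(t^{-1/2})$ diverges. So neither term is summable in the required sense.

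The missing idea is that $L_t(\cdot;\vct{y}_t)$ is \emph{exactly} quadratic, so one can write $\ip{\mtx{D}_t}{\mtx{X}_\star - \mtx{X}_t} = p_\star - L_t(\mtx{X}_t;\vct{y}_t) - \tfrac{1}{2}\beta_t\normsq{\mathcal{A}\mtx{X}_t - \vct{b}}$ (using $\mathcal{A}\mtx{X}_\star = \vct{b}$), which is strictly sharper than convexity by the extra negative quadratic. After incorporating the smoothing update, the paper shows that the net coefficient on $\normsq{\mathcal{A}\mtx{X}_t-\vct{b}}$ satisfies $(1-\eta_t)(\beta_t-\beta_{t-1}) - \beta_t\eta_t < -\beta_0^2/\beta_t$. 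The positive linear term $\eta_t\alpha\beta_0\norm{\mathcal{A}}\norm{\mathcal{A}\mtx{X}_t-\vct{b}}$ coming from $\norm{\mtx{D}_t}$ is then absorbed into this negative quadratic by maximizing the concave function $r \mapsto \eta_t\alpha\beta_0\norm{\mathcal{A}}\,r - \tfrac{\beta_0^2}{2\beta_t}r^2$, which is at most $\beta_t\eta_t^2\alpha^2\normsq{\mathcal{A}} = \mathcal{O}(t^{-3/2})$. This is what makes the recursion close with the constant $E$.
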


\subsubsection{Problem scaling}
\label{sec:prob-scaling-supp}

\Cref{thm:cgal-supp} indicates that it is valuable to scale the model problem~\cref{eqn:model-problem-supp}
so that $\alpha = \norm{\mtx{C}} = \norm{\mathcal{A}} = 1$.  In this case, a good choice for the
smoothing parameter is $\beta_0 = 1$.  Nevertheless, the algorithm converges, regardless of
the scaling and regardless of the parameter choices $\beta_0$ and $K$.
We use a slightly different scaling in practice; see~\cref{sec:numerics-scaling}.

\subsubsection{Bounded travel?\nopunct}

\Cref{thm:cgal-supp} suggests that the optimal choice for the travel bound
is $K = 0$.  In other words, the dual vector $\vct{y}_t = \vct{0}$, and it
does not evolve.  The algorithm that results from this choice is called 
\textsf{HomotopyCGM}~\cite{YFL+18:HomotopyCGM}.
The numerical work on \CGAL, reported in~\cite{YFC19:Conditional-Gradient-Based},
makes it clear that updating the dual variable, as \CGAL\ does, allows for substantial
performance improvements over \textsf{HomotopyCGM}. 
Unfortunately, the theory developed in~\cite{YFC19:Conditional-Gradient-Based},
and echoed here, does not comprehend the reason for this phenomenon.
This is an obvious direction for further research.

\subsection{Proof of \cref{thm:cgal-supp}}
\label{sec:cgal-pf-supp}

In this section, we establish the convergence guarantee
stated in~\cref{thm:cgal-supp}.

\subsubsection{Analysis of the primal update}

The first steps in the proof address the role of the primal update rule~\cref{eqn:cgal-primal-update-supp}.
The arguments parallels the standard convergence analysis~\cite{pmlr-v28-jaggi13} of \textsf{CGM},
applied to the function
\begin{equation} \label{eqn:ft-supp}
f_t(\mtx{X}) := L_t(\mtx{X}; \vct{y}_t)
	= \ip{ \mtx{C} }{ \mtx{X} } + \ip{\vct{y}_t}{\mathcal{A} \mtx{X} - \vct{b}}
	+ \frac{1}{2} \beta_t \normsq{ \mathcal{A} \mtx{X} - \vct{b} }.
\end{equation}
Observe that the gradient $\grad f_t(\mtx{X}_t)$ coincides with the partial derivative~\cref{eqn:cgal-grad-supp}.

To begin, we exploit the smoothness of $f_t$ to control the change in its value at adjacent
primal iterates.  The function $f_t$ is convex on $\Sym_n$, and its gradient has Lipschitz
constant $\beta_t \normsq{\mathcal{A}}$, so
$$
f_t(\mtx{X}_+) - f_t(\mtx{X}) \leq
	\ip{ \grad f_t(\mtx{X}) }{ \mtx{X}_+ - \mtx{X} }
	+ \frac{1}{2} \beta_t \normsq{\mathcal{A} } \fnormsq{ \mtx{X}_+ - \mtx{X} }
	\quad\text{for $\mtx{X},\mtx{X}_+ \in \Sym_n$.}
$$
In particular, with $\mtx{X}_+ = \mtx{X}_{t+1}$ and $\mtx{X} = \mtx{X}_t$, we obtain
\begin{equation} \label{eqn:ft-smooth-supp}
\begin{aligned}
f_t(\mtx{X}_{t+1})
	&\leq f_t(\mtx{X}_t) + \ip{ \mtx{D}_t }{\mtx{X}_{t+1} - \mtx{X}_t}
		+ \frac{1}{2} \beta_t \normsq{\mathcal{A} } \fnormsq{ \mtx{X}_{t+1} - \mtx{X}_t } \\
	&= f_t(\mtx{X}_t) + \eta_t \ip{ \mtx{D}_t }{ \mtx{H}_t - \mtx{X}_t }
		+ \frac{1}{2} \beta_t \eta_t^2 \normsq{\mathcal{A}} \fnormsq{\mtx{H}_{t} - \mtx{X}_t} \\
	&\leq f_t(\mtx{X}_t) + \eta_t \ip{ \mtx{D}_t }{ \mtx{H}_t - \mtx{X}_t }
		+ \beta_t \eta_t^2 \alpha^2 \normsq{\mathcal{A}}.
\end{aligned}
\end{equation}
The second identity follows from the update rule~\cref{eqn:cgal-primal-update-supp}.
The bound on the last term depends on the fact that the constraint set $\alpha \mtx{\Delta}_n$
has Frobenius-norm diameter $\alpha \sqrt{2}$.

Next, we use the construction of the primal update to control the linear term in the last display.
The update direction $\mtx{H}_t$ satisfies the inequality~\cref{eqn:cgal-direction-supp}, so
\begin{equation} \label{eqn:ft-update-supp}
\begin{aligned}
\ip{ \mtx{D}_t }{ \mtx{H}_t - \mtx{X}_t }
	&\leq \min_{\mtx{H} \in \alpha \mtx{\Delta}_n} \ip{ \mtx{D}_t }{\mtx{H} - \mtx{X}_t} + \frac{\alpha \beta_0}{\beta_t} \norm{\mtx{D}_t} \\
	&\leq \ip{ \mtx{D}_t }{ \mtx{X}_{\star} - \mtx{X}_t } + \frac{\alpha \beta_0}{\beta_t} \norm{ \mtx{D}_t }.
\end{aligned}
\end{equation}
The second inequality depends on the fact that $\mtx{X}_{\star} \in \alpha \mtx{\Delta}_n$.

We can use the explicit formula~\cref{eqn:cgal-grad-supp} for the derivative $\mtx{D}_t$
to control the two terms in~\cref{eqn:ft-update-supp}.  First,
\begin{equation} \label{eqn:ft-update-supp-2}
\begin{aligned}
\ip{ \mtx{D}_t }{ \mtx{X}_{\star} - \mtx{X}_t }
	&= \ip{ \mtx{C} + \mathcal{A}^* \big( \vct{y}_t + \beta_t (\mathcal{A}\mtx{X}_t - \vct{b})\big) }
	{ \mtx{X}_{\star} - \mtx{X}_t } \\ %
	&= \ip{ \mtx{C} }{ \mtx{X}_{\star} - \mtx{X}_t }
	- \ip{ \vct{y}_t }{ \mathcal{A}\mtx{X}_t - \vct{b} }
	- \beta_t \normsq{\mathcal{A}\mtx{X}_t - \vct{b} } \\ %
	&= \ip{ \mtx{C} }{\mtx{X}_{\star}} - f_t(\mtx{X}_t)
	- \frac{1}{2} \beta_t \normsq{ \mathcal{A}\mtx{X}_t - \vct{b} }. %
\end{aligned}
\end{equation}
We have invoked the definition of the adjoint $\mathcal{A}^*$
and the fact that $\mathcal{A}\mtx{X}_{\star} = \vct{b}$.
Last, we used the definition~\cref{eqn:ft-supp} to identify the quantity $f_t(\mtx{X}_t)$.
Second,
\begin{equation} \label{eqn:ft-update-supp-3}
\begin{aligned}
\norm{\mtx{D}_t}
	&= \norm{\mtx{C} + \mathcal{A}^* \big(\vct{y}_t + \beta_t (\mathcal{A}\mtx{X}_t - \vct{b}) \big) } \\
	&\leq \norm{ \mtx{C} } + K \norm{ \mathcal{A}} + \beta_t \norm{\mathcal{A}} \norm{ \mathcal{A}\mtx{X}_t - \vct{b} }.
\end{aligned}
\end{equation}
We have used the assumption that $\vct{y}_t$ satisfies
the bounded travel condition~\cref{eqn:cgal-bdd-travel-supp}.

Combine the last three displays to obtain the estimate
\begin{multline} \label{eqn:ft-update-supp-comb}
\ip{ \mtx{D}_t }{ \mtx{H}_t - \mtx{X}_t }
	\leq \ip{\mtx{C}}{\mtx{X}_{\star}} - f_t(\mtx{X}_t) \\
		+ \Big( \alpha\beta_0 \norm{\mathcal{A}} \norm{\mathcal{A}\mtx{X}_t - \vct{b}}
		- \frac{1}{2} \beta_t \normsq{\mathcal{A}\mtx{X}_t - \vct{b} } \Big)
		+ \frac{\alpha \beta_0}{\beta_t}(\norm{\mtx{C}} + K \norm{\mathcal{A}}).
\end{multline}

We can now control the decrease in the function $f_t$ between adjacent primal iterates.
Combine the displays~\cref{eqn:ft-smooth-supp,eqn:ft-update-supp-comb} to arrive at the bound
\begin{multline*}
f_t(\mtx{X}_{t+1}) \leq (1 - \eta_t) f_t(\mtx{X}_t) + \eta_t \ip{ \mtx{C} }{ \mtx{X}_{\star} } \\
	+ \Big( \eta_t \alpha \beta_0 \norm{\mathcal{A}} \norm{\mathcal{A}\mtx{X}_t - \vct{b}}
	- \frac{1}{2} \beta_t \eta_t \normsq{\mathcal{A}\mtx{X}_t - \vct{b}} \Big)
		+ \Big( \beta_t \eta_t^2 \alpha^2 \normsq{\mathcal{A}}
		+ \frac{\eta_t \alpha\beta_0}{\beta_t}(\norm{\mtx{C}} + K \norm{\mathcal{A}}) \Big).
\end{multline*}
Subtract $p_{\star} = \ip{\mtx{C}}{\mtx{X}_{\star}}$ from both sides to arrive at
\begin{multline*}
f_t(\mtx{X}_{t+1}) - p_{\star} \leq (1 - \eta_t)\Big( f_t(\mtx{X}_t) - p_{\star} \Big) \\
	+ \Big( \eta_t  \alpha \beta_0 \norm{\mathcal{A}} \norm{\mathcal{A}\mtx{X}_t - \vct{b}}
	- \frac{1}{2} \beta_t \eta_t \normsq{\mathcal{A}\mtx{X}_t - \vct{b}} \Big)
		+ \Big( \beta_t \eta_t^2 \alpha^2 \normsq{\mathcal{A}}
		+ \frac{\eta_t \alpha\beta_0}{\beta_t}(\norm{\mtx{C}} + K \norm{\mathcal{A}}) \Big).
\end{multline*}
Finally, use the definition~\cref{eqn:ft-supp} again to pass back to the augmented Lagrangian:
\begin{multline} \label{eqn:Lt+primal-supp}
L_t(\mtx{X}_{t+1}; \vct{y}_t) - p_{\star}
	\leq (1 - \eta_t ) \Big( L_t(\mtx{X}_{t}; \vct{y}_t) - p_{\star} \Big) \\
	+ \Big( \eta_t  \alpha \beta_0 \norm{\mathcal{A}} \norm{\mathcal{A}\mtx{X}_t - \vct{b}}
	- \frac{1}{2} \beta_t \eta_t \normsq{\mathcal{A}\mtx{X}_t - \vct{b}} \Big) \\
		+ \Big( \beta_t \eta_t^2 \alpha^2 \normsq{\mathcal{A}}
		+ \frac{\eta_t \alpha\beta_0}{\beta_t}(\norm{\mtx{C}} + K \norm{\mathcal{A}}) \Big).
\end{multline}
This bound describes the evolution of the augmented Lagrangian as the primal iterate advances.
But we still need to include the effects of increasing the smoothing parameter
and updating the dual variable.

\subsubsection{Analysis of the smoothing update}

To continue, observe that updates to the smoothing parameter have a controlled
impact on the augmented Lagrangian~\cref{eqn:al-supp}:
$$
L_{t}(\mtx{X}_{t}; \vct{y}_t) - L_{t-1}(\mtx{X}_t; \vct{y}_t)
	= \frac{1}{2} \big( \beta_{t} - \beta_{t-1} \big)  \normsq{\mathcal{A}\mtx{X}_t - \vct{b}}.
$$
Add and subtract $L_{t-1}(\mtx{X}_t; \vct{y}_t)$ in the large parenthesis in the first line of~\cref{eqn:Lt+primal-supp}
and invoke the last identity to obtain
\begin{multline} \label{eqn:Lt+smooth-supp}
L_t(\mtx{X}_{t+1}; \vct{y}_t) - p_{\star}
	\leq (1 - \eta_t ) \Big( L_{t-1}(\mtx{X}_{t}; \vct{y}_t) - p_{\star} \Big) \\
	+ \Big( \eta_t \alpha\beta_0 \norm{\mathcal{A}} \norm{\mathcal{A}\mtx{X}_t - \vct{b}}
	+ \frac{1}{2} \left[ (1 - \eta_t)( \beta_t - \beta_{t-1} ) - \beta_t \eta_t\right] \normsq{\mathcal{A}\mtx{X}_t - \vct{b}} \Big) \\
	+ \Big( \beta_t \eta_t^2 \alpha^2 \normsq{\mathcal{A}}
	+ \frac{\eta_t \alpha\beta_0}{\beta_t}(\norm{\mtx{C}} + K \norm{\mathcal{A}}) \Big).
\end{multline}

The next step is to develop a uniform bound on the terms in the second line
so that we can ignore the role of the feasibility gap $\norm{\mathcal{A}\mtx{X}_t - \vct{b}}$
in the subsequent calculations.  The choice~\cref{eqn:cgal-step-supp}
of the parameters ensures that
$$
\begin{aligned}
(1 - \eta_t)(\beta_t - \beta_{t-1}) - \beta_t \eta_t
	< \frac{-\beta_0^2}{\beta_t}.
\end{aligned}
$$
Introduce this bound into the second line of~\cref{eqn:Lt+smooth-supp} and maximize
the resulting concave quadratic function to reach
$$
\begin{aligned}
\eta_t \alpha \beta_0 \norm{\mathcal{A}} \norm{\mathcal{A}\mtx{X}_t - \vct{b}}
	&+ \frac{1}{2} \left[ (1 - \eta_t)( \beta_t - \beta_{t-1} ) - \beta_t \eta_t\right] \normsq{\mathcal{A}\mtx{X}_t - \vct{b}} \\
	&\leq \eta_t \alpha \beta_0 \norm{\mathcal{A}} \norm{\mathcal{A}\mtx{X}_t - \vct{b}}
	- \frac{\beta_0^2}{2 \beta_t} \normsq{\mathcal{A}\mtx{X}_t - \vct{b}}
	\leq \beta_t \eta_t^2 \alpha^2 \normsq{\mathcal{A}}.
\end{aligned}
$$
Substitute the last display into~\cref{eqn:Lt+smooth-supp} to determine that
\begin{multline} \label{eqn:Lt+smooth-supp-2}
L_t(\mtx{X}_{t+1}; \vct{y}_t) - p_{\star}
	\leq (1 - \eta_t ) \Big( L_{t-1}(\mtx{X}_{t}; \vct{y}_t) - p_{\star} \Big) \\
	+ \Big( 2 \beta_t \eta_t^2 \alpha^2 \normsq{\mathcal{A}}
	+ \frac{\eta_t \alpha \beta_0}{\beta_t}(\norm{\mtx{C}} + K \norm{\mathcal{A}}) \Big).
\end{multline}
To develop a recursion, we need to assess how the left-hand side changes
when we update the dual variable.

\subsubsection{Analysis of the dual update}

To incorporate the dual update, observe that
$$
\begin{aligned}
L_t(\mtx{X}_{t+1}; \vct{y}_{t+1})
	&= L_t(\mtx{X}_{t+1}; \vct{y}_t) + \ip{ \vct{y}_{t+1} - \vct{y}_t }{ \mathcal{A} \mtx{X}_{t+1} - \vct{b} } \\
	&= L_t(\mtx{X}_{t+1}; \vct{y}_t) + \gamma_t \normsq{ \mathcal{A}\mtx{X}_{t+1} - \vct{b} } \\
	&\leq L_t(\mtx{X}_{t+1}; \vct{y}_t) + \beta_t \eta_t^2 \alpha^2 \normsq{\mathcal{A}}.
\end{aligned}
$$
The first relation is simply the definition~\cref{eqn:al-supp} of the augmented Lagrangian,
while the second relation depends on the dual update rule~\cref{eqn:cgal-dual-update-supp}.
The last step follows from the selection rule~\cref{eqn:cgal-dual-step-supp} for
the dual step size parameter.

Introduce the latter display into~\cref{eqn:Lt+smooth-supp} to discover that
\begin{multline} \label{eqn:Lt-recursion-supp}
L_t(\mtx{X}_{t+1}; \vct{y}_{t+1}) - p_{\star}
	\leq (1 - \eta_t ) \Big( L_{t-1}(\mtx{X}_{t}; \vct{y}_t) - p_{\star} \Big) \\
	+ \Big( 3 \beta_t \eta_t^2 \alpha^2 \normsq{\mathcal{A}}
	+ \frac{\eta_t \alpha \beta_0}{\beta_t}(\norm{\mtx{C}} + K \norm{\mathcal{A}}) \Big).
\end{multline}
We have developed a recursion for the value of the augmented Lagrangian
as the iterates and the smoothing parameter evolve.

\subsubsection{Solving the recursion}

Next, we solve the recursion~\cref{eqn:Lt-recursion-supp}.  We assert that
\begin{equation} \label{eqn:cgal-err-supp}
\begin{aligned}
L_t(\mtx{X}_{t+1}; \vct{y}_{t+1}) - p_{\star}
	&< \frac{2 \beta_0}{\sqrt{t+1}}
	\left[6 \alpha^2 \normsq{\mathcal{A}} + \alpha (\norm{\mtx{C}} + K \norm{\mathcal{A}}) \right] \\
	&=: \frac{2\beta_0E}{\sqrt{t+1}}
\quad\text{for $t = 1, 2, 3, \dots$.}
\end{aligned}
\end{equation}
For the case $t = 1$, the definition~\cref{eqn:cgal-step-supp}
ensures that $\eta_1 = 1$ and $\beta_1 = \beta_0 \sqrt{2}$,
so the bound~\cref{eqn:cgal-err-supp} follows instantly from~\cref{eqn:Lt-recursion-supp}.
When $t > 1$, an inductive argument using the recursion~\cref{eqn:Lt-recursion-supp}
and the bound~\cref{eqn:cgal-err-supp} for $t - 1$ ensures that
\begin{align*}
L_t(\mtx{X}_{t+1}; \vct{y}_{t+1}) - p_{\star}
	&\leq \left[ \frac{t-1}{t+1} \cdot \frac{2\beta_0}{\sqrt{t}} + \frac{2\beta_0}{(t+1)^{3/2}} \right]
	\left[ 6 \alpha^2\normsq{\mathcal{A}} + \alpha (\norm{\mtx{C}} + K \norm{\mathcal{A}}) \right] \\
	&< \frac{2\beta_0}{\sqrt{t+1}}
	\left[ 6 \alpha^2\normsq{\mathcal{A}} + \alpha (\norm{\mtx{C}} + K \norm{\mathcal{A}}) \right].
\end{align*}
We have introduced the stated values~\cref{eqn:cgal-step-supp}
of the step size and smoothing parameters.  The induction proceeds,
and we conclude that~\cref{eqn:cgal-err-supp} is valid.

\subsubsection{Bound for the suboptimality of the objective}

We are prepared to develop an upper bound on the suboptimality of the objective
of the model problem~\cref{eqn:model-problem-supp}.  The definition~\cref{eqn:al-supp}
of the augmented Lagrangian directly implies that
\begin{multline} \label{eqn:subopt-decomp-supp}
\ip{\mtx{C}}{\mtx{X}_{t+1}} - p_{\star}
	= L_t(\mtx{X}_{t+1}; \vct{y}_{t+1}) - p_{\star}
	- \ip{ \vct{y}_{t+1} }{ \mathcal{A}\mtx{X}_{t+1}-\vct{b}} \\
	- \frac{1}{2}\beta_t \normsq{ \mathcal{A} \mtx{X}_{t+1} - \vct{b} }.
\end{multline}
Continuing from here,
$$
\ip{\mtx{C}}{\mtx{X}_{t+1}} - p_{\star}
	\leq \frac{2\beta_0E}{\sqrt{t+1}}
	+ K \cdot \norm{ \mathcal{A}\mtx{X}_{t+1} - \vct{b}}.
$$
The first identity follows from definition~\cref{eqn:al-supp} of the augmented Lagrangian.
The bound relies on~\cref{eqn:cgal-err-supp} and the Cauchy--Schwarz inequality.
We have also used the bounded travel condition~\cref{eqn:cgal-bdd-travel-supp}.
This establishes~\cref{eqn:cgal-subopt-supp}.

\subsubsection{Bound for the superoptimality of the objective}
\label{sec:bound-superopt-supp}

The \CGAL iterates $\mtx{X}_t$ are generally infeasible
for~\cref{eqn:model-problem-supp}, so they can be superoptimal.
Nevertheless, we can easily control the superoptimality.
By the saddle-point properties~\cref{eqn:saddle-point-supp,eqn:optimal-value-supp},
the Lagrangian~\cref{eqn:lagrangian-supp} satisfies
\begin{equation} \label{eqn:subopt-lagrange-supp}
p_{\star} = \max_{\vct{y}} \min_{\mtx{X} \in \alpha \mtx{\Delta}_n} L(\mtx{X}; \vct{y})
	\leq L(\mtx{X}_{t+1}; \vct{y}_{\star})
	= \ip{ \mtx{C} }{ \mtx{X}_{t+1} } + \ip{ \vct{y}_{\star} }{ \mathcal{A} \mtx{X}_{t+1} - \vct{b} }.
\end{equation}
Invoke the Cauchy--Schwarz inequality and rearrange to determine that %
$$
- \norm{\vct{y}_{\star}} \cdot\norm{\mathcal{A}\mtx{X}_{t+1} -\vct{b}}
	\leq \ip{ \mtx{C} }{ \mtx{X}_{t+1} } - p_{\star}.
$$
This establishes~\cref{eqn:cgal-superopt-supp}.

\subsubsection{Bound for the infeasibility of the iterates}

Next, we demonstrate that the iterates converge toward the feasible set
of~\cref{eqn:model-problem-supp}.  Combine~\cref{eqn:subopt-decomp-supp,eqn:subopt-lagrange-supp}
and rearrange to see that
$$
\ip{ \vct{y}_{t+1} - \vct{y}_{\star} }{ \mathcal{A}\mtx{X}_{t+1}-\vct{b} }
	\leq L_t(\mtx{X}_{t+1}; \vct{y}_{t+1}) - p_{\star} - \frac{1}{2} \beta_t \normsq{\mathcal{A}\mtx{X}_{t+1} - \vct{b}}.
$$
Bound the left-hand side below using Cauchy--Schwarz and the right-hand side above
using~\cref{eqn:cgal-err-supp}:
$$
- \norm{ \vct{y}_{t+1} - \vct{y}_{\star} } \cdot \norm{ \mathcal{A}\mtx{X}_{t+1} - \vct{b}}
	\leq \frac{2\beta_0 E}{\sqrt{t+1}}
	- \frac{1}{2} \beta_t \normsq{ \mathcal{A}\mtx{X}_{t+1} - \vct{b}}.
$$
Solve this quadratic inequality to obtain the bound
$$
\begin{aligned}
\norm{\mathcal{A}\mtx{X}_{t+1} - \vct{b}}
	&\leq \beta_t^{-1} \left( \norm{\vct{y}_{t+1} - \vct{y}_{\star}} + \sqrt{ \normsq{\vct{y}_{t+1} - \vct{y}_{\star}}
	+ \frac{4 \beta_t \beta_0 E}{\sqrt{t+1}} } \right) \\
	&\leq \beta_t^{-1} \Big( 2 \norm{\vct{y}_{t+1} - \vct{y}_{\star}} + \sqrt{4\beta_0^2 E} \Big) \\
	&= \frac{2\beta_0^{-1} \norm{\vct{y}_{t+1} - \vct{y}_{\star}} + 2 \sqrt{E}}{\sqrt{t+1}}.
\end{aligned}
$$
The second inequality depends on the definition~\cref{eqn:cgal-step-supp}
of the smoothing parameter $\beta_t$ and the subadditivity of the square root.

Finally, we control the dual error using the bounded travel condition~\cref{eqn:cgal-bdd-travel-supp}:
$$
\norm{\vct{y}_{t+1} - \vct{y}_{\star}}
	\leq \norm{\vct{y}_{t+1}} + \norm{\vct{y}_{\star}}
	\leq K + \norm{\vct{y}_{\star}}.
$$
The last two displays yield
$$
\norm{\mathcal{A}\mtx{X}_{t+1} - \vct{b}}
	\leq \frac{2\beta_0^{-1} (K + \norm{\vct{y}_{\star}}) + 2 \sqrt{E}}{\sqrt{t+1}}.
$$
This confirms~\cref{eqn:cgal-infeas-supp}.

\subsection{Computable bounds for suboptimality}
\label{sec:bound-subopt-supp}

In this section, we assume that the linear minimization~\cref{eqn:cgal-direction-supp}
is performed exactly at iteration $t$. 
{%
That is, there is no error depending on $\norm{\mtx{D}_t}$.} 
Introduce the duality gap surrogate
\begin{equation} \label{eqn:cgal-gap-supp}
g_t(\mtx{X}) := \max_{\mtx{H} \in \alpha \mtx{\Delta}_n} \ip{ \grad f_t(\mtx{X}) }{ \mtx{X} - \mtx{H} }.
\end{equation}
The function $f_t$ is defined in~\cref{eqn:ft-supp}.
Note that the gap $g(\mtx{X}_t)$ can be evaluated with the information we have at hand:
\begin{equation} \label{eqn:cgal-dual-gap-supp}
\begin{aligned}
g_t(\mtx{X}_t) &= \ip{ \mtx{D}_t }{ \mtx{X}_t } - \ip{ \mtx{D}_t }{ \mtx{H}_t } \\
	&= \ip{ \mtx{C} }{ \mtx{X}_t } + \ip{ \vct{y}_t + \beta_t (\mathcal{A}\mtx{X}_t -\vct{b}) }{ \mathcal{A}\mtx{X}_t }
	- \ip{ \mtx{D}_t }{ \mtx{H}_t }.
\end{aligned}
\end{equation}
The last term is just the value of the linear minimization~\cref{eqn:cgal-direction-supp}

The gap gives us computable bounds on the suboptimality of the current iterate $\mtx{X}_t$.
Indeed, the convexity of $f_t$ implies that
$$
f_t(\mtx{X}_t) - f_t(\mtx{X}_{\star})
	\leq \ip{ \grad f_t(\mtx{X}_t) }{ \mtx{X}_t - \mtx{X}_{\star} }
	\leq g_{t}(\mtx{X}_t).
$$
Using the definition~\cref{eqn:ft-supp} and the fact that
$\mtx{X}_{\star}$ is feasible for~\cref{eqn:model-problem-supp},
we find that
$$
f_t(\mtx{X}_t) - p_{\star}
	\, = \, f_t(\mtx{X}_t) - f_t(\mtx{X}_{\star}) %
	\leq g_t(\mtx{X}_t).
$$
Invoking the definition~\cref{eqn:ft-supp} again and rearranging, we find that
\begin{equation} \label{eqn:cgal-subopt-pf-supp}
\begin{aligned}
\ip{\mtx{C}}{\mtx{X}_t} - p_{\star}
	&\leq g_t(\mtx{X}_t) - \ip{ \vct{y}_t }{ \mathcal{A} \mtx{X}_t - \vct{b} } - \frac{1}{2} \beta_t \normsq{\mathcal{A}\mtx{X}_t - \vct{b} }. %
\end{aligned}
\end{equation}
In other words, the suboptimality of the primal iterate $\mtx{X}_t$
is bounded in terms of the dual gap $g_t(\mtx{X}_t)$, the feasibility gap
$\mathcal{A} \mtx{X}_t - \vct{b}$, and the dual variable $\vct{y}_t$.

\begin{remark}[Bounds with approximate linear minimization]
We can extend the bound \cref{eqn:cgal-subopt-pf-supp} for the approximate linear minimization in \cref{eqn:cgal-direction-supp} by taking the error term into account.
Based on the definition \cref{eqn:cgal-gap-supp} of $g_t(\mtx{X}_t)$ and the oracle \cref{eqn:cgal-direction-supp},
\begin{equation*}
g_t(\mtx{X}_t) \leq \ip{ \mtx{D}_t }{ \mtx{X}_t } - \ip{ \mtx{D}_t }{ \mtx{H}_t } + \frac{\alpha\beta_0}{\beta_t} \norm{\mtx{D}_t}.
\end{equation*}
This leads to an extended version of \cref{eqn:cgal-subopt-pf-supp}:
\begin{equation}\label{eqn:cgal-subopt-pf-approx-supp}
\ip{\mtx{C}}{\mtx{X}_t} - p_{\star}
	\leq g_t(\mtx{X}_t) - \ip{ \vct{y}_t }{ \mathcal{A} \mtx{X}_t - \vct{b} } - \frac{1}{2} \beta_t \normsq{\mathcal{A}\mtx{X}_t - \vct{b} } + \frac{\alpha\beta_0}{\beta_t} \norm{\mtx{D}_t}.
\end{equation}
Note that the additional error term converges to zero. 
We can simplify this bound further by using \cref{eqn:ft-update-supp-3}.

In practice, we have observed that the bound~\cref{eqn:cgal-subopt-pf-approx-supp}
is less informative than simply using the bound~\cref{eqn:cgal-subopt-pf-supp},
which assumes that the eigenvalue computation is exact.
\end{remark}

{%
\begin{remark}[Superoptimality]
Note that the suboptimality $\ip{\mtx{C}}{\mtx{X}_t} - p_{\star}$ can attain negative values because the \CGAL\ iterates $\mtx{X}_t$ are generally infeasible. Similarly, the upper bound in \cref{eqn:cgal-subopt-pf-supp} can be negative. In other words, the iterates $\mtx{X}_t$ can be superoptimal.  Nevertheless, the superoptimality is controlled by the feasibility gap, as shown in \cref{sec:bound-superopt-supp}. 
\end{remark}
}

\section{Sketching and reconstruction of a positive-semidefinite matrix}
\label{sec:nystrom-supp}

This section reviews and gives additional details about the
Nystr{\"o}m sketch~\cite{HMT11:Finding-Structure,Git13:Topics-Randomized,LLS+17:Algorithm-971,TYUC17:Fixed-Rank-Approximation}.
This sketch tracks an evolving psd matrix and then reports a provably accurate low-rank approximation.
The material on error estimation is new.

\subsection{Sketching and updates}

Consider a psd input matrix $\mtx{X} \in \Sym_n$.
Let $R$ be a parameter that
modulates the storage cost of the sketch and the quality of the matrix approximation.

To construct the sketch, we draw and fix a standard normal %
test matrix $\mtx{\Omega} \in \F^{n \times R}$.
The sketch of the matrix $\mtx{X}$ takes the form
\begin{equation} \label{eqn:sketch-supp}
\mtx{S} = \mtx{X\Omega} \in \F^{n \times R}
\quad\text{and}\quad
\tau = \trace(\mtx{X}) \in \R.
\end{equation}
The sketch supports linear rank-one updates to $\mtx{X}$.
Indeed, we can track the evolution
\begin{equation} \label{eqn:sketch-update-supp}
\begin{aligned}
\mtx{X} &\gets (1 - \eta)\, \mtx{X} + \eta \,\vct{vv}^*
\qquad\qquad\text{for $\eta \in[0,1]$ and $\vct{v}\in\F^n$} \\
\quad\text{via}\quad
\mtx{S} &\gets (1 - \eta)\,\mtx{S} + \eta \, \vct{v} (\vct{v}^* \mtx{\Omega}) \\
\quad\text{and}\quad
\tau &\gets (1 - \eta)\, \tau + \eta \, \normsq{ \vct{v} }.
\end{aligned}
\end{equation}
The test matrix $\mtx{\Omega}$ and the sketch $(\mtx{S}, \tau)$ require
storage of $2Rn + 1$ numbers in $\F$.  The arithmetic cost of the linear
update~\cref{eqn:sketch-update} to the sketch is $\Theta(Rn)$ numerical operations.

\begin{remark}[Structured random matrices]
We can reduce storage costs by a factor of two by using a structured
random matrix in place of $\mtx{\Omega}$.
 For example, see~\cite[Sec.~3]{TYUC19:Streaming-Low-Rank}
or~\cite{SGTU18:Tensor-Random}.
This modification requires
additional care with implementation (e.g., use of sparse arithmetic
or fast trigonometric transforms), but the improvement can be
significant for very large problems.
\end{remark}

\subsection{The reconstruction process}

Given the test matrix $\mtx{\Omega}$
and the sketch $\mtx{S} = \mtx{X\Omega}$,
we can form a rank-$R$ approximation $\widehat{\mtx{X}}$
of the matrix $\mtx{X}$ contained in the sketch.
This approximation is defined by the formula
\begin{equation} \label{eqn:nystrom-supp}
\widehat{\mtx{X}} := \mtx{S} (\mtx{\Omega}^* \mtx{S})^\dagger \mtx{S}^*
	= (\mtx{X\Omega}) (\mtx{\Omega}^* \mtx{X \Omega})^{\dagger} (\mtx{X \Omega})^*.
\end{equation}
This reconstruction is called a \emph{Nystr{\"o}m approximation}.
We often truncate $\widehat{\mtx{X}}$ %
by replacing it with
its best rank-$r$ approximation $\lowrank{\widehat{\mtx{X}}}{r}$ for some $r \leq R$.

See~\cref{alg:nystrom-sketch-supp} for a numerically stable
implementation of the Nystr{\"o}m reconstruction process~\cref{eqn:nystrom},
including error estimation steps. %
The algorithm takes $\mathcal{O}(R^2 n)$ numerical operations
and $\Theta(Rn)$ storage.

\begin{algorithm}[t]%
  \caption{\textsf{NystromSketch} implementation (see \cref{sec:nystrom-supp})
  \label{alg:nystrom-sketch-supp}}
  \begin{algorithmic}[1]
  \vspace{0.5pc}

	\Require{Dimension $n$ of input matrix, size $R$ of sketch } %
    \Ensure{Rank-$R$ approximation $\widehat{\mtx{X}}$ of sketched matrix in factored form
    $\widehat{\mtx{X}} = \mtx{U\Lambda U}^*$,
    where $\mtx{U} \in \R^{n \times R}$ has orthonormal columns and $\mtx{\Lambda} \in \R^{R \times R}$
    is nonnegative diagonal, and the Schatten 1-norm approximation errors $\mathrm{err}(\lowrank{\widehat{\mtx{X}}}{r})$
    for $1 \leq r \leq R$, as defined in~\cref{eqn:sketch-err-supp}}
    \Recommend{Choose $R$ as large as possible} %

	\vspace{0.5pc}

	\Function{\textsf{NystromSketch.Init}}{$n$, $R$}
	\State	$\mtx{\Omega} \gets \texttt{randn}(n, R)$
		\Comment	Draw and fix random test matrix
	\State	$\mtx{S} \gets \texttt{zeros}(n, R)$ and $\tau \gets 0$
		\Comment	Form sketch of zero matrix
  	\EndFunction

	\vspace{0.5pc}

	\Function{\textsf{NystromSketch.RankOneUpdate}}{$\vct{v}$, $\eta$}
		\Comment	Implements~\cref{eqn:sketch-update}
	\State	$\mtx{S} \gets	(1-\eta)\, \mtx{S} + \eta\, \vct{v} (\vct{v}^* \mtx{\Omega})$
		\Comment	Update sketch of matrix
	\State	$\tau \gets (1 - \eta) \, \tau + \eta \, \normsq{\vct{v}}$
		\Comment	Update the trace
	\EndFunction

	\vspace{0.5pc}

	\Function{\textsf{NystromSketch.Reconstruct}}{\,} %
	\State	$\sigma \gets \sqrt{n} \, \texttt{eps}(\texttt{norm}(\mtx{S}))$ %
		\Comment	Compute a shift parameter
	\State	$\mtx{S} \gets \mtx{S} + \sigma \, \mtx{\Omega}$
		\Comment	Implicitly form sketch of $\mtx{X} + \sigma \, \Id$
	\State	$\mtx{L} \gets \texttt{chol}(\mtx{\Omega}^* \mtx{S})$
	\State	$[\mtx{U}, \mtx{\Sigma}, \sim] \gets \texttt{svd}(\mtx{S}/\mtx{L})$
		\Comment	Dense SVD
	\State	$\mtx{\Lambda} \gets \max\{0, \mtx{\Sigma}^2 - \sigma \, \Id \}$
		\Comment	Remove shift
	\State	$\mathbf{err} \gets \tau - \texttt{cumsum}(\texttt{diag}(\mtx{\Lambda}))$
		\Comment	Compute approximation errors
	\EndFunction

  \vspace{0.25pc}

\end{algorithmic}
\end{algorithm}

\subsection{The approximation error}

We can easily determine the exact Schatten 1-norm error in
the truncated Nystr{\"o}m approximation:
\begin{equation} \label{eqn:sketch-err-supp}
\mathrm{err}(\lowrank{\widehat{\mtx{X}}}{r})
	:= \norm{ \mtx{X} - \lowrank{\widehat{\mtx{X}}}{r} }_{*}
	= \tau - \trace(\lowrank{\widehat{\mtx{X}}}{r})
	\quad\text{for each $r \leq R$.}
\end{equation}
Furthermore, we can use~\cref{eqn:sketch-err-supp}
to ascertain whether the unknown input matrix $\mtx{X}$
is (almost) low-rank.  Indeed, the best rank-$r$ approximation of $\mtx{X}$
satisfies
\begin{equation} \label{eqn:sketch-input-err-bd-supp}
\norm{ \mtx{X} - \lowrank{\mtx{X}}{r} }_* \leq
	\mathrm{err}(\lowrank{\widehat{\mtx{X}}}{r})
	\quad\text{for each $r \leq R$.}
\end{equation}
Thus, large drops in the function $r \mapsto \mathrm{err}(\lowrank{\widehat{\mtx{X}}}{r})$
signal large drops in the eigenvalues of $\mtx{X}$.
See \cref{sec:approx-analysis-supp} for the details.

\subsection{Trace correction}
\label{sec:trace-correct-supp}

The trace of the Nystr{\"o}m approximation $\widehat{\mtx{X}}$
does not exceed the trace of the input matrix $\mtx{X}$.  It can sometimes be helpful
to replace the Nystr{\"o}m approximation by another matrix whose trace matches
the trace of the input matrix.

Suppose that the Nystr{\"o}m approximation has the form $\widehat{\mtx{X}} = \mtx{U\Lambda U}^*$,
where $\mtx{U} \in \F^{n \times R}$ is orthonormal and $\mtx{\Lambda} \in \Sym_R^+$ is diagonal.  
Assume that $\trace( \mtx{X} ) = \alpha$ and $\trace( \widehat{\mtx{X}} ) = \widehat{\alpha}$.
Since $\widehat{\mtx{X}}$ is a Nystr{\"o}m approximation of $\mtx{X}$,
it holds that $\widehat{\alpha} \leq \alpha$.
Now, we can construct a second approximation
\begin{equation} \label{eqn:trace-correct-supp}
\widetilde{\mtx{X}} := \mtx{U} \, \big( \mtx{\Lambda} + (\alpha - \widehat{\alpha}) \Id_R / R \big)\, \mtx{U}^*.
\end{equation}
It is evident that $\widehat{\mtx{X}} \psdle \widetilde{\mtx{X}}$ and that $\trace \widetilde{\mtx{X}} = \alpha$.

One might hope that the trace-corrected approximation $\widetilde{\mtx{X}}$
is better than the original Nystr{\"o}m approximation $\widehat{\mtx{X}}$,
but this is not necessarily the case.  Fortunately, we always have the
relation
$$
\norm{ \mtx{X} - \widetilde{\mtx{X}} }_*
	\leq 2 \norm{ \mtx{X} - \widehat{\mtx{X}} }_*.
$$
In other words, correcting the trace doubles the error, at worst.
See \cref{sec:trace-correct-pf-supp} for the proof.

\subsection{Statistical properties of the Nystr{\"o}m sketch}

The truncated Nystr{\"o}m approximation has a number of attractive statistical properties.
For a fixed input matrix $\mtx{X}$, the expected approximation error
$\Expect_{\mtx{\Omega}} \norm{ \mtx{X} - \lowrank{\widehat{\mtx{X}}}{r} }_*$
is monotone decreasing in both the sketch size $R$ and the truncation rank $r$.
Furthermore, if $\rank(\mtx{X}) = r$ for $r \leq R$,
then $\norm{ \mtx{X} - \lowrank{\widehat{\mtx{X}}}{r} }_{*} = 0$ with probability one.
We establish these results below in \cref{sec:nystrom-stat-supp}.

\subsection{\textit{A priori} error bounds}
\label{sec:nystrom-apriori-supp}

The Nystr{\"o}m approximation $\widehat{\mtx{X}}$ yields a provably good estimate
for the matrix $\mtx{X}$ contained in the sketch~\cite[Thms.~4.1]{TYUC17:Fixed-Rank-Approximation}.

\begin{fact}[Nystr{\"o}m sketch: Error bound] \label{fact:nystrom-supp}
Fix a psd matrix $\mtx{X} \in \Sym_n$.  Let $\mtx{S} = \mtx{X\Omega}$
where $\mtx{\Omega} \in \F^{n \times R}$ is standard normal.
For each $r < R - 1$, the Nystr{\"o}m approximation~\cref{eqn:nystrom-supp} satisfies
\begin{equation} \label{eqn:nystrom-error-supp}
\Expect_{\mtx{\Omega}} \norm{ \mtx{X} - \widehat{\mtx{X}} }_{*} %
	\leq \left(1 + \frac{r}{R-r-1}\right) \norm{ \mtx{X} - \lowrank{\mtx{X}}{r} }_*.
\end{equation}
If we replace $\widehat{\mtx{X}}$
with the rank-$r$ truncation $\lowrank{ \widehat{\mtx{X}} }{r}$,
the error bound~\cref{eqn:nystrom-error-supp} remains valid.
Similar results hold with high probability.
\end{fact}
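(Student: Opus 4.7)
\textbf{Proof plan for \cref{fact:nystrom-supp}.} The argument follows the Halko--Martinsson--Tropp template for randomized low-rank approximation, specialized to the psd/Nystr{\"o}m setting. The first observation is that the Nystr{\"o}m approximation can be written as $\widehat{\mtx{X}} = \mtx{X}^{1/2} \mtx{P} \mtx{X}^{1/2}$, where $\mtx{P}$ is the orthogonal projector onto $\range(\mtx{X}^{1/2}\mtx{\Omega})$. Consequently $\mtx{X} - \widehat{\mtx{X}} = \mtx{X}^{1/2}(\Id - \mtx{P})\mtx{X}^{1/2} \psdge \mtx{0}$, and the nuclear norm reduces to a trace: $\norm{\mtx{X}-\widehat{\mtx{X}}}_* = \trace((\Id-\mtx{P})\mtx{X})$. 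My goal is then to bound $\Expect_{\mtx{\Omega}} \trace((\Id - \mtx{P})\mtx{X})$.

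Next I would reduce to the diagonal case by unitary invariance. Diagonalize $\mtx{X} = \mtx{U}\mtx{\Lambda}\mtx{U}^*$ with eigenvalues in decreasing order, and partition $\mtx{\Lambda} = \diag(\mtx{\Lambda}_1, \mtx{\Lambda}_2)$ into the top-$r$ and tail blocks. Because $\mtx{\Omega}$ is Gaussian, $\mtx{U}^*\mtx{\Omega}$ has the same distribution as $\mtx{\Omega}$, so I may assume $\mtx{X} = \mtx{\Lambda}$ and partition the rows of $\mtx{\Omega}$ as $\mtx{\Omega}_1 \in \F^{r\times R}$ and $\mtx{\Omega}_2 \in \F^{(n-r)\times R}$. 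The crux of the argument is a deterministic \emph{structural bound}: whenever $\mtx{\Omega}_1$ has full row rank,
\begin{equation*}
\trace((\Id-\mtx{P})\mtx{X}) \;\leq\; \trace(\mtx{\Lambda}_2) + \trace\bigl(\mtx{\Lambda}_2\,\mtx{\Omega}_2\mtx{\Omega}_1^{\dagger}(\mtx{\Omega}_1^{\dagger})^*\mtx{\Omega}_2^{*}\bigr).
\end{equation*}
This is obtained by choosing a test subspace inside $\range(\mtx{X}^{1/2}\mtx{\Omega})$ spanned by the columns of $\mtx{X}^{1/2}\mtx{\Omega}\,\mtx{\Omega}_1^{\dagger}$ (which approximately recovers the dominant eigenspace) and bounding the residual by a Schur-complement/interlacing calculation. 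Taking expectations and exploiting the independence of $\mtx{\Omega}_1$ and $\mtx{\Omega}_2$, together with the Gaussian identity $\Expect_{\mtx{\Omega}_2}[\mtx{\Omega}_2 \mtx{M} \mtx{\Omega}_2^*] = \trace(\mtx{M})\,\Id$ and the inverse-Wishart moment $\Expect\fnormsq{\mtx{\Omega}_1^{\dagger}} = r/(R-r-1)$ (valid for $R > r+1$), gives
\begin{equation*}
\Expect_{\mtx{\Omega}}\norm{\mtx{X}-\widehat{\mtx{X}}}_* \;\leq\; \trace(\mtx{\Lambda}_2)\Bigl(1 + \tfrac{r}{R-r-1}\Bigr) \;=\; \Bigl(1 + \tfrac{r}{R-r-1}\Bigr)\norm{\mtx{X} - \lowrank{\mtx{X}}{r}}_*.
\end{equation*}

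To upgrade the bound to the rank-$r$ truncation $\lowrank{\widehat{\mtx{X}}}{r}$, I exploit psd structure. Since $\lowrank{\widehat{\mtx{X}}}{r} \psdle \widehat{\mtx{X}} \psdle \mtx{X}$, the difference $\mtx{X} - \lowrank{\widehat{\mtx{X}}}{r}$ is psd, so again the nuclear norm collapses to a trace. Among all rank-$r$ psd matrices dominated by $\widehat{\mtx{X}}$, the truncation $\lowrank{\widehat{\mtx{X}}}{r}$ has the largest trace (by Weyl's inequality applied to the top-$r$ eigenvalues). Thus for any rank-$r$ sub-projector $\widetilde{\mtx{P}} \psdle \mtx{P}$, we have $\trace\lowrank{\widehat{\mtx{X}}}{r} \geq \trace(\mtx{X}^{1/2}\widetilde{\mtx{P}}\mtx{X}^{1/2})$, and therefore
$\norm{\mtx{X} - \lowrank{\widehat{\mtx{X}}}{r}}_* \leq \trace((\Id - \widetilde{\mtx{P}})\mtx{X})$.
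Choosing $\widetilde{\mtx{P}}$ to be the projector onto the $r$-dimensional subspace spanned by the columns of $\mtx{X}^{1/2}\mtx{\Omega}\,\mtx{\Omega}_1^{\dagger}$ used in the structural bound, the same expectation computation goes through verbatim and yields the identical constant $(1 + r/(R-r-1))$. The high-probability statement follows by applying concentration (e.g., a Markov or Laurent--Massart argument) to $\fnormsq{\mtx{\Omega}_1^{\dagger}}$.

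I expect the main obstacle to be the structural bound in the display above: verifying it requires a careful Schur-complement manipulation to express $\Id - \mtx{P}$ against $\mtx{X}^{1/2}$ in block form and then controlling the cross term. Once this deterministic inequality is in hand, the Gaussian expectation and the reduction to the truncated case are routine.
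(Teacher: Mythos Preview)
Your sketch is correct and follows the standard argument. Note, however, that the paper does not actually prove this statement: it is recorded as a \emph{Fact} and attributed to \cite[Thm.~4.1]{TYUC17:Fixed-Rank-Approximation}, with only the projector representation $\widehat{\mtx{X}} = \mtx{X}^{1/2}\mtx{P}\mtx{X}^{1/2}$ and $\lowrank{\widehat{\mtx{X}}}{r} = \mtx{X}^{1/2}\mtx{P}_r\mtx{X}^{1/2}$ recalled in \cref{sec:nystrom-rep-supp}. Your plan---reduce to the diagonal case by rotation invariance, apply the Halko--Martinsson--Tropp structural bound to $\mtx{A} = \mtx{X}^{1/2}$, and then take Gaussian expectations using $\Expect\fnormsq{\mtx{\Omega}_1^{\dagger}} = r/(R-r-1)$---is precisely the argument in that reference, so there is nothing to compare; your truncation step via Ky~Fan's inequality is a minor cosmetic variant of the $\mtx{P}_r$ representation the paper quotes, and leads to the same constant.
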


The truncated Nystr{\"o}m approximations
satisfy a stronger error bound %
when the input matrix $\mtx{X}$ exhibits spectral decay.

\begin{fact}[Nystr{\"o}m sketch: Error bound II] \label{fact:nystrom-ii-supp}
Fix a psd matrix $\mtx{X} \in \Sym_n$.  Let $\mtx{S} = \mtx{X\Omega}$
where $\mtx{\Omega} \in \F^{n \times R}$ is standard normal.
For each $r < R - 1$, the Nystr{\"o}m approximation~\cref{eqn:nystrom-supp} satisfies
\begin{equation} \label{eqn:nystrom-error-ii-supp}
\Expect_{\mtx{\Omega}} \norm{ \mtx{X} - \lowrank{\widehat{\mtx{X}}}{r} }_{*} %
	\leq \norm{ \mtx{X} - \lowrank{\mtx{X}}{r} }_*
	+ \left(1 + \frac{r}{R-r-1}\right) \norm{ \mtx{X} - \lowrank{\mtx{X}}{r} }_*.
\end{equation}
If we replace $\widehat{\mtx{X}}$ with the rank-$r$ truncation
$\lowrank{ \widehat{\mtx{X}} }{r}$, the error bound~\cref{eqn:nystrom-error-ii-supp} remains valid.
Similar results hold with high probability.
\end{fact}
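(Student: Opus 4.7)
The plan is to deduce Fact B.2 from Fact B.1 via a structural observation about the Nyström approximation, namely that $\widehat{\mtx{X}} \psdle \mtx{X}$ in the Löwner order, combined with an additivity identity for the nuclear norm on sums of psd matrices. This avoids incurring the double error factor that a naive triangle inequality would produce.

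First I would establish two positivity facts. \textbf{(i)} Writing $\mtx{X} = \mtx{L}\mtx{L}^*$ for some $\mtx{L} \in \F^{n \times n}$, a direct computation from the formula~\cref{eqn:nystrom-supp} gives $\widehat{\mtx{X}} = \mtx{L}\, \mtx{P}\, \mtx{L}^*$, where $\mtx{P}$ is the orthogonal projector onto $\range(\mtx{L}^* \mtx{\Omega})$. Since $\mtx{P} \psdle \Id$, we obtain $\widehat{\mtx{X}} \psdle \mtx{X}$, so $\mtx{X} - \widehat{\mtx{X}} \psdge \mtx{0}$. \textbf{(ii)} The difference $\widehat{\mtx{X}} - \lowrank{\widehat{\mtx{X}}}{r}$ is also psd, because truncation retains only the top $r$ nonnegative eigenvalues of $\widehat{\mtx{X}}$.

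Next I would exploit additivity of the trace on psd matrices. Because
\[
\mtx{X} - \lowrank{\widehat{\mtx{X}}}{r} = (\mtx{X} - \widehat{\mtx{X}}) + (\widehat{\mtx{X}} - \lowrank{\widehat{\mtx{X}}}{r}),
\]
and both summands are psd, their sum is psd and its nuclear norm is the trace, which splits linearly:
\[
\norm{\mtx{X} - \lowrank{\widehat{\mtx{X}}}{r}}_{*}
= \norm{\mtx{X} - \widehat{\mtx{X}}}_{*} + \norm{\widehat{\mtx{X}} - \lowrank{\widehat{\mtx{X}}}{r}}_{*}.
\]
For the second term, note that $\norm{\widehat{\mtx{X}} - \lowrank{\widehat{\mtx{X}}}{r}}_{*} = \sum_{i>r} \lambda_i(\widehat{\mtx{X}})$, and the Löwner inequality $\widehat{\mtx{X}} \psdle \mtx{X}$ implies $\lambda_i(\widehat{\mtx{X}}) \leq \lambda_i(\mtx{X})$ for every $i$ by Weyl's monotonicity theorem. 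Hence $\norm{\widehat{\mtx{X}} - \lowrank{\widehat{\mtx{X}}}{r}}_{*} \leq \norm{\mtx{X} - \lowrank{\mtx{X}}{r}}_{*}$, which is deterministic in $\mtx{\Omega}$.

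Taking expectations and applying \cref{fact:nystrom-supp} to the first term then yields the stated bound~\cref{eqn:nystrom-error-ii-supp}. The claim for $\lowrank{\widehat{\mtx{X}}}{r}$ replaced by itself is immediate; nothing changes since we are already bounding the truncated object. For the high-probability version, I would replace the expectation step with the corresponding high-probability bound from \cref{fact:nystrom-supp}; the trace-splitting identity is pathwise and carries over unchanged. The only delicate step is the Löwner-order claim $\widehat{\mtx{X}} \psdle \mtx{X}$; I would verify it carefully via the projection representation above (taking care to use the pseudoinverse so that $\range(\mtx{L}^* \mtx{\Omega})$ is treated correctly when $\mtx{\Omega}^* \mtx{X} \mtx{\Omega}$ is singular), as everything else reduces to elementary spectral theory.
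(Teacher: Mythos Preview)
Your argument is correct. The trace-splitting identity
\[
\norm{\mtx{X} - \lowrank{\widehat{\mtx{X}}}{r}}_{*}
= \norm{\mtx{X} - \widehat{\mtx{X}}}_{*} + \norm{\widehat{\mtx{X}} - \lowrank{\widehat{\mtx{X}}}{r}}_{*},
\]
valid because both summands are psd, together with Weyl's monotonicity applied to the L{\"o}wner relation $\widehat{\mtx{X}}\psdle\mtx{X}$, cleanly reduces the claim to \cref{fact:nystrom-supp}.

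The paper does not give a self-contained proof here; it simply points to~\cite[Thm.~4.2]{TYUC17:Fixed-Rank-Approximation} and~\cite[Thm.~9.3]{HMT11:Finding-Structure}. Your argument is in the same spirit as those references and in fact uses exactly the projection representation $\widehat{\mtx{X}}=\mtx{X}^{1/2}\mtx{P}\mtx{X}^{1/2}$ that the paper develops in \cref{sec:nystrom-rep-supp}. The main difference is packaging: rather than revisiting the Gaussian-projection analysis of those papers, you factor the error into a deterministic piece (handled by Weyl) and a stochastic piece (handled by \cref{fact:nystrom-supp} as a black box). This is arguably a crisper route to the stated bound, and your observation that the ``replace $\widehat{\mtx{X}}$ by $\lowrank{\widehat{\mtx{X}}}{r}$'' clause is already built into the left-hand side is well taken.
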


\begin{proof}
Combine the proofs
of~\cite[Thm.~4.2]{TYUC17:Fixed-Rank-Approximation} and~\cite[Thm.~9.3]{HMT11:Finding-Structure}.
\end{proof}

\subsection{Discussion}
\label{sec:nystrom-disc}

In practice, it is best to minimize the error attributable to sketching.
To that end, we recommend choosing the sketch size parameter $R$
as large as possible, given resource constraints,
so that we can obtain the highest-quality
Nystr{\"o}m approximation.

In some problems, e.g., \textsf{MaxCut} with eigenvector rounding,
the desired rank $r$ of the truncation is known in advance.
In this case, \cref{fact:nystrom} offers guidance about
how to select $R$ to achieve a specific error tolerance $(1 + \zeta)$
in~\cref{eqn:goal}.
For example, when $5r + 1 \leq R$, the expected Schatten 1-norm error
in the rank-$r$ approximation $\lowrank{\widehat{\mtx{X}}}{r}$
is at most $1.25 \times$ the error in the best rank-$r$
approximation of $\mtx{X}$.

When the input matrix $\mtx{X}$ has decaying eigenvalues,
the error in the truncated approximation may be far smaller
than \cref{fact:nystrom} predicts; see~\cite[Thm.~4.2]{TYUC17:Fixed-Rank-Approximation}.
This happy situation is typical when $\mtx{X}$ is generated by
the \CGAL\ iteration.

\subsection{Representation of the truncated Nystr{\"o}m approximation}
\label{sec:nystrom-rep-supp}

The key tool in the analysis is a simple representation for the truncated approximation.
These facts are extracted from~\cite[Supp.]{TYUC17:Fixed-Rank-Approximation}.

Let $\mtx{X} \in \Sym_n$ be a fixed psd matrix,
and let $\mtx{\Omega} \in \R^{n \times R}$ be an arbitrary test matrix.
Let $\mtx{P} \in \Sym_n$ be the orthoprojector onto the range of $\mtx{X}^{1/2} \mtx{\Omega}$.
Then we can write the Nystr{\"o}m approximation~\cref{eqn:nystrom-supp} as
$$
\widehat{\mtx{X}} = \mtx{X}^{1/2} \mtx{P} \mtx{X}^{1/2}.
$$
For each $r \leq R$, define $\mtx{P}_r$ to be the orthoprojector onto the co-range of
the matrix $\lowrank{ \mtx{X}^{1/2} \mtx{P} }{r}$.  By construction,
$\lowrank{ \mtx{X}^{1/2} \mtx{P} }{r} = \mtx{X}^{1/2} \mtx{P}_r$.
As a consequence,
$$
\lowrank{\widehat{\mtx{X}}}{r} = (\lowrank{\mtx{X}^{1/2} \mtx{P}}{r})(\lowrank{\mtx{X}^{1/2} \mtx{P}}{r})^*
	= \mtx{X}^{1/2} \mtx{P}_r \mtx{X}^{1/2}.
$$
These results allow us to relate the truncated approximations to each other:

\subsection{Approximation errors: Analysis}
\label{sec:approx-analysis-supp}

We may now obtain explicit formulas for the error in each Nystr{\"o}m approximation.
For $r \leq R$, note that
$$
\mtx{X} - \lowrank{\widehat{\mtx{X}}}{r} = \mtx{X}^{1/2} (\Id - \mtx{P}_r) \mtx{X}^{1/2} \psdge \mtx{0}.
$$
It follows immediately that
$$
\mathrm{err}( \lowrank{\widehat{\mtx{X}}}{r} )
	= \norm{ \mtx{X} - \lowrank{\widehat{\mtx{X}}}{r} }_*
	= \trace( \mtx{X} - \lowrank{\widehat{\mtx{X}}}{r} )
	= \trace( \mtx{X} ) - \trace( \lowrank{\widehat{\mtx{X}}}{r} ).
$$
This is the relation~\cref{eqn:sketch-err-supp}.

Assume that $r \leq r'$.  Since $\trace( \lowrank{\widehat{\mtx{X}}}{r} ) \leq \trace( \lowrank{\widehat{\mtx{X}}}{r'} )$,
we have the bound
$$
\mathrm{err}( \lowrank{\widehat{\mtx{X}}}{r} ) \geq \mathrm{err}( \lowrank{ \widehat{\mtx{X}}}{r'} )
\quad\text{for $r \leq r'$.}
$$
In other words, for fixed sketch size $R$,
the error in the truncated Nystr{\"o}m approximation
is monotone decreasing in the approximation rank.

To obtain~\cref{eqn:sketch-input-err-bd-supp}, observe that
$$
\norm{ \mtx{X} - \lowrank{\mtx{X}}{r} }_*
	\leq \norm{ \mtx{X} - \lowrank{\widehat{\mtx{X}}}{r} }_*
	= \mathrm{err}( \lowrank{\widehat{\mtx{X}}}{r} ).
$$
The inequality holds because $\lowrank{\mtx{X}}{r}$ is a best
rank-$r$ approximation of $\mtx{X}$ in Schatten 1-norm,
while $\lowrank{\widehat{\mtx{X}}}{r}$ is another rank-$r$ matrix.

\subsection{Trace correction: Analysis}
\label{sec:trace-correct-pf-supp}

Next, we study the trace-corrected Nystr{\"o}m approximation,
introduced in \cref{sec:trace-correct-supp}.  Recall that
$\mtx{X}$ is a psd matrix with trace $\alpha$ and
$\widehat{\mtx{X}}$ is a Nystr{\"o}m approximation
of $\mtx{X}$ with $\trace(\widehat{\mtx{X}}) = \widehat{\alpha}$.
Owing to the projection representation of $\widehat{\mtx{X}}$ in \cref{sec:nystrom-rep-supp},
it must be the case that $\widehat{\alpha} \leq \alpha$.
The trace-corrected approximation $\widetilde{\mtx{X}}$ is defined
in~\cref{eqn:trace-correct-supp}.
By construction, this matrix satisfies $\widetilde{\mtx{X}} \psdge \widehat{\mtx{X}}$
and that $\trace(\widetilde{\mtx{X}}) = \alpha$.

First, we develop a variational interpretation of the trace-corrected approximation:
\begin{equation} \label{eqn:tracenorm-min-supp}
\widetilde{\mtx{X}} \in \argmin \{ \norm{ \mtx{Y} - \widehat{\mtx{X}} }_* : \text{$\trace(\mtx{Y}) = \alpha$ and $\mtx{Y} \psdge \mtx{0}$} \}.
\end{equation}
Indeed, for any feasible point $\mtx{Y}$,
$$
\norm{ \mtx{Y} - \widehat{\mtx{X}} }_*
	\geq \trace( \mtx{Y} - \widehat{\mtx{X}} )
	= \alpha - \widehat{\alpha}.
$$
In the first relation, equality holds if and only if $\mtx{Y} \psdge \widehat{\mtx{X}}$.
We have already seen that the matrix $\widetilde{\mtx{X}}$
is a feasible point that satisfies the equality condition.
There are many solutions to the optimization problem~\cref{eqn:tracenorm-min-supp};
we have singled out $\widetilde{\mtx{X}}$ as the one that simultaneously minimizes
the Frobenius-norm error $\fnorm{ \mtx{Y} - \widehat{\mtx{X}} }$ over the same
feasible set.

Second, we need to argue that trace correction has a controlled impact
on the error in the Nystr{\"o}m approximation.  This point follows
from a standard calculation:
$$
\norm{ \widetilde{\mtx{X}} - \mtx{X} }_*
	\leq \norm{ \widetilde{\mtx{X}} - \widehat{\mtx{X}} }_* + \norm{ \mtx{X} - \widehat{\mtx{X}} }_*
	\leq 2 \norm{ \mtx{X} - \widehat{\mtx{X}} }_*.
$$
The first relation is the triangle inequality.
The second relation holds because $\widetilde{\mtx{X}}$
solves the variational problem~\cref{eqn:tracenorm-min-supp},
while $\mtx{X}$ is also feasible for this optimization problem.

\subsection{Statistical properties: Analysis}
\label{sec:nystrom-stat-supp}

Next, let us verify the statistical properties of the error.
In this section, the test matrix
$\mtx{\Omega} \in \F^{n \times R}$ is standard normal.

Assuming that $\rank(\mtx{X}) = r \leq R$,
let us prove that {$\norm{ \mtx{X} - \lowrank{\widehat{\mtx{X}}}{r} }_* = 0$}
with probability one.  To that end, we observe
$$
\range(\mtx{P}) = \range(\mtx{X}^{1/2} \mtx{\Omega}) = \range(\mtx{X}^{1/2})
\quad\text{with probability one.}
$$
It follows that
$$
\widehat{\mtx{X}} = \mtx{X}^{1/2} \mtx{P} \mtx{X}^{1/2} = \mtx{X}
\quad\text{with probability one.}
$$
Moreover, $\rank(\mtx{X}^{1/2} \mtx{P}) = r$ with probability one.
Conditional on this event,
$$
\lowrank{\widehat{\mtx{X}}}{r} = \mtx{X}^{1/2} \mtx{P}_r \mtx{X}^{1/2}
	= (\lowrank{ \mtx{X}^{1/2} \mtx{P} }{r} )(\lowrank{ \mtx{X}^{1/2} \mtx{P} }{r} )^*
	= (\mtx{X}^{1/2} \mtx{P}) (\mtx{X}^{1/2} \mtx{P})^* = \mtx{X}.
$$
This is the stated result.

Next, we show that the expected error in the truncated Nystr{\"o}m approximation is monotone
decreasing with respect to the sketch size $R$.  Fix the truncation rank $r$.
Let $\mtx{\Omega}_+ = \begin{bmatrix} \mtx{\Omega} & \vct{\omega} \end{bmatrix}$,
where $\vct{\omega} \in \F^n$ is a standard normal vector independent from $\mtx{\Omega}$.
Define $\mtx{P}_+ \in \Sym_n$ to be the orthoprojector
onto $\range(\mtx{X}^{1/2} \mtx{\Omega}_+)$.
It is clear that $\range(\mtx{P}) \subseteq \range(\mtx{P}_+)$,
and so
$$
\mtx{X}^{1/2} \mtx{P} \mtx{X}^{1/2} \psdle \mtx{X}^{1/2} \mtx{P}_+ \mtx{X}^{1/2}.
$$
As a consequence,
$$
\trace( \lowrank{ \mtx{X}^{1/2} \mtx{P} \mtx{X}^{1/2} }{r} )
	\leq \trace( \lowrank{ \mtx{X}^{1/2} \mtx{P}_+ \mtx{X}^{1/2} }{r} ).
$$
Equivalently,
$$
\norm{ \mtx{X} - \lowrank{ \mtx{X}^{1/2} \mtx{P} \mtx{X}^{1/2} }{r} }_*
	\geq \norm{ \mtx{X} - \lowrank{ \mtx{X}^{1/2} \mtx{P}_+ \mtx{X}^{1/2} }{r} }_*.
$$
Take the expectation with respect to $\mtx{\Omega}_+$.  The left-hand side
is the average error in the $r$-truncated Nystr{\"o}m approximation with
a standard normal sketch of size $R$.  The right-hand side is the same thing,
except the sketch has size $R+1$.  This is the required result.

\section{\sCGAL: Additional results}

This section contains some additional material about the \sCGAL\
algorithm.

\subsection{Assessing solution quality}
\label{sec:scgal-soln-quality-supp}

We can develop estimates for the quality of the \sCGAL\
solution by adapted the approach that we used for \CGAL.

To do so, we need to track the primal objective value
at the current iterate:
$$
p_t = \ip{\mtx{C}}{\mtx{X}_t}.
$$
At each iteration, we can easily update this estimate using the computed
approximate eigenvector $\vct{v}_t$:
$$
p_{t+1} = (1 - \eta_t) p_t + \eta_t \alpha \ip{ \vct{v}_t }{ \mtx{C} \vct{v}_t }.
$$
This update rule is applied with the help of the primitive \cref{eqn:primitives}\primone.

When we wish to estimate the error, say in iteration $t$, we can solve the eigenvalue subproblem
to very high accuracy:
$$
\xi_t = \vct{v}_t^* \mtx{D}_t \vct{v}_t = \min_{\norm{\vct{v}} = 1} \vct{v}^* \mtx{D}_t \vct{v}.
$$
Then, we can compute the surrogate duality gap:
$$
\begin{aligned}
g_t(\mtx{X}_t) = p_t + \ip{ \vct{y}_t + \beta_t (\vct{z}_t -\vct{b}) }{ \vct{z}_t }
	- \xi_t.
\end{aligned}
$$
This expression follows directly from the formula~\cref{eqn:cgal-dual-gap-supp}
using the loop invariant that $\vct{z}_t = \mathcal{A}\mtx{X}_t$.
We arrive at a computable error estimate:
$$
\begin{aligned}
p_t - p_{\star}
	&\leq g_t(\mtx{X}_t) - \ip{ \vct{y}_t }{ \vct{z}_t - \vct{b} } - \frac{1}{2} \beta_t \norm{\vct{z}_t - \vct{b}}^2. %
\end{aligned}
$$
This bound follows directly from~\cref{eqn:cgal-subopt-pf-supp}.

\subsection{Convergence theory}
\label{sec:scgal-converge-supp}

In this section, we establish two simple results on the convergence
properties of the \sCGAL\ algorithm.

\begin{theorem}[\sCGAL: Convergence I]
Let $\mathsf{\Psi}_{\star}$ be the solution set of the model problem~\cref{eqn:model-problem}.
For each $r < R - 1$, the iterates $\widehat{\mtx{X}}_t$ computed by \sCGAL\ (\cref{sec:sketchy-cgal-iteration,sec:loop-invariants})
satisfy
$$
\limsup_{t\to\infty} \Expect_{\mtx{\Omega}} \dist_*(\widehat{\mtx{X}}_t, \mathsf{\Psi}_{\star})
	\leq \left(1+ \frac{r}{R-r-1}\right) \cdot
	\max_{\mtx{X} \in \mathsf{\Psi}_{\star}} \norm{\mtx{X} - \lowrank{\mtx{X}}{r} }_*.
$$
The same bound holds for the truncated approximations $\lowrank{\widehat{\mtx{X}}_t}{r}$.
Here, $\dist_*$ is the nuclear-norm distance between a matrix and a set of matrices.
\end{theorem}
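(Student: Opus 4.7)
\emph{Proof proposal.} The plan is to bound the distance from $\widehat{\mtx{X}}_t$ to the solution set by the triangle inequality through the implicit iterate $\mtx{X}_t$, and then to bound each piece using, respectively, the Nystr\"om error bound from \cref{fact:nystrom} and the CGAL convergence result from \cref{fact:cgal-converge}.

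First I would write, for any $\mtx{X}_\star \in \mathsf{\Psi}_\star$,
\begin{equation*}
\dist_*(\widehat{\mtx{X}}_t, \mathsf{\Psi}_\star)
\leq \norm{\widehat{\mtx{X}}_t - \mtx{X}_t}_* + \norm{\mtx{X}_t - \mtx{X}_\star}_*,
\end{equation*}
and then minimize the right-hand side over $\mtx{X}_\star \in \mathsf{\Psi}_\star$ to obtain
$\dist_*(\widehat{\mtx{X}}_t, \mathsf{\Psi}_\star) \leq \norm{\widehat{\mtx{X}}_t - \mtx{X}_t}_* + \dist_*(\mtx{X}_t, \mathsf{\Psi}_\star)$. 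Taking expectations in $\mtx{\Omega}$ and applying \cref{fact:nystrom} to the implicit iterate $\mtx{X}_t$ (which is psd and satisfies the loop invariant $\mtx{S}_t = \mtx{X}_t \mtx{\Omega}$ from \cref{eqn:loop-invariant}), we get
\begin{equation*}
\Expect_{\mtx{\Omega}} \dist_*(\widehat{\mtx{X}}_t, \mathsf{\Psi}_\star)
\leq \left(1 + \frac{r}{R-r-1}\right) \norm{\mtx{X}_t - \lowrank{\mtx{X}_t}{r}}_* + \dist_*(\mtx{X}_t, \mathsf{\Psi}_\star).
\end{equation*}

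Next, I would argue that $\dist_*(\mtx{X}_t, \mathsf{\Psi}_\star) \to 0$. Since $\mtx{X}_t \in \alpha \mtx{\Delta}_n$, which is a compact subset of $\Sym_n$, every subsequence of $\{\mtx{X}_t\}$ has a further subsequence that converges, say to $\mtx{X}_\infty$. By \cref{fact:cgal-converge} (applied to the implicit CGAL iterates of \sCGAL, per \cref{sec:loop-invariants}), the feasibility $\mathcal{A} \mtx{X}_t - \vct{b}$ tends to $\vct{0}$ and the objective $\ip{\mtx{C}}{\mtx{X}_t}$ tends to $\ip{\mtx{C}}{\mtx{X}_\star}$. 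Continuity of $\mathcal{A}$ and the linear objective then forces $\mtx{X}_\infty$ to be feasible and optimal, hence $\mtx{X}_\infty \in \mathsf{\Psi}_\star$. A standard argument by contradiction (if $\dist_*(\mtx{X}_t, \mathsf{\Psi}_\star) \not\to 0$, extract a subsequence bounded away from $\mathsf{\Psi}_\star$, then a convergent subsubsequence, contradiction) gives $\dist_*(\mtx{X}_t, \mathsf{\Psi}_\star) \to 0$.

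Finally, I would pass the low-rank approximation error to the limit. The map $\mtx{X} \mapsto \norm{\mtx{X} - \lowrank{\mtx{X}}{r}}_*$ equals the sum of the trailing $n-r$ singular values and is therefore continuous on $\Sym_n$. Combined with compactness of $\alpha \mtx{\Delta}_n$ and of $\mathsf{\Psi}_\star$, together with $\dist_*(\mtx{X}_t, \mathsf{\Psi}_\star) \to 0$, this yields
\begin{equation*}
\limsup_{t\to\infty} \norm{\mtx{X}_t - \lowrank{\mtx{X}_t}{r}}_*
\leq \max_{\mtx{X} \in \mathsf{\Psi}_\star} \norm{\mtx{X} - \lowrank{\mtx{X}}{r}}_*.
\end{equation*}
Substituting into the displayed bound and taking $\limsup_{t\to\infty}$ completes the proof for $\widehat{\mtx{X}}_t$. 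The identical argument works for $\lowrank{\widehat{\mtx{X}}_t}{r}$ because \cref{fact:nystrom} holds for the truncated approximation with the same constant.

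The main obstacle I anticipate is justifying $\dist_*(\mtx{X}_t, \mathsf{\Psi}_\star) \to 0$ cleanly, since \cref{fact:cgal-converge} only provides convergence in objective value and in feasibility gap, not convergence of the matrix iterates themselves. The compactness-based subsequence argument circumvents this, and it is essential here that the model problem constrains $\mtx{X}$ to the compact set $\alpha \mtx{\Delta}_n$; otherwise, convergence in objective and feasibility would not be enough to conclude that the iterates themselves approach $\mathsf{\Psi}_\star$ in nuclear norm.
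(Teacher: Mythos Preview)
Your proposal is correct and follows essentially the same route as the paper: triangle inequality through the implicit iterate $\mtx{X}_t$, the Nystr\"om bound \cref{fact:nystrom}, convergence $\dist_*(\mtx{X}_t,\mathsf{\Psi}_\star)\to 0$, and then passing the tail-singular-value function to the limit. The only cosmetic differences are that the paper invokes the $1$-Lipschitzness of $\mtx{X}\mapsto\norm{\mtx{X}-\lowrank{\mtx{X}}{r}}_*$ in nuclear norm (rather than mere continuity plus compactness) to control $\norm{\mtx{X}_t-\lowrank{\mtx{X}_t}{r}}_*$, and that you supply the compactness/subsequence justification for $\dist_*(\mtx{X}_t,\mathsf{\Psi}_\star)\to 0$ that the paper leaves implicit.
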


\begin{proof}
The implicit iterates $\mtx{X}_t$ satisfy the conclusions of~\cref{fact:cgal-converge},
so they converge toward the compact set $\mathsf{\Psi}_{\star}$.
Therefore, we can choose a sequence $\{\mtx{X}_{t\star}\} \subset \mathsf{\Psi}_{\star}$ with the property that
$\norm{\mtx{X}_t - \mtx{X}_{t\star}}_* \to 0$.
By the triangle inequality and~\cref{eqn:Xt-hat-err},
$$
\begin{aligned}
\Expect_{\mtx{\Omega}} \dist_*(\widehat{\mtx{X}}_t, \mathsf{\Psi}_{\star})
	&\leq \Expect_{\mtx{\Omega}} \norm{\widehat{\mtx{X}}_t - \mtx{X}_t}_* + \dist_*(\mtx{X}_t, \mathsf{\Psi}_{\star}) \\
	&\leq \left(1+ \frac{r}{R-r-1}\right) \cdot \norm{ \mtx{X}_t - \lowrank{\mtx{X}_t}{r} }_* + \norm{ \mtx{X}_t - \mtx{X}_{t\star} }_*.
\end{aligned}
$$
The rank-$r$ approximation error in Schatten 1-norm is 1-Lipschitz
with respect to the Schatten 1-norm (cf.~\cite[Sec.~SM2.2]{TYUC19:Streaming-Low-Rank}),
so
$$
\abs{ \norm{ \mtx{X}_t - \lowrank{\mtx{X}_t}{r} }_* - \norm{ \mtx{X}_{t\star} - \lowrank{\mtx{X}_{t\star}}{r} }_* }
	 \leq \norm{ \mtx{X}_{t} - \mtx{X}_{t\star} }_*.
$$
Therefore,
$$
\begin{aligned}
\norm{ \mtx{X}_t - \lowrank{\mtx{X}_t}{r} }_*
	& \leq \norm{ \mtx{X}_{t\star} - \lowrank{\mtx{X}_{t\star}}{r} }_*
	+ \norm{ \mtx{X}_{t} - \mtx{X}_{t\star} }_* \\
	& \leq \max_{\mtx{X} \in \mathsf{\Psi}_{\star}} \norm{ \mtx{X} - \lowrank{\mtx{X}}{r} }_*
	+ \norm{ \mtx{X}_{t} - \mtx{X}_{t\star} }_*.
\end{aligned}
$$
Combine the last two displays, and extract the superior limit. %
\end{proof}

If the implicit iterates generated by \sCGAL\ happen to converge
to a limit, we have a more precise result.

\begin{theorem}[\sCGAL: Convergence II]
Assume the implicit iterates $\mtx{X}_t$
induced by \sCGAL\ (\cref{sec:sketchy-cgal-iteration,sec:loop-invariants})
converge to a matrix $\mtx{X}_{\mathrm{cgal}}$.  For each $r < R - 1$,
the computed iterates $\widehat{\mtx{X}}_t$ satisfy
\begin{equation*} %
\begin{aligned}
\limsup_{t \to \infty} \Expect_{\mtx{\Omega}} \norm{ \mathcal{A}\widehat{\mtx{X}}_t - \vct{b} }
	&\leq \left(1+ \frac{r}{R-r-1}\right) \cdot \norm{\mathcal{A}} \cdot \norm{\mtx{X}_{\mathrm{cgal}} - \lowrank{\mtx{X}_{\mathrm{cgal}}}{r} }_*; \\
\limsup_{t \to \infty} \Expect_{\mtx{\Omega}} \abs{ \ip{\mtx{C}}{\widehat{\mtx{X}}_t} - \ip{\mtx{C}}{\mtx{X}_{\star}} }
	&\leq \left(1+ \frac{r}{R-r-1}\right) \cdot \norm{\mtx{C}} \cdot \norm{\mtx{X}_{\mathrm{cgal}} - \lowrank{\mtx{X}_{\mathrm{cgal}}}{r} }_*.
\end{aligned}
\end{equation*}
The same bound holds for the truncated approximations $\lowrank{\widehat{\mtx{X}}_t}{r}$.
If $\rank(\mtx{X}_{\mathrm{cgal}}) \leq R$, then the computed iterates $\widehat{\mtx{X}}_t$
converge to the solution set of~\cref{eqn:model-problem}.
\end{theorem}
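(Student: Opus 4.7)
The plan is to reduce the asymptotic behavior of the sketched iterates $\widehat{\mtx{X}}_t$ to the asymptotic behavior of the implicit \CGAL\ iterates $\mtx{X}_t$, using the Nystr{\"o}m error bound (\cref{fact:nystrom}) to control the discrepancy. The convergence of $\mtx{X}_t$ to $\mtx{X}_{\mathrm{cgal}}$ plus \cref{fact:cgal-converge} tells us $\mathcal{A} \mtx{X}_t - \vct{b} \to \vct{0}$ and $\ip{\mtx{C}}{\mtx{X}_t} \to \ip{\mtx{C}}{\mtx{X}_{\star}}$, so only the sketching error survives in the limit.

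First, I would split the feasibility residual and the objective error using the triangle inequality:
\begin{align*}
\norm{ \mathcal{A}\widehat{\mtx{X}}_t - \vct{b} } &\leq \norm{\mathcal{A}} \cdot \fnorm{ \widehat{\mtx{X}}_t - \mtx{X}_t } + \norm{ \mathcal{A}\mtx{X}_t - \vct{b} }, \\
\abs{ \ip{\mtx{C}}{\widehat{\mtx{X}}_t} - \ip{\mtx{C}}{\mtx{X}_{\star}} } &\leq \norm{\mtx{C}} \cdot \norm{ \widehat{\mtx{X}}_t - \mtx{X}_t }_* + \abs{ \ip{\mtx{C}}{\mtx{X}_t} - \ip{\mtx{C}}{\mtx{X}_{\star}} }.
\end{align*}
In the first line I use the operator-norm definition of $\norm{\mathcal{A}}$ together with $\fnorm{\cdot} \leq \norm{\cdot}_*$; in the second line I use nuclear--spectral duality. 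Then I take expectations in $\mtx{\Omega}$ and apply \cref{fact:nystrom} to obtain
\begin{equation*}
\Expect_{\mtx{\Omega}} \norm{ \widehat{\mtx{X}}_t - \mtx{X}_t }_* \leq \left( 1 + \frac{r}{R-r-1} \right) \norm{ \mtx{X}_t - \lowrank{\mtx{X}_t}{r} }_*.
\end{equation*}

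Second, I would pass to the limit. Since $\mtx{X}_t \to \mtx{X}_{\mathrm{cgal}}$ in Schatten $1$-norm and the rank-$r$ approximation error is $1$-Lipschitz in that norm (the same Lipschitz fact used in \cref{sec:scgal-converge-supp}), the right-hand side above converges to $(1 + r/(R-r-1)) \norm{ \mtx{X}_{\mathrm{cgal}} - \lowrank{\mtx{X}_{\mathrm{cgal}}}{r} }_*$. The residual terms $\norm{\mathcal{A}\mtx{X}_t - \vct{b}}$ and $\abs{\ip{\mtx{C}}{\mtx{X}_t} - \ip{\mtx{C}}{\mtx{X}_{\star}}}$ both tend to zero by \cref{fact:cgal-converge}. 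Taking $\limsup$ delivers the two stated bounds. The truncated approximations $\lowrank{\widehat{\mtx{X}}_t}{r}$ satisfy the same Nystr{\"o}m bound (second clause of \cref{fact:nystrom}), so the identical argument applies verbatim.

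Finally, for the low-rank case, if $\rank(\mtx{X}_{\mathrm{cgal}}) \leq R - 2$ I simply take $r = \rank(\mtx{X}_{\mathrm{cgal}})$: then $\norm{\mtx{X}_{\mathrm{cgal}} - \lowrank{\mtx{X}_{\mathrm{cgal}}}{r}}_* = 0$, so both limsups vanish, which means $\widehat{\mtx{X}}_t$ is asymptotically feasible with asymptotically optimal objective, i.e.\ the sequence approaches the solution set of~\cref{eqn:model-problem}. The two edge cases $\rank(\mtx{X}_{\mathrm{cgal}}) \in \{R-1, R\}$ are where the main technical obstacle lives: \cref{fact:nystrom} requires $r < R - 1$, so for these cases I would instead invoke the almost-sure exactness property established in \cref{sec:nystrom-stat-supp} (whenever $\rank(\mtx{X}) \leq R$, the Nystr{\"o}m reconstruction equals $\mtx{X}$ with probability one) applied to $\mtx{X}_{\mathrm{cgal}}$, and then argue continuity of the reconstruction map $\mtx{Y} \mapsto (\mtx{Y}\mtx{\Omega})(\mtx{\Omega}^* \mtx{Y} \mtx{\Omega})^\dagger (\mtx{Y}\mtx{\Omega})^*$ along the sequence $\mtx{Y} = \mtx{X}_t$; the key point is that when $\rank(\mtx{X}_{\mathrm{cgal}}) \leq R$, the matrix $\mtx{\Omega}^* \mtx{X}_{\mathrm{cgal}} \mtx{\Omega}$ has the same rank as all nearby $\mtx{\Omega}^* \mtx{X}_t \mtx{\Omega}$ almost surely, which makes the pseudoinverse continuous along the tail of the sequence.
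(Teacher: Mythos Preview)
Your argument for the two limsup bounds is correct and follows the paper's proof essentially verbatim: split via the triangle inequality, insert $\norm{\mathcal{A}}$ and $\norm{\mtx{C}}$ through the operator norm and nuclear--spectral duality, apply the Nystr{\"o}m bound~\cref{eqn:Xt-hat-err}, and pass to the limit using the $1$-Lipschitz property of the rank-$r$ approximation error.

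For the final clause the paper is terser than you---it simply points to the Nystr{\"o}m facts in the appendix---whereas you explicitly separate the edge cases $\rank(\mtx{X}_{\mathrm{cgal}})\in\{R-1,R\}$. Your pseudoinverse-continuity argument is sound when the rank equals $R$ (then $\mtx{\Omega}^*\mtx{X}_{\mathrm{cgal}}\mtx{\Omega}$ is almost surely invertible, and invertibility is an open condition), but it breaks for rank $R-1$: the matrices $\mtx{\Omega}^*\mtx{X}_t\mtx{\Omega}$ can have strictly larger rank than the limit along the sequence, so the pseudoinverse is genuinely discontinuous there. The cleaner route is the projector representation $\widehat{\mtx{X}}_t=\mtx{X}_t^{1/2}\mtx{P}_t\mtx{X}_t^{1/2}$ of \cref{sec:nystrom-rep-supp}, in which the blow-up of the pseudoinverse is absorbed by the vanishing directions of $\mtx{X}_t^{1/2}$.
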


\begin{proof}
The implicit iterates $\mtx{X}_t$ satisfy the conclusions of \cref{fact:cgal-converge},
so the limit $\mtx{X}_{\mathrm{cgal}}$ solves~\cref{eqn:model-problem}.
Using the triangle inequality, the operator norm bound, and~\cref{eqn:Xt-hat-err}, we obtain
nonasymptotic error bounds
\begin{equation*} %
\begin{aligned}
\Expect_{\mtx{\Omega}} \norm{ \mathcal{A}\widehat{\mtx{X}}_t - \vct{b} }
	&\leq \frac{\mathrm{Const}}{\sqrt{t}}
		+ \left(1+ \frac{r}{R-r-1}\right) \cdot \norm{\mathcal{A}} \cdot \norm{\mtx{X}_t - \lowrank{\mtx{X}_t}{r} }_*; \\
\Expect_{\mtx{\Omega}} \abs{\ip{\mtx{C}}{\widehat{\mtx{X}}_t} - \ip{\mtx{C}}{\mtx{X}_{\star}} }
	&\leq \frac{\mathrm{Const}}{\sqrt{t}}
		+ \left(1+ \frac{r}{R-r-1}\right) \cdot \norm{\mtx{C}} \cdot \norm{\mtx{X}_t - \lowrank{\mtx{X}_t}{r} }_*.
\end{aligned}
\end{equation*}
Extract the limit as $t \to \infty$. %
The last conclusion follows from the facts outlined in \cref{sec:nystrom-apriori-supp}.
\end{proof}

\section{Beyond the model problem}
\label{sec:extensions}

The \CGAL\ algorithm~\cite{YFC19:Conditional-Gradient-Based} applies
to a more general problem template than~\cref{eqn:model-problem}.
Likewise, the \sCGAL\ algorithm can solve a wider class of problems
in a scalable fashion.  This section outlines some of the opportunities.

\subsection{A more general template}

Consider the optimization problem %
\begin{equation} \label{eqn:general-problem}
\minimize\quad	\ip{\mtx{C}}{\mtx{X}}
\quad\subjto\quad	\mathcal{A}\mtx{X} \in \mathsf{K}
\quad\text{and}\quad
\mtx{X} \in \mathsf{X}, \quad
\text{$\mtx{X}$ is psd.}
\end{equation}
In this expression, %
$\mathsf{K} \subset \R^d$ is a closed, convex set and $\mathsf{X} \subset \Sym_n(\F)$
is a compact, convex set of matrices.  The rest of this section
describes some problems that fall within the compass of~\cref{eqn:general-problem},
as well as new computational challenges that appear.
\Cref{alg:full-sketchy-cgal-supp} contains pseudocode for a version of
\sCGAL\ tailored to~\cref{eqn:general-problem}.

\begin{remark}[Other matrix optimization problems]
We can also extend \sCGAL\ to optimization problems involving
matrices that are symmetric (but not psd) or that are rectangular.
For example, matrix completion via nuclear-norm minimization~\cite{Sre04:Learning-Matrix}
falls in this framework.  In this case, we need to replace
the Nystr{\"o}m sketch with a more general technique,
such as~\cite{TYUC17:Practical-Sketching,TYUC19:Streaming-Low-Rank}.
Further extensions are also possible; see~\cite{YFC19:Conditional-Gradient-Based}.
We omit these developments.
\end{remark}

\subsection{The convex constraint set}
\label{sec:convex-constraint-set}

To handle the convex constraint $\mathsf{X}$ that appears in~\cref{eqn:general-problem},
we must develop a subroutine for the linear minimization problem
\begin{equation} \label{eqn:lmo-general}
\underset{\mtx{H} \in \Sym_n}{\minimize}\quad
\ip{\mtx{D}_t}{ \mtx{H} }
\quad\subjto\quad \mtx{H} \in \mathsf{X}, \quad \text{$\mtx{H}$ is psd.}
\end{equation}
To implement \sCGAL\ efficiently, we need the problem~\cref{eqn:lmo-general}
to admit a structured (e.g., low-rank) approximate solution.
Here are some situations where this is possible.

\vspace{0.5pc}

\begin{enumerate}
\item	\textbf{Trace-bounded psd matrices.} $\mathsf{X} := \{ \mtx{X} \in \Sym_n : \text{$\trace \mtx{X} \leq \alpha$ and $\mtx{X}$ is psd} \}$.
For solving a standard-form SDP, this constraint is more natural than $\mathsf{X} = \alpha \mtx{\Delta}_n$. Given an (approximate) minimum eigenpair $(\xi_t, \vct{v}_t)$ of $\mtx{D}_t$,
the solution of~\cref{eqn:lmo-general} is
$$
\mtx{H}_t = \begin{cases}
	\alpha \vct{v}_t \vct{v}_t^*, & \xi_t < 0, \\
	\vct{0}, & \xi_t \geq 0.
\end{cases}
$$
As before, we can solve the eigenvector problem with \cref{alg:rand-lanczos}.

\item	\textbf{Relaxed orthoprojectors.} $\mathsf{X} := \{ \mtx{X} \in \Sym_n : \text{$\trace \mtx{X} = \alpha$ and $\mtx{0} \psdle \mtx{X} \psdle \Id$} \}$.  This is the best convex relaxation of the set of orthogonal projectors with rank $\alpha$; see~\cite{OW92:Sum-Largest}.  When $\alpha$ is small, we can provably solve the linear minimization with randomized subspace iteration~\cite{HMT11:Finding-Structure} or randomized block Lanczos methods~\cite[Sec.~10.3.6]{GVL13:Matrix-Computations-4ed}.

\end{enumerate}

\begin{remark}[Standard-form SDP] 
It is often easy to find an upper bound for the trace of a solution for an SDP. 
In many applications, the constraint $\mathcal{A}\mtx{X} = \vct{b}$ already enforces a constant trace (e.g., MaxCut or QAP).
In many other applications (e.g., phase retrieval), $\mtx{C}$ is identity matrix,
so the objective is to minimize the trace. 
In this setting, the trace of an arbitrary feasible point is an upper bound for $\alpha$. 
If neither situation is in force,
we can still solve a standard-form SDP by solving a small number of trace-bounded SDPs:
\begin{enumerate}
\item Start with an arbitrary bound $\alpha_0 > 0$, say, twice the trace of an arbitrary feasible point.
\item Solve the trace-bounded SDP with $\trace \mtx{X} \leq \alpha_i$ to obtain $X_\star(\alpha_i)$. 
\item If the primal objective $\ip{\mtx{C}}{\mtx{X}_\star(\alpha_i)} = \ip{\mtx{C}}{\mtx{X}_\star(\alpha_{i-1})}$, then terminate and return $\mtx{X}_\star(\alpha_i)$.
\item Otherwise, set $\alpha_{i+1} = 2 \alpha_i$ and return to Step~2.
\end{enumerate}
This procedure terminates in $\tilde{\mathcal{O}}(1)$ iterations if a bounded solution exists. It is easy to show by contradiction that there exists no finite $\alpha > \alpha_i$ such that $\ip{\mtx{C}}{\mtx{X}_\star(\alpha)} < \ip{\mtx{C}}{\mtx{X}_\star(\alpha_i)}$. 
Unfortunately, it is nontrivial to extend this claim to the approximate solutions because they are infeasible.
\end{remark}

\subsection{Convex inclusions}

To handle the inclusion in $\mathsf{K}$ that appears in~\cref{eqn:general-problem},
we need an efficient algorithm to perform the Euclidean projection onto $\mathsf{K}$.
That is,
$$
\mathrm{proj}_{\mathsf{K}}(\vct{w})
	:= \argmin\{ \norm{ \vct{w} - \vct{u} } : \vct{u} \in \mathsf{K} \}
	\quad\text{for $\vct{w} \in \R^{d}$.}
$$
Here are some important examples:

\vspace{0.5pc}

\begin{enumerate}
\item	\textbf{Inequality constraints.}  $\mathsf{K} := \{ \vct{u} \in \R^d : \vct{u} \leq \vct{b} \}$.
In this case, the projection takes the form
$\mathrm{proj}_{\mathsf{K}}(\vct{w}) = ( \vct{w} - \vct{b} )_-$,
where $(\cdot)_-$ reports the negative part of a vector.

\item	\textbf{Norm constraints.}  $\mathsf{K} := \{ \vct{u} \in \R^d : \triplenorm{ \vct{u} - \vct{b} } \leq \delta \}$,
where $\triplenorm{\cdot}$ is a norm.  The projector can be computed easily
for many norms, including the $\ell_p$ norm for $p \in \{1,2,\infty\}$.
\end{enumerate}

\subsection{The \CGAL\ iteration for the general template}

To extend the description of the \CGAL\ iteration in \cref{sec:cgal-iteration-supp} for the general template \cref{eqn:general-problem}, we consider the following augmented Lagrangian formulation with the slack variable $\vct{w} \in \mathsf{K}$ instead of \cref{eqn:al-supp}:
\begin{equation*} %
L_{t}(\mtx{X}; \vct{y}) :=
	\ip{\mtx{C}}{\mtx{X}} + \min_{\vct{w} \in \mathsf{K}} \left\{ \ip{\vct{y}}{\mathcal{A}\mtx{X} - \vct{w}}
	+ \frac{1}{2} \beta_t\normsq{\mathcal{A}\mtx{X} - \vct{w}} \right\}.
\end{equation*}
Accordingly, the partial derivative \cref{eqn:cgal-grad-supp} becomes
\begin{equation*}
\mtx{D}_t := \partial_{\mtx{X}} L_t(\mtx{X}_t; \vct{y}_t)
	= \mtx{C} + \mathcal{A}^*\big( \vct{y}_t + \beta_t (\mathcal{A}\mtx{X}_t - \vct{w}_t) \big) 
	\quad \text{where} \quad \vct{w}_t := \mathrm{proj}_{\mathsf{K}}( \mathcal{A}\mtx{X}_t + \beta_t^{-1} \vct{y}_t ).
\end{equation*}
We replace the linear minimization subroutine \cref{eqn:cgal-direction-supp} with \cref{eqn:lmo-general}. We can still use an inexact variant of \cref{eqn:lmo-general} with additive error. 
We also revise the dual update scheme by modifying \cref{eqn:cgal-dual-update-supp} as
\begin{equation*}
\vct{y}_{t+1} = \vct{y}_t + \gamma_t (\mathcal{A}\mtx{X}_{t+1} - \overline{\vct{w}}_t)
	\quad \text{where} \quad \overline{\vct{w}}_t := \mathrm{proj}_{\mathsf{K}}( \mathcal{A}\mtx{X}_{t+1} + \beta_{t+1}^{-1} \vct{y}_t ).
\end{equation*} 
Finally, we replace the dual step size parameter selection rule \cref{eqn:cgal-dual-step-supp} with
\begin{equation} \label{eqn:cgal-gen-dual-step-supp}
\gamma_t \normsq{\mathcal{A}\mtx{X}_{t+1} - \overline{\vct{w}}_t}
	\leq \beta_t \eta_t^2 \alpha^2 \normsq{\mathcal{A}}.
\end{equation}
The bounded travel condition \cref{eqn:cgal-bdd-travel-supp} remains the same.

To obtain the extension of \sCGAL\ to the general template,
we simply pursue the same program outlined in \cref{sec:sketchy-cgal}
to augment \CGAL\ with sketching.

\begin{algorithm}[t]%
  \caption{\sCGAL\ for the general template~\cref{eqn:general-problem}
  \label{alg:full-sketchy-cgal-supp}}
  \begin{algorithmic}[1]
  \vspace{0.5pc}

	\Require{Problem data for~\cref{eqn:general-problem} implemented via the primitives~\cref{eqn:primitives},
	sketch size $R$, number $T$ of iterations}
    \Ensure{Rank-$R$ approximate solution to~\cref{eqn:general-problem}
    in factored form $\widehat{\mtx{X}}_T = \mtx{U\Lambda U}^*$
    where $\mtx{U} \in \R^{n \times R}$ has orthonormal columns and $\mtx{\Lambda} \in \R^{R\times R}$
    is nonnegative diagonal, and the Schatten 1-norm approximation errors
    $\mathrm{err}(\lowrank{\widehat{\mtx{X}}}{r})$ for $1 \leq r \leq R$, as defined in~\cref{eqn:sketch-err-supp}}
    \Recommend{To achieve~\cref{eqn:goal}, set $R$ as large as possible, and set $T \approx \eps^{-1}$}

	\Statex

	\Function{\sCGAL}{$R$; $T$}

	\State	Scale problem data (\cref{sec:numerics-scaling})
		\Comment{\textcolor{dkblue}{\textbf{[opt]}} Recommended!}
	\State	$\beta_0 \gets 1$ and $K \gets +\infty$
		\Comment{Default parameters}
	\State	\textsf{NystromSketch.Init}($n$, $R$)
	\State	$\vct{z} \gets \vct{0}_{d}$ and $\vct{y} \gets \vct{0}_d$
	\For{$t \gets 1, 2, 3, \dots, T$}
  		\State	$\beta \gets \beta_0 \sqrt{t+1}$ and $\eta \gets 2 /(t+1)$
		\State	$\vct{w} \gets \mathrm{proj}_{\mathsf{K}}( \vct{z} + \beta^{-1} \vct{y} )$
		\State	$\mtx{D} \gets \mtx{C} + \mathcal{A}^*(\vct{y} + \beta (\vct{z} - \vct{w}))$
			\Comment Represent via primitives~\cref{eqn:primitives}\primone\primtwo
		\State	$\mtx{H}$ is a (low-rank) matrix that solves~\cref{eqn:lmo-general}
		\State	$\vct{z} \gets (1 - \eta) \, \vct{z} + \eta \, \mathcal{A}( \mtx{H} )$
			\Comment	Use primitive~\cref{eqn:primitives}\primthree
  		\State	$\beta_+ \gets \beta_0 \sqrt{t+2}$
		\State	$\vct{w} \gets \mathrm{proj}_{\mathsf{K}}( \vct{z} + \beta_+^{-1} \vct{y} )$
		\State	$\vct{y} \gets \vct{y} + \gamma (\vct{z} - \vct{w})$
			\Comment	Step size $\gamma$ satisfies~\cref{eqn:cgal-gen-dual-step-supp,eqn:cgal-bdd-travel-supp}
		\State	\textsf{NystromSketch.RankOneUpdate}($\sqrt{\alpha} \vct{v}$, $\eta$)
	\EndFor
	\State	$[\mtx{U}, \mtx{\Lambda}, \mathbf{err}] \gets \textsf{NystromSketch.Reconstruct}()$
	\EndFunction

  \vspace{0.25pc}

\end{algorithmic}
\end{algorithm}

\section{Details of MaxCut experiments}
\label{sec:maxcut-details-supp}

This section presents further details about the termination criteria that we used in the \textsf{MaxCut}
experiments presented in~\cref{sec:numerics}. 

\subsection{Comparison of SDP solvers}
\label{sec:comparison-sdp-solvers-supp}

We begin with a discussion of how we compared the performance
of different SDP solvers for the \textsf{MaxCut} problem~\cref{eqn:maxcut-sdp}.

\subsubsection{Convention for sign of the dual} 

We used the sign convention from the Lagrangian formulation \eqref{eqn:lagrangian-supp} when describing the quality guarantees of each solver, and also in the definition of \textsc{Dimacs} errors and the dual problem in \cref{sec:primal-dual-conv-supp}. This convention can be different in other works, so these definitions might have the sign of the dual variable inverted in some references.

\subsubsection{Details for \sCGAL} 
We implement the stopping criteria based on the discussion in \cref{sec:bound-subopt-supp,sec:scgal-soln-quality-supp}. 
We stop the algorithm when both
\begin{equation}
\label{eqn:scgal-stopping-supp}
\frac{p_t + \ip{ \vct{y}_t }{\vct{b} } + \frac{1}{2}\beta_t \ip{  \vct{z}_t -\vct{b} }{ \vct{z}_t + \vct{b}} - \lambda_{\min}(\mtx{D}_t)}{1 + \abs{p_t}} \leq \varepsilon
\qquad \text{and} \qquad
\frac{\norm{ \vct{z}_t - \vct{b} }}{1+\norm{\vct{b}}} \leq \varepsilon.
\end{equation}

In theory, this bound requires computing $\lambda_{\min}(\mtx{D}_t)$ to high accuracy. 
In practice, we did not observe a significant difference when using a high accuracy approximation or the inexact computation from step \cref{eqn:cgal-direction-supp}. 
We fixed the code to use the latter to avoid additional cost.

If \sCGAL terminates by \cref{eqn:scgal-stopping-supp}, then the implicit variable $\mtx{X}$ provably satisfies
$$
\frac{\ip{\mtx{C}}{\mtx{X}} - \ip{\mtx{C}}{\mtx{X}_\star}}{1 + \abs{\ip{\mtx{C}}{\mtx{X}}}} \leq \varepsilon
\qquad \text{and} \qquad
\frac{\norm{ \mathcal{A}\mtx{X} - \vct{b} }}{1+\norm{\vct{b}}} \leq \varepsilon.
$$
This is the guarantee that we seek.

\subsubsection{Details for \textsf{SDPT3}} 
We used \textsf{SDPT3} version~4.0 \cite{TTT2009:SDPT3version4} in the experiments. 
This software is designed for solving conic optimization problems by using a primal-dual interior-point algorithm. 
The algorithm iterates three variables: $\mtx{X}$, $\vct{y}$, and $\mtx{Z}$. 
For the \textsf{MaxCut} problem, $\mtx{X}$ and $\mtx{Z}$ are $n \times n$ symmetric positive semidefinite matrices, and $\vct{y} \in \R^n$.  

We can control the desired accuracy by changing the parameter \texttt{OPTIONS.gaptol}. 
When we set this parameter to $\varepsilon$, the outputs ensure
$$
\frac{\ip{\mtx{X}}{\mtx{Z}}}{1+\abs{\ip{\mtx{C}}{\mtx{X}}}+\abs{\vct{b}^\top \vct{y}}} \leq \varepsilon, 
\qquad
\frac{\norm{\mathcal{A}\mtx{X} - \vct{b}}}{1 + \norm{\vct{b}}} \leq \varepsilon, 
\qquad \text{and} \qquad
\frac{\norm{\mathcal{A}^*\vct{y} - \mtx{Z} + \mtx{C}}_F}{1 + \norm{\mtx{C}}_F} \leq \varepsilon.
$$
These are the required bounds.

\subsubsection{Details for \textsf{SeDuMi}} 
We used \textsf{SeDuMi}~1.3 \cite{Sedumi13} in the experiments. 
This software implements a self-dual embedding technique \cite{YTM94:self-dual}. 
The algorithm has two outputs. For the \textsf{MaxCut} problem, they are the $n \times n$ symmetric positive semidefinite primal solution $\mtx{X}$ and $n$ dimensional dual solution $\vct{y}$. 
We can control the desired accuracy by changing the parameter \texttt{pars.eps}. 

For some instances, when we set \texttt{pars.eps} very small, even though the algorithm achieves the desired accuracy, we observed that \textsf{SeDuMi}~1.3 overwrites the variables as zeros after solving the problem before returning them, based on a quality control procedure. 
To prevent this issue, we commented out the section between the lines $638$ and $642$ in \texttt{sedumi.m}.

\subsubsection{Details for \textsf{SDPNAL+}} 
We used \textsf{SDPNAL} version 1.0 \cite{STYZ20:SDPNAL+} which implements an augmented Lagrangian based method for solving semidefinite programs. 
When applied to the \textsc{MaxCut} SDP, the algorithm outputs three variables $\mtx{X}$, $\vct{y}$ and $\mtx{Z}$ similar to \textsf{SDPT3}. 
We can control the desired accuracy by changing the parameter \texttt{OPTIONS.tol}. 
When we set this parameter to $\varepsilon$, the outputs ensure
$$
\frac{\norm{\mathcal{A}\mtx{X} - \vct{b}}}{1 + \norm{\vct{b}}} \leq \varepsilon, 
\qquad
\frac{\norm{\mathcal{A}^*\vct{y} - \mtx{Z} + \mtx{C}}_F}{1 + \norm{\mtx{C}}_F} \leq \varepsilon,
\qquad \text{and} \qquad
\frac{\norm{\mtx{X} - \mathrm{proj}_{\mathrm{psd}}(\mtx{X} - \mtx{Z})}_F}{1 + \norm{\mtx{X}}_F + \norm{\mtx{Z}}_F} \leq 5 \varepsilon,
$$ 
where $\mathrm{proj}_{\mathrm{psd}} : \R^{n \times n} \to \Sym_n$ is the projection operator onto positive semidefinite cone.   

\subsubsection{Details for \textsf{Mosek}} 
We used the interior point optimizer for conic optimization from the \textsf{Mosek Optimization Suite} Release 8.1.0.64 \cite{mosek}. 
This optimizer implements of the so-called homogeneous and self-dual algorithm \cite{ART3:implementing-pd-ipm}. 
When applied to the \textsf{MaxCut} SDP, the algorithm returns three variables $\mtx{X}$, $\vct{y}$, and $\mtx{Z}$. 
We can control the target accuracy by changing three parameters: 
\texttt{MSK\_DPAR\_INTPNT\_CO\_TOL\_PFEAS}, 
\texttt{MSK\_DPAR\_INTPNT\_CO\_TOL\_DFEAS}, 
and \texttt{MSK\_DPAR\_INTPNT\_CO\_TOL\_RELGAP}.
For simplicity, and as suggested in the manual, we relax these parameters together and set each one to $\varepsilon$. 
This ensures
\begin{gather*}
\frac{\norm{\mathcal{A}\mtx{X} - \vct{b}}_\infty}{1 + \norm{\vct{b}}_\infty} \leq \varepsilon, 
\qquad
\frac{\norm{\mathcal{A}^*\vct{y} - \mtx{Z} + \mtx{C}}_\infty}{1 + \norm{\mtx{C}}_\infty} \leq \varepsilon, \\
\frac{\ip{\mtx{X}}{\mtx{Z}}}{\max\left\{1,\min\{\abs{\ip{\mtx{C}}{\mtx{X}}},\abs{\vct{b}^\top\vct{y}}\}\right\}} \leq \varepsilon, 
\qquad \text{and} \qquad
\frac{\abs{\ip{\mtx{C}}{\mtx{X}}+\vct{b}^\top\vct{y}}}{\max\left\{1,\min\{\abs{\ip{\mtx{C}}{\mtx{X}}},\abs{\vct{b}^\top\vct{y}}\}\right\}} \leq \varepsilon.
\end{gather*}
These are the quality guarantees that we need.

\subsection{Convergence of \sCGAL\ for the MaxCut SDP}
\label{sec:maxcut-supp}

This section gives further information about our evaluation
of the convergence behavior of the \sCGAL\ algorithm.

\cref{tab:maxcut-supp} presents numerical data from the MaxCut SDP experiment with \textsc{Gset} benchmark. 
We compare the methods in terms of the
$$
\texttt{suboptimality} = \frac{\ip{\mtx{C}}{\mtx{X}} - \ip{\mtx{C}}{\mtx{X}_{\star}}}{1 + \abs{\ip{\mtx{C}}{\mtx{X}_{\star}}}} \quad\text{and}\quad
\texttt{infeasibility} = \frac{\norm{\mathcal{A}\mtx{X} - \vct{b}}}{1 + \norm{\vct{b}}},
$$
as well as the storage cost and computation time. 
We use a high-accuracy solution from \textsf{SDPT3} with default parameters to approximate the optimal point. 
The storage cost is approximated by monitoring the virtual memory size of the process, hence it includes the memory that is swapped out and it can go beyond 16 GB. 

We also compare the weight of cut evaluated after rounding (see \cref{sec:maxcut-rounding} for details of the rounding procedure), relative to the weight obtained by rounding $\mtx{X}_{\star}$:
$$
\texttt{relative\ cut\ weight} = \frac{\texttt{cut\ weight\ from}~\mtx{X} \texttt{ - cut\ weight\ from}~\mtx{X}_{\star} }{\texttt{cut\ weight\ from}~\mtx{X}_{\star}}.
$$
Positive values indicate better cuts with a higher weight. 
The average of this quantity over all datasets in the benchmark indicates a discrepancy of less than $1.5\%$, a small price for huge scalability benefits. 

\subsection{Primal--dual convergence}
\label{sec:primal-dual-conv-supp}

This section presents empirical evidence for the convergence of
the primal variables, the  dual variables, and the surrogate duality gap
generated by \sCGAL.
We use the \textsf{MaxCut} SDP~\cref{eqn:maxcut-sdp} for these tests.

\subsubsection{\textsc{DIMACS} errors}

We evaluate the \textsc{Dimacs} errors \cite{DIMACS7} (with Euclidean scaling) to measure the suboptimality and infeasibilities for primal and dual problems. 
These measures are commonly used for benchmarking SDP solvers \cite{Mittelmann:2003aa}. 
For a given approximate solution triplet $(\mtx{X},\vct{y},\mtx{Z})$, we compute
\begin{gather*}
\texttt{err}_1 = \frac{\norm{\mathcal{A}\mtx{X} - \vct{b}}}{1 + \norm{\vct{b}}}, \qquad
\texttt{err}_2 = \frac{\max\{-\lambda_{\min}(\mtx{X}),0\}}{1 + \norm{\vct{b}}}, \qquad
\texttt{err}_3 = \frac{\norm{\mathcal{A}^*\vct{y} - \mtx{Z} + \mtx{C}}_F}{1 + \norm{\mtx{C}}_F}, \\[1em]
\texttt{err}_4 = \frac{\max\{-\lambda_{\min}(\mtx{Z}),0\}}{1 + \norm{\mtx{C}}_F}, \qquad 
\text{and} \qquad 
\texttt{err}_5 = \frac{\ip{\mtx{C}}{\mtx{X}} + \vct{b}^\top \vct{y}}{1 + \abs{\ip{\mtx{C}}{\mtx{X}}} + \abs{\vct{b}^\top \vct{y}}}. 
\end{gather*}
We also compute the $6$th error defined in \cite{Mittelmann:2003aa} to measure the violation in complementary slackness 
\begin{equation*}
\texttt{err}_6 = \frac{\ip{\mtx{X}}{\mtx{Z}}}{1 + \abs{\ip{\mtx{C}}{\mtx{X}}} + \abs{\vct{b}^\top \vct{y}}}. 
\end{equation*}
\sCGAL and \textsf{Sedumi} do not maintain the third variable $\mtx{Z}$ explicitly. 
We set $\mtx{Z} = \mtx{C} + \mathcal{A}^*\vct{y}$ as suggested in the guideline of the 7th \textsc{Dimacs} Implementation Challenge \cite{DIMACS7,Mittelmann:2003aa}. 

We use a testbed similar to the \textsf{MaxCut} experiments summarized in~\cref{sec:maxcut-numerics}. 
For ease of comparison, we replace $\trace \mtx{X} = n$ constraint with $\trace \mtx{X} \leq 1.05 \cdot n$. 
We explain the reason of this modification in the next subsection. 
We also set a stronger target accuracy by choosing $\varepsilon= 10^{-3}$ for each solver. 
\Cref{tab:maxcut-dual-supp} reports the numerical outcomes.

\subsubsection{The dual of the model problem}
The dual function for the model problem \cref{eqn:model-problem-supp} with the trace constraint is 
\begin{equation} \label{eqn:model-dual-function-supp}
\begin{aligned}
\phi(\vct{y}) := \min_{\mtx{X} \in \alpha \mtx{\Delta}_n} L(\mtx{X}, \vct{y}) & = \min_{\mtx{X} \in \alpha \mtx{\Delta}_n} \ip{\mtx{C} + \mathcal{A}^*\vct{y}}{\mtx{X}} - \ip{\vct{y}}{\vct{b}} \\
& = \alpha \cdot \lambda_{\min}(\mtx{C} + \mathcal{A}^*\vct{y}) - \ip{\vct{y}}{\vct{b}} .
\end{aligned}
\end{equation}
Therefore, the dual problem is an unconstrained nonsmooth concave maximization:
\begin{equation} \label{eqn:model-dual-problem-supp}
\underset{\vct{y}\in \R^n}{\maximize} \quad \phi(\vct{y}) := \alpha \cdot \lambda_{\min}(\mtx{C} + \mathcal{A}^*\vct{y}) - \ip{\vct{y}}{\vct{b}}.
\end{equation}
Under strong duality, $\phi(\vct{y}_\star) = p_\star = \ip{\mtx{C}}{\mtx{X}_\star}$. 

Note that \cref{eqn:model-dual-problem-supp} is slightly different from the dual of the standard-form SDP. 
In the standard-form dual, the maximization in \cref{eqn:model-dual-function-supp} takes place over the psd cone without any further constraints, and the term $\min_{\text{\{$\mtx{X}$ is psd\}}} \ip{\mtx{C} + \mathcal{A}^*\vct{y}}{\mtx{X}}$ becomes an indicator function that constraints $\mtx{C} + \mathcal{A}^*\vct{y}$ to the psd cone. We can formulate the standard-form dual problem as
\begin{equation} \label{eqn:standard-sdp-dual-problem-supp}
\maximize \quad -\ip{\vct{y}}{\vct{b}}
\quad\subjto\quad \text{$\mtx{C} + \mathcal{A}^*\vct{y}$ is psd},
\quad \vct{y} \in \R^n.
\end{equation}
In particular, the problem~\cref{eqn:standard-sdp-dual-problem-supp} is constrained,
while~\cref{eqn:model-dual-problem-supp} is unconstrained.

\textsc{Dimacs} measures are defined to evaluate convergence to a solution of \cref{eqn:standard-sdp-dual-problem-supp}, 
hence they do not fit well for evaluating \cref{eqn:model-dual-problem-supp} in general.
In our experiments, we empirically observed that $\phi(\vct{y})$ converges to $p_\star$ for the dual sequence of \sCGAL, but $\vct{y}$ does not converge to a solution of \cref{eqn:standard-sdp-dual-problem-supp}. 
However, the following lemma from our concurrent work with Ding \cite{DYC+19:ApproximateComplementarity} shows that the solution sets of the duals of the standard SDP and the trace-bounded SDP (\cref{sec:convex-constraint-set}) are the same under some technical conditions.
\begin{lemma}[Lemma~6.1 in \cite{DYC+19:ApproximateComplementarity}]
Suppose that the standard-form SDP has a solution $\mtx{X}_\star$, satisfies strong duality, and $\vct{y}_\star$ is the unique solution of the standard-form dual problem \cref{eqn:standard-sdp-dual-problem-supp}.
Consider the trace bounded SDP with $\mathsf{X} = \{ \mtx{X} \in \Sym_n : \text{$\trace \mtx{X} \leq \alpha$ and $\mtx{X}$ is psd} \}$ for some $\alpha > \trace{\mtx{X}_\star}$. 
The dual problem is 
\begin{equation}\label{eqn:trace-bounded-dual-problem-supp}
\underset{\vct{y}\in \R^n}{\mathrm{maximize}} \quad \alpha \cdot \min\{0, \lambda_{\min}(\mtx{C} + \mathcal{A}^*\vct{y}) \} - \ip{\vct{y}}{\vct{b}}.
\end{equation}
Suppose $\norm{\vct{b}} \neq 0$. Then, $\vct{y}_\star$ is the unique solution of \cref{eqn:trace-bounded-dual-problem-supp}.
\end{lemma}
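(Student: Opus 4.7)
The plan is to verify optimality of $\vct{y}_\star$ in the trace-bounded dual, then establish uniqueness via a case split on the spectrum of $\mtx{Z} := \mtx{C} + \mathcal{A}^\ast \vct{y}$. As a preliminary, I would check that $\mtx{X}_\star$ is feasible for the trace-bounded primal since $\trace \mtx{X}_\star < \alpha$, so the trace-bounded primal value is at most $p_\star$; conversely, every trace-bounded primal feasible point is feasible for the standard-form primal, so the trace-bounded primal value equals $p_\star$. Strong duality of the trace-bounded problem follows from strict feasibility (with $\mtx{X}_\star$ lying in the relative interior of $\{\trace \mtx{X} \leq \alpha\}$), hence $\sup_{\vct{y}} \phi^{\text{tb}}(\vct{y}) = p_\star$. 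Since $\mtx{C} + \mathcal{A}^\ast \vct{y}_\star \psdge \mtx{0}$, the indicator term in $\phi^{\text{tb}}$ vanishes at $\vct{y}_\star$ and $\phi^{\text{tb}}(\vct{y}_\star) = -\ip{\vct{y}_\star}{\vct{b}} = p_\star$ by standard-form strong duality, confirming $\vct{y}_\star$ is a trace-bounded dual optimum.

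For uniqueness, suppose $\tilde{\vct{y}}$ is any trace-bounded dual optimizer and set $\mtx{Z} := \mtx{C} + \mathcal{A}^\ast \tilde{\vct{y}}$. In the first case $\lambda_{\min}(\mtx{Z}) \geq 0$, we have $\phi^{\text{tb}}(\tilde{\vct{y}}) = -\ip{\tilde{\vct{y}}}{\vct{b}} = p_\star$, so $\tilde{\vct{y}}$ is feasible and optimal for the standard-form dual \cref{eqn:standard-sdp-dual-problem-supp}. Uniqueness of $\vct{y}_\star$ in that problem forces $\tilde{\vct{y}} = \vct{y}_\star$.

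In the second case $\lambda_{\min}(\mtx{Z}) < 0$, I would chain weak duality against $\mtx{X}_\star$:
$$
p_\star \;=\; \phi^{\text{tb}}(\tilde{\vct{y}}) \;\leq\; \ip{\mtx{Z}}{\mtx{X}_\star} - \ip{\tilde{\vct{y}}}{\vct{b}} \;=\; \ip{\mtx{C}}{\mtx{X}_\star} + \ip{\tilde{\vct{y}}}{\mathcal{A}\mtx{X}_\star - \vct{b}} \;=\; p_\star,
$$
using $\mathcal{A}\mtx{X}_\star = \vct{b}$. Equality forces $\mtx{X}_\star$ to attain the minimum in the definition of $\phi^{\text{tb}}(\tilde{\vct{y}})$, and that minimum equals $\alpha \lambda_{\min}(\mtx{Z})$ (achieved by $\alpha$ times a unit minimum-eigenvector outer product). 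Thus $\ip{\mtx{Z}}{\mtx{X}_\star} = \alpha \lambda_{\min}(\mtx{Z})$. On the other hand, for every psd matrix $\mtx{X}_\star \psdge \mtx{0}$ one has the elementary bound $\ip{\mtx{Z}}{\mtx{X}_\star} \geq \lambda_{\min}(\mtx{Z}) \cdot \trace \mtx{X}_\star$, which combined with $\lambda_{\min}(\mtx{Z}) < 0$ yields $\alpha \leq \trace \mtx{X}_\star$, contradicting the hypothesis $\alpha > \trace \mtx{X}_\star$. Therefore the second case is impossible and uniqueness is established.

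The main obstacle is the second case: the argument hinges on extracting complementary slackness from equality in weak duality and then exploiting the strict inequality $\alpha > \trace \mtx{X}_\star$ via the trace lower bound for psd matrices. The hypothesis $\norm{\vct{b}} \neq 0$ is not explicitly needed in the chain above, but it ensures the setup is nondegenerate: it guarantees $\mtx{X}_\star \neq \mtx{0}$, rules out the trivial situation where every feasible $\vct{y}$ yields dual value $0$, and makes the uniqueness hypothesis on $\vct{y}_\star$ substantive. A minor side step would be to verify Slater/strong duality of the trace-bounded formulation explicitly, which follows directly from the given strong duality of the standard-form SDP together with $\trace \mtx{X}_\star < \alpha$.
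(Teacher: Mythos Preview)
The paper does not actually prove this lemma; it simply states it and writes ``We refer to \cite{DYC+19:ApproximateComplementarity} for the proof.'' So there is no in-paper argument to compare against.

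Your argument is correct. A small clean-up: you do not actually need strong duality for the trace-bounded problem, only weak duality. Your claim that $\mtx{X}_\star$ lies in the relative interior of $\{\trace \mtx{X} \le \alpha\}$ is true but does not by itself yield Slater (you would also need $\mtx{X}_\star \succ 0$, which is not assumed). Fortunately this is unnecessary: weak duality gives $\phi^{\text{tb}}(\vct{y}) \le p_\star^{\text{tb}} = p_\star$ for all $\vct{y}$, and you verified directly that $\phi^{\text{tb}}(\vct{y}_\star) = -\ip{\vct{y}_\star}{\vct{b}} = p_\star$, so $\vct{y}_\star$ is optimal without any appeal to Slater.

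The case split and the Case~2 contradiction via $\ip{\mtx{Z}}{\mtx{X}_\star} \ge \lambda_{\min}(\mtx{Z})\,\trace \mtx{X}_\star$ combined with $\alpha > \trace \mtx{X}_\star$ are clean and correct. Your observation that the hypothesis $\norm{\vct{b}} \neq 0$ is not used in the chain is accurate; it functions only to make the uniqueness assumption on $\vct{y}_\star$ nonvacuous (when $\vct{b}=\vct{0}$ the standard-form dual objective is identically zero on its feasible set).
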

We refer to \cite{DYC+19:ApproximateComplementarity} for the proof. 

\subsection{Failure of the Burer--Monteiro heuristic}
\label{sec:bm-hard}

This section presents empirical evidence that Burer--Monteiro (BM) factorization methods
cannot support storage costs better than $\Omega(n\sqrt{d})$.  Our approach
is based on the paper of Waldspurger and Waters~\cite{WW18:RankOptimality},
which proves that the BM heuristic~\cref{eqn:factorized-problem}
can produce incorrect results unless $R = \Omega(n\sqrt{d})$. 

Waldspurger provided us code that generates a random symmetric $\mtx{C} \in \Sym_n(\R)$.
The \textsf{MaxCut} SDP~\cref{eqn:maxcut-sdp} with objective $\mtx{C}$
has a unique solution, and the solution has rank $1$.
If the factorization rank $R$ satisfies $R(R+3) \leq 2n$,
then the BM formulation~\cref{eqn:factorized-problem}
has second-order critical points that are not optimal
points of the original SDP~\cref{eqn:maxcut-sdp}.
As a consequence, the Burer--Monteiro approach
is reliable only if the storage budget is $\Theta(n^{3/2})$.
In contrast, for the same problem instances,
our analysis (Theorem~\cref{thm:scgal}) shows
that \sCGAL\ succeeds with factorization rank $R = 2$
and storage budget $\Theta(n)$.

We will demonstrate numerically that Waldspurger and Waters~\cite{WW18:RankOptimality}
have identified a serious obstruction to using the
Burer--Monteiro approach.  Moreover, we will see that
\sCGAL\ resolves the issue.  See the code supplement
for scripts to reproduce these experiments. 

We use the \textsf{Manopt} software~\cite{JMLR:v15:boumal14a}
to solve the Burer--Monteiro formulation of the \textsf{MaxCut} SDP.
For $n = 100$, we drew $10$ random matrices $\mtx{C}_1, \dots, \mtx{C}_{10}$
using Waldspurger's code.
For each instance, we sweep the factorization rank $R = 2, 3, 4, \dots, 13$.
(For $R \geq 13$, we anticipate that each second-order critical point
of the Burer--Monteiro problem is a solution to the original SDP,
owing to the analysis in~\cite{BVB16:BMSmoothSDP}.)
In each experiment, we ran \textsf{Manopt} with $100$ random initializations,
and we counted the number of times the algorithm failed.
We declared failure if \textsf{Manopt} converged to a second-order stationary
point whose objective value is $10^{-3}$ larger than the true optimal value.
See~\cref{table:maxcut-rank-optimality-supp} for the statistics.

In contrast, \sCGAL\ can solve all of these instances, even when the
sketch size $R = 2$.  For these problems, we use the default parameter
choices for \sCGAL, but we do not pre-scale the data or perform tuning.
\Cref{fig:maxcut-rank-optimality-supp} compares the convergence trajectory
of \sCGAL\ and \textsf{Manopt} for one problem instance.
The difference is evident.

\section{Details of phase retrieval experiments}

This section presents further details about the phase retrieval
experiments presented in~\cref{sec:numerics}.

\subsection{Synthetic phase retrieval data}
\label{sec:synthetic-phase-supp}

This section provides additional details
on the construction of synthetic datasets
for the abstract phase retrieval SDP.

For each $n \in \{ 10^2, 10^3, \dots, 10^6 \}$, we generate $20$ independent datasets as follows. %
First, draw $\vct{\chi}_\natural \in \C^n$ from the complex standard normal distribution.
We acquire $d = 12n$ phaseless measurements \cref{eqn:phase-retrieval-measurements}
using the coded diffraction pattern model \cite{CLS15:CodedDiffractionPhaseLift}.

To do so, we randomly draw $12$ independent modulating waveforms $\vct{\psi}_j$ for $j = 1, 2, \ldots,  12$.
Each entry of $\vct{\psi}_j$ is drawn as the product of two independent random variables, one chosen uniformly from $\{ 1, i, -1, -i \}$, and the other from $\{\sqrt{2}/2,\sqrt{3}\}$ with probabilities $0.8$ and $0.2$ respectively.
Then, we modulate $\vct{\chi}_\natural$ with these waveforms and take its Fourier transform.
Each $\vct{a}_i$ corresponds to computing a single entry of this Fourier transform:
$$
\vct{a}_{(j-1)n + \ell} = \mtx{W}_n(\ell,:\, )  \, \diag^*(\vct{\psi}_j), \qquad 1 \leq j \leq 12 \quad \text{and} \quad 1 \leq \ell \leq n,
$$
where $\mtx{W}_n(\ell,:)$ is the $\ell$th row of the $n \times n$ discrete Fourier transform matrix.
We use the fast Fourier transform to implement the measurement operator.

\subsection{Fourier ptychography}
\label{sec:FP-supp}

We study a more realistic measurement setup, Fourier ptychography (FP), for the phase retrieval problem.
In this setup, $\vct{\chi}_\natural \in \C^n$ corresponds to an unknown high resolution image (vectorized) from a microscopic sample in the Fourier domain.
One cannot directly acquire a high resolution image from this sample because of the physical limitations of optical systems.
Any measurement is subject to a filter caused by the lens aperture.
We can represent this filter by a sparse matrix $\mtx{\Phi} \in \C^{m \times n}$ with $m \leq n$, each row of which has only one non-zero coefficient.
Because of this filter, we can acquire only low-resolution images, through $m$-dimensional Fourier transform.

FP enlightens the sample from $L$ different angles using a LED grid.
This lets us to to obtain $L$ different aperture matrices $\Phi_j$, $j=1,2,\ldots,L$.
Then, we acquire phaseless measurements from the sample using the following transmission matrices:
$$
\vct{a}_{(j-1)m + \ell} = \mtx{W}^*_m(\ell ,: )  \, \Phi_j, \qquad 1 \leq j \leq L \quad \text{and} \quad 1 \leq \ell \leq m.
$$
Here, $\mtx{W}^*_m(\ell,:)$ is the $\ell$th row of the conjugate transpose of discrete Fourier transform matrix.

The aim in FP is to reconstruct complex valued $\vct{\chi}_\natural$ from these phaseless measurements.
Once we construct $\vct{\chi}_\natural$, we can generate a high resolution image by taking its inverse Fourier transform.

\section{Details for QAP}
\label{sec:numerics-supp}

This section gives further information about the QAP experiments summarized in~\cref{sec:numerics-qap}.
\cref{tab:QAPLIB-convergence-supp,tab:TSPLIB-convergence-supp} display the performance of \sCGAL, by presenting the upper bound after rounding (\cref{sec:qap-round}), the objective value $\ip{\mtx{B}\otimes\mtx{A}}{\mtx{X}}$, feasibility gap, total number of iterations, memory usage, and computation time. %
Feasibility gap is defined as $\dist_{\mathsf{K}}(\mathcal{A}\mtx{X})$ where $\mathsf{K}$ is the Cartesian product between a singleton (for equality constraints) and the nonnegative orthant (for inequality constraints). It is evaluated with respect to the original (not rescaled) problem data and without normalization. 

Note that the objective value of \sCGAL is not a lower bound for the optimum since the iterates are infeasible. Due to sublinear convergence, \sCGAL might be impractical when high accuracy is required, for example within a branch and bound procedure.

\cref{tab:QAPLIB-gaps-supp,tab:TSPLIB-gaps-supp} compare the relative gap~\cref{eqn:qap-gap} obtained by \sCGAL\ with the values for the \textsf{CSDP} method \cite{BravoFerreira2018} with clique size $k = \{2,3,4\}$ and the \textsf{PATH} method \cite{ZBV09:PathFollowing} reported in \cite[Tab.~6]{BravoFerreira2018}. 
A graphical view of these results appears in \cref{fig:qap}.

\label{sec:QAP-supp}

\clearpage

\begin{figure} %
    \centering
        \includegraphics[width=\textwidth]{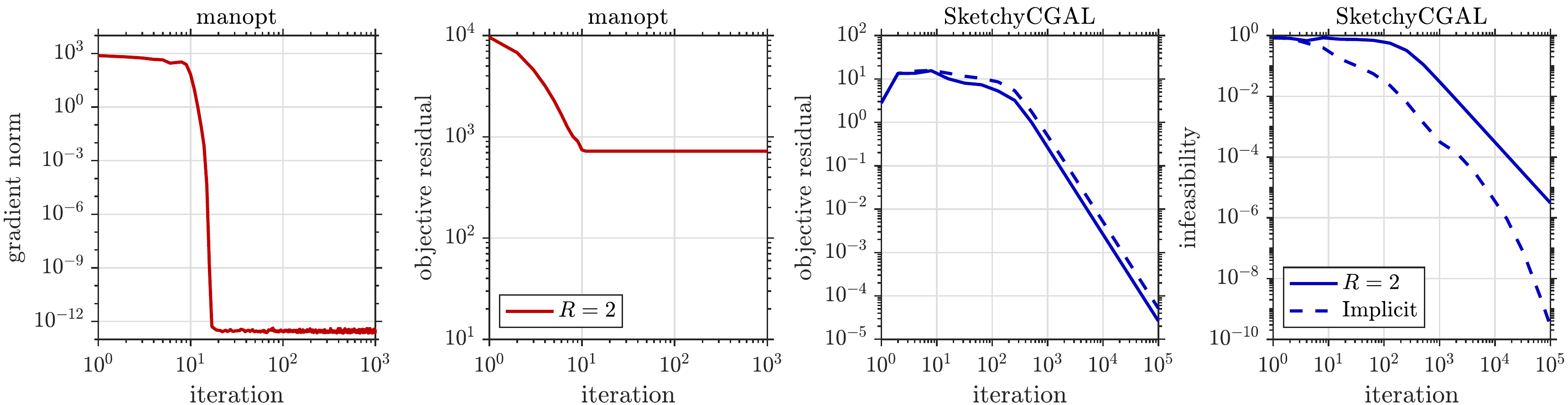}
\caption{\textsf{\textbf{MaxCut SDP: Failure of the Burer--Monteiro heuristic.}}
We apply \textsf{Manopt} and \sCGAL~for solving the \textsf{MaxCut} SDP with the dataset $\mtx{C}_1$. 
The subplots show the gradient norm and suboptimality for \textsf{Manopt} [left] and the suboptimality and infeasibility for \sCGAL [right]. 
\textsf{Manopt} with $R=2$ converges to a spurious solution, whereas \sCGAL~successfully computes a rank-$1$ approximation
of the global optimum.  The dashed line describes the convergence of the \sCGAL\ implicit iterates.   For details, see~\cref{sec:bm-hard}.
}
\label{fig:maxcut-rank-optimality-supp}
\end{figure}

\begin{table}
\label{table:maxcut-rank-optimality-supp}
\centering
\caption{We run \textsf{Manopt} for solving hard instances of the \textsf{MaxCut} SDP.  We consider $10$ datasets $\mtx{C}_1, \dots, \mtx{C}_{10}$. 
For each dataset, we run \textsf{Manopt} with 100 random initializations and report the number of failures. We declare failure when \textsf{Manopt}
converges to second-order critical point that is not a global optimum.  For details, see~\cref{sec:bm-hard}.}
\begin{tabular}{ c | r r r r r r r r r r r r r }
\toprule
  Dataset / R &    $R=2$   &    \ 3 \   &   \ 4 \   &   \ 5 \   &   \ 6 \   &   \ 7 \   &   \ 8 \   &   \ 9 \   &    10   &     11    &     12    &   13   \\
\midrule
  $\mtx{C}_1$  &    82   &     69    &    63    &    53    &    35    &    32    &    24    &    12    &    11    &     1     &     4     &      0   \\
 $\mtx{C}_2$  &      77    &     56     &    56    &     36    &     19    &     17    &  12      &    2     &     0       &   0      &     0      &     0   \\
  $\mtx{C}_3$  &     89   &      65     &    54      &   47    &     44     &    46    &     23     &    11      &    5     &     0       &    3       &    0   \\
  $\mtx{C}_4$  &     84    &     69     &    50     &    40    &     27      &   23    &     18     &    17     &     1    &      0      &     9       &    0   \\
  $\mtx{C}_5$  &     85    &     68     &    52     &    51    &     43      &   30    &     31     &    20     &    14      &    3      &     4      &     0   \\
  $\mtx{C}_6$  &     81    &     68    &     53     &    41    &     23      &   22    &     10     &    10    &      2      &    0      &     1      &     0   \\
  $\mtx{C}_7$  &     83    &     76     &    60     &    39    &     19     &    19     &    19     &     3      &    0     &     0       &    1     &      0   \\
  $\mtx{C}_8$  &     81     &    73    &     44    &     34     &    41     &    25     &     8       &  12      &    5    &      4      &    10      &     0   \\
  $\mtx{C}_9$  &     84    &     64      &   46    &     35    &     25    &     17     &     1     &    10     &     0     &     2       &    4      &     0   \\
  $\mtx{C}_{10}$  &     83    &     71      &  54     &    50      &   31     &    25     &    24    &     16      &   13     &     0      &     8      &     0   \\
\end{tabular}
\end{table}

\clearpage

\begin{figure}[p]
    \centering
    \subfloat[$t = 10\quad$ (6 min)]{\label{fig:Ptychography640-10}\includegraphics[scale=0.25]{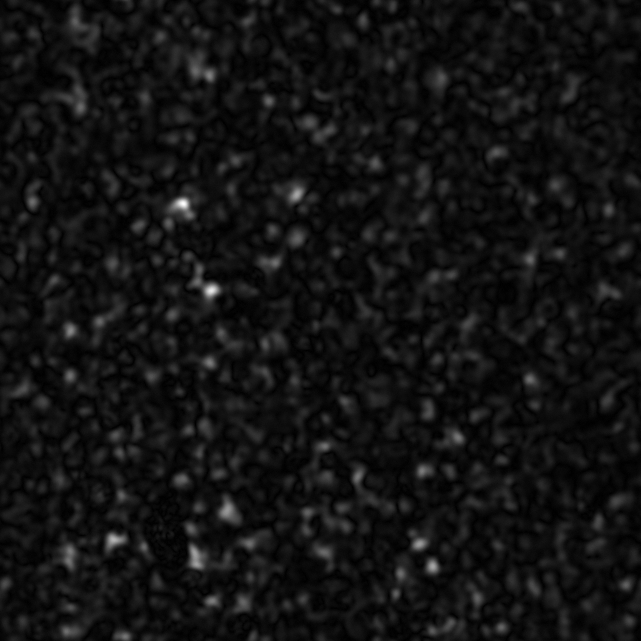} } ~
    \subfloat[$t = 100\quad$ (78 min)]{\label{fig:Ptychography640-100}\includegraphics[scale=0.25]{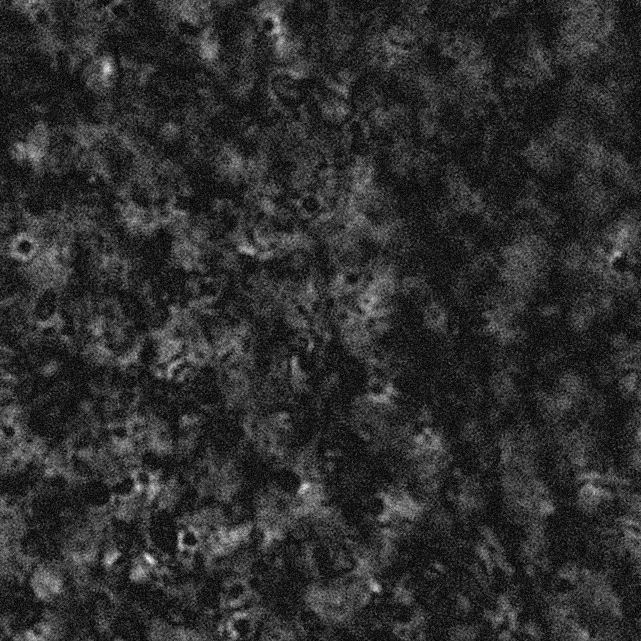} } \\
    \subfloat[$t = 1000\quad$ (1'265 min)]{\label{fig:Ptychography640-1000}\includegraphics[scale=0.25]{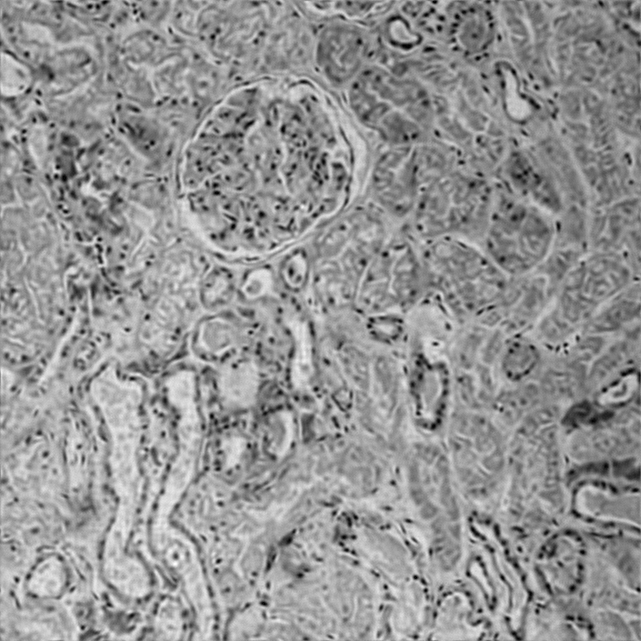} } ~
    \subfloat[$t = 5'000\quad$ (6'248 min)]{\label{fig:Ptychography640-10000}\includegraphics[scale=0.25]{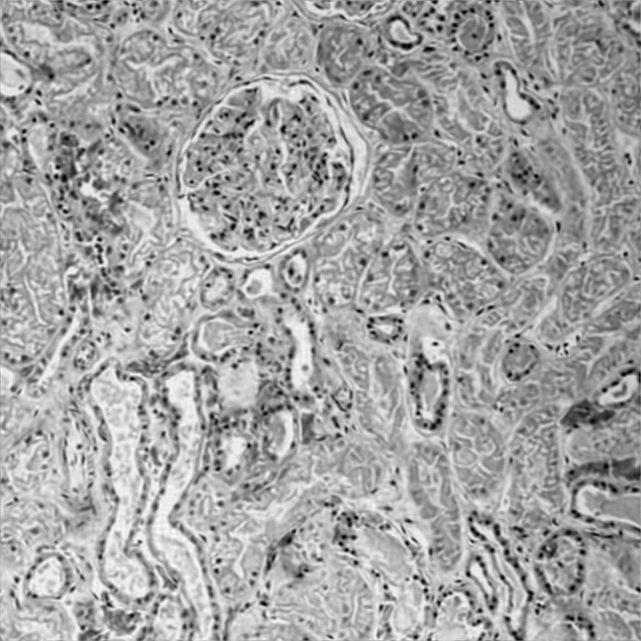} } \\
    \subfloat[original]{\fboxsep=0mm\fboxrule=1pt\label{fig:Ptychography640-original}\fcolorbox{red}{white}{\includegraphics[scale=0.25]{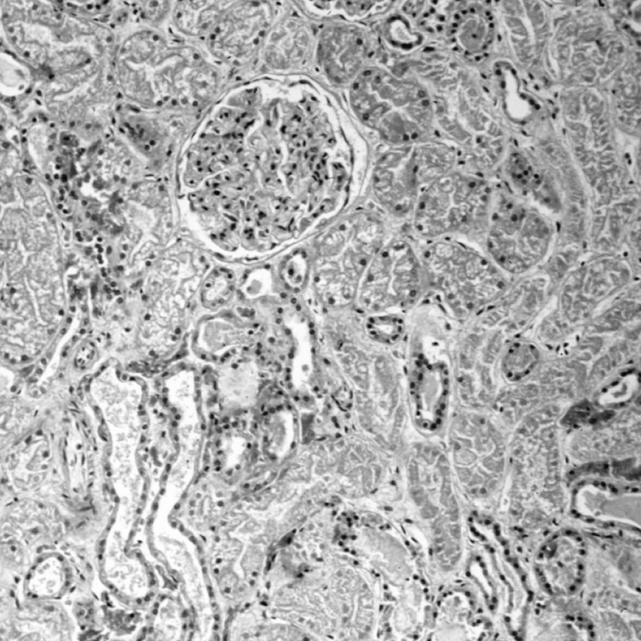}}}
    \caption{\textbf{\textsf{Phase retrieval SDP: Imaging.}}
    Reconstruction of an $n = 640^2$ pixel image from Fourier ptychography data.  We solve an $n \times n$ phase retrieval SDP via \sCGAL\ with rank parameter $R = 5$
    and show the images obtained at iterations $t = 10, 10^2, 10^3, 5 \cdot 10^3$.  The last subfigure is the original.  See~\cref{sec:FP}.}
\label{fig:Ptychography640}
\end{figure}

\clearpage

\begin{footnotesize}
\LTcapwidth=\textwidth
\setlength\LTleft{0pt}
\setlength\LTright{0pt}
\begin{longtable}{@{\extracolsep{\fill}} c | ccccc@{}}
    \caption[]{Numerical outcomes from the MaxCut SDP experiment with \textsc{Gset} Benchmark.}
    \label{tab:maxcut-supp}        \\
    \toprule
~
    &   \makecell[b]{\sCGAL}
        &   \makecell[b]{\textsf{MoSeK}}    
            &   \makecell[b]{\textsf{SDPT3}}
                &   \makecell[b]{\textsf{SDPNAL+}}
                    &   \makecell[b]{\textsf{Sedumi}}  \\
    \midrule
       \makecell{\footnotesize Data Name \\ \footnotesize Size $(n)$} 
	& \makecell{ \footnotesize relative cut weight \\ \footnotesize suboptimality \\ \footnotesize infeasibility \\ \footnotesize storage (MB) \\ \footnotesize time (hh:mm:ss)}
		& & & & \\ \hline
		\midrule
	\endfirsthead
	
    \caption{MaxCut Benchmark with GSet (cont.).}  \\
    \toprule
~
    &   \makecell[b]{\sCGAL}
        &   \makecell[b]{\textsf{MoSeK}} 
            &   \makecell[b]{\textsf{SDPT3}}
                &   \makecell[b]{\textsf{SDPNAL+}}
                    &   \makecell[b]{\textsf{Sedumi}}  \\
    \midrule
\endhead
    \midrule
    \multicolumn{6}{r}{\footnotesize\itshape Continued on the next page}
\endfoot
    \bottomrule
\endlastfoot
\csvreader[late after line=\\\hline]{tables/MaxCutTable.csv}{
Data=\csvdataname,
n=\csvdatasize,
cgalsuboptimality=\csvcgalsuboptimality,
cgalrelcut=\csvcgalrelcut,
cgalinfeasibility=\csvcgalpfeas,
cgaltime=\csvcgaltime,
cgalmem=\csvcgalmem,
moseksuboptimality=\csvmoseksuboptimality,
mosekrelcut=\csvmosekrelcut,
mosekinfeasibility=\csvmosekpfeas,
mosektime=\csvmosektime,
mosekmem=\csvmosekmem,
sdptsuboptimality=\csvsdptsuboptimality,
sdptrelcut=\csvsdptrelcut,
sdptinfeasibility=\csvsdptpfeas,
sdpttime=\csvsdpttime,
sdptmem=\csvsdptmem,
sdpnalsuboptimality=\csvsdpnalsuboptimality,
sdpnalrelcut=\csvsdpnalrelcut,
sdpnalinfeasibility=\csvsdpnalpfeas,
sdpnaltime=\csvsdpnaltime,
sdpnalmem=\csvsdpnalmem,
sedumisuboptimality=\csvsedumisuboptimality,
sedumirelcut=\csvsedumirelcut,
sedumiinfeasibility=\csvsedumipfeas,
sedumitime=\csvsedumitime,
sedumimem=\csvsedumimem}%
{\makecell{\csvdataname \\ \csvdatasize} 
	& \makecell{{\csvcgalrelcut} \\ {\csvcgalsuboptimality} \\ {\csvcgalpfeas} \\ {\csvcgalmem} \\ \csvcgaltime}
		& \makecell{{\csvmosekrelcut} \\ {\csvmoseksuboptimality} \\ {\csvmosekpfeas} \\ {\csvmosekmem} \\ \csvmosektime}
			& \makecell{{\csvsdptrelcut} \\ {\csvsdptsuboptimality} \\ {\csvsdptpfeas} \\ {\csvsdptmem} \\ \csvsdpttime} 
				& \makecell{{\csvsdpnalrelcut} \\ {\csvsdpnalsuboptimality} \\ {\csvsdpnalpfeas} \\ {\csvsdpnalmem} \\ \csvsdpnaltime}  
					& \makecell{{\csvsedumirelcut} \\ {\csvsedumisuboptimality} \\ {\csvsedumipfeas} \\ {\csvsedumimem} \\ {\csvsedumitime}}}
\end{longtable}
\end{footnotesize}

\clearpage
\begin{table}
\centering
\begin{footnotesize}
\caption{We solve SDP relaxations of QAP instances from QAPLIB using \sCGAL. We compute the relative gap and compare it with the values for the \textsf{CSDP} method \cite{BravoFerreira2018} with clique size $k = \{2,3,4\}$ and the \textsf{PATH} method \cite{ZBV09:PathFollowing} reported in \cite[Tab.~4]{BravoFerreira2018}. Smaller is better. See~\cref{sec:numerics-qap}.}
\label{tab:QAPLIB-gaps-supp} 
\begin{tabular}{c | c | c c c c c  }
\toprule
\multicolumn{1}{p{2cm}}{\centering Dataset} & \multicolumn{1}{p{2cm}}{\centering Optimum} & \multicolumn{1}{p{2cm}}{\centering \sCGAL} & \multicolumn{1}{p{1.5cm}}{\centering \textsf{CSDP2}}& \multicolumn{1}{p{1.5cm}}{\centering \textsf{CSDP3}}&\multicolumn{1}{p{1.5cm}}{ \centering \textsf{CSDP4} } & \multicolumn{1}{p{1.5cm}}{ \centering \textsf{PATH}} \\
\midrule 
\midrule
\csvreader[late after line=\\,head to column names]{tables/QapTableGapComparison.csv}{}
{\texttt{\csvcoli}
	& $\csvcolii$ 
		& $\csvcoliii$
 			& $\csvcoliv$
 				&  $\csvcolv$
 					&  $\csvcolvi$
 						&  $\csvcolvii$}
\end{tabular}
\end{footnotesize}
\end{table}

\begin{table}
\centering
\caption{We run \sCGAL\ for (the first of) $10^6$ iterations or $72$ hours of runtime, for solving SDP relaxations of QAP instances from QAPLIB. We report the upper bound after rounding (\cref{sec:qap-round}), the objective value, the feasibility gap, the number of iterations, the memory usage (in MB), and the cpu time (`hh:mm:ss').  See~\cref{sec:numerics-qap}.}
\label{tab:QAPLIB-convergence-supp} 
\begin{footnotesize}
\nprounddigits{2}
\begin{tabular}{c | c | c c c c c c }
\toprule
\multicolumn{1}{p{1.25cm}}{\centering Dataset} & \multicolumn{1}{p{1.25cm}}{\centering Optimum} & \multicolumn{1}{p{1.75cm}}{\centering Upper bnd.} & \multicolumn{1}{p{1.75cm}}{\centering Objective}& \multicolumn{1}{p{1.5cm}}{\centering Feas. gap}&\multicolumn{1}{p{1.5cm}}{ \centering Iteration} & \multicolumn{1}{p{1.25cm}}{ \centering Memory} & \multicolumn{1}{p{1.75cm}}{ \centering Time} \\
\midrule 
\midrule
\csvreader[late after line=\\,head to column names]{tables/QapTableSketchyCGAL.csv}{}
{\texttt{\csvcoli}
	& $\csvcolii$ 
		& $\csvcoliii$
 			& $\numprint{\csvcoliv}$
 				&  $\num{\csvcolv}$
 					&  $\csvcolvi$
 						&  $\csvcolvii$
 							&  $\csvcolviii$}
\end{tabular}
\npnoround
\end{footnotesize}
\end{table}

\clearpage
\begin{table}
\centering
\begin{footnotesize}
\caption{We solve SDP relaxations of QAP instances from TSPLIB using \sCGAL. We compute the relative gap and compare it with the values for the \textsf{CSDP} method \cite{BravoFerreira2018} with clique size $k = \{2,3,4\}$ and the \textsf{PATH} method \cite{ZBV09:PathFollowing} reported in \cite[Tab.~6]{BravoFerreira2018}. Smaller is better.}
\label{tab:TSPLIB-gaps-supp} 
\begin{tabular}{ c | c | c c c c c  }
\toprule
\multicolumn{1}{p{2cm}}{\centering Dataset} & \multicolumn{1}{p{2cm}}{\centering Optimum} & \multicolumn{1}{p{1.5cm}}{\centering \sCGAL} & \multicolumn{1}{p{1.5cm}}{\centering \textsf{CSDP2}}& \multicolumn{1}{p{1.5cm}}{\centering \textsf{CSDP3}}&\multicolumn{1}{p{1.5cm}}{ \centering \textsf{CSDP4}} & \multicolumn{1}{p{1.5cm}}{ \centering \textsf{PATH}} \\
\midrule 
\midrule
\csvreader[late after line=\\,head to column names]{tables/TspTableGapComparison.csv}{}
{\texttt{\csvcoli}
	& $\csvcolii$ 
		& $\csvcoliii$
 			& $\csvcoliv$
 				&  $\csvcolv$
 					&  $\csvcolvi$
 						&  $\csvcolvii$}
\end{tabular}
\end{footnotesize}
\end{table}

\begin{table}
\centering
\caption{We run \sCGAL\ for (the first of) $10^6$ iterations or $72$ hours of runtime, for solving SDP relaxations of QAP instances from TSPLIB. We report the upper bound after rounding (\cref{sec:qap-round}), the objective value, the feasibility gap, the number of iterations, the memory usage (in MB), and the cpu time (`hh:mm:ss').  See~\cref{sec:numerics-qap}.}
\label{tab:TSPLIB-convergence-supp} 
\begin{footnotesize}
\nprounddigits{2}
\begin{tabular}{c | c | c c c c c c }
\toprule
\multicolumn{1}{p{1.5cm}}{\centering Dataset} & \multicolumn{1}{p{1.25cm}}{\centering Optimum} & \multicolumn{1}{p{1.75cm}}{\centering Upper bnd.} & \multicolumn{1}{p{1.75cm}}{\centering Objective}& \multicolumn{1}{p{1.5cm}}{\centering Feas. gap}&\multicolumn{1}{p{1.5cm}}{ \centering Iteration} & \multicolumn{1}{p{1.25cm}}{ \centering Memory} & \multicolumn{1}{p{1.75cm}}{ \centering Time} \\
\midrule 
\midrule
\csvreader[late after line=\\,head to column names]{tables/TspTableSketchyCGAL.csv}{}
{\texttt{\csvcoli}
	& $\csvcolii$ 
		& $\csvcoliii$
 			& $\numprint{\csvcoliv}$
 				&  $\num{\csvcolv}$
 					&  $\csvcolvi$
 						&  $\csvcolvii$
 							&  $\csvcolviii$}
\end{tabular}
\npnoround
\end{footnotesize}
\end{table}

\clearpage

\begin{footnotesize}
\LTcapwidth=\textwidth
\setlength\LTleft{0pt}
\setlength\LTright{0pt}
\begin{longtable}{@{\extracolsep{\fill}} c | ccccc@{}}
    \caption[]{Primal--dual errors from the MaxCut SDP experiment with \textsc{Gset} Benchmark.}
    \label{tab:maxcut-dual-supp}        \\
    \toprule
~
    &   \makecell[b]{\sCGAL}
        &   \makecell[b]{\textsf{MoSeK}}    
            &   \makecell[b]{\textsf{SDPT3}}
                &   \makecell[b]{\textsf{SDPNAL+}}
                    &   \makecell[b]{\textsf{Sedumi}}  \\
    \midrule
       \makecell{\footnotesize Data Name \\ \footnotesize Size $(n)$} 
	& \makecell{ \footnotesize err1 \\ \footnotesize err2 \\ \footnotesize err3 \\ \footnotesize err4 \\ \footnotesize err5 \\ \footnotesize err6 \\ \footnotesize time (dd:hh:mm:ss)}
		& & & & \\ \hline
		\midrule
	\endfirsthead
	
    \caption{Primal--dual errors from the MaxCut SDP experiment with \textsc{Gset} Benchmark (cont.).}  \\
    \toprule
~
    &   \makecell[b]{\sCGAL}
        &   \makecell[b]{\textsf{MoSeK}} 
            &   \makecell[b]{\textsf{SDPT3}}
                &   \makecell[b]{\textsf{SDPNAL+}}
                    &   \makecell[b]{\textsf{Sedumi}}  \\
    \midrule
\endhead
    \midrule
    \multicolumn{6}{r}{\footnotesize\itshape Continued on the next page}
\endfoot
    \bottomrule
\endlastfoot
\csvreader[late after line=\\\hline]{tables/MaxCutTableDimacs.csv}{
Data=\csvdataname,
n=\csvdatasize,
cgalerr1=\csvcgalerrone,
cgalerr2=\csvcgalerrtwo,
cgalerr3=\csvcgalerrthree,
cgalerr4=\csvcgalerrfour,
cgalerr5=\csvcgalerrfive,
cgalerr6=\csvcgalerrsix,
cgaltime=\csvcgaltime,
mosekerr1=\csvmosekerrone,
mosekerr2=\csvmosekerrtwo,
mosekerr3=\csvmosekerrthree,
mosekerr4=\csvmosekerrfour,
mosekerr5=\csvmosekerrfive,
mosekerr6=\csvmosekerrsix,
mosektime=\csvmosektime,
sdpnalerr1=\csvsdpnalerrone,
sdpnalerr2=\csvsdpnalerrtwo,
sdpnalerr3=\csvsdpnalerrthree,
sdpnalerr4=\csvsdpnalerrfour,
sdpnalerr5=\csvsdpnalerrfive,
sdpnalerr6=\csvsdpnalerrsix,
sdpnaltime=\csvsdpnaltime,
sdpt3err1=\csvsdpterrone,
sdpt3err2=\csvsdpterrtwo,
sdpt3err3=\csvsdpterrthree,
sdpt3err4=\csvsdpterrfour,
sdpt3err5=\csvsdpterrfive,
sdpt3err6=\csvsdpterrsix,
sdpt3time=\csvsdpttime,
sedumierr1=\csvsedumierrone,
sedumierr2=\csvsedumierrtwo,
sedumierr3=\csvsedumierrthree,
sedumierr4=\csvsedumierrfour,
sedumierr5=\csvsedumierrfive,
sedumierr6=\csvsedumierrsix,
sedumitime=\csvsedumitime} %
{\makecell{\csvdataname \\ \csvdatasize} 
	& \makecell{{\csvcgalerrone} \\ {\csvcgalerrtwo} \\ {\csvcgalerrthree} \\ {\csvcgalerrfour} \\ {\csvcgalerrfive} \\ {\csvcgalerrsix} \\ {\csvcgaltime}}
		& \makecell{{\csvmosekerrone} \\ {\csvmosekerrtwo} \\ {\csvmosekerrthree} \\ {\csvmosekerrfour} \\ {\csvmosekerrfive} \\ {\csvmosekerrsix} \\ {\csvmosektime}}
			& \makecell{{\csvsdpterrone} \\ {\csvsdpterrtwo} \\ {\csvsdpterrthree} \\ {\csvsdpterrfour} \\ {\csvsdpterrfive} \\ {\csvsdpterrsix} \\ {\csvsdpttime}}
				& \makecell{{\csvsdpnalerrone} \\ {\csvsdpnalerrtwo} \\ {\csvsdpnalerrthree} \\ {\csvsdpnalerrfour} \\ {\csvsdpnalerrfive} \\ {\csvsdpnalerrsix} \\ {\csvsdpnaltime}}
					& \makecell{{\csvsedumierrone} \\ {\csvsedumierrtwo} \\ {\csvsedumierrthree} \\ {\csvsedumierrfour} \\ {\csvsedumierrfive} \\ {\csvsedumierrsix} \\ {\csvsedumitime}}}
\end{longtable}
\end{footnotesize}

\begin{figure}[p]
    \centering
    \subfloat{\label{fig:G67-dimacs-err1}\includegraphics[scale=0.55]{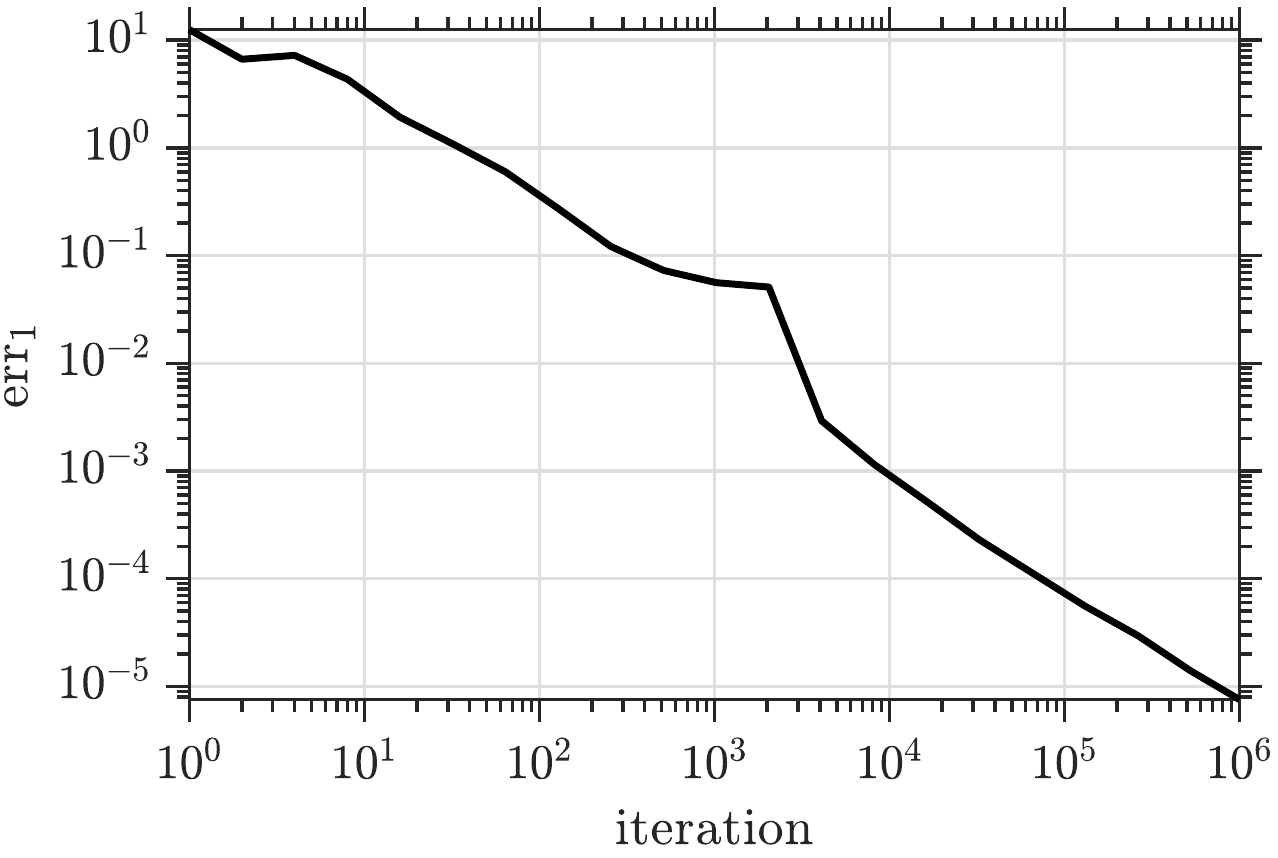} } ~
    \subfloat{\label{fig:G67-dimacs-err4}\includegraphics[scale=0.55]{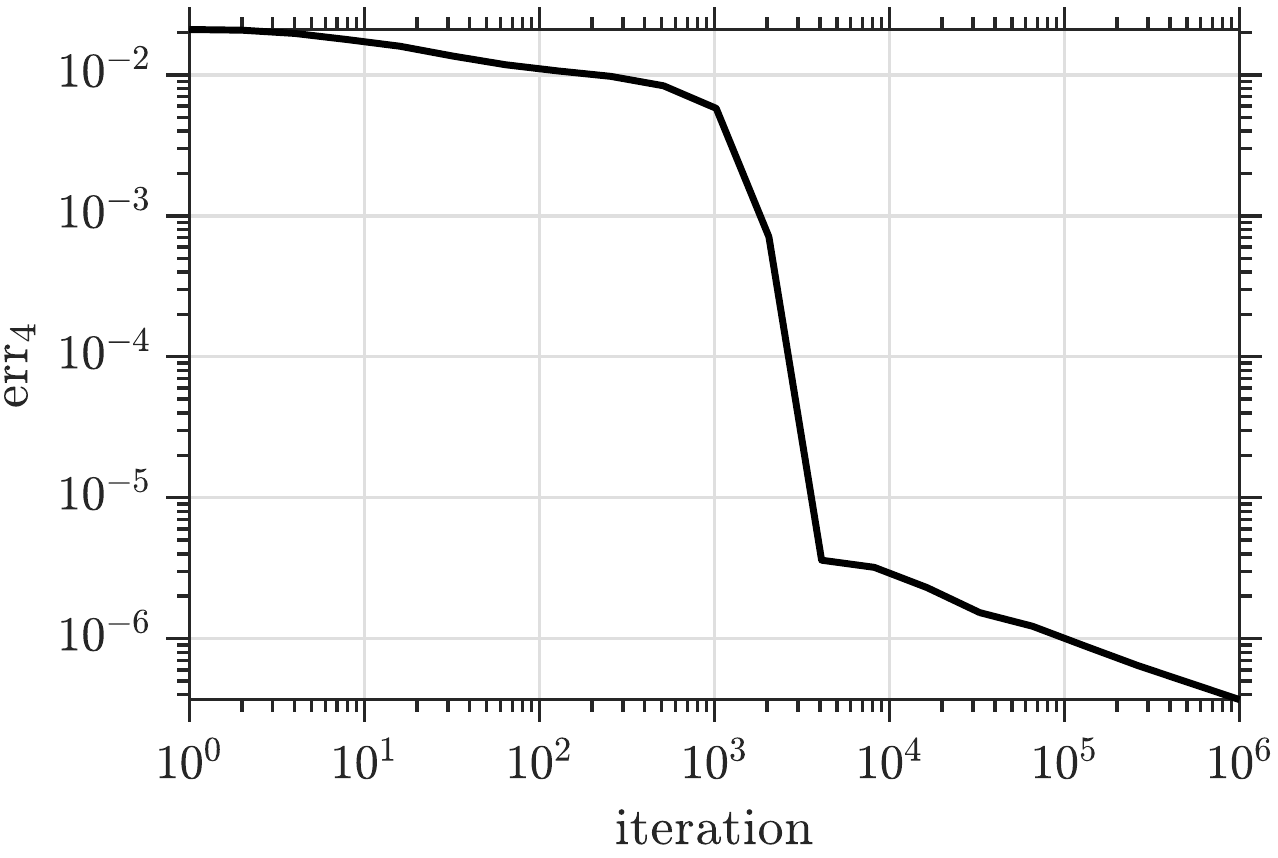} } \\
    \subfloat{\label{fig:G67-dimacs-err5}\includegraphics[scale=0.55]{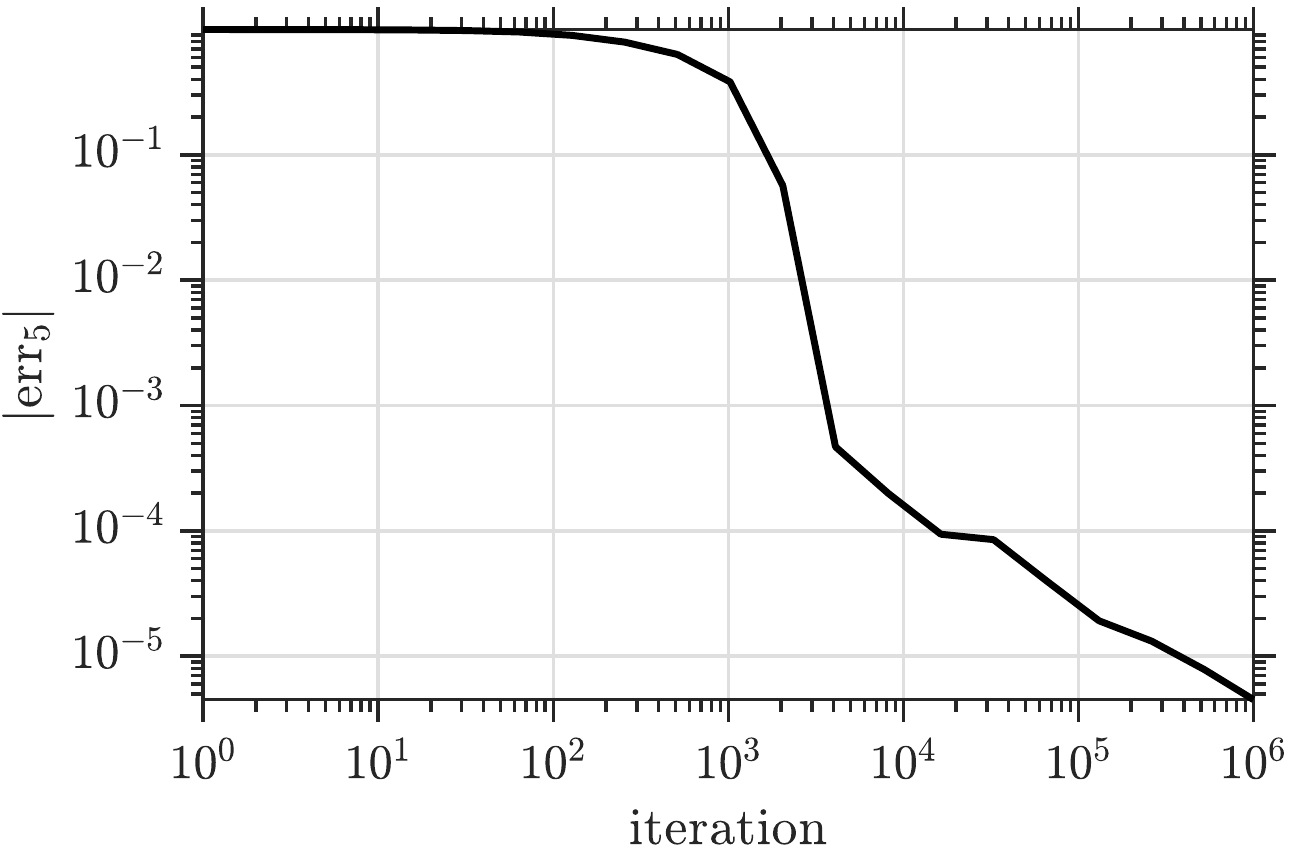} } ~
    \subfloat{\label{fig:G67-dimacs-err6}\includegraphics[scale=0.55]{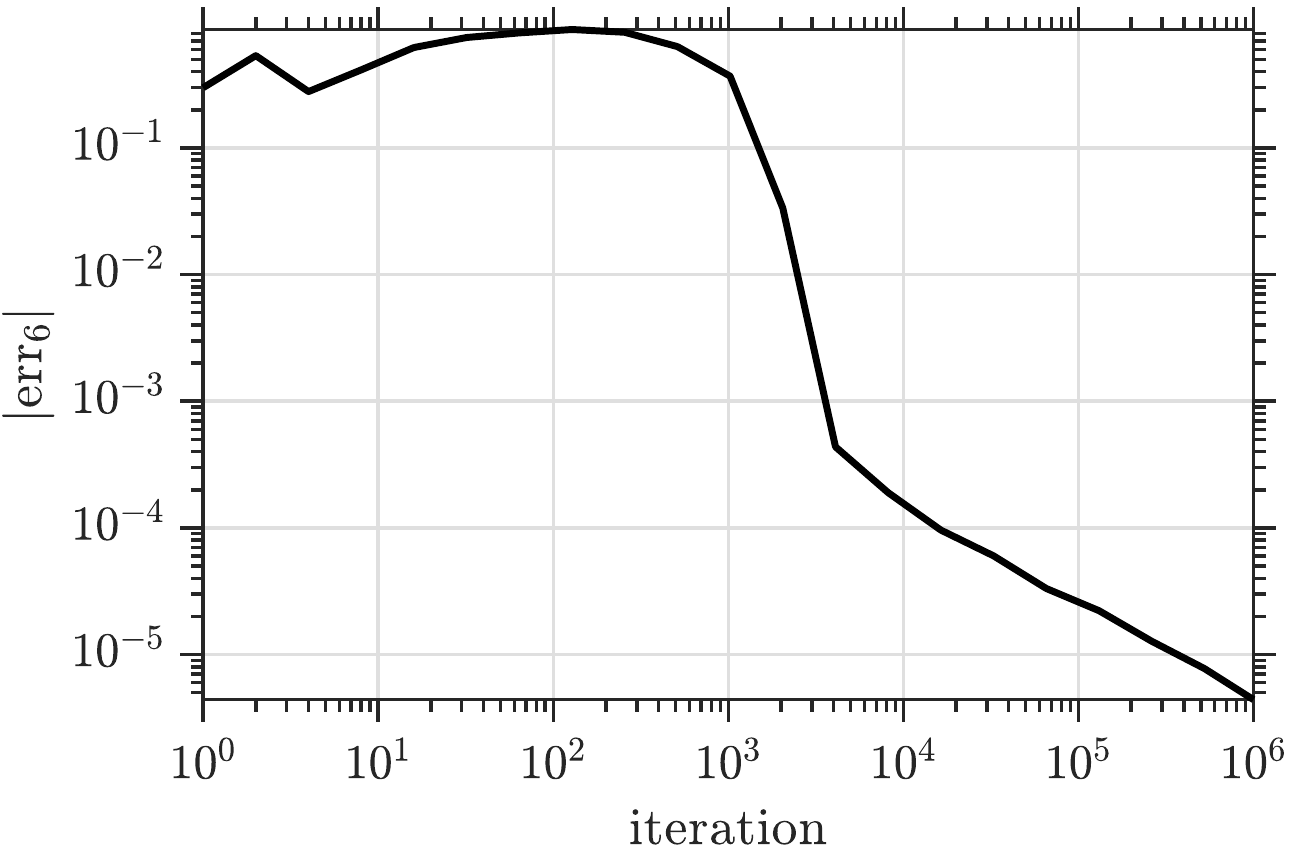} } \\
    \subfloat{\label{fig:G67-surrogate-gap}\includegraphics[scale=0.55]{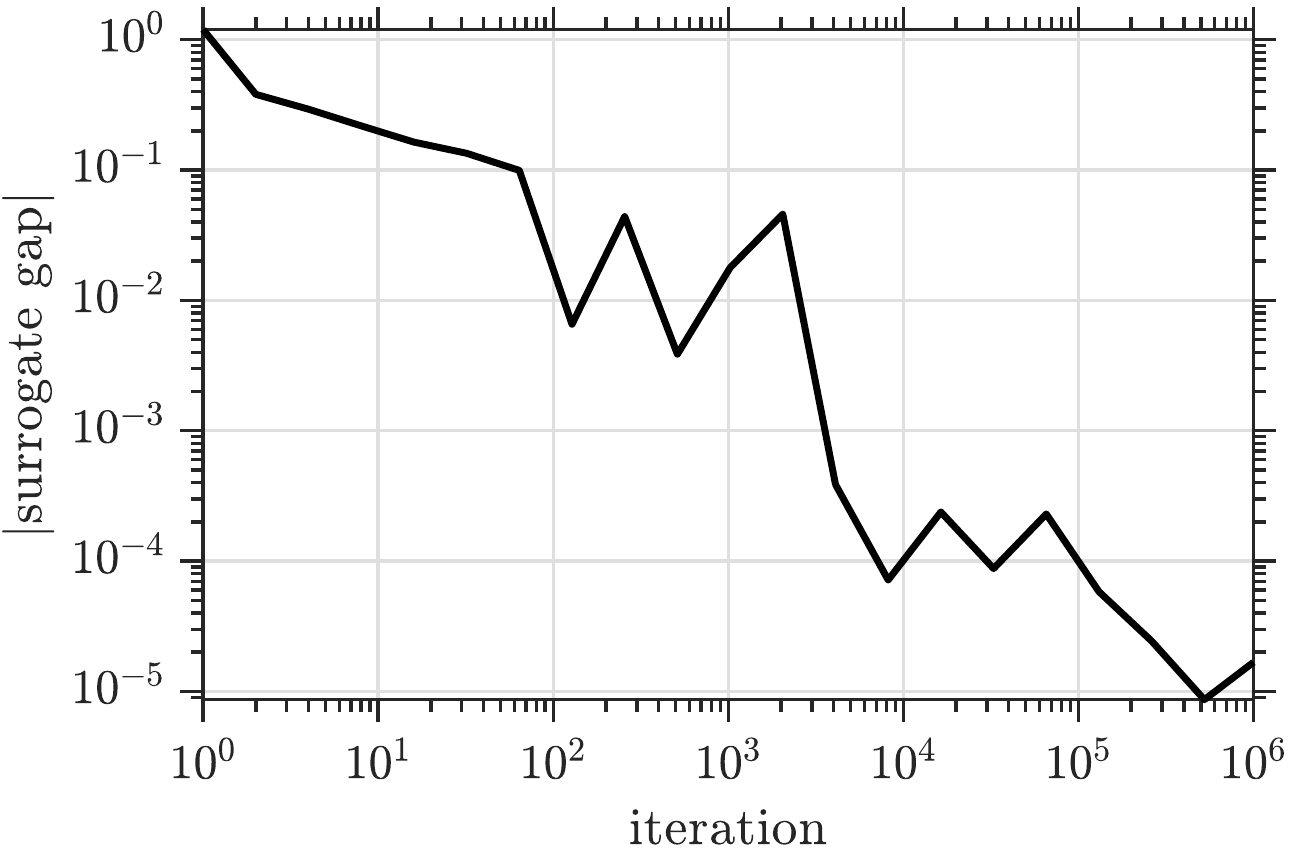}}
\caption{\textsf{\textbf{\textsf{MaxCut} SDP: Primal--dual convergence.}}
We solve the \textsf{MaxCut} SDP for the \textsf{G67} dataset ($n = 10\,000$) with \sCGAL. The subplots show the magnitudes of the \textsc{Dimacs} errors and the surrogate gap of the implicit primal iterate and the dual iterate. We omit $\mathrm{err}_2$ and $\mathrm{err}_3$ since they are zero by construction. See~\cref{sec:primal-dual-conv-supp}.}
\label{fig:maxcut-dimacs-supp}
\end{figure}

\clearpage

\section*{Acknowledgments}

Nicolas Boumal encouraged us to reconsider the storage-optimal
randomized Lanczos method (\cref{alg:rand-lanczos}); this algorithm
simplifies the presentation while improving our theoretical 
and numerical results.
Richard Kueng allowed us to include his results on
trace normalization (Step 13 in \cref{alg:sketchy-cgal}).
Ir{\`e}ne Waldspurger provided code that generates
instances of the \textsf{MaxCut} problem that are difficult for the Burer--Monteiro heuristic.
We would also like to thank the editor and reviewers for their feedback.

\bibliographystyle{siamplain}

\end{document}